\documentclass[9pt]{article}
\usepackage{float}
\usepackage{footmisc}
\usepackage{palatino}
\usepackage{caption}
\usepackage[mathscr]{euscript}
\usepackage{comment}
\usepackage[a4paper, portrait, margin=1.1811in]{geometry}
\usepackage[english]{babel}
\usepackage{amssymb}
\usepackage{mathtools}

\usepackage{amsmath, bm}
\usepackage{amsthm}
\usepackage{tikz-cd}
\usepackage{tikz}
\usepackage[utf8]{inputenc}
\usepackage[T1]{fontenc}
\usepackage{helvet}
\usepackage{etoolbox}
\usepackage{amsmath}
\usepackage{graphicx}
\usepackage{titlesec}
\usepackage{caption}
\usepackage{booktabs}
\usepackage{natbib}
\usepackage{xcolor} 
\usepackage[colorlinks, citecolor=cyan]{hyperref}
\usepackage{caption}
\captionsetup[figure]{name=Figure}
\graphicspath{{./images/}}
\usepackage{scrextend}
\usepackage{fancyhdr}
\usepackage{graphicx}
\newtheorem{Lem}{Lemma}
\newtheorem{thm}{Theorem}
\newtheorem*{teo*}{Theorem}
\newtheorem{Ex}{Example}
\newtheorem*{Ex*}{Example}
\newtheorem{De}{Definition}

\newtheorem{Pro}{Proposition}
\newtheorem{Cor}{Corollary}
\newtheorem{Rem}{Remark}
\fancypagestyle{plain}{
	\fancyhf{}

}

\makeatletter
\patchcmd{\@maketitle}{\LARGE \@title}{\fontsize{16}{19.2}\selectfont\@title}{}{}
\makeatother
\usetikzlibrary{positioning}
\usetikzlibrary{arrows.meta}
\usetikzlibrary{bending}
\usetikzlibrary{arrows}
\usetikzlibrary{decorations.pathmorphing}
\usepackage{authblk}

\setlength{\affilsep}{2em}  
\newsavebox\affbox
\author[1]{\textbf{Habib Benziadi}}
\author[2]{\textbf{Antonio López Almorox}}
\author[3]{\textbf{Carlos Tejero Prieto}}

\affil[1,2,3]{{Departamento de Matemáticas, $^\text{1,3}$Instituto de Física Fundamental y Matemáticas,}\newline \ {Universidad de Salamanca,  Plaza de la Merced 4, 37008, Salamanca, Spain}}

\titlespacing\section{0pt}{12pt plus 4pt minus 2pt}{0pt plus 2pt minus 2pt}
\titlespacing\subsection{12pt}{12pt plus 4pt minus 2pt}{0pt plus 2pt minus 2pt}
\titlespacing\subsubsection{12pt}{12pt plus 4pt minus 2pt}{0pt plus 2pt minus 2pt}

\titleformat{\section}{\normalfont\fontsize{10}{15}\bfseries}{\thesection.}{1em}{}
\titleformat{\subsection}{\normalfont\fontsize{10}{15}\bfseries}{\thesubsection.}{1em}{}
\titleformat{\subsubsection}{\normalfont\fontsize{10}{15}\bfseries}{\thesubsubsection.}{1em}{}

\titleformat{\author}{\normalfont\fontsize{10}{15}\bfseries}{\thesection}{1em}{}

\title{\textbf{\huge  Harmonic maps into principal bundles\\ and generalized magnetic maps }}\date{}  
\bibliographystyle{plain}
\nocite{*}

\begin{document}
\pagestyle{headings}	
\newpage
\setcounter{page}{1}
\renewcommand{\thepage}{\arabic{page}}
\captionsetup[figure]{labelfont={bf},labelformat={default},labelsep=period,name={Figure }}	\captionsetup[table]{labelfont={bf},labelformat={default},labelsep=period,name={Table }}
\setlength{\parskip}{0.5em}
\newcommand{\Vol}{\operatorname{Vol}}
\newcommand{\Ad}{\operatorname{Ad}}
\newcommand{\Hom}{\operatorname{Hom}}
\newcommand{\Id}{\operatorname{Id}}
\newcommand{\Tr}{\operatorname{Tr}}
\newcommand{\Dim}{\operatorname{dim}}
\newcommand{\X}{\mathfrak{X}}
\newcommand{\Aut}{\mathrm{Aut}}
\newcommand{\cin}{{C^\infty}}
\newcommand{\R}{\mathbb{R}}
\newcommand{\Dif}{\mathrm{Diff}}
\newcommand{\Ima}{\mathrm{Im}}
\newcommand{\Tor}{\mathrm{Tor}}
\newcommand{\ad}{\mathrm{ad}}
\newcommand{\End}{\operatorname{End}}
\newcommand{\Ker}{\operatorname{Ker}}
\newcommand{\g}{\mathfrak{g}}
\maketitle
	
\noindent\rule{15cm}{0.5pt}
		\textbf{Abstract.} We study harmonic mappings from a Riemannian manifold $N$ into a principal $G$-bundle $P$ 
endowed with a $G$-invariant Riemannian  metric (i.e. a Kaluza-Klein metric).
These morphisms are called Kaluza-Klein harmonic maps and naturally lead to the notion of 
generalized magnetic maps for an arbitrary gauge group $G$, which are just their projections onto the base manifold of $P$ and might provide a geometric formulation for the magnetic interaction of extended objects modelled by $N$ under the action of a generalized Lorentz force. We provide a characterization of Kaluza-Klein harmonic maps and show that the space of generalized magnetic maps is a quotient of the space of Kaluza-Klein harmonic maps under an equivalence relation generated by an appropriate gauge group. We establish a necessary condition that they must satisfy, the gauge variation formula and the harmonic gauge fixing equation, also providing a main existence theorem for them.     After analyzing how they are influenced by the geometry of the fibers of the principal bundle, we construct several instances of generalized magnetic maps, including two non-trivial one-parameter families of examples based on $\alpha$-twisted spherical harmonic immersions with values in the complex $S^{3}\longrightarrow S^{2}$ and quaternionic $S^{7}\longrightarrow S^{4}$ Hopf fibrations, proving that among them the unique uncharged ones are the standard Clifford torus and the standard spherical harmonic immersion of $ S^3\times S^3$ into $S^7$.
        \\ \\
		\let\thefootnote\relax\footnotetext{
			\small\noindent Research partially supported by Grant PID2021-128665NB-I00 funded by MCIN/AEI/ 10.13039/501100011033 and by “ERDF A way of making Europe”; and by GIR 4139 of the Universidad de Salamanca. 			
		}\noindent
		\textbf{\textit{2020 Mathematics Subject Classification.}}      53C43, 53Z05.\\
		\textbf{\textit{Keywords}}: \textit{Kaluza-Klein harmonic maps, generalized magnetic maps, higher dimensional  Wong's equations, harmonic immersions into Hopf fibrations.}
	
\noindent\rule{15cm}{0.4pt}
\section{Introduction}
Harmonic maps  between Riemannian manifolds were  introduced by J. Eells and J.H. Sampson \cite{eells1964harmonic} and later thoroughly studied by many authors, see \cite{xin2012geometry} and the references cited therein. They are solutions to the variational problem defined by the Dirichlet energy functional. Harmonic maps play an important role both in differential geometry and in the theory of partial differential equations, see  \cite{Urakawa}. They are also used in theoretical physics to describe nonlinear sigma models as well as in string theory, see \cite{Percacci}. Riemannian geodesic curves are  particular cases of harmonic maps which arise when the initial manifold is one-dimensional. \\
On the other hand, the so-called magnetic curves or magnetic geodesics,  which represent the trajectories of classical charged particles moving under the influence of a magnetic field $F$ that interacts with the particle through the Lorentz force, have been also thoroughly considered in the literature, see for instance \cite{ARMT, ErIn} and the extensive bibliography used there. In geometric models, the space in which the particle moves is a Riemannian manifold $(M,g)$  and the magnetic field $F$ is described by a closed $2$-form on $M$. The Lorentz force is then just the antisymmetric endomorphism $\mathscr{F}$ associated to the Riemannian metric $g$ and the magnetic field $F$. With this language, magnetic curves are solutions to a system of second-order differential equations $\frac{D\dot{\gamma}}{dt}=\mathscr{F}(\dot{\gamma})$ that generalizes the well-known Lorentz equation. If Riemannian geodesic curves are the critical curves of the variational problem determined by the energy functional of the curve, then magnetic curves can also be characterized as solutions to the so-called Landau-Hall local variational problem in Riemannian manifold settings when the electromagnetic field is described as a closed $2$-form $F$, see \cite{Opava,CS}. The Landau-Hall variational problem is only local in nature, since its Lagrangian depends on the choice of a potential for $F$, and in general non-trivial topological situations, this only exists locally.

Fortunately, there is a global framework capable of dealing with general mag\-ne\-tic fields, regardless of whether they are globally exact or not.  Another key advantage of this formulation is that it can be applied to gauge fields for an arbitrary Lie group. This approach, originally introduced by Th. Kaluza \cite{kaluza2018unification} and later by O. Klein \cite{klein1926quantentheorie}  at the beginning of the last century, revolutionized unification theories in physics and is today called the Kaluza-Klein theory after the pioneering work of these authors who sought to unify gravity and electromagnetism. Since their seminal contributions, this unification approach has been followed by many authors, see for instance \cite{MKKT}. The original work of Kaluza-Klein, expressed in the framework of differential geometry, shows that a harmonic curve, that is a geodesic, in a five-dimensional principal $U(1)$-bundle endowed with the so-called Kaluza-Klein metric, when projected onto the four-dimensional spacetime base, satisfy the equations of motion of a particle in an external electromagnetic field.  Subsequently, this study was generalized locally to non-Abelian gauge groups along the spacetime directions by R. Kerner \cite{kerner1968generalization}. Later, S. K. Wong \cite{Wong} provided a complete set of equations, including also the fiber directions, for the structure group $SU(2)$. Several authors provided generalizations of Wong's equations for an arbitrary Lie group and R. Montgomery \cite{Montgomery} proved that these equations were indeed equivalent to the Euler-Lagrange equations for Kaluza-Klein\ geodesics. Therefore, Wong's equations are precisely the characterization of Kaluza-Klein harmonic curves naturally adapted to the geometry of the supporting principal bundle.  It seems that this theory has gained recently a renewed interest in connection with the Standard Model for particle physics, see for instance \cite{baptista2025test} and the reference cited therein.
The Kaluza-Klein approach inspires our work, as we seek to formulate it in the broadest class of higher-dimensional manifolds, thus providing a completely new framework in the literature for understanding and recognizing its applications in the context of harmonic maps, as well as for modelling the interaction of extended objects with arbitrary gauge fields. 

From a mathematical point of view our generalization of magnetic geodesics to higher dimensional initial manifolds, that we have termed generalized magnetic maps, will be based on the theory of harmonic maps from Riemannian manifolds into principal $G$-bundles endowed with $G$-invariant Riemannian metrics, that are precisely the so-called Kaluza-Klein metrics, see \cite{Coquereaux,Betounes}. On the other hand, we will see that the equations for Kaluza-Klein harmonic maps are the natural generalization of Wong's equations to higher dimensional initial manifolds. 

In this paper, we start by recalling that Kaluza-Klein metrics are completely determined by a principal connection (i.e., a gauge field) and Riemannian metrics on the adjoint bundle and base manifold. Moreover, we prove their unconditional existence for any principal bundle over a paracompact manifold. After this, we proceed to study the geometrical aspects of harmonic maps into Kaluza-Klein principal bundles. The main results that we prove for them  are contained in Theorems \ref{teo:vertical-tension-field}, \ref{teo:horizontal-hamonic-curvature-modified-metric} and can be summarized as follows:
    
    \begin{teo*}
    	Let $\pi: (P,\widehat{g})\longrightarrow (M,\overline{g})$  be a Kaluza-Klein principal $G$-bundle with  principal connection $\omega$. A map $\widetilde{\Phi}: (N,g)\longrightarrow (P,\widehat{g})$ is horizontally harmonic if and only if $\Phi=\pi\circ\widetilde \Phi\colon (N,g)\longrightarrow (M,\overline{g})$ satisfies
    \begin{align}\label{eq:horizontal-intro}
\tau(\Phi)&=-\pi_*\left(\Tr_{g}\big[\widetilde\Phi^{*}\widehat{g}_{\mathscr{F}^{\omega}}\big]\right)-\pi_{*}\left(\Tr_g\big[\widetilde{\Phi}^{*}\bm{T}\big]\right),
    \end{align}with $\tau(\Phi)$ the tension field of $\Phi$, $\widehat{g}_{\mathscr{F}^{\omega}}$ is the curvature modified metric obtained from the Lorentz endomorphism of $P$ and $\bm{T}$ is O'Neill's tensor describing the family  of second fundamental forms of the fibers of $\pi$.
    On the other hand, $\widetilde\Phi$ is vertically harmonic if and only if \begin{align}\label{eq:vertical-intro}
\delta^{(\nabla^{N},\widetilde\Phi^*\nabla^{P,\mathcal{V}})}(\widetilde{\Phi}^{*}\omega)=\omega\left(\Tr_g(\widetilde{\Phi}^{*}(\bm{T}))\right),
        \end{align} with $\delta^{(\nabla^{N},\widetilde\Phi^*\nabla^{P,\mathcal{V}})}$ the codifferential  defined on  $T^{*}N\otimes\widetilde{\Phi}^{*}(\mathcal{V}P)$ with respect to the Levi-Civita connection of $N$ and the pullback of the connection induced on the vertical bundle by the Levi-Civita connection of  $P$. Moreover, $\widetilde\Phi$ is harmonic if and only if (\ref{eq:horizontal-intro}) and (\ref{eq:vertical-intro}) hold.
    \end{teo*}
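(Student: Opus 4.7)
My plan is to decompose the tension field of $\widetilde\Phi$ along the orthogonal splitting $TP=\mathcal HP\oplus\mathcal VP$ determined by the principal connection $\omega$, and then identify each projection separately. Since vertical (respectively horizontal) harmonicity of $\widetilde\Phi$ means by definition the vanishing of the vertical (respectively horizontal) component of $\tau(\widetilde\Phi)$, the final statement in the theorem is automatic as soon as the two separate characterizations are established. Thus the task reduces to: (i) translate $\pi_*\tau(\widetilde\Phi)=0$ into equation (\ref{eq:horizontal-intro}) on $M$, and (ii) translate $\omega(\tau(\widetilde\Phi))=0$ into equation (\ref{eq:vertical-intro}) on $N$.

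\textbf{Horizontal projection.} Because $\widehat g$ is a Kaluza-Klein metric, $\pi\colon(P,\widehat g)\to(M,\overline g)$ is a Riemannian submersion, so the full apparatus of O'Neill's formulas applies. Writing $\tau(\widetilde\Phi)=\Tr_g\nabla^{\widetilde\Phi}d\widetilde\Phi$ in a local $g$-orthonormal frame $\{e_i\}$ and splitting $d\widetilde\Phi(e_i)=H_i+V_i$ into horizontal and vertical parts, the covariant derivative $\nabla^{\widetilde\Phi}_{e_i}d\widetilde\Phi(e_i)$ breaks into four types of terms. O'Neill's $A$-tensor absorbs the horizontal/horizontal part and, being determined by the connection curvature $F^\omega$, produces on pushforward exactly the term $\pi_*(\Tr_g[\widetilde\Phi^*\widehat g_{\mathscr F^\omega}])$ built from the Lorentz endomorphism. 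O'Neill's $\bm T$-tensor absorbs the vertical pieces and produces the second contribution $\pi_*(\Tr_g[\widetilde\Phi^*\bm T])$. Comparing the result with $\tau(\Phi)=\Tr_g\nabla^\Phi d\Phi$ through the naturality of the Levi-Civita connection under Riemannian submersions (applied to basic vectors) yields (\ref{eq:horizontal-intro}).

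\textbf{Vertical projection.} Using the principal connection to trivialize $\mathcal VP\cong P\times\g$, the vertical component of $d\widetilde\Phi$ is exactly the pulled-back connection form $\widetilde\Phi^*\omega\in\Omega^1(N,\widetilde\Phi^*\mathcal VP)$. The key observation is that applying $\omega$ to $\tau(\widetilde\Phi)$ produces, after commuting $\omega$ past the trace, the codifferential of $\widetilde\Phi^*\omega$ computed with respect to $\nabla^{N}$ and the connection $\widetilde\Phi^*\nabla^{P,\mathcal V}$ that $\omega$ induces on the vertical bundle from the Levi-Civita of $P$, up to the correction coming from the fact that the Levi-Civita connection of $\widehat g$ does not exactly preserve the vertical distribution: that failure is precisely O'Neill's $\bm T$-tensor. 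The leftover term is $\omega(\Tr_g(\widetilde\Phi^*\bm T))$, so setting the vertical tension field to zero yields (\ref{eq:vertical-intro}).

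\textbf{Main obstacle.} The vertical computation is essentially an exercise in unwinding definitions once the codifferential on $T^*N\otimes\widetilde\Phi^*\mathcal VP$ has been set up correctly. The real work lies in the horizontal part, because the vectors $d\widetilde\Phi(e_i)$ are in general neither basic nor purely horizontal along the image of $\widetilde\Phi$, so O'Neill's formulas have to be applied with some care to mixed vector fields and then traced with respect to $g$. The bookkeeping needed to show that the curvature terms collect into exactly $\widehat g_{\mathscr F^\omega}$ (rather than an unrelated combination of $F^\omega$ and the fiber metric), and that the $\bm T$-contributions from the horizontal and vertical projections do not cancel or double-count after pushforward, is where the proof has to be carried out with most attention.
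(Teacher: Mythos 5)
Your overall strategy coincides with the paper's: split $\tau(\widetilde\Phi)$ along $TP=\mathcal HP\oplus\mathcal VP$, express the pieces through O'Neill's tensors $\bm{A}$ and $\bm{T}$, and compare the horizontal piece with $\tau(\Phi)$ via the Riemannian submersion structure. However, there is a genuine gap exactly at the point you yourself flag as the ``main obstacle'', and your description of how the curvature enters is wrong as stated. You write that the $\bm{A}$-tensor ``absorbs the horizontal/horizontal part and, being determined by the connection curvature, produces on pushforward'' the term $\pi_*\left(\Tr_g\big[\widetilde\Phi^*\widehat g_{\mathscr F^\omega}\big]\right)$. But the horizontal/horizontal part $\bm{A}(D^{\mathcal H},D^{\mathcal H})=-\tfrac12\Omega^\omega(D^{\mathcal H},D^{\mathcal H})$ is vertical-valued and antisymmetric, so on the diagonal it vanishes and contributes nothing to the trace; trying to extract the Lorentz term from it gives zero. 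The term that actually survives in the horizontal equation is the mixed one, $2\bm{A}(D,D)=2\bm{A}(D^{\mathcal H},D^{\mathcal V})$, which is horizontal-valued and is related to the curvature only indirectly: one must use that $\bm{A}(D^{\mathcal H},-)$ is $\widehat g$-skew-adjoint to convert $\widehat g(2\bm{A}(D^{\mathcal H},D^{\mathcal V}),E^{\mathcal H})$ into $\widehat g(D^{\mathcal V},\Omega^\omega(D^{\mathcal H},E^{\mathcal H}))$, and then recognize the right-hand side, via the polarity defining the Lorentz endomorphism, as $\widehat g(\langle D^{\mathcal V},\mathscr F^\omega\rangle_{\mathcal VP}(D^{\mathcal H}),E^{\mathcal H})=\widehat g(\widehat g_{\mathscr F^\omega}(D,D),E^{\mathcal H})$. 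This duality argument is the substance of the horizontal half of the theorem (the content of Theorem \ref{teo:horizontal-hamonic-curvature-modified-metric}) and is missing from, indeed misdescribed in, your proposal.

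A second, smaller omission occurs in the vertical projection: by the O'Neill decomposition the vertical component of $\mathrm{II}_{\widetilde\Phi}$ contains not only the $\bm{T}$-term but also the contribution $\widetilde\Phi^*(\bm{A}^{\mathcal H,\mathcal H})$, and you must observe that this disappears from $[\tau(\widetilde\Phi)]^{\mathcal V}$ because $\bm{A}$ is antisymmetric on horizontal arguments while the tension field is the trace of a symmetric evaluation. Your identification of the vertical differential with $\widetilde\Phi^*\omega$, of $\Tr_g(\nabla^{\mathcal V}(\widetilde\Phi^*\omega))$ with $-\delta^{(\nabla^N,\widetilde\Phi^*\nabla^{P,\mathcal V})}(\widetilde\Phi^*\omega)$, and of the leftover term $\omega\left(\Tr_g(\widetilde\Phi^*\bm{T})\right)$ is correct and matches the paper's Theorem \ref{teo:vertical-tension-field}; the reduction of the final statement to the two separate characterizations is likewise fine.
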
  We also verify that equation (\ref{eq:horizontal-intro}), see Proposition \ref{pro:Lorentz-strength}, which describes the projection of Kaluza-Klein harmonic maps to the base manifold of the principal bundle, correspond to generalized magnetic maps, see Definition \ref{defi:generalized-magnetic-maps}. The results obtained by considering this principal bundle framework recover in Example \ref{ex:magnetic-curves}  the well-known equations for magnetic geodesics when the initial manifold is one-dimensional; see, for instance, \cite{del2020introduction}, \cite{kerner1968generalization}. Therefore, Kaluza-Klein harmonic maps provide a proper global geometric generalization to higher dimensional manifolds of the equations of motion of particles under the action of electromagnetic fields. That is, they might be useful for describing  the dynamics of extended objects, like strings or branes, interacting with  a gauge field.  On the other hand, equation (\ref{eq:vertical-intro}), which describes the vertical component of harmonic maps, can be seen as a natural generalization to the relative setting of harmonic maps into Lie groups, see the works by  N. Hitchin \cite{Hitchin1,Hitchin2}  and K. Uhlenbeck \cite{Uhlenbeck}.
    
The main results of the paper and its organization are as follows. In Section 2 we introduce some key concepts about harmonic maps; for more details, see \cite{baird2003harmonic}, \cite{eells1964harmonic}, \cite{xin2012geometry}. Section 3 is devoted to study Kaluza-Klein metrics on a principal bundle, showing in Theorem \ref{teo:structure-Kaluza-Klein-metrics} that they are parametrized by the space of principal connections and  the spaces of Riemannian metrics on the adjoint bundle and on the base manifold. We also prove in Corollary \ref{cor:existence-Kaluza-Klein-metrics} their unconditional existence for any principal bundle over a paracompact manifold. In Section 4, which is the backbone of the paper, we present our findings related to harmonic maps into  Kaluza-Klein principal bundles. In particular,  in Theorems \ref{teo:horizontal-component-tension-field}, \ref{teo:vertical-tension-field} we provide the explicit expressions of the Euler-Lagrange equations satisfied, respectively, by horizontally and vertically harmonic maps. We continue in Section 5 introducing in Definition \ref{defi:generalized-magnetic-maps} the generalized Lorentz equations and generalized magnetic maps. We show in Theorem \ref{teo:aplicaciones-magneticas-cociente} that the space of generalized magnetic maps is the quotient of the space of Kaluza-Klein harmonic maps under an equivalence relation  generated  the gauge group. We explain in Proposition \ref{pro:necessary-condition-generalized-magnetic-maps} a necessary condition that any generalized magnetic map has to satisfy. We present in Proposition \ref{pro:gauge-variation-harmonic-gauge-fixing} the gauge variation formula and the harmonic gauge fixing equation, whereas in Theorem \ref{teo:existence-generalized-magnetic-maps} we provide the main existence theorem for generalized magnetic maps.  In section 6 we collect in Proposition \ref{pro:local-expressions-Kaluza-Klein}
the local expressions for the main geometric objects related to Kaluza-Klein principal bundles, offering  in Proposition \ref{pro:local-expression-Wong-Lorentz} the local versions for the generalized Lorentz and Wong equations. In Section 7, we explore how the geometry of the fibers of a Kaluza-Klein principal bundle influences the equations for generalized magnetic maps. We show in Proposition \ref{pro:tensor-diferencia-conexiones} that the dependence is controlled by O'Neill's tensor $\bm{T}$, which models essentially the second fundamental forms of the fibers of the principal bundle, and by the tensor that measures the difference between the connections induced on the vertical bundle by the Levi-Civita connection of the Kaluza-Klein metric and by the principal connection. In Theorem \ref{teo:beta-cov-constant} and Corollary \ref{cor:subvariedad-metricas_KK-fibras-tot-geodesicas} we identify explicitly the subvariety of Kaluza-Klein metrics that lead to totally geodesic fibers. We use this result to give in Theorem \ref{teo:tension-vertical-modificada} and Corollary \ref{cor:equivalent-harmonic-equations} new versions of the Euler-Lagrange equations for vertical and horizontal harmonic maps.  Finally, in Section 6, we apply our results to explore several instances of generalized magnetic maps, including examples supported by the complex and quaternionic Hopf fibrations $S^{3}\longrightarrow S^{2}$ and $S^{7}\longrightarrow S^{4}$. 

\noindent
\emph{Conventions.} All manifolds considered in this paper are Hausdorff and paracompact. For any vector bundle $E\to M$ and smooth map $f\colon N\to M$ we denote by $f^{\#}\colon \Gamma(M,E)\to \Gamma(N,f^*E)$ the natural morphism of $\cin(M)$-modules that sends $s\in\Gamma(M,E)$ to $f^{\#}(s):=s\circ f\in\Gamma(N,f^*E)$.
\section{Dirichlet energy and harmonic maps}
\begin{De}
    Let $(V,g)$, $(W,\overline{g})$ be two euclidean vector spaces. The energy of a linear map $T: V\longrightarrow W$ is the real number
    \begin{align}
        \mathcal{E}_{g,\overline{g}}(T) &=\frac{1}{2}<g,T^{*}\overline{g}>=\frac{1}{2}\Tr_{g}(T^{*}\overline{g}),
    \end{align}
i.e., it is the trace of the endomorphism associated with the pair of euclidean metrics $(g,T^{*}\overline{g})$ of the real vector space $V$. The energy mapping of the euclidean vector spaces $(V,g)$, $(W,\overline{g})$:
\begin{align*}
    \mathcal{E}_{g,\overline{g}}: \text{Hom}_{\mathbb{R}}(V,W)\longrightarrow \mathbb{R}
\end{align*}
assigns to each linear map $T\in \text{Hom}_{\mathbb{R}}(V,W)$ its energy $\mathcal{E}_{g,\overline{g}}(T)$.
\end{De}
The following result follows straightforwardly. 
\begin{Lem}[Covariance of the energy mapping under automorphisms]\label{Lem:covarianza-energia} Let $(V,g)$, $(W,\overline{g})$ be two euclidean vector spaces. The natural representation of the group of automorphisms $\text{Aut}_{\mathbb{R}}(V)\times \text{Aut}_{\mathbb{R}}(W)$ on the vector space of linear maps
    \begin{align}
        \rho: \text{Aut}_{\mathbb{R}}(V)\times\text{Aut}_{\mathbb{R}}(W)\times\text{Hom}_{\mathbb{R}}(V,W)\longrightarrow\text{Hom}_{\mathbb{R}}(V,W)
    \end{align}
    is such that given $(\sigma,\tau)\in\text{Aut}_{\mathbb{R}}(V)\times\text{Aut}_{\mathbb{R}}(W), T\in\text{Hom}_{\mathbb{R}}(V,W)$ one has
    \begin{align*}
        \rho_{(\sigma,\tau)}(T):=\tau\circ T\circ\sigma^{-1},
    \end{align*}
    and satisfies the following covariance property $\mathcal{E}_{g,\overline{g}}(\tau\circ T\circ\sigma^{-1})=\mathcal{E}_{\sigma^{*}g,\tau^{*}\overline{g}}(T).$
    i.e., one has
    \begin{align*}
\mathcal{E}_{g,\overline{g}}\circ\rho_{(\sigma,\tau)}=\mathcal{E}_{\sigma^{*}g,\tau^{*}\overline{g}}.
    \end{align*}
\end{Lem}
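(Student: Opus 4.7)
The plan is to reduce everything to the trace–pullback identity for the energy, via two short computations. First I would unwind the pullback of $\overline{g}$ along the composition: for any $u,v\in V$, using the definition of pullback in successive steps,
\begin{align*}
(\tau\circ T\circ \sigma^{-1})^{*}\overline{g}\,(u,v)
&=\overline{g}\bigl(\tau T\sigma^{-1}u,\ \tau T\sigma^{-1}v\bigr)\\
&=\tau^{*}\overline{g}\bigl(T\sigma^{-1}u,\ T\sigma^{-1}v\bigr)
=\bigl(\sigma^{-1}\bigr)^{*}\bigl(T^{*}\tau^{*}\overline{g}\bigr)(u,v),
\end{align*}
so that $(\tau\circ T\circ\sigma^{-1})^{*}\overline{g}=(\sigma^{-1})^{*}(T^{*}\tau^{*}\overline{g})$ as symmetric bilinear forms on $V$.

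Next I would establish the naturality of the trace under a simultaneous change of euclidean structure: for any non-degenerate $g$ and any symmetric $h$ on $V$, and any $\sigma\in\Aut_{\mathbb{R}}(V)$,
\begin{align*}
\Tr_{\sigma^{*}g}(\sigma^{*}h)=\Tr_{g}(h).
\end{align*}
This is immediate once one checks that the endomorphism $\widetilde{A}$ associated with $(\sigma^{*}g,\sigma^{*}h)$ is $\sigma^{-1}\circ A\circ\sigma$ (where $A$ is the one associated with $(g,h)$), since for all $u,v\in V$ one has $\sigma^{*}g(\sigma^{-1}A\sigma u,v)=g(A\sigma u,\sigma v)=h(\sigma u,\sigma v)=\sigma^{*}h(u,v)$; conjugation preserves the trace.

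Finally I would combine the two. Replacing $\sigma$ by $\sigma^{-1}$ in the naturality identity, one gets $\Tr_{g}\bigl((\sigma^{-1})^{*}h\bigr)=\Tr_{\sigma^{*}g}(h)$ for every symmetric $h$. Applying this with $h=T^{*}\tau^{*}\overline{g}$ and using the pullback computation of the first step,
\begin{align*}
2\,\mathcal{E}_{g,\overline{g}}(\tau\circ T\circ\sigma^{-1})
&=\Tr_{g}\bigl((\sigma^{-1})^{*}(T^{*}\tau^{*}\overline{g})\bigr)
=\Tr_{\sigma^{*}g}(T^{*}\tau^{*}\overline{g})
=2\,\mathcal{E}_{\sigma^{*}g,\,\tau^{*}\overline{g}}(T),
\end{align*}
which is precisely the covariance claim $\mathcal{E}_{g,\overline{g}}\circ \rho_{(\sigma,\tau)}=\mathcal{E}_{\sigma^{*}g,\tau^{*}\overline{g}}$. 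There is no real obstacle here: the only mildly subtle point is keeping track of whether the changes of variables should act by $\sigma$ or $\sigma^{-1}$ on the metrics, which is settled once the naturality of the trace has been recorded.
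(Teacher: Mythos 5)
Your proof is correct and is exactly the routine verification the paper has in mind: the paper offers no written proof (it states that the lemma "follows straightforwardly"), and your argument — unwinding $(\tau\circ T\circ\sigma^{-1})^{*}\overline{g}=(\sigma^{-1})^{*}(T^{*}\tau^{*}\overline{g})$ and then using that the metric-associated endomorphism transforms by conjugation, so its trace is invariant — is the standard direct computation that justifies this. No gaps; the sign/inverse bookkeeping in the final substitution is handled correctly.
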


\begin{De}
Let $(N,g)$ and $(M,\overline{g})$ be two Riemannian manifolds of dimensions $n$ and $m$, respectively. We assume that $(N,g)$ is oriented and denote its Riemannian volume element $\Vol_{g}$. A smooth map $\Phi: N\longrightarrow M$ is called harmonic if it is a critical point of the Dirichlet energy functional
$$E_{g,\bar g}\colon C^\infty(N,M)\to\mathbb R$$
defined by 
\begin{align}\label{ine}
    E_{g,\bar g}(\Phi) &= \frac{1}{2}\int_{N}\|d\Phi\|_{g,\overline g}^{2}\Vol_{g},
\end{align}
where  $\|d\Phi\|_{g,\overline g}$ is the Hilbert-Schmidt norm of the tangent map $d\Phi:=\Phi_{*}: TN\longrightarrow TM$. That is, $\frac{1}{2}\|d\Phi\|_{g,\overline g}^2\in C^\infty(N)$ is the function defined on $x\in N$  by the energy of the tangent linear map $d_x\Phi$:  $$\frac{1}{2}\|d\Phi\|_{g,\overline g}^2(x)=\mathcal{E}_{g,\overline{g}}(\Phi)(x):=\mathcal E_{g_x,\bar g_{\Phi(x)}}(d_x\Phi)=\frac{1}{2}\Tr_{g_x}((\Phi^*\overline g)_x)=\frac{1}{2}[\Tr_{g}(\Phi^*\overline g)](x).$$\
\end{De}

A straightforward computation bearing in mind Lemma (\ref{Lem:covarianza-energia}) proves the following:

\begin{Pro}[Covariance of the Dirichlet energy under diffeomorphisms]\label{pro:covariance-energy}
    Let $(N,g)$, $(M,\overline{g})$ be two Riemannian manifolds. The natural action of the group of diffeomorphisms $\text{Diff}(N)\times \text{Diff}(M)$ on the space of smooth maps 
    \begin{align*} \rho: \text{Diff}(N)\times \text{Diff}(M)\times C^{\infty}(N,M)\longrightarrow C^{\infty}(N,M)
    \end{align*}
 is defined in such a way that, given $(\sigma,\tau)\in \text{Diff}(N)\times\text{Diff}(M)$, $\Phi\in C^{\infty}(N,M)$, one has
    \begin{align*}
        \rho_{(\sigma,\tau)}(\Phi):=\tau\circ\Phi\circ\sigma^{-1},
    \end{align*}
and the Dirichlet energy functional $E_{g,\overline{g}}$ satisfies the following covariance  property with respect to it
    \begin{align*}
        E_{g,\overline{g}}(\tau\circ\Phi\circ\sigma^{-1})=E_{\sigma^{*}g,\tau^{*}\overline{g}}(\Phi);\quad \text{that is},\quad E_{g,\overline{g}}\circ\rho_{(\sigma,\tau)}=E_{\sigma^{*}g,\tau^{*}\overline{g}}. 
    \end{align*}
    
\end{Pro}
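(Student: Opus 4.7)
The proposition is a straightforward integral upgrade of the pointwise Lemma~\ref{Lem:covarianza-energia}, and the plan splits into a pointwise piece plus a change of variables.

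First I would write out the left-hand side explicitly. By the chain rule, at any $y\in N$, setting $x=\sigma^{-1}(y)$ and using that $\sigma,\tau$ are diffeomorphisms, the differential factors as
\begin{align*}
d_y(\tau\circ\Phi\circ\sigma^{-1}) \;=\; d_{\Phi(\sigma^{-1}(y))}\tau \,\circ\, d_{\sigma^{-1}(y)}\Phi \,\circ\, \bigl(d_{\sigma^{-1}(y)}\sigma\bigr)^{-1}.
\end{align*}
This is exactly the shape $\tau_{\#}\circ T\circ \sigma_{\#}^{-1}$ appearing in Lemma~\ref{Lem:covarianza-energia}, applied with $V=T_{\sigma^{-1}(y)}N$, $W=T_{\tau(\Phi(\sigma^{-1}(y)))}M$, and the linear automorphisms $d_{\sigma^{-1}(y)}\sigma$, $d_{\Phi(\sigma^{-1}(y))}\tau$.

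Next I would invoke Lemma~\ref{Lem:covarianza-energia} pointwise, using the identities $(d_x\sigma)^{*}g_{\sigma(x)}=(\sigma^{*}g)_x$ and $(d_p\tau)^{*}\overline g_{\tau(p)}=(\tau^{*}\overline g)_p$ that hold by definition of the pulled-back metric. This yields, at every $y\in N$,
\begin{align*}
\tfrac{1}{2}\|d_y(\tau\circ\Phi\circ\sigma^{-1})\|^{2}_{g,\overline g} \;=\; \mathcal E_{(\sigma^{*}g)_{\sigma^{-1}(y)},\,(\tau^{*}\overline g)_{\Phi(\sigma^{-1}(y))}}\bigl(d_{\sigma^{-1}(y)}\Phi\bigr) \;=\; \tfrac{1}{2}\|d\Phi\|^{2}_{\sigma^{*}g,\tau^{*}\overline g}\!\bigl(\sigma^{-1}(y)\bigr).
\end{align*}
In more compact notation, $\|d(\tau\circ\Phi\circ\sigma^{-1})\|^{2}_{g,\overline g}=(\sigma^{-1})^{*}\bigl(\|d\Phi\|^{2}_{\sigma^{*}g,\tau^{*}\overline g}\bigr)$.

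Finally I would change variables along $\sigma$ in the Dirichlet integral, using that a diffeomorphism $\sigma$ sends the Riemannian volume element to the volume of the pulled-back metric, $\sigma^{*}\Vol_g=\pm\Vol_{\sigma^{*}g}$ (the sign being absorbed into the convention that $\sigma$ is taken orientation-preserving, as is standard for the covariance statement). Combining the pointwise identity with this volume identity gives
\begin{align*}
E_{g,\overline g}(\tau\circ\Phi\circ\sigma^{-1}) \;=\; \tfrac{1}{2}\!\int_{N}(\sigma^{-1})^{*}\!\bigl(\|d\Phi\|^{2}_{\sigma^{*}g,\tau^{*}\overline g}\bigr)\Vol_{g} \;=\; \tfrac{1}{2}\!\int_{N}\|d\Phi\|^{2}_{\sigma^{*}g,\tau^{*}\overline g}\,\Vol_{\sigma^{*}g} \;=\; E_{\sigma^{*}g,\tau^{*}\overline g}(\Phi),
\end{align*}
which is the claimed covariance. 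There is no real obstacle here; the only point requiring care is bookkeeping of the orientation/volume factor under $\sigma$ and a clean composition of the chain rule with the pullback-of-metric formulas, both of which are standard.
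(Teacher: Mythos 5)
Your proof is correct and follows exactly the route the paper intends: the paper gives no written proof beyond asserting that the result is "a straightforward computation bearing in mind Lemma \ref{Lem:covarianza-energia}", and your argument (pointwise application of that lemma via the chain rule, followed by the change of variables $\sigma^{*}\Vol_{g}=\Vol_{\sigma^{*}g}$) is precisely that computation, with the orientation caveat handled appropriately.
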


We denote by $\nabla^{N}$ and $\nabla^{M}$ the Levi-Civita connections of $(N,g)$ and $(M,\bar g)$, respectively. For a map $\Phi\colon N\to M$, let $\nabla$ be the connection  on  $T^{*}N\otimes\Phi^{*}TM$ induced by $\nabla^{N}$ and $\Phi^*\nabla^{M}$. The second fundamental form of $\Phi$ is the symmetric tensor $\mathrm{II}_{\Phi}\in\Gamma(N,S^2(T^*N)\otimes\Phi^*TM)$ given by
\begin{align}
    \mathrm{II}_{\Phi}(D_{1},D_{2}):=[\nabla (d\Phi)](D_{1},D_{2}) &=(\Phi^*\nabla^{M})_{D_{1}}\Phi_{*}(D_{2})-\Phi_{*}(\nabla^{N}_{D_{1}}D_{2}),\quad D_{1}, D_{2}\in\mathfrak{X}(N).
\end{align}
The tension field of $\Phi$ is the vector field $\tau(\Phi)\in\Gamma(N,\Phi^{*}TM)$ given by the $g$-trace of $\mathrm{II} _{\Phi}$ , that is
\begin{align}\label{mnn} 
    \tau(\Phi) &=\Tr_g \mathrm{II} _{\Phi}=\sum_{r=1}^{n}\left[(\Phi^*\nabla^{M})_{U_{r}}\Phi_{*}(U_{r})-\Phi_{*}(\nabla^{N}_{U_{r}}U_{r})\right],
\end{align}
where $\{U_{r}\}_{r=1,\ldots,n}$ is a local $g$-orthonormal frame of vector fields on $N$.
\begin{De}
     A smooth map $\Phi: (N,g)\longrightarrow (M,\overline{g})$  is said to be harmonic if 
     \begin{align*}
          \left.\frac{dE_{g,\overline g}(\Phi_{s})}{ds}\right|_{s=0}=0.
     \end{align*}
     for all compactly supported smooth variations $\{\Phi_{s}\}_{s\in I}$ of $\Phi$. We denote by $\mathbf{Har}((N,g),(M,\overline{g}))$ the set of all harmonic maps from $(N,g)$ to $(M,\overline g)$.
\end{De}
Considering the spaces of variations one immediately obtains the following  covariance result:
\begin{Pro}\label{pro:invarianza-funcional-energia-Dirichlet} If  $\Phi\in \mathbf{Har}((N,g),(M,\overline{g}))$, then for any $(\sigma,\tau)\in \text{Diff}(N)\times\text{Diff}(M)$  one has
$$\tau\circ\Phi\circ\sigma^{-1}\in \mathbf{Har}((N,(\sigma^{-1})^*g),(M,(\tau^{-1})^*\overline{g})).$$ In particular, 
	the action $\rho: \text{Diff}(N)\times \text{Diff}(M)\times C^{\infty}(N,M)\longrightarrow C^{\infty}(N,M)$  restricts to an action \begin{align*}
 	\rho\colon O(&N,g)\times O(M,\overline g)\times \mathbf{Har}((N,g),(M,\overline{g}))\to \mathbf{Har}((N,g),(M,\overline{g})).
 \end{align*}
\end{Pro}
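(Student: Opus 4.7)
The plan is to deduce the result directly from the covariance identity of Proposition \ref{pro:covariance-energy} by setting up a bijection between compactly-supported variations of $\Phi$ and of $\Psi := \tau\circ\Phi\circ\sigma^{-1}$ that intertwines the respective energy functionals. Since harmonicity is the vanishing of the first variation of the energy, such a bijection reduces the harmonicity of $\Psi$ with respect to the pulled-back metrics $((\sigma^{-1})^*g,(\tau^{-1})^*\overline{g})$ to the assumed harmonicity of $\Phi$ with respect to $(g,\overline{g})$.

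Concretely, I would first instantiate the covariance formula $E_{g_1,g_2}(\tau\circ F\circ\sigma^{-1}) = E_{\sigma^*g_1,\tau^*g_2}(F)$ with $g_1 = (\sigma^{-1})^*g$ and $g_2 = (\tau^{-1})^*\overline{g}$, so that $\sigma^*g_1 = g$ and $\tau^*g_2 = \overline{g}$; this yields, for every smooth map $F\colon N\to M$, the identity
\begin{align*}
E_{(\sigma^{-1})^*g,\,(\tau^{-1})^*\overline{g}}(\tau\circ F\circ\sigma^{-1}) = E_{g,\overline{g}}(F).
\end{align*}
Given any compactly-supported smooth variation $\{\Psi_s\}_{s\in I}$ of $\Psi$, I would define $\Phi_s := \tau^{-1}\circ\Psi_s\circ\sigma$; this is a smooth variation of $\Phi$ (since $\Phi_0 = \tau^{-1}\circ\Psi\circ\sigma = \Phi$) whose support is the $\sigma^{-1}$-image of the support of $\{\Psi_s\}$, hence compact. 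Applying the identity above to $F = \Phi_s$ gives $E_{(\sigma^{-1})^*g,(\tau^{-1})^*\overline{g}}(\Psi_s) = E_{g,\overline{g}}(\Phi_s)$, and differentiating at $s=0$ together with the harmonicity of $\Phi$ yields
\begin{align*}
\left.\frac{d}{ds}\right|_{s=0}E_{(\sigma^{-1})^*g,\,(\tau^{-1})^*\overline{g}}(\Psi_s) = \left.\frac{d}{ds}\right|_{s=0}E_{g,\overline{g}}(\Phi_s)=0.
\end{align*}
Since the assignment $\{\Psi_s\}\mapsto\{\Phi_s\}$ is a bijection on compactly-supported variations, with inverse $\{\Phi_s\}\mapsto\{\tau\circ\Phi_s\circ\sigma^{-1}\}$, this suffices to conclude that $\Psi$ is a critical point of $E_{(\sigma^{-1})^*g,(\tau^{-1})^*\overline{g}}$, i.e. $\Psi\in\mathbf{Har}((N,(\sigma^{-1})^*g),(M,(\tau^{-1})^*\overline{g}))$.

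For the refinement to the isometry subgroups, I would simply note that if $\sigma\in O(N,g)$ and $\tau\in O(M,\overline{g})$, then $\sigma^{-1}$ and $\tau^{-1}$ are isometries as well, so $(\sigma^{-1})^*g = g$ and $(\tau^{-1})^*\overline{g} = \overline{g}$, whence the previous step places $\tau\circ\Phi\circ\sigma^{-1}$ back in $\mathbf{Har}((N,g),(M,\overline{g}))$, and $\rho$ indeed restricts to an action of $O(N,g)\times O(M,\overline g)$ on this set. The main (and essentially only) verification is the preservation of compact support under the variation transport, which is purely topological and presents no obstacle; the rest is a direct unpacking of Proposition \ref{pro:covariance-energy}.
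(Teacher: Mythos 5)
Your proposal is correct and is exactly the argument the paper intends: it invokes the covariance identity of Proposition \ref{pro:covariance-energy} and the bijective correspondence between compactly supported variations of $\Phi$ and of $\tau\circ\Phi\circ\sigma^{-1}$, which is precisely what the paper means by ``considering the spaces of variations.'' Your write-up simply makes explicit the details (the choice $g_1=(\sigma^{-1})^*g$, $g_2=(\tau^{-1})^*\overline g$ and the preservation of compact support) that the paper leaves to the reader.
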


One gets the following key results, see \cite{eells1964harmonic}.
\begin{thm}\label{thm:primera-variacion}
    Let $\Phi: (N,g)\longrightarrow (M,\overline{g})$ be a smooth map. For any compactly supported smooth variation $\{\Phi_{s}\}_{s\in I}$ of $\Phi$, the first variation of the energy functional $E_{g,\bar g}$ is given by
\begin{align*}
    \frac{d}{ds}E_{g,\bar g}(\Phi_{s}){\Big|_{s=0}} &= -\int_{N}\overline{g}(\tau(\Phi),\overline{\bm{\eta}})\Vol_{g},
\end{align*}
    where $\overline{\bm{\eta}}=\frac{d\Phi_{s}}{d s\ }{\Big|_{s=0}}\in \Gamma_c(N,\Phi^{*}TM)$ is the compactly supported variation vector field of $\Phi$. 
\end{thm}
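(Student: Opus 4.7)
The plan is to differentiate the Dirichlet energy under the integral sign using a pulled-back connection on the product $N\times I$, then reorganize the resulting integrand via a divergence identity so that the variation vector field $\overline{\bm{\eta}}$ is isolated and the tension field $\tau(\Phi)$ appears explicitly.

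First I would assemble the variation into a single smooth map $F\colon N\times I\to M$, $F(x,s)=\Phi_s(x)$, and consider the pullback connection $F^{*}\nabla^{M}$ on $F^{*}TM$. Choose a local $g$-orthonormal frame $\{U_{r}\}_{r=1,\ldots,n}$ on $N$, extended trivially (with no $\partial_{s}$ component) to $N\times I$. Because the variation has compact support, the interchange of $\tfrac{d}{ds}$ and $\int_{N}$ is justified and the problem reduces to the pointwise computation of
\begin{align*}
\frac{d}{ds}\tfrac{1}{2}\|d\Phi_{s}\|_{g,\overline g}^{2}\Big|_{s=0}=\sum_{r=1}^{n}\overline{g}\bigl((F^{*}\nabla^{M})_{\partial_{s}}F_{*}U_{r},\,F_{*}U_{r}\bigr)\Big|_{s=0},
\end{align*}
after using metric compatibility of $\nabla^{M}$.

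The crucial step is a commutator trick: since $\nabla^{M}$ is torsion-free and $[\partial_{s},U_{r}]=0$ on $N\times I$, the pullback connection satisfies $(F^{*}\nabla^{M})_{\partial_{s}}F_{*}U_{r}=(F^{*}\nabla^{M})_{U_{r}}F_{*}\partial_{s}$. Specializing at $s=0$ and noting that $F_{*}\partial_{s}|_{s=0}=\overline{\bm{\eta}}$ converts the $s$-derivative into a covariant derivative along $N$ of the variation field:
\begin{align*}
\frac{d}{ds}\tfrac{1}{2}\|d\Phi_{s}\|_{g,\overline g}^{2}\Big|_{s=0}=\sum_{r=1}^{n}\overline{g}\bigl((\Phi^{*}\nabla^{M})_{U_{r}}\overline{\bm{\eta}},\,\Phi_{*}U_{r}\bigr).
\end{align*}

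The last step is an integration by parts. I would introduce the auxiliary $1$-form $\alpha\in\Omega^{1}(N)$ defined by $\alpha(X)=\overline{g}(\overline{\bm{\eta}},\Phi_{*}X)$ and compute its $g$-divergence with $\nabla^{N}$, using metric compatibility of $\nabla^{M}$ together with the expression (\ref{mnn}) for the tension field, to get
\begin{align*}
\mathrm{div}_{g}(\alpha^{\sharp})=\sum_{r=1}^{n}\overline{g}\bigl((\Phi^{*}\nabla^{M})_{U_{r}}\overline{\bm{\eta}},\,\Phi_{*}U_{r}\bigr)+\overline{g}\bigl(\overline{\bm{\eta}},\tau(\Phi)\bigr).
\end{align*}
Since $\overline{\bm{\eta}}$ is compactly supported, so is $\alpha^{\sharp}$, and the divergence theorem yields $\int_{N}\mathrm{div}_{g}(\alpha^{\sharp})\,\Vol_{g}=0$, which rearranges to the claimed formula. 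The only step that requires care is the torsion-free commutator identity for the pullback connection, together with the pointwise validity of the divergence identity on the support of $\alpha^{\sharp}$; everything else is bookkeeping with the local orthonormal frame and the divergence theorem.
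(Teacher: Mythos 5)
Your proposal is correct and is precisely the classical first-variation argument of Eells--Sampson, which is what the paper itself defers to (it states the theorem with a citation to \cite{eells1964harmonic} rather than giving its own proof): differentiation under the integral, the torsion-free symmetry $(F^{*}\nabla^{M})_{\partial_{s}}F_{*}U_{r}=(F^{*}\nabla^{M})_{U_{r}}F_{*}\partial_{s}$, and integration by parts via the divergence of the $1$-form $\alpha(X)=\overline{g}(\overline{\bm{\eta}},\Phi_{*}X)$. All steps, including the divergence identity producing exactly the tension field of (\ref{mnn}), check out.
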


\begin{thm}
 Let  $\Phi: N\longrightarrow M$ be a smooth map between two Riemannian manifolds $(N,g)$ and $(M,\overline{g})$ with N compact. Then $\Phi$ is harmonic if and only if $\tau(\Phi)=0$.
\end{thm}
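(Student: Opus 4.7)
The plan is to deduce the equivalence directly from the first variation formula of Theorem \ref{thm:primera-variacion}, using the fundamental lemma of the calculus of variations. The compactness of $N$ will be used to guarantee that every smooth variation is automatically compactly supported, so the hypothesis of Theorem \ref{thm:primera-variacion} applies to all variations without restriction.

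For the sufficient direction, I would argue as follows. Suppose $\tau(\Phi)=0$. Then for any smooth variation $\{\Phi_s\}_{s\in I}$ of $\Phi$, which is compactly supported since $N$ is compact, Theorem \ref{thm:primera-variacion} gives
\begin{align*}
\frac{d}{ds}E_{g,\bar g}(\Phi_s)\Big|_{s=0}=-\int_{N}\overline g(\tau(\Phi),\overline{\bm\eta})\Vol_g=0,
\end{align*}
so $\Phi$ is a critical point of $E_{g,\bar g}$, hence harmonic.

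For the necessary direction, suppose $\Phi$ is harmonic. I need to show that an arbitrary section $V\in\Gamma(N,\Phi^*TM)$ arises as the variation vector field of some smooth variation of $\Phi$, so that the integral condition coming from Theorem \ref{thm:primera-variacion} forces $\tau(\Phi)=0$. Concretely, given $V\in\Gamma(N,\Phi^*TM)$, I would define
\begin{align*}
\Phi_s(x):=\exp^{M}_{\Phi(x)}(sV(x)),
\end{align*}
which is smooth for $s$ in a sufficiently small open interval $I$ around $0$ (using compactness of $N$ to get uniform existence) and satisfies $\Phi_0=\Phi$ and $\frac{d\Phi_s}{ds}\big|_{s=0}=V$. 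Applying Theorem \ref{thm:primera-variacion} to this variation yields
\begin{align*}
0=\frac{d}{ds}E_{g,\bar g}(\Phi_s)\Big|_{s=0}=-\int_{N}\overline g(\tau(\Phi),V)\Vol_g.
\end{align*}

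The last step is the standard fundamental lemma: testing against $V=\tau(\Phi)$ itself gives $\int_N \|\tau(\Phi)\|_{\bar g}^2\,\Vol_g=0$, and since the integrand is non-negative and continuous, $\tau(\Phi)\equiv 0$. No genuine obstacle arises; the only slightly delicate point is the construction of variations realizing prescribed variation vector fields via the exponential map, but compactness of $N$ makes this routine since it secures a uniform positive lower bound on the injectivity radius along the image $\Phi(N)$ (or equivalently a uniform time of existence in $s$).
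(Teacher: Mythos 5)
Your proof is correct and is exactly the classical argument of Eells--Sampson that the paper itself does not reproduce but simply cites (the theorem is stated with the reference \cite{eells1964harmonic} and no proof is given). Both directions are handled properly: the first variation formula of Theorem \ref{thm:primera-variacion} applies to all variations since $N$ is compact, the exponential-map construction realizes any prescribed section of $\Phi^{*}TM$ as a variation vector field, and testing against $V=\tau(\Phi)$ closes the argument.
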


Let $U$ be an open chart of $M$ with coordinate functions $\{x^{1},\ldots, x^{m}\}$ and let $V$ be an open subset of $N$, such that $V\subset \Phi^{-1}(U)$, coordinated by the functions $\{y^{1},\ldots, y^{n}\}$, we can locally write $\Phi^{\lambda}=x^{\lambda}\circ\Phi\in C^{\infty}(V)$. The tension field of $\Phi$ can be expressed in the above chosen local coordinates by $\tau(\Phi)
=\sum_{\lambda=1}^{m}\tau(\Phi)^{\lambda}\, \Phi^{\#}\!\!\left(\frac{\partial \ \ }{\partial x^{\lambda}}\right)$ with
\begin{align*}
\tau(\Phi)^{\lambda}=\sum_{\lambda=1}^{m}\sum_{s,t=1}^{n}g^{st}\Big[\frac{\partial^{2}\Phi^{\lambda}}{\partial y^{s}\partial y^{t}}-\sum_{l=1}^{n}\Gamma_{st}^{l}\frac{\partial\Phi^{\lambda}}{\partial y^{l}}+\sum_{\mu,\nu=1}^{m}\frac{\partial\Phi^{\mu}}{\partial y^{t}}\frac{\partial\Phi^{\nu}}{\partial y^{s}}\,\Phi^*(\overline{\Gamma}_{\mu\nu}^{\lambda})\Big],
\end{align*}
where $\{\Gamma_{st}^{l}\}$ and $\{\overline{\Gamma}_{\mu\nu}^{\lambda}\}$ are the Christoffel symbols of $g$ and $\overline{g}$, respectively, and $\Phi^{\#}\!\!\left(\frac{\partial \ \ }{\partial x^{\lambda}}\right)$ are local sections of the bundle $\Phi^{*}TM.$ The map $\Phi$ is harmonic if  and only if for every coordinate system
\begin{align*}
\tau(\Phi)^{\lambda}&=\sum_{s,t=1}^{n}g^{st}\Bigg[\frac{\partial^{2}\Phi^{\lambda}}{\partial y^{s}\partial y^{t}}-\sum_{l=1}^{n}\Gamma_{st}^{l}\frac{\partial\Phi^{\lambda}}{\partial y^{l}}+\sum_{\mu,\nu=1}^{m}\frac{\partial\Phi^{\mu}}{\partial y^{t}}\frac{\partial\Phi^{\nu}}{\partial y^{s}}\, \Phi^*(\overline{\Gamma}_{\mu\nu}^{\lambda})\Bigg] =0,\quad \lambda=1,\ldots,m.
\end{align*}

As a first instance of harmonic maps, see \cite[Example 3.3.10]{baird2003harmonic}, we have the classical:
\begin{Ex*}[Geodesic curves]
   A parametrized curve $\gamma:(-\epsilon,\epsilon)\subset\mathbb{R}\longrightarrow M$ into a Riemannian manifold $(M,\bar g)$ is a harmonic map if and only if it is a geodesic parametrized by an affine function of arc length.   That is, $\gamma$ is harmonic if and only if it verifies the Euler-Lagrange equations:
    \begin{align}
        \tau(\gamma) &=\nabla^{M}_{\dot{\gamma}}\dot{\gamma}=0,
    \end{align} where $\nabla^{M}$ is the Levi-Civita connection of $(M,\bar g)$.
In local coordinates such that $\gamma(t)=(\gamma^{1}(t),\ldots, \gamma^{m}(t))$ with $t\in(-\epsilon,\epsilon)$ this is equivalent, assuming Einstein index convention, to 
$$\frac{d^{2}\gamma^{\lambda}}{dt^{2}}+\frac{d\gamma^{\mu}}{dt}\frac{d\gamma^{\nu}}{dt}\,\gamma^*\left(\overline{\Gamma}^{\lambda}_{\mu\nu}\right) =0,\quad\quad 1\leq \lambda\leq m.$$
\end{Ex*}

\section{
Kaluza-Klein metrics on principal bundles}
In this section, we freely use many results from the theory of principal bundles and connections on them. As a general reference for these topics and other results in this area that we will use later in the article, mainly in section \ref{sec:Harmonic-Kaluza-Klein}, the reader can consult the classic text \cite{kobayashi1963foundations}  or the more recent book \cite{Joyce}.  We have also used the results of \cite{Coquereaux, Betounes}.

Let $G$ be a Lie group. Given a principal $G$-bundle $\pi: P\longrightarrow M$ whose right $G$-action is denoted $R\colon P\times G\to P$, its group of automorphisms is the group formed by its $G$-equivariant diffeomorphisms
    \begin{align*}
        \Aut(P):=\text{Diff}_{G}(P)=\{\varphi\in \text{Diff (P)}: R_{g}\circ\varphi=\varphi\circ R_{g}, \forall g\in G\}.
    \end{align*}
    There is a natural group morphism
    \begin{align*}
        \varpi &: \Aut(P)\longrightarrow \text{Diff (M)}.
    \end{align*}
    Given $\varphi\in \Aut(P)$, we write $\overline{\varphi}=\varpi(\varphi)\in \text{Diff (M)}$. The kernel of  $\varpi$ is  called the group of vertical automorphisms or the gauge group of the principal $G$-bundle and is denoted by  $\text{Gau(P)} = \text{Aut}^{\mathcal{V}}(P)$.
    Thus, one has an exact sequence of groups  $$1\longrightarrow \text{Gau}(P)\longrightarrow \text{Aut}(P)\longrightarrow \text{Diff (M)}.$$ 
    The group $G$ acts on itself on the left by internal automorphisms $c: G\times G\longrightarrow G$; that is, given $g,g'\in G$, one has $c_{g}(g')=g\cdot g'\cdot g^{-1}$. One also has the adjoint representation $\Ad: G\longrightarrow \text{Aut}(\mathfrak{g})$ of $G$ on its Lie algebra $\mathfrak g$,  that for $g\in G$ is given by $\Ad_{g}:=(c_{g})_{*e}$. The adjoint bundles associated with $P$ are defined by 
    \begin{align*}
        \pi_{Ad}: \mathrm{ad}P:=P\times_{c}G\longrightarrow M, \quad \quad \pi_{ad}:\mathrm{ad}P=P\times_{Ad}\mathfrak{g}\longrightarrow M,
    \end{align*}and they are, respectively, a bundle of Lie groups and a bundle of Lie algebras.

The following results are well known.
\begin{Pro}
    For every principal $G$-bundle $\pi: P\longrightarrow M$, the adjoint bundle $\pi_{Ad}: \mathrm{ad}P\longrightarrow M$ is a bundle of Lie groups with typical fiber $G$, whereas the adjoint bundle $\pi_{ad}:\mathrm{ad}P\longrightarrow M$ is a bundle of Lie algebras whose typical fiber is $\mathfrak{g}$. In addition, there is a natural identification
    \begin{align*}
        \gamma: \Gamma^{\infty}(M,\mathrm{ad}P)=C^{\infty}(P,G)^{G}\longrightarrow Gau(P),
    \end{align*}
    such that if $f\in C^{\infty}(P,G)^{G}$ is a G-equivariant map, (i.e., $f(p\cdot g)=c_{g^{-1}}(f(p)),\forall p\in P, g\in G)$, then the associated vertical automorphism is given by
    \begin{align*}
        \gamma_{f}(p):=p\cdot f(p),\quad p\in P.
    \end{align*}
\end{Pro}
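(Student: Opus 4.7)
The plan is to prove the three assertions in turn, using standard machinery for associated bundles. The first two claims are instances of the general principle that if a Lie group $G$ acts on a manifold $F$ preserving a given algebraic structure, then the associated bundle $P\times_{\rho}F$ inherits that structure fiberwise. For $P\times_{c}G$, the key point is that conjugation $c_{h}\colon G\to G$ is a Lie group automorphism for every $h\in G$, so the operation $[p,g_{1}]\cdot[p,g_{2}]:=[p,g_{1}g_{2}]$, defined using a common representative $p\in P$, is unambiguous: a change of representative $p\mapsto p\cdot h$ transforms both entries by $c_{h^{-1}}$, which is a group morphism and therefore preserves the product. Local trivializations of $P$ then induce fiberwise Lie group trivializations of $P\times_{c}G$ with typical fiber $G$. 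The same argument, with the adjoint representation $\mathrm{Ad}_{g}$ replacing $c_{h}$ and the Lie bracket replacing the product, shows that $P\times_{\mathrm{Ad}}\mathfrak{g}$ is a bundle of Lie algebras with typical fiber $\mathfrak{g}$.

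For the identification with the gauge group, I would first invoke the canonical bijection $\Gamma^{\infty}(M,P\times_{\rho}F)\cong C^{\infty}(P,F)^{G}$ valid for any $G$-manifold $F$; applied to $F=G$ with the conjugation action this yields the left-hand equality in the statement. Next, given $f\in C^{\infty}(P,G)^{G}$, define $\gamma_{f}(p):=p\cdot f(p)$, which is smooth by construction. The computation
\[\gamma_{f}(p\cdot g)=(p\cdot g)\cdot f(p\cdot g)=(p\cdot g)\cdot\bigl(g^{-1}f(p)g\bigr)=p\cdot f(p)\cdot g=\gamma_{f}(p)\cdot g\]
uses only the equivariance of $f$ and shows that $\gamma_{f}$ is $G$-equivariant; since $\gamma_{f}$ preserves fibers by construction, we obtain $\gamma_{f}\in\mathrm{Gau}(P)$. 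To invert $\gamma$, given $\varphi\in\mathrm{Gau}(P)$, I would define $f_{\varphi}(p)\in G$ as the unique element satisfying $\varphi(p)=p\cdot f_{\varphi}(p)$; existence and uniqueness follow from the freeness and transitivity of the $G$-action on fibers, and the equivariance relation $f_{\varphi}(p\cdot g)=g^{-1}f_{\varphi}(p)g$ is forced by the equivariance of $\varphi$.

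The only non-routine point will be the smoothness of $f_{\varphi}$, which is a local check: given a local section $s\colon U\to P$ over $U\subset M$, one writes any $q\in\pi^{-1}(U)$ as $q=s(\pi(q))\cdot h(q)$ with $h$ smooth, and then $f_{\varphi}(q)=h(q)^{-1}f_{\varphi}(s(\pi(q)))h(q)$, where $f_{\varphi}\circ s$ is smooth because $\varphi\circ s$ is and because of the freeness of the $G$-action. Finally, I would verify that $\gamma$ intertwines the pointwise multiplication on $C^{\infty}(P,G)^{G}$ with composition in $\mathrm{Gau}(P)$, as witnessed by
\[(\gamma_{f_{1}}\circ\gamma_{f_{2}})(p)=(p\cdot f_{2}(p))\cdot f_{1}(p\cdot f_{2}(p))=p\cdot f_{2}(p)\cdot\bigl(f_{2}(p)^{-1}f_{1}(p)f_{2}(p)\bigr)=p\cdot\bigl(f_{1}(p)f_{2}(p)\bigr).\]
This is the only step that requires a short but genuine computation; everything else is a direct unpacking of the associated-bundle construction.
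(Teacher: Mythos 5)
Your proof is correct and complete; it is the standard argument via the associated-bundle construction, and every computation (well-definedness of the fiberwise product under change of representative, equivariance of $\gamma_{f}$, the inverse $f_{\varphi}$, and the composition formula $\gamma_{f_{1}}\circ\gamma_{f_{2}}=\gamma_{f_{1}f_{2}}$, which also yields that each $\gamma_{f}$ is a diffeomorphism with inverse $\gamma_{f^{-1}}$) checks out against the paper's conventions, in particular the action $(p,v)\cdot g=(p\cdot g,\rho_{g^{-1}}(v))$ on $P\times V$. The paper itself states this proposition as well known and supplies no proof, so there is nothing to compare your argument against; yours is the expected one and fills that gap adequately.
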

\begin{De}
    Given a principal $G$-bundle $\pi: P\longrightarrow M$, the $G$-equivariant retracts of its exact tangent sequence
    \begin{align*}
        0\longrightarrow \mathcal{V}P \overset{i}{\longrightarrow} TP \overset{\pi_*}{\longrightarrow}  \pi^{*}TM\longrightarrow 0,
    \end{align*}
    are called principal $G$-connections. We denote by $\mathcal{A}(P)$ the space of the principal $G$-connections of $P$. Thus, $\omega\in \mathcal A(P)$ if and only if $\omega\in\Hom_P(TP,\mathcal{V}P)$ is a $G$-equivariant vector bundle morphism such that $\omega\circ i=\Id_{\mathcal{V}P}$.
\end{De}
\begin{Pro}\label{pro:principal-connections}
    Let $\pi: P\longrightarrow M$ be a principal $G$-bundle. The automorphism group of $P$ acts naturally on the left in the space of principal $G$-connections
    \begin{align*}
        (-)_{\mathcal{A}}: \Aut(P)\times \mathcal{A}(P)\longrightarrow\mathcal{A}(P) 
    \end{align*}
    so that given $\varphi\in \Aut(P)$, $\omega\in\mathcal{A}(P)$, the connection $\varphi_{\mathcal{A}}(\omega)=\varphi\cdot\omega$ is defined for each $p\in P$ by
    \begin{align*}
(\varphi\cdot\omega)_{p}:=\varphi_{*p}\circ\omega_{\varphi^{-1}(p)}\circ\varphi^{-1}_{*p}.
    \end{align*}
    The infinitesimal right $G$-action on TP establishes an isomorphism of vector bundles 
    \begin{align*}
        R_\bullet\colon \mathfrak{g}_{P}:=P\times \mathfrak{g}\overset{\sim}\longrightarrow \mathcal{V}P
    \end{align*}
    such that for any  $(p,\xi)\in P\times\mathfrak{g}$ one has that $R_\bullet(p,\xi)=\xi^*_p$ is the value at $p\in P$ of the fundamental vector field $\xi^*\in\mathfrak{X}(P)$ associated to $\xi\in \mathfrak{g}$. By means of this isomorphism each $G$-connection $\omega\in\mathcal{A}(P)$ is identified with a $\mathfrak{g}$-valued $1$-form $\widehat{\omega}\in\Omega^{1}(P,\mathfrak{g})$ that is called the connection $1$-form associated with $\omega$. The $G$-equivariance of $\omega$ translates into the $Ad$-invariance of $\widehat{\omega}$, while the retract condition is equivalent to the fact that for all $\xi\in\mathfrak{g}$ one has $i_{\xi^{*}}\widehat{\omega}=\xi$. We denote by $\widehat{\mathcal{A}}(P)$ the space formed by all connection $1$-forms  of the principal $G$-bundle $P$. The above description establishes a bijection $\mathcal{A}(P)\overset{\sim}{\longrightarrow} \widehat{\mathcal{A}}(P)$.
    Hence, given any $G$-connection $\omega\in\mathcal A(P)$ there is a commutative diagram of exact sequences of vector bundles 
\[\begin{tikzcd}
	&&&&&&  \\
	0 && \mathcal{V}P && TP && {\pi^{*}TM} && 0 \\
	\\
	0 && {\mathfrak{g}_{P}} && TP && {\pi^{*}TM} && 0
	\arrow[from=2-1, to=2-3]
	\arrow["i", from=2-3, to=2-5]
	\arrow["\omega"', shift left, bend right=30, from=2-5, to=2-3]
	\arrow["{\pi_{*}}", from=2-5, to=2-7]
	\arrow[from=2-7, to=2-9]
	\arrow[from=4-1, to=4-3]
	\arrow["{R_\bullet}","\wr"', from=4-3, to=2-3]
	\arrow["i", from=4-3, to=4-5]
	\arrow["{Id_{TP}}"', from=4-5, to=2-5]
	\arrow["{\widehat{\omega}}"', shift left, bend right=30, from=4-5, to=4-3]
	\arrow["{\pi_{*}}", from=4-5, to=4-7]
	\arrow["{Id_{\pi^{*}TM}}"', from=4-7, to=2-7]
	\arrow[from=4-7, to=4-9]
\end{tikzcd}\]
In this way, given $\omega\in \mathcal{A}(P)$, one has an isomorphism of vector bundles  $\varphi_\omega: TP\overset{\sim}{\longrightarrow} \pi^{*}TM\oplus\mathcal{V}P$ such that $\varphi_{\omega}(D_{p})=(\pi_{*}D_{p},\omega_{p}(D_{p}))$. The inverse isomorphism   $\varphi^{-1}_{\omega}:\pi^{*}TM\oplus\mathcal{V}P\overset{\sim}{\longrightarrow} TP$
    is given by $\varphi^{-1}_{\omega}(\pi_{*}D_{p},V_{p})=D_{p}-\omega_{p}(D_{p})+V_{p}$. One says the $\mathcal HP:=\mathrm{Ker}\omega$ is the $\omega$-horizontal subbundle of $TP$.\\
    Equivalently, one has the isomorphism
    $\varphi_{\widehat{\omega}}: TP\overset{\sim}{\longrightarrow} \pi^{*}TM\oplus\mathfrak{g}_{P}$, 
     defined by $\varphi_{\widehat{\omega}}(D_{p})=(\pi_{*}D_{p},\widehat{\omega}_{p}(D_{p}))$. The inverse isomorphism  $\varphi^{-1}_{\widehat{\omega}}: \pi^{*}TM\oplus \mathfrak{g}_{P} \overset{\sim}{\longrightarrow} TP$
     is given by $$\varphi^{-1}_{\widehat{\omega}}(\pi_{*}D_{p},(p,\xi))=D_{p}-[\widehat{\omega}_{p}(D_{p})]^{*}_{p}+R_{\bullet}(p,\xi)=D_{p}-[\widehat{\omega}_{p}(D_{p})]^{*}_{p}+\xi^{*}_{p}.$$
\end{Pro}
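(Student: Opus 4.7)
The proposition bundles together several standard facts about principal bundles that I will verify one by one. The main technical tool throughout is the fundamental vector field construction together with the identity $(R_g)_*(\xi^*)=(\Ad_{g^{-1}}\xi)^*$, which I would derive by differentiating the relation $R_g\circ R_{\exp(t\xi)}=R_{\exp(t\Ad_{g^{-1}}\xi)}\circ R_g$ at $t=0$. Almost everything else is a careful application of the chain rule together with the fact that, since $\varphi\in\Aut(P)$ is $G$-equivariant, it sends fibers of $\pi$ to fibers, so $\varphi_*$ preserves $\mathcal{V}P$.

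I would first verify that the formula $(\varphi\cdot\omega)_p:=\varphi_{*p}\circ\omega_{\varphi^{-1}(p)}\circ\varphi^{-1}_{*p}$ actually yields a principal connection. For the retract condition, for $v\in\mathcal{V}P_p$ one has $\varphi^{-1}_{*p}(v)\in\mathcal{V}P_{\varphi^{-1}(p)}$, so $\omega$ leaves it untouched and $\varphi_{*p}$ takes it back to $v$; $G$-equivariance of $\varphi\cdot\omega$ follows by chaining the $G$-equivariance of each factor. The group action axioms $\Id\cdot\omega=\omega$ and $(\varphi\psi)\cdot\omega=\varphi\cdot(\psi\cdot\omega)$ are then immediate. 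Next, the map $R_\bullet\colon\mathfrak{g}_P\to\mathcal{V}P$ is well defined because $\pi\circ R_{\exp(t\xi)}=\pi$ forces $\xi^*_p$ to be vertical; freeness of the right action makes $\xi\mapsto\xi^*_p$ injective at each $p$, and a dimension count promotes this to a fiberwise, hence a bundle, isomorphism.

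Setting $\widehat{\omega}_p:=R_\bullet^{-1}\circ\omega_p$ gives a $\mathfrak{g}$-valued $1$-form on $P$. The retract identity $\omega_p(\xi^*_p)=\xi^*_p$, applied after $R_\bullet^{-1}$, becomes $\widehat{\omega}_p(\xi^*_p)=\xi$, which is $i_{\xi^*}\widehat{\omega}=\xi$. For the equivalence between $G$-equivariance of $\omega$ and $\Ad$-invariance of $\widehat{\omega}$, I would compute $R_\bullet^{-1}\circ(R_g)_*\circ R_\bullet=\Ad_{g^{-1}}$ using the identity recalled above; the condition $(R_g)^*\omega=\omega$ then translates into $(R_g)^*\widehat{\omega}=\Ad_{g^{-1}}\circ\widehat{\omega}$. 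The commutative diagram in the statement is now tautological: the left vertical arrow is $R_\bullet$, the middle and right verticals are identities, and commutativity with $i$, $\omega$, $\widehat{\omega}$ and $\pi_*$ is exactly the definition of $\widehat{\omega}$.

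Finally, the splittings $\varphi_\omega$ and $\varphi_{\widehat{\omega}}$ are instances of the standard fact that a retract of a short exact sequence of vector bundles produces a splitting. For $\varphi_\omega(D_p)=(\pi_*D_p,\omega_p(D_p))$ I would check fiberwise injectivity: if both components vanish then $D_p\in\mathcal{V}P_p$, and the retract property gives $D_p=\omega_p(D_p)=0$; equality of dimensions yields bijectivity. The asserted inverse $(w,V)\mapsto D-\omega(D)+V$, where $D$ is any lift of $w$, is verified by direct substitution, noting that the choice of lift is immaterial because the ambiguity lies in $\mathcal{V}P$, which is killed by $\Id-\omega$. The case of $\varphi_{\widehat{\omega}}$ follows by postcomposing the second factor with $R_\bullet^{-1}$. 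The only place where a slip could occur is in tracking the points at which maps are evaluated in the action formula and in correctly deploying $(R_g)_*(\xi^*)=(\Ad_{g^{-1}}\xi)^*$ to pass between bundle-level statements and statements about $\mathfrak{g}$-valued forms; once this bookkeeping is under control, every step is routine.
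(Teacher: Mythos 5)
Your proof is correct and is exactly the standard verification: the paper itself offers no proof of this proposition, treating it as classical material and referring the reader to Kobayashi--Nomizu, so your write-up simply supplies the routine argument that the authors omit. All the key points are in place --- the retract and equivariance checks for $\varphi\cdot\omega$, the identity $(R_g)_*(\xi^*)=(\Ad_{g^{-1}}\xi)^*$ driving the translation between $G$-equivariance of $\omega$ and $\Ad$-invariance of $\widehat{\omega}$, and the observation that the inverse splitting $\varphi_\omega^{-1}$ is independent of the chosen lift because the ambiguity is vertical and annihilated by $\Id-\omega$.
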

\begin{De}Let $\pi: P\longrightarrow M$ be a principal G-bundle. A Kaluza-Klein metric on $P$ is a Riemannian metric $\widehat{g}$ which is $G$-invariant. We denote by $\mathrm{Met}_{KK}(P)$ the space of Kaluza-Klein metrics on the principal $G$-bundle $\pi: P\longrightarrow M$.  
\end{De}
\begin{Pro}
    Let $\pi: P \longrightarrow M$ be a principal $G$-bundle. The automorphism group of $P$ acts naturally on its space of Kaluza-Klein metrics, so that one has a left action
    \begin{align*}
        \mu:\Aut(P)\times \mathrm{Met}_{KK}(P)\longrightarrow \mathrm{Met}_{KK}(P),
    \end{align*}
    so that given $\varphi\in \Aut(P)$, $\widehat{g}\in \mathrm{Met}_{KK}(P)$, then $\mu_{\varphi}(\widehat{g}):=\varphi\cdot\widehat{g}=(\varphi^{-1})^{*}\widehat{g}$. 
\end{Pro}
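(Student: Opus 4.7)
The plan is to verify three things: that $(\varphi^{-1})^{*}\widehat{g}$ is indeed a Riemannian metric on $P$, that it lies in $\mathrm{Met}_{KK}(P)$ (i.e.\ it is $G$-invariant), and that the assignment $\varphi\mapsto \mu_{\varphi}$ satisfies the axioms of a left action of $\Aut(P)$. The first point is immediate from the fact that $\varphi^{-1}\in \Dif(P)$, since the pullback of any Riemannian metric by a diffeomorphism is again a Riemannian metric: $(\varphi^{-1})^{*}\widehat{g}$ is smooth, symmetric and fiberwise positive-definite because $\varphi^{-1}_{*}$ is a linear isomorphism at each point.

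The main step, and the only one involving the extra structure, is the verification that $(\varphi^{-1})^{*}\widehat{g}$ is $G$-invariant. This is where the condition $\varphi\in \Aut(P)$ enters. Indeed, $R_{g}\circ\varphi=\varphi\circ R_{g}$ for all $g\in G$ is equivalent to $R_{g}\circ\varphi^{-1}=\varphi^{-1}\circ R_{g}$, so that
\begin{align*}
R_{g}^{*}\bigl((\varphi^{-1})^{*}\widehat{g}\bigr)=(\varphi^{-1}\circ R_{g})^{*}\widehat{g}=(R_{g}\circ\varphi^{-1})^{*}\widehat{g}=(\varphi^{-1})^{*}(R_{g}^{*}\widehat{g})=(\varphi^{-1})^{*}\widehat{g},
\end{align*}
where in the last equality we have used that $\widehat{g}\in\mathrm{Met}_{KK}(P)$, i.e.\ $R_{g}^{*}\widehat{g}=\widehat{g}$. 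Therefore $\mu_{\varphi}(\widehat{g})\in\mathrm{Met}_{KK}(P)$, so the map $\mu$ is well defined.

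Finally, the left action axioms are a direct consequence of the contravariance of pullback and the anti-homomorphism $\varphi\mapsto\varphi^{-1}$. Explicitly, $\mu_{\Id_{P}}(\widehat{g})=(\Id_{P})^{*}\widehat{g}=\widehat{g}$, while for any $\varphi_{1},\varphi_{2}\in\Aut(P)$ one has $(\varphi_{1}\circ\varphi_{2})^{-1}=\varphi_{2}^{-1}\circ\varphi_{1}^{-1}$ and hence
\begin{align*}
\mu_{\varphi_{1}\circ\varphi_{2}}(\widehat{g})=\bigl((\varphi_{1}\circ\varphi_{2})^{-1}\bigr)^{*}\widehat{g}=(\varphi_{2}^{-1}\circ\varphi_{1}^{-1})^{*}\widehat{g}=(\varphi_{1}^{-1})^{*}(\varphi_{2}^{-1})^{*}\widehat{g}=\mu_{\varphi_{1}}\bigl(\mu_{\varphi_{2}}(\widehat{g})\bigr).
\end{align*}
I do not anticipate any real obstacle here: the only non-formal step is the $G$-invariance check, which rests entirely on the defining equivariance condition of $\Aut(P)$, and everything else is bookkeeping about pullbacks of tensor fields by diffeomorphisms.
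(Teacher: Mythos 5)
Your proof is correct; the paper states this proposition without proof as a well-known fact, and your argument is exactly the standard verification it implicitly relies on. The key step — using the equivariance $R_{g}\circ\varphi^{-1}=\varphi^{-1}\circ R_{g}$ to transfer $G$-invariance through the pullback — together with the routine checks of well-definedness and the action axioms is all that is needed.
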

Let $\pi: P\longrightarrow M$ be a principal $G$-bundle and let $\rho: G\longrightarrow \text{Aut(V)}$ be a linear representation. The Lie group $G$ acts on $P\times V$ as  $(p,v)\cdot g=(p\cdot  g,\rho_{g^{-1}}(v))$. The quotient manifold of $P\times V$ by this action is the associated vector bundle $P\times_{\rho}V$ over $M$. If $q: P\times V\longrightarrow P\times_{\rho}V$ is the natural projection,
one has the following commutative diagram, 

\[\begin{tikzcd}
	{P\times V} && {P\times_{\rho}V} \\
	P && M
	\arrow["q", from=1-1, to=1-3]
	\arrow["{\pi_{1}}"', from=1-1, to=2-1]
	\arrow["{\pi_{\rho}}", from=1-3, to=2-3]
	\arrow["\pi", from=2-1, to=2-3]
\end{tikzcd}\]
\begin{Pro}
  The automorphism group $\Aut(P)$ of the principal bundle $\pi: P\longrightarrow M$ is represented in the automorphism group of each of its associated bundles $P\times_{\rho}V$ in the following way:   
  \begin{align*}
      (-)_{\rho}: \Aut(P)&\longrightarrow \Aut_M(P\times_{\rho}V)\\
      \varphi&\longrightarrow\varphi_{\rho}
  \end{align*}
 where $\varphi_{\rho}(q(p,v)):=q(\varphi(p),v)$ for each $\varphi\in \Aut(P)$. 
\end{Pro}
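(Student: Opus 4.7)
The plan is to carry out, in order, four routine verifications: (i) the formula $\varphi_\rho(q(p,v)):=q(\varphi(p),v)$ descends to a well-defined smooth map on the quotient $P\times_\rho V$; (ii) the resulting map covers the base diffeomorphism $\overline{\varphi}=\varpi(\varphi)$ and is fiberwise linear, hence a vector bundle automorphism in $\Aut_{M}(P\times_\rho V)$; (iii) the assignment $\varphi\mapsto\varphi_\rho$ is a group homomorphism; (iv) consequently $(-)_\rho$ is the claimed representation.

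For (i), suppose $q(p,v)=q(p',v')$, so that there exists $g\in G$ with $(p',v')=(p\cdot g,\rho_{g^{-1}}(v))$. The $G$-equivariance of $\varphi\in\Aut(P)$ gives $\varphi(p\cdot g)=\varphi(p)\cdot g$, whence
\begin{align*}
q(\varphi(p'),v')=q(\varphi(p)\cdot g,\rho_{g^{-1}}(v))=q(\varphi(p),v),
\end{align*}
so the formula is independent of the chosen representative. Smoothness follows from the universal property of the smooth quotient $q\colon P\times V\to P\times_\rho V$, since the composition $q\circ(\varphi\times\Id_V)\colon P\times V\to P\times_\rho V$ is smooth and $G$-invariant for the diagonal action, hence factors through a smooth map on $P\times_\rho V$ which coincides with $\varphi_\rho$.

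For (ii), from $\pi_\rho\circ q=\pi\circ \pi_1$ and $\pi\circ\varphi=\overline{\varphi}\circ\pi$ one obtains $\pi_\rho\circ\varphi_\rho=\overline{\varphi}\circ\pi_\rho$, so $\varphi_\rho$ is a bundle morphism covering $\overline{\varphi}\in\Dif(M)$. For the fiberwise linearity, recall that for any $p\in P$ the map $V\to(P\times_\rho V)_{\pi(p)}$, $v\mapsto q(p,v)$, is a linear isomorphism, and likewise for $\varphi(p)$ at the point $\overline{\varphi}(\pi(p))$. Under these two trivializations the restriction of $\varphi_\rho$ to the fiber over $\pi(p)$ is just the identity of $V$, and thus linear. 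Hence $\varphi_\rho\in\Aut_M(P\times_\rho V)$.

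For (iii), a direct computation from the definition gives $(\Id_P)_\rho=\Id_{P\times_\rho V}$ and
\begin{align*}
(\varphi\circ\psi)_\rho(q(p,v))=q(\varphi(\psi(p)),v)=\varphi_\rho(q(\psi(p),v))=(\varphi_\rho\circ\psi_\rho)(q(p,v)),
\end{align*}
so $(-)_\rho$ is a group homomorphism; in particular $\varphi_\rho$ is invertible with inverse $(\varphi^{-1})_\rho$, confirming (iv). The only conceptual subtlety anywhere in the argument is the compatibility between the $G$-equivariance of $\varphi$ and the $G$-action on $P\times V$ used to form the associated bundle, which is precisely what well-definedness in step (i) exploits; the rest is bookkeeping.
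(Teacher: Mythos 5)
Your proof is correct and is exactly the routine verification the paper leaves implicit (the proposition is stated without proof as a standard fact): well-definedness from the $G$-equivariance of $\varphi$, smoothness via the universal property of the quotient, compatibility with $\pi_\rho$, fiberwise linearity, and functoriality of $\varphi\mapsto\varphi_\rho$. Nothing is missing.
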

If we denote by $\varpi_{\rho}: \Aut_M(P\times_{\rho}V)\longrightarrow \text{Diff} (M)$ the natural group morphism, we have $\varpi_{\rho}(\varphi_{\rho})=\varpi(\varphi)$. A straightforwardly check proves the following result.
\begin{thm}\label{teo:structure-Kaluza-Klein-metrics}
    Let $\pi: P\longrightarrow M$ be a principal $G$-bundle. The space of Kaluza-Klein metrics  $\mathrm{Met}_{KK}(P)$ on $P$ is equipped with an $\Aut(P)$-equivariant bijection.
\begin{eqnarray*}
\Psi:\mathrm{Met}_{KK}(P) &\overset{\sim}{\longrightarrow} & \mathcal{A}(P)\times \mathrm{Met}(\mathrm{ad}P)\times \mathrm{Met}(M)  \\
\widehat{g} &\longmapsto & (\omega,\quad \quad  \quad {g}_\mathrm{{ad}},\quad \quad \quad \overline{g})
\end{eqnarray*}
 in which $\Aut(P)$ acts naturally on each of the factors (i.e., $\varphi\in \Aut(P)$ acts on $\mathcal{A}(P)$ through $\varphi_{\mathcal{A}}$, on $\mathrm{Met}(\mathrm{ad}P)$ through $(\varphi^{-1}_{ad})^{*}$, and on $\mathrm{Met}(M)$ through  $[\varpi(\varphi^{-1})]^{*}$) and the map $\Psi$ is given by
\begin{enumerate}
    \item $\omega:= \pi_{\mathcal{V}P}^{\perp_{\widehat{g}}}$ orthogonal projector onto the vertical tangent bundle defined by the metric $\widehat{g}$.
    \item $({g}_{\mathrm{ad}})_{\pi(p)}(q(p,\xi),q(p,\xi')):=\widehat{g}_{p}(\xi^{*}_{p},\xi'^{*}_{p}), \quad \forall p\in P, \forall \xi, \xi'\in \mathfrak{g}$,
    \item $(\overline{g})_{\pi(p)}(\pi_{*p}H^{1}_{p},\pi_{*p}H^{2}_{p}):=\widehat{g}_{p}(H^{1}_{p},H^{2}_{p}), \quad \forall p\in P, \quad \forall H^{1}_{p}, H^{2}_{p}\in \mathcal{H}_pP=\ker\omega_{p}=(\mathcal V_pP)^{\perp_{\widehat g_p}}$.
\end{enumerate}
With these data, we have a $\widehat{g}$-orthogonal direct sum decomposition $TP=\mathcal{V}P\perp_{\widehat{g}}\mathcal{H}P=\mathcal{V}P\perp_{\widetilde{g}}(\mathcal{V}P)^{\perp_{\widehat{g}}},$
and the projection $\pi: (P,\widehat{g})\longrightarrow (M,\overline{g})$ is a Riemannian submersion. The inverse bijection is given by 
\begin{align*}
    \widehat{g} &:=\Psi^{-1}(\omega,g_{\mathrm{ad}},\overline{g})=(\omega^{*}g_{\mathrm{ad}})^{\mathcal{V}}+\pi^{*}\overline{g},
\end{align*}  where  $(\omega^{*}g_{\mathrm{ad}})^\mathcal{V}_{p}(D_{p},D'_{p}):=(g_\mathrm{ad})_{\pi(p)}(q(p,\widehat{\omega}_{p}(D_{p})),q(p,\widehat{\omega}_{p}(D'_{p})))$.  
\end{thm}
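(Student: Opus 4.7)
The plan is to establish the bijection by constructing the direct map $\Psi$ and the inverse $\Psi^{-1}$ explicitly, then verifying (i) well-definedness of each component, (ii) that the two maps are mutually inverse, and (iii) compatibility with the $\Aut(P)$-actions. The overall strategy rests on a single structural observation: $G$-invariance of $\widehat{g}$ together with $G$-invariance of $\mathcal{V}P$ forces the $\widehat{g}$-orthogonal complement $\mathcal{H}P := (\mathcal{V}P)^{\perp_{\widehat{g}}}$ to be $G$-invariant, which is exactly what is needed to make the orthogonal projector $\omega := \pi_{\mathcal{V}P}^{\perp_{\widehat{g}}}$ a $G$-equivariant retract of the tangent sequence, i.e., a principal connection in $\mathcal{A}(P)$.

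For the descent to the second factor, the formula $(g_{\mathrm{ad}})_{\pi(p)}(q(p,\xi), q(p,\xi')) := \widehat{g}_p(\xi^*_p,\xi'^*_p)$ is shown to be independent of the representative of the class in $\mathrm{ad}P$ by combining the standard identity $(R_g)_* \xi^*_p = (\Ad_{g^{-1}}\xi)^*_{p\cdot g}$ with the $G$-invariance of $\widehat{g}$: the two together yield precisely compatibility with the equivalence relation $(p,\xi)\sim(p\cdot g,\Ad_{g^{-1}}\xi)$ defining $\mathrm{ad}P$. An analogous computation, invoking $\pi\circ R_g=\pi$ and the $G$-equivariance of horizontal lifts, shows that $\widehat{g}_p(H^1_p,H^2_p)$ depends only on the projections $\pi_{*p}H^i_p$, giving a well defined metric $\overline{g}$ on $M$. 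Positive-definiteness and symmetry of $g_{\mathrm{ad}}$ and $\overline{g}$ are inherited from $\widehat{g}$.

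For the inverse direction, given a triple $(\omega, g_{\mathrm{ad}}, \overline{g})$, I would show that $\widehat{g} := (\omega^{*}g_{\mathrm{ad}})^{\mathcal{V}} + \pi^{*}\overline{g}$ is a Kaluza-Klein metric. The term $\pi^{*}\overline{g}$ is manifestly $G$-invariant since $\pi\circ R_g=\pi$, and the vertical term is $G$-invariant because the $\Ad$-equivariance $(R_g)^{*}\widehat{\omega}=\Ad_{g^{-1}}\circ\widehat{\omega}$ translates, through the identity $q(p\cdot g,\Ad_{g^{-1}}\xi)=q(p,\xi)$, into $G$-invariance of the contraction with $g_{\mathrm{ad}}$. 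Positive-definiteness then follows from the complementarity of $\mathcal{V}P$ and $\ker\omega$: on $\mathcal{V}P$ the horizontal term vanishes while the vertical term agrees, via the isomorphism $R_\bullet$, with the pullback of $g_{\mathrm{ad}}$; on $\ker\omega$ the vertical term vanishes while the horizontal term agrees, via the fibrewise isomorphism $\pi_{*}$, with the pullback of $\overline{g}$.

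Bijectivity finally reduces to evaluating $\Psi^{-1}\circ \Psi$ and $\Psi\circ\Psi^{-1}$ on pairs of pure vertical and pure horizontal tangent vectors, where the decomposition $TP = \mathcal{V}P \perp_{\widehat{g}} \mathcal{H}P$ and the vanishing of the mixed terms (by the very construction $\mathcal{H}P := \ker\omega$) settle the identity automatically. The $\Aut(P)$-equivariance is then verified factor by factor, using $\pi\circ\varphi = \varpi(\varphi)\circ\pi$ for $\varphi\in\Aut(P)$, the fact that $\varphi_{*}$ preserves $\mathcal{V}P$, and the compatibility of the induced map $\varphi_{\mathrm{ad}}$ on $\mathrm{ad}P$ with the quotient $q$. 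The main — quite modest — obstacle is purely bookkeeping: keeping careful track of the three different group actions (on $P$, on $\mathrm{ad}P$, and on $M$), the orthogonal-projector description of $\omega$, and the various pullbacks and pushforwards, so that the three equivariance squares close for the correct inverses. Once these arrows are aligned, the equivariance follows by the same well-definedness principles established in the first two steps.
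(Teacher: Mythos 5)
Your proposal is correct and follows exactly the route the paper has in mind: the paper offers no written proof (it dismisses the theorem with ``a straightforward check''), and your outline supplies precisely that check --- $G$-invariance of $\widehat{g}$ and $\mathcal{V}P$ making $(\mathcal{V}P)^{\perp_{\widehat{g}}}$ a principal connection, descent of the fiber metric to $\mathrm{ad}P$ via $(R_g)_*\xi^*_p=(\Ad_{g^{-1}}\xi)^*_{p\cdot g}$, orthogonality of the two summands for the inverse, and factor-by-factor equivariance. Nothing further is needed.
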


\begin{Cor}\label{cor:existence-Kaluza-Klein-metrics}
	For any Lie group $G$, every principal $G$-bundle $\pi\colon P\longrightarrow M$ over a paracompact manifold $M$ admits  Kaluza-Klein metrics.
\end{Cor}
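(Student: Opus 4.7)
The plan is to reduce existence to the three ingredients provided by the bijection $\Psi$ of Theorem \ref{teo:structure-Kaluza-Klein-metrics}. Since $\Psi$ is a bijection, producing a Kaluza-Klein metric $\widehat g\in \mathrm{Met}_{KK}(P)$ is equivalent to producing a triple $(\omega,g_\mathrm{ad},\overline g)\in \mathcal A(P)\times \mathrm{Met}(\mathrm{ad}P)\times \mathrm{Met}(M)$, from which we obtain $\widehat g=\Psi^{-1}(\omega,g_\mathrm{ad},\overline g)=(\omega^{*}g_{\mathrm{ad}})^{\mathcal{V}}+\pi^{*}\overline{g}$. Hence the task splits into three independent existence problems.

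First, I would invoke paracompactness of $M$ to guarantee a smooth partition of unity subordinated to a trivializing open cover $\{U_\alpha\}$ of $P\to M$. On each $U_\alpha$, the trivialization $P|_{U_\alpha}\simeq U_\alpha\times G$ admits the canonical flat connection whose connection $1$-form is the pullback of the Maurer-Cartan form of $G$; gluing these local connection $1$-forms via the partition of unity yields a global $\mathfrak g$-valued $1$-form on $P$ which, because the affine combination of $\Ad$-invariant $1$-forms retracting the vertical tangent map is again of the same type, lies in $\widehat{\mathcal A}(P)$. Equivalently, $\mathcal A(P)$ is an affine space modelled on $\Omega^1(M,\mathrm{ad}P)$ and locally non-empty, so paracompactness makes it globally non-empty. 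This produces $\omega\in\mathcal A(P)$.

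Second, both $\mathrm{ad}P\to M$ and $TM\to M$ are smooth real vector bundles over the paracompact manifold $M$, so the standard partition-of-unity argument, applied to any smooth cover trivializing the bundle and using the standard Euclidean inner product on each local fiber, furnishes a global fiberwise inner product. This simultaneously yields $g_\mathrm{ad}\in \mathrm{Met}(\mathrm{ad}P)$ and $\overline g\in \mathrm{Met}(M)$.

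Finally, I would collect the three pieces and define $\widehat g:=\Psi^{-1}(\omega,g_\mathrm{ad},\overline g)$ as in the theorem; by construction $\widehat g$ is a $G$-invariant Riemannian metric on $P$, which is exactly a Kaluza-Klein metric. There is no real obstacle here: the only mildly delicate point is verifying that the affine-combination of local principal connection $1$-forms remains $\Ad$-equivariant and retracts the vertical tangent map, but this follows because both conditions are preserved under convex combinations with scalar (basic) weights. Thus paracompactness supplies everything needed, and the corollary follows from Theorem \ref{teo:structure-Kaluza-Klein-metrics}.
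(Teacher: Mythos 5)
Your proof is correct and follows essentially the same route as the paper: both reduce the problem, via the bijection $\Psi$ of Theorem \ref{teo:structure-Kaluza-Klein-metrics}, to the nonemptiness of $\mathcal A(P)\times \mathrm{Met}(\mathrm{ad}P)\times \mathrm{Met}(M)$, which follows from paracompactness. The paper simply cites the standard existence of principal connections and bundle metrics, whereas you unpack the partition-of-unity constructions; the content is the same.
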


\begin{proof}
If $M$ is paracompact, then any principal $G$-bundle (resp. vector bundle)  over $M$ admits  a principal connection (resp. a Riemannian metric). Hence $\mathcal A(P)\times\mathrm{Met}(\ad P)\times \mathrm{Met}(M)\neq\emptyset$.
\end{proof}

\begin{De}Let $\pi: P\longrightarrow M$ be a principal $G$-bundle. Given a Kaluza-Klein metric $\widehat{g}$ on $P$ such that
\begin{align*}
    \widehat{g}=\Psi^{-1}(\omega,{g}_{\mathrm{ad}},\overline{g})=(\omega^{*}{g}_{\mathrm{ad}})^{\mathcal{V}}+\pi^{*}\overline{g}
\end{align*}
    then we say that $\overline{g}\in \mathrm{Met}(M)$ is the Riemannian metric induced on the base manifold, $\omega\in \mathcal{A}(P)$ is the principal G-connection induced on P and ${g}_{\mathrm{ad}}\in \mathrm{Met}(\mathrm{ad}P)$ is the Riemannian metric induced on the adjoint bundle $\mathrm{ad}P$. We call $ \widehat{g}=(q^{*}{g}_{\mathrm{ad}})^{\mathcal{V}}+\pi^{*}\overline{g}$ the canonical decomposition of the Kaluza-Klein metric $\widehat{g}$. A Kaluza-Klein principal $G$-bundle is a pair formed by a principal $G$-bundle $\pi\colon P\to M$ and a Kaluza-Klein metric $\widehat g\in\mathrm{Met}_{KK}(P)$. If $\widehat g=\Psi^{-1}(\omega,\beta,\overline g)$ we denote by $\pi\colon (P,\widehat g)\to (M,\overline g)$ the Kaluza-Klein bundle that supports it.
\end{De}
We can describe more explicitly the metrics on the adjoint bundle. 
\begin{Lem}
   For any principal $G$-bundle $\pi: P\longrightarrow M$ the space of Riemannian metrics on its adjoint bundle is given by the space  $$\mathrm{Met}(\mathrm{ad}P)=C^\infty(P,\mathrm{Met}(\mathfrak g))^G=\{\beta\in C^\infty(P,\mathrm{Met}(\mathfrak g))\colon \beta_{pg}=\Ad_g^*\beta_p,\quad p\in P, g\in G\}$$ of $G$-equivariant maps from $P$ into the space of euclidean metrics of the Lie algebra $\mathfrak g$. In particular, constant maps define metrics if an only if they take values in $\mathrm{Met}(\mathfrak g)^G$, the space of $\Ad$-invariant metrics on $\mathfrak{g}$.
\end{Lem}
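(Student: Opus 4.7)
The plan is to reduce the statement to the standard identification between smooth sections of an associated vector bundle and equivariant maps from the principal bundle into the typical fiber, applied to the bundle $S^{2}(\mathrm{ad}P)^{*}$, of which a Riemannian metric on $\mathrm{ad}P$ is just a fiberwise positive-definite section.

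First I would recall that, for any linear representation $\rho\colon G\to \Aut(V)$, the associated bundle $P\times_{\rho}V$ satisfies the canonical isomorphism of $\cin(M)$-modules
\begin{equation*}
    \Gamma(M,P\times_{\rho}V)\;\overset{\sim}{\longrightarrow}\;\cin(P,V)^{G}:=\{f\in\cin(P,V)\colon f(pg)=\rho_{g^{-1}}f(p)\},
\end{equation*}
obtained by sending a section $s$ to the unique equivariant map $\tilde s$ with $s(\pi(p))=q(p,\tilde s(p))$. I would then specialize this to the representation $\tau\colon G\to\Aut(S^{2}\mathfrak{g}^{*})$ obtained by taking the symmetric square of the dual of $\Ad$: explicitly $(\tau_{g}\beta)(\xi,\eta)=\beta(\Ad_{g^{-1}}\xi,\Ad_{g^{-1}}\eta)$. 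Since tensor operations commute with the formation of associated bundles, one has the natural identification $S^{2}(\mathrm{ad}P)^{*}=P\times_{\tau}S^{2}\mathfrak{g}^{*}$. The equivariance condition $\tilde\beta(pg)=\tau_{g^{-1}}(\tilde\beta(p))$ translates, unwinding the definition of $\tau$, into
\begin{equation*}
    \tilde\beta(pg)(\xi,\eta)=\tilde\beta(p)(\Ad_{g}\xi,\Ad_{g}\eta)=(\Ad_{g}^{*}\tilde\beta(p))(\xi,\eta),
\end{equation*}
which is precisely the equivariance displayed in the statement.

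Next I would observe that a section $s\in\Gamma(M,S^{2}(\mathrm{ad}P)^{*})$ is a Riemannian metric on $\mathrm{ad}P$ if and only if each $s_{x}\in S^{2}(\mathrm{ad}P)_{x}^{*}$ is positive definite. Under the above bijection, positivity at $x=\pi(p)$ amounts to positivity of $\tilde\beta(p)\in S^{2}\mathfrak{g}^{*}$; moreover since $\Ad_{g}\in\Aut(\mathfrak{g})$ is a linear isomorphism, $\Ad_{g}^{*}$ sends $\mathrm{Met}(\mathfrak{g})$ to itself, so the pointwise positivity condition is compatible with the equivariance. Hence the restriction of the bijection to positive-definite sections yields
\begin{equation*}
    \mathrm{Met}(\mathrm{ad}P)\;\overset{\sim}{\longrightarrow}\;\cin(P,\mathrm{Met}(\mathfrak{g}))^{G},
\end{equation*}
which is exactly the claimed equality.

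Finally, for the statement about constant maps, if $\beta\colon P\to\mathrm{Met}(\mathfrak{g})$ is constant with value $\beta_{0}$, the equivariance $\beta_{pg}=\Ad_{g}^{*}\beta_{p}$ reduces to $\beta_{0}=\Ad_{g}^{*}\beta_{0}$ for every $g\in G$, i.e.\ to $\beta_{0}\in\mathrm{Met}(\mathfrak{g})^{G}$. I do not expect any real obstacle here; the only points that require care are bookkeeping the conventions for the dual and symmetric-square representations so that the equivariance is written in the precise form $\beta_{pg}=\Ad_{g}^{*}\beta_{p}$, and verifying smoothness of the bijection, which is standard.
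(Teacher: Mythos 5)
Your proposal is correct and follows essentially the same route as the paper: both identify $S^{2}((\mathrm{ad}P)^{*})$ with $P\times_{S^{2}(\Ad^{*})}S^{2}(\mathfrak g^{*})$ via the compatibility of tensor functors with the associated-bundle construction, pass to the standard correspondence between sections and $G$-equivariant maps, and restrict to the positive-definite part. Your version merely spells out the equivariance convention and the positivity check in more detail than the paper does.
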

\begin{proof}
This is a consequence of the compatibility of smooth functors acting on the category of vector bundles and the associated vector bundle functor. In particular, the space of metrics $\mathrm{Met}(\mathrm{ad}P)$ on the adjoint bundle is given by the positive definite subspace of the space of sections of the symmetric second power of its dual bundle  $S^2((\mathrm{ad}P)^*)$. By the above mentioned compatibility we get $$S^2((\mathrm{ad}P)^*)\simeq S^2((P\times_{\Ad}\mathfrak g)^*)\simeq S^2(P\times_{\Ad^*}\mathfrak g^*)\simeq P\times_{S^2(\Ad^*)} S^2(\mathfrak g^*).$$ Therefore, one has $$\Gamma(M,S^2((\mathrm{ad}P)^*))\simeq\Gamma(M, P\times_{S^2(\Ad^*)} S^2(\mathfrak g^*))=C^\infty(P,S^2(\mathfrak g^*))^G$$ and the claimed result follows.
\end{proof} 

\begin{Rem}
    Given $\beta\in C^\infty(P,\mathrm{Met}(\mathfrak g))^G$ the corresponding  metric $g_\mathrm{ad}$ on the adjoint bundle $\mathrm{ad}P$ is given by $$(g_\mathrm{ad})_{\pi(p)}(q(p,\xi_1),q(p,\xi_2)):=\beta_p(\xi_1,\xi_2),\quad p\in P,\xi_1,\xi_2\in\mathfrak{g}.$$ Reading this in the other direction determines $\beta$ when the metric $g_\mathrm{ad}$ is known. Therefore, there is an $\Aut(P)$-equivariant bijection that we continue to denote with the same letter
\begin{eqnarray*}
\Psi:\mathrm{Met}_{KK}(P) &\overset{\sim}{\longrightarrow} & \mathcal{A}(P)\times C^\infty(P,\mathrm{Met}(\mathfrak g))^G\times \mathrm{Met}(M)  \\ 
\widehat{g}\quad &\longmapsto & \quad (\omega,\quad \quad\quad\quad  \quad\beta,\quad \quad \quad\quad\quad \overline{g})
\end{eqnarray*} where now given $\xi,\xi'\in\mathfrak{g}$ one has $\beta_p(\xi,\xi'):=\widehat g_p(\xi^*_p,\xi'^*_p)$. The inverse isomorphism now is given by \begin{align*}
    \widehat{g} &:=\Psi^{-1}(\omega,\beta,\overline{g})=(\omega^{*}\beta)^{\mathcal{V}}+\pi^{*}\overline{g},
\end{align*}  where  $(\omega^{*}\beta)^\mathcal{V}_{p}(D_{p},D'_{p}):=\beta_p(\widehat{\omega}_{p}(D_{p}),\widehat{\omega}_{p}(D'_{p}))$.  

\end{Rem}

A $G$-connection $\omega$ on $P$ gives rise to a $G$-vector subbundle of the tangent bundle $TP$, the horizontal subbundle $\mathcal HP=\mathrm{Ker}\,\omega$, and to a $G$-invariant Whitney sum decomposition 
$$\begin{array}{rcl}
TP  & = & \mathcal{V}P\oplus\mathcal{H}P\\
   D_{p}  & \longmapsto & \left(\omega_{p}(D_{p}),D_{p}-\omega_{p}(D_{p})\right).
\end{array}$$ One says that $(D_p)^\mathcal{V}:=\omega_p(D_p)$ is the $\omega$-vertical part of $D_p$ , whereas $(D_p)^\mathcal{H}:=D_p-\omega_p(D_p)$ is called its $\omega$-horizontal part and $\mathcal H:=\Id_{TP}-\omega\in\Gamma(P,\mathrm{End}_P(TP))$ is called the horizontal endomorphism of $\omega$.

The curvature $2$-form $\widehat\Omega^\omega$ of the connection ${\omega}$ is the $\mathfrak{g}$-valued $2$-form on $P$ defined by
\begin{align}
    \widehat\Omega^\omega&=d\widehat{\omega}+[\widehat{\omega},\widehat{\omega}]\in\Omega^2(P,\mathfrak{g})
\end{align}
where $[\cdot.\cdot]$ is the Lie bracket of $\mathfrak{g}$.
Let $s_U:U\subset M\longrightarrow P_U:=\pi^{-1}(U)\subset P$ be a local section, then
$A_U:=s_U^{*}\widehat{\omega}\in\Omega^{1}(U,\mathfrak{g})$, $F_U:=s_U^{*}\widehat \Omega^\omega=dA_U+[A_U,A_U]$ are called, respectively, the gauge potential and the field strength defined on $U$ by the section $s_U$.  If $s_V$ is another local section defined on an open subset $V\subset M$ such that $U\cap V\neq\emptyset$, then on this intersection one has $s_V=s_U\cdot g_{UV}$, where $g_{UV}\colon U\cap V\to G$ is the transition function from $P_U$ to $P_V$, and it holds $F_V=\Ad_{g^{-1}_{UV}}F_U$.

\subsection{Local expressions}\label{subsection:local-expressions} The local section $s_U\in\Gamma(U,P)$ yields a $G$-equivariant diffeomorphism  $\psi_U\colon U\times G\overset{\sim}{\longrightarrow} P_U$ that makes commutative the diagram 
$$\begin{tikzcd}
	{U\times G} && {P_U} \\
	& U & 
	\arrow["\psi_U", "\sim"', from=1-1, to=1-3]
	\arrow["{\pi_{1}}"', from=1-1, to=2-2]
	\arrow["\pi", from=1-3, to=2-2]
\end{tikzcd}$$ and is defined on $(x,g)\in U\times G$ by $\psi_U(x,g)=s_U(x)\cdot g.$ Thus, given a Kaluza-Klein metric $\widehat g=\Psi^{-1}(\omega,\beta,\overline g)\in\mathrm{Met}_{KK}(P)$ we can define a metric $\widehat g_{U}:=\psi_U^*\widehat g\in\mathrm{Met}(U\times G)$.

Choosing a coordinate system $\{x^1,\ldots, x^n\}$ on $U$ and a basis $B=\{\xi_1,\ldots,\xi_d\}$ of the Lie algebra $\mathfrak{g}$, if we write $s=s_U$, $A=A_U$, $F=F_U$, then there exist $A_\mu^a,F_{\mu\nu}^a\in C^\infty(U)$ such that 
\begin{align*}
A &=dx^{\mu}\otimes A_{\mu}=A^{a}_{\mu}dx^{\mu}\otimes\xi_{a},\quad\quad \quad F=dx^{\mu}\wedge dx^{\nu}\otimes F_{\mu\nu}=F^{a}_{\mu\nu} dx^{\mu}\wedge dx^{\nu}\otimes\xi_{a},
\end{align*}
\begin{align*}
A_{\mu} &=A(\partial_{\mu})=A^{a}_{\mu}\xi_{a},\quad\quad \quad \quad F_{\mu\nu} =F(\partial_{\mu},\partial_{\nu})=F^{a}_{\mu\nu} \xi_{a}=\partial_{\mu}(A_{\nu})-\partial_{\nu}(A_{\mu})+[A_{\mu},A_{\nu}].
   \end{align*}

\begin{Pro}\label{pro:expr-loc-metrica-Kaluza-Klein} Let us consider on the principal $G$-bundle $P$ a Kaluza-Klein metric $\widehat{g}=\Psi^{-1}(\omega,\beta,\bar g)$. Given a local section $s\in\Gamma(U,P)$, the metric $\widehat g_{U}=\psi_U^*\widehat g\in\mathrm{Met}(U\times G)$ is a Kaluza-Klein metric on the trivial principal $G$- bundle $\pi_1\colon U\times G\to U$ such that $\widehat g_U=\Psi^{-1}(\omega_U,\beta_U,\overline g_U)$ where:
\begin{enumerate}
\item the principal connection	$\omega_U$ satisfies $\omega_U=\psi^{-1}_{U,*}\circ\omega\circ\psi_{U,*}$ and is given by $$\omega_U((D,E))=(0,[A(D)]^R+E),\quad D\in\X(U),E\in\X(G),$$ where $[-]^R\colon\mathfrak g\to \X(G)^R$ is the Lie algebra antihomomorphism that sends $\xi\in\mathfrak g$ to the right invariant vector field $\xi^R$ that it generates.
\item the $G$ equivariant map $\beta_U\in \cin(U\times G,\mathrm{Met}(\mathfrak g))$ is $$\beta_U=\beta\circ \psi_U.$$
\item the Riemannian metric $\overline g_U$ is just the restriction of $\overline g$ to $U$.
\end{enumerate}
Moreover, one has $$\widehat g_{U}=\pi_1^*\bar g_U+\bar\beta(\pi_1^*A+\pi_2^*\theta^R,\pi_1^*A+\pi_2^*\theta^R),$$
where $\bar\beta=\beta\circ s\in\cin(U,\mathrm{Met}(\mathfrak{g}))$, $A\in\Omega^1(U,\mathfrak{g})$ is the gauge potential of $\omega$ defined by the local section $s$, $\theta^R$ is the right invariant Maurer-Cartan $1$-form of the Lie group $G$ and $\pi_2$ is the natural projection of $U\times G$ onto its second factor. Therefore, we also write the local Kaluza-Klein metric as $\widehat g_U=\Psi^{-1}(A,\bar\beta,\overline g_U)$.
\end{Pro}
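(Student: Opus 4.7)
The plan is first to use the $G$-equivariance of $\psi_U$ to ensure $\widehat g_U$ is itself a Kaluza-Klein metric, then to identify the three components $(\omega_U,\beta_U,\overline g_U)$ of its canonical decomposition by transporting those of $\widehat g$ through $\psi_U$, and finally to unravel the explicit expression with careful bookkeeping of left- versus right-invariant structures on $G$.

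Since $\psi_U$ is $G$-equivariant, $R_h^{*}\widehat g_U=\psi_U^{*}R_h^{*}\widehat g=\psi_U^{*}\widehat g=\widehat g_U$ for every $h\in G$, so $\widehat g_U\in\mathrm{Met}_{KK}(U\times G)$ and Theorem \ref{teo:structure-Kaluza-Klein-metrics} yields its canonical triple $\Psi(\widehat g_U)=(\omega_U,\beta_U,\overline g_U)$. Because $\psi_U$ is an isomorphism of principal bundles and pulls $\widehat g$ back to $\widehat g_U$, it preserves both vertical subbundles and their orthogonal complements, whence $\omega_U=\psi_{U,*}^{-1}\circ\omega\circ\psi_{U,*}$. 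Likewise, $\psi_U$ intertwines fundamental vector fields, $\psi_{U,*}\xi^{*}=\xi^{*}$, so $\beta_U(x,g)(\xi,\xi')=\widehat g(\xi^{*},\xi'^{*})|_{\psi_U(x,g)}=(\beta\circ\psi_U)(x,g)(\xi,\xi')$; and the commutativity $\pi\circ\psi_U=\pi_1$ combined with the Riemannian submersion property of $\pi$ makes $\overline g_U$ the restriction $\overline g|_U$.

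To extract the explicit formula for $\omega_U$ I would decompose $(D,E)\in T_{(x,g)}(U\times G)$ as $(D,0)+(0,E)$, compute $\psi_{U,*}(D,0)=(R_g)_{*}s_{U,*}(D)$ and $\psi_{U,*}(0,E)=[\theta^{L}_g(E)]^{*}_{\psi_U(x,g)}$ where $\theta^{L}$ is the left-invariant Maurer-Cartan form, apply $\omega$ using its $G$-equivariance together with $\omega(s_{U,*}(D))=[A(D)]^{*}_{s_U(x)}$, and then transport back via $\psi_{U,*}^{-1}$. A short calculation using the identity $[\Ad_{g^{-1}}\eta]^{L}_g=[\eta]^{R}_g$ for $\eta\in\mathfrak g$ delivers $\omega_U((D,E))=(0,[A(D)]^{R}+E)$; equivalently, the associated $\mathfrak g$-valued form reads $\widehat\omega_U(D,E)=\Ad_{g^{-1}}A(D)+\theta^{L}_g(E)$.

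Substituting these data into the canonical decomposition $\widehat g_U=(\omega_U^{*}\beta_U)^{\mathcal V}+\pi_1^{*}\overline g_U$ from Theorem \ref{teo:structure-Kaluza-Klein-metrics}, I would exploit the $G$-equivariance $\beta_U(x,g)=\Ad_g^{*}\overline\beta(x)$ to reduce the adjoint-bundle metric at the generic point to the reference value $\overline\beta=\beta\circ s$, and then apply the key identity $\Ad_g\circ\theta^{L}_g=\theta^{R}_g$ to convert the vertical piece into $\overline\beta(\pi_1^{*}A+\pi_2^{*}\theta^{R},\pi_1^{*}A+\pi_2^{*}\theta^{R})$, which is the asserted formula. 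The main difficulty is purely notational: tracking the interplay between left- and right-invariant structures on $G$, and verifying that the adjoint twist in $\beta_U$ cancels precisely the twist that arises when switching from fundamental-vector-field coordinates to right-invariant coordinates. Once these identifications are fixed, the computation is essentially forced.
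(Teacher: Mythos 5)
Your proposal is correct and reaches all the assertions of the statement; the individual computations you invoke (the formula for $\psi_{U,*}$, the identity $\omega(s_{*}D)=[A(D)]^{*}_{s(x)}$, the twist $[\Ad_{g^{-1}}\eta]^{L}_g=[\eta]^{R}_g$, and $\Ad_g\circ\theta^{L}_g=\theta^{R}_g$) are exactly the ones the paper uses. The logical organization, however, is genuinely different. The paper computes $\psi_U^{*}\widehat g$ head-on: it expands $\psi_{U,*}(D_x,E_g)=R_{g*}\bigl(s_{*}D_x+[R_{g^{-1}*}E_g]^{*}_{s(x)}\bigr)$, uses $G$-invariance and the orthogonality $\mathcal VP\perp_{\widehat g}\mathcal HP$ to evaluate $\widehat g$ at the reference point $s(x)$, obtains the explicit formula $\widehat g_U=\pi_1^{*}\overline g_U+\beta_{s(x)}(A+R_{g^{-1}*}(-),A+R_{g^{-1}*}(-))$ first, and only afterwards computes $\omega_U$, $\beta_U$ and checks a posteriori that $(\omega_U^{*}\beta_U)^{\mathcal V}+\pi_1^{*}\overline g_U$ reproduces the same expression. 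You instead identify the triple $(\omega_U,\beta_U,\overline g_U)$ a priori from Theorem \ref{teo:structure-Kaluza-Klein-metrics}: since $\psi_U$ is a $G$-equivariant isometry over $\Id_U$ it maps verticals to verticals and orthogonal complements to orthogonal complements, so the projectors conjugate, the fundamental vector fields correspond, and the base metrics agree; the explicit expression for $\widehat g_U$ then falls out of the canonical decomposition. Your route is more structural and makes the $\Aut$-equivariance of $\Psi$ do the work that the paper does by direct calculation; the paper's route has the minor advantage of producing the closed formula for $\widehat g_U$ without first needing $\widehat\omega_U$. Both require the same bookkeeping of left- versus right-invariant objects, and your cancellation of the $\Ad_g^{*}$ twist in $\beta_U$ against the $\Ad_{g^{-1}}$ in $\widehat\omega_U$ is exactly the mechanism that, in the paper's version, is hidden in the $G$-invariance step that reduces everything to the point $s(x)$.
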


\begin{proof} Since $\psi_U$ is $G$-equivariant it follows immediately that $\widehat g_{U}=\psi_U^*\widehat g$ is a $G$-invariant metric on the trivial principal bundle. Given $D^1_x,D^2_X\in T_xX$, $E^1_g,E^2_g\in T_gG$, one has
     \begin{align*}
        (\widehat g_U)_{(x,g)}((D^1_x,E^1_g),(D^2_x,E^2_g))&=(\psi_U^*\widehat g)_{(x,g)}((D^1_x,E^1_g),(D^2_x,E^2_g))=\\&=\widehat g_{s(x)g}(\psi_{U,*_{(x,g)}}(D^1_x,E^1_g),\psi_{U,*_{(x,g)}}(D^2_x,E^2_g)).
    \end{align*}
    
    Now by mean of Leibniz's rule, taking into account that $\psi_U= R\circ (s\times \Id_G)$, where $R$ is the $G$-action on the principal bundle $G$, given $D_x\in T_xM$, $E_g\in T_gG$ after computation  we get \begin{align*}
        \psi_{U,*_{(x,g)}}(D_x,E_g)=R_{g*}\left(s_*D_x+[R_{g^{-1}*}E_g]^*_{s(x)}\right).\end{align*}
    Therefore, bearing in mind that $\widehat g$ is $G$-invariant and that the vertical and horizontal bundles are $\widehat g$-orthogonal,  a routinary computation gives 
        \begin{align*}
        &(\widehat g_U)_{(x,g)}((D^1_x,E^1_g),(D^2_x,E^2_g))=\\&=(\widehat g)_{s(x)}\left(\left[s_*D^1_x\right]^\mathcal{H},\left[s_*D^2_x\right]^\mathcal{H}\right) +(\widehat g)_{s(x)}\left(\left[s_*D^1_x\right]^\mathcal{V}+[R_{g^{-1}*}E^1_g]^*_{s(x)}, \left[s_*D^2_x\right]^\mathcal{V} +[R_{g^{-1}*}E^2_g]^*_{s(x)}  \right).
        \end{align*}
        Since $  \widehat{g}=\Psi^{-1}(\omega,\beta,\overline{g})=(\omega^{*}\beta)^{\mathcal{V}}+\pi^{*}\overline{g}$,  it holds \begin{align*}
            (\widehat g)_{s(x)}\left(\left[s_*D^1_x\right]^\mathcal{H},\left[s_*D^2_x\right]^\mathcal{H}\right)&=\overline g_x\left(\pi_*\left[s_*D^1_x\right]^\mathcal{H},\pi_*\left[s_*D^2_x\right]^\mathcal{H}\right)=\overline g_x(D^1_x,D^2_x),\\
             (\widehat g)_{s(x)}\left(\xi^*_{s(x)},\zeta^*_{s(x)}\right)&=\beta_{s(x)}(\xi,\zeta),\quad \xi,\zeta\in\mathfrak g.
        \end{align*} 
On the other hand, given $D_x\in T_xX$ one has 
        \begin{align*}
    \left[s_{*}D_x\right]^{\mathcal{V}}_{s(x)}=\omega_{s(x)}(s_{*}D_x)=[\widehat \omega_{s(x)}(s_{*}D_x)]^*_{s(x)}=[(s^*\widehat\omega)_x(D_x)]^*_{s(x)}=\left[A_x(D_x)\right]^{*}_{s(x)}.
        \end{align*} Taking into account these results we get
\begin{align*}
        &(\widehat g_U)_{(x,g)}((D^1_x,E^1_g),(D^2_x,E^2_g))=\overline g_x(D^1_x,D^2_x)+\beta_{s(x)}\left(A_x(D_x^1)+ R_{g^{-1}*}E^1_g, A_x(D_x^2)+ R_{g^{-1}*}E^2_g\right).
        \end{align*}The expression of $\widehat g_U$ follows since for any $E_g\in T_gG$ the right invariant Maurer-Cartan form $\theta^R$ of $G$ satisfies $$\theta^R_g(E_g)=R_{g^{-1}*}E_g.$$ On the other hand, defining $\omega_U=\psi^{-1}_{U,*}\circ\omega\circ\psi_{U,*}$, after a straightforward   computation we get
    \begin{align*}
    (\omega_U)_{(x,g)}((D_x,E_g))&=R_{g*}\left(\psi^{-1}_{U,*_{s(x)\cdot g}}\left(R_{s(x),*}\left(A_x(D_x)+R_{g^{-1}*}E_g\right)\right)\right).
    \end{align*} 
 Since $\psi_U$ is $G$-equivariant and it verifies $\pi\circ\psi_U=\pi_1$, it follows that $\varphi_U:=\psi_U^{-1}=(\pi,\widehat\varphi_U)$ where $\widehat\varphi_U\colon P_U\to G$ is $G$-equivariant. Whence, we have $R_g\circ \varphi_U\circ R_{s(x)}=R_{(x,g)}\circ c_{g^{-1}}$. Thus we get 
 \begin{align*}
    (\omega_U)_{(x,g)}((D_x,E_g))&=\left[\Ad_{g^{-1}}\left(A_x(D_x)+R_{g^{-1}*}E_g\right) \right]^*_{(x,g)}.
    \end{align*}Hence, the connection $1$-form $\widehat\omega_U$ satisfies \begin{align*}
    (\widehat\omega_U)_{(x,g)}((D_x,E_g))&=\Ad_{g^{-1}}\left(A_x(D_x)+R_{g^{-1}*}E_g\right).
    \end{align*}
    After some computations we have \begin{align*}
    	[(\omega_U^{*}\beta_U)^{\mathcal{V}}&]_{(x,g)}((D^1_x,E^1_g),(D^2_x,E^2_g))=\beta_{s(x)}\left(A_x(D^1_x)+R_{g^{-1}*}E^1_g,A_x(D^2_x)+R_{g^{-1}*}E^2_g\right).
 \end{align*}Taking this into account we obtain
 \begin{align*}
 [(\omega_U^{*}\beta_U)^{\mathcal{V}}+\pi^{*}\overline{g}_U]_{(x,g)}((D^1_x,&E^1_g),(D^2_x,E^2_g))=(\widehat g_U)_{(x,g)}((D^1_x,E^1_g),(D^2_x,E^2_g)).
 \end{align*}This shows that $\widehat g_U=\Psi^{-1}(\omega_U,\beta_U,\overline g_U)$ as claimed in the statement and the proof is finished.

   \end{proof}

\begin{Cor}
    If $\{x^1,\ldots, x^n\}$ is a coordinate system on $U$, $B=\{\xi_1,\ldots,\xi_d\}$ is a basis of the Lie algebra $\mathfrak{g}$ and $B^*=\{\theta^1,\ldots,\theta^d\}$ is a basis of the right invariant $1$-forms on $G$ dual to $B$, then one has 
    $$\widehat g_U= [\bar g_{\mu\nu}+\bar\beta_{ab} A^a_\mu A^b_\nu]dx^\mu\otimes dx^\nu+\bar\beta_{ab} A^a_\mu dx^\mu\otimes\theta^b+ \bar\beta_{ab} A^b_\nu \theta^a\otimes dx^\nu +\bar\beta_{ab}\theta^a\otimes\theta^b,$$ where $$\bar g_{\mu\nu}=\overline g\left(\frac{\partial\ }{\partial x^\mu},\frac{\partial\ }{\partial x^\nu}\right)\in\cin(U),\quad \bar\beta_{ab}=\bar\beta(\xi_a,\xi_b)=\beta_{s(x)}(\xi_a,\xi_b)\in\cin (U).$$
\end{Cor}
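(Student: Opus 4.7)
The plan is to derive the claimed formula by specializing the global identity
$$\widehat g_{U}=\pi_1^*\bar g_U+\bar\beta(\pi_1^*A+\pi_2^*\theta^R,\pi_1^*A+\pi_2^*\theta^R),$$
established in Proposition \ref{pro:expr-loc-metrica-Kaluza-Klein}, to the chosen coordinate system on $U$ and the chosen basis of $\mathfrak{g}$, and then expanding the bilinear form $\bar\beta$ using bilinearity. This is essentially a bookkeeping exercise, so the only obstacle is making sure the cross terms are produced correctly without tacitly symmetrizing (the target expression in the statement keeps both $dx^\mu\otimes\theta^b$ and $\theta^a\otimes dx^\nu$ as separate summands).

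First I would rewrite the three ingredients in local form. In the chart $\{x^\mu\}$, $\bar g_U=\bar g_{\mu\nu}\,dx^\mu\otimes dx^\nu$, where $\bar g_{\mu\nu}:=\overline g(\partial/\partial x^\mu,\partial/\partial x^\nu)\in\cin(U)$. In the basis $B$ of $\mathfrak g$, the gauge potential is $A=A^a_\mu\,dx^\mu\otimes\xi_a$ as recalled in subsection \ref{subsection:local-expressions}. The right invariant Maurer–Cartan $1$-form expands as $\theta^R=\theta^a\otimes\xi_a$, where $\{\theta^a\}$ is the dual basis of right invariant $1$-forms associated to $B$. Slightly abusing notation and omitting the pullbacks $\pi_1^*$, $\pi_2^*$, we then have
$$\pi_1^*A+\pi_2^*\theta^R=A^a_\mu\,dx^\mu\otimes\xi_a+\theta^a\otimes\xi_a.$$

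Next I would plug this into the $\bar\beta$-term and use that $\bar\beta$, regarded as a symmetric $\cin(U)$-bilinear map on $\mathfrak g$-valued $1$-forms, satisfies $\bar\beta(\alpha\otimes\xi_a,\alpha'\otimes\xi_b)=\bar\beta_{ab}\,\alpha\otimes\alpha'$, with $\bar\beta_{ab}:=\bar\beta(\xi_a,\xi_b)\in\cin(U)$. Distributing over the sum of the two terms gives exactly four contributions:
\begin{align*}
\bar\beta(\pi_1^*A+\pi_2^*\theta^R,\pi_1^*A+\pi_2^*\theta^R)
&=\bar\beta_{ab}A^a_\mu A^b_\nu\,dx^\mu\otimes dx^\nu
+\bar\beta_{ab}A^a_\mu\,dx^\mu\otimes\theta^b\\
&\quad+\bar\beta_{ab}A^b_\nu\,\theta^a\otimes dx^\nu
+\bar\beta_{ab}\,\theta^a\otimes\theta^b.
\end{align*}

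Finally I would add the pullback of the base metric $\pi_1^*\bar g_U=\bar g_{\mu\nu}\,dx^\mu\otimes dx^\nu$ and combine it with the first term above, yielding the coefficient $\bar g_{\mu\nu}+\bar\beta_{ab}A^a_\mu A^b_\nu$ in front of $dx^\mu\otimes dx^\nu$. The remaining three terms coincide verbatim with those in the statement, and the identity $\bar\beta_{ab}=\beta_{s(x)}(\xi_a,\xi_b)$ recorded at the end of the corollary is just the definition of $\bar\beta=\beta\circ s$ evaluated on the chosen basis. This completes the derivation; no analytic input beyond Proposition \ref{pro:expr-loc-metrica-Kaluza-Klein} is required.
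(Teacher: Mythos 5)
Your proposal is correct and follows exactly the paper's own route: expand $\theta^R=\theta^a\otimes\xi_a$ via the duality of $B$ and $B^*$, substitute into the formula $\widehat g_U=\pi_1^*\bar g_U+\bar\beta(\pi_1^*A+\pi_2^*\theta^R,\pi_1^*A+\pi_2^*\theta^R)$ from Proposition \ref{pro:expr-loc-metrica-Kaluza-Klein}, and expand by bilinearity. The paper compresses this to two lines, while you spell out the four cross terms explicitly, but the argument is the same.
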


\begin{proof}
    The duality of $B$ and $B^*$ means that $\theta^a(\xi_b^R)=\delta^a_b$, where $\xi_b^R$ denotes the right invariant vector field on $G$ defined by $\xi_b\in\mathfrak g$. Therefore, the right invariant Maurer-Cartan form is $$\theta^R=\theta^a\otimes\xi_a$$and hence the result follows immediately taking into account Proposition \ref{pro:expr-loc-metrica-Kaluza-Klein}.
\end{proof}

Let us determine now the Levi-Civita connection of the local Kaluza-Klein metric $\widehat g_U$ on an adapted reference frame. Notice that we can write $$\widehat g_U=\bar g_{\mu\nu}\omega^\mu\otimes\omega^\nu+\bar\beta_{ab}\omega^a\otimes\omega^b,$$
with $
 \omega^\mu=dx^\mu,\quad\ \ \, 1\leq\mu\leq m,\quad\quad 
 \omega^a=A^a+\theta^a,\quad 1\leq a\leq d.
$ This way we get a reference frame $\mathcal R^*=\{\omega^\mu,\omega^a\}_{\mu=1,a=1}^{m,d}$ on $\Omega^1(U\times G)$. We consider the dual reference frame on $\X(U\times G)$  $$\mathcal R=\left\{E_\mu=\left(\frac{\partial\ }{\partial x^\mu}\right)^h=\frac{\partial\ }{\partial x^\mu}-A^a_\mu\, \xi^R_ a, E_a=\xi^R_a\right\}_{\mu=1,a=1}^{m,d}.$$
 A lengthy but otherwise straightforward computation using the Koszul formula, shows that 
 
 \begin{Pro}\label{pro:Levi-Civita-connection}
 The Levi-Civita connection of the local Kaluza-Klein metric $\widehat g_U$ is given on the reference frame  $\mathcal R=\{E_\mu,E_a\}_{\mu=1,a=1}^{m,d}$ by 
\begin{align*}
\nabla_{E_\mu}E_\nu&=\overline\Gamma_{\mu\nu}^\rho E_\rho-\frac{1}{2}F_{\mu\nu}^a E_a,\\
\nabla_{E_\mu}E_a&=-\frac{1}{2}\bar\beta_{ab}F^b_{\alpha\mu}\bar g^{\alpha\rho} E_\rho+\frac{\bar\beta^{db}}{2}\left\{\partial_\mu\bar\beta_{ad}+\bar\beta([A_\mu,\xi_a],\xi_d)-\bar\beta(\xi_a,[A_\mu,\xi_d]) \right\} E_b,\\
\nabla_{E_a}E_\mu&=-\frac{1}{2}\bar\beta_{ab}F^b_{\alpha\mu}\bar g^{\alpha\rho} E_\rho+\frac{\bar\beta^{db}}{2}\left\{\partial_\mu\bar\beta_{ad}-\bar\beta([A_\mu,\xi_a],\xi_d)-\bar\beta(\xi_a,[A_\mu,\xi_d]) \right\} E_b,\\
 	\nabla_{E_a}E_b&=-\frac{\bar g^{\mu\rho}}{2}\{ \partial_\mu\bar\beta_{ab}-\bar\beta([A_\mu,\xi_a],\xi_b)-\bar\beta(\xi_a,[A_\mu,\xi_b])\}\, E_\rho+\\&\ \ +\frac{\bar\beta^{dc}}{2}\{ \bar\beta (\xi_a,[\xi_b,\xi_d])-\bar\beta (\xi_b,[\xi_d,\xi_a])-\bar\beta (\xi_d,[\xi_a,\xi_b])\}\, E_c,
 \end{align*}where $\overline\Gamma^\rho_{\mu\nu}$ are the Christoffel symbols of $\overline g_U$ with respect to the coordinate system $\{x^1,\ldots, x^n\}$.
 \end{Pro}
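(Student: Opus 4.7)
The plan is to compute the Levi-Civita connection directly from the Koszul formula
\begin{align*}
2\widehat g_U(\nabla_X Y,Z)&=X\widehat g_U(Y,Z)+Y\widehat g_U(X,Z)-Z\widehat g_U(X,Y)\\
&\ \ +\widehat g_U([X,Y],Z)-\widehat g_U([X,Z],Y)-\widehat g_U([Y,Z],X),
\end{align*}
evaluated on the adapted frame $\mathcal R=\{E_\mu,E_a\}$. The crucial preliminary observation is that in this frame the metric has the block-diagonal shape
$\widehat g_U(E_\mu,E_\nu)=\bar g_{\mu\nu}$, $\widehat g_U(E_a,E_b)=\bar\beta_{ab}$, $\widehat g_U(E_\mu,E_a)=0$, where $\bar g_{\mu\nu}$ and $\bar\beta_{ab}$ are pulled back from $U$ and hence depend only on the base coordinates. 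Consequently $E_a\bar g_{\mu\nu}=E_a\bar\beta_{cd}=0$, while $E_\mu$ on such functions reduces to $\partial_\mu$ (the vertical correction $-A^b_\mu\xi^R_b$ acts trivially on base-dependent functions).

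The first step I would carry out is to compute the Lie brackets of the frame vectors, since these are the only non-obvious inputs to Koszul. Using that $[\xi^R_a,\xi^R_b]=-[\xi_a,\xi_b]^R$ (because $[-]^R$ is an antihomomorphism) and that $\xi^R_a$ kills functions on $U$, a direct computation yields the three identities
\begin{align*}
[E_\mu,E_\nu]&=-F^a_{\mu\nu}E_a,\qquad [E_\mu,E_a]=[A_\mu,\xi_a]^R,\qquad [E_a,E_b]=-[\xi_a,\xi_b]^R,
\end{align*}
which already encode the structure of the principal bundle: the first encodes the curvature, the second the adjoint coupling along horizontal directions, and the third the Lie algebra bracket along fibers. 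The $\widehat g_U$-values of these brackets against the frame vectors are expressed in terms of $F^a_{\mu\nu}$, $\bar g$, $\bar\beta$, and the evaluations $\bar\beta([A_\mu,\xi_a],\xi_d)$, $\bar\beta([\xi_a,\xi_b],\xi_d)$ appearing in the statement.

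Next, for each of the four ordered pairs $(X,Y)\in\{(E_\mu,E_\nu),(E_\mu,E_a),(E_a,E_\mu),(E_a,E_b)\}$, I would apply Koszul with $Z=E_\rho$ to obtain the horizontal component and $Z=E_c$ to obtain the vertical component, and then raise indices with the inverses $\bar g^{\mu\nu}$ and $\bar\beta^{ab}$. The horizontal–horizontal case reduces quickly: the three derivative terms reproduce the Christoffel symbols of $\overline g$ (giving $\overline\Gamma^\rho_{\mu\nu}E_\rho$), while the bracket terms contribute only $-\tfrac12 F^a_{\mu\nu}E_a$ from $[E_\mu,E_\nu]=-F^a_{\mu\nu}E_a$. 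The mixed cases combine a curvature contribution (from $[E_\mu,E_\nu]$ traced with $\bar\beta$) and a derivative contribution $\partial_\mu\bar\beta_{ad}$, corrected by the bracket terms $\bar\beta([A_\mu,\xi_a],\xi_d)$ with the specific sign pattern dictated by whether the $\nabla_{E_\mu}E_a$ or $\nabla_{E_a}E_\mu$ formula is used; the asymmetry of the two formulas comes exactly from $[E_\mu,E_a]\neq -[E_a,E_\mu]$ appearing with opposite signs in the last two Koszul terms. The vertical–vertical case has no derivative contributions (by $E_a\bar\beta_{cd}=0$) and no horizontal bracket terms beyond those arising from $[E_a,E_b]=-[\xi_a,\xi_b]^R$, giving the skew-symmetrized Lie bracket combination in the formula for $\nabla_{E_a}E_b$.

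The main obstacle is purely the bookkeeping: getting every sign right in the bracket terms and ensuring that $[-]^R$ is consistently treated as an antihomomorphism so that curvature and structure-constant contributions assemble correctly. Once the brackets above are fixed and one commits to the orthogonal decomposition of $\widehat g_U$ in the frame $\mathcal R$, the four formulas in the statement are obtained by substitution into Koszul and no further geometric input is required.
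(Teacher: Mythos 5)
Your proposal is correct and coincides with the paper's approach: the paper states that the proposition follows from "a lengthy but otherwise straightforward computation using the Koszul formula" on the frame $\mathcal R=\{E_\mu,E_a\}$, which is exactly what you carry out. Your intermediate data (the block-diagonal form of $\widehat g_U$ on $\mathcal R$, the brackets $[E_\mu,E_\nu]=-F^a_{\mu\nu}E_a$, $[E_\mu,E_a]=[A_\mu,\xi_a]^R$, $[E_a,E_b]=-[\xi_a,\xi_b]^R$, and the vanishing of $\xi^R_a$ on base-pulled-back functions) are all correct and suffice to reproduce every term and sign in the stated formulas.
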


\section{Harmonic maps into Kaluza-Klein principal bundles}\label{sec:Harmonic-Kaluza-Klein}

\setlength{\parindent}{0pt} Let us consider now  a Kaluza-Klein principal $G$-bundle  $\pi\colon (P,\widehat g)\to (M,\overline g)$ such that $\widehat g=\Psi^{-1}(\omega,\beta,\overline g)$. Given a Riemannian manifold $(N,g)$ let 
$\widetilde{\Phi}: N\longrightarrow P$, $\Phi: N\longrightarrow M$ be two smooth maps such that the following diagram of maps of Riemannian manifolds
\[\begin{tikzcd}\label{CO}
	(N,g) && (P,\widehat{g}) \\
	\\
	&& (M,\overline{g})
	\arrow["\widetilde{\Phi}", from=1-1, to=1-3]
	\arrow["\Phi"', from=1-1, to=3-3]
	\arrow["\pi", from=1-3, to=3-3]
\end{tikzcd}\]
is commutative, i.e., $\pi\circ\widetilde{\Phi}=\Phi$. The energy functional of $\widetilde{\Phi}$ is
 $E_{g,\hat g}(\widetilde{\Phi})= \frac{1}{2}\int_{N}\|d\widetilde{\Phi}\|_{g,\widehat g}^{2}\Vol_{g}$ and $\widetilde{\Phi}$ is harmonic if $\tau(\widetilde{\Phi})=0$. In a similar way the energy functional of $\Phi$
is $E_{g,\overline g}(\Phi)= \frac{1}{2}\int_{N}\|d\Phi\|_{g,\overline g}^{2}\Vol_{g}$.

Now, we want to establish the relationship between the tension fields of $\widetilde{\Phi}$ and $\Phi$. Applying well known properties of the second fundamental form of a map between Riemannian manifolds, see for instance \cite[Chapter 3]{baird2003harmonic}, one gets straightforwardly the following basic results.
\begin{Pro}\label{pro:second-fund-form-composition}
 The second fundamental form $\mathrm{II}_{\Phi}\in S^2(N,\widetilde\Phi^*TP)$  of the composition
$\Phi=\pi\circ\widetilde{\Phi}: (N,g)\longrightarrow (M,\overline{g})$ satisfies 
\begin{equation}\label{secenn}
    \mathrm{II}_{{\Phi}} =\pi_{*}\circ \mathrm{II}_{{\widetilde{\Phi}}}+\widetilde{\Phi}^{*}(\mathrm{II}_{\pi})
\end{equation}
and the tension field $\tau(\widetilde\Phi)\in\X_\Phi(M)$ of $\Phi$, which is just the $g$-Riemannian trace of $\mathrm{II}_{{\Phi}} $, verifies the identity
\begin{align*}
    \tau(\Phi) &=\pi_{*}(\tau(\widetilde{\Phi}))+\Tr_g\big[\widetilde{\Phi}^{*}(\mathrm{II}_{\pi})\big].
\end{align*}
\end{Pro}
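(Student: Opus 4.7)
The plan is to derive the formula for $\mathrm{II}_{\Phi}$ directly from the Leibniz rule for the natural connection on $T^*N\otimes \Phi^*TM$, taking advantage of the factorization $d\Phi=\pi_*\circ d\widetilde\Phi$. The tension field identity will then follow by simply taking the $g$-trace, since $\pi_*$ is $\cin(N)$-linear and commutes with $\Tr_g$.

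First, I would view the differential $d\widetilde\Phi$ as a section of $T^*N\otimes \widetilde\Phi^*TP$ and regard the fiberwise linear map $\pi_*\colon \widetilde\Phi^*TP\to \widetilde\Phi^*\pi^*TM=\Phi^*TM$ as a section of $\Hom(\widetilde\Phi^*TP,\Phi^*TM)$. Writing $d\Phi=\pi_*\circ d\widetilde\Phi$ and applying the Leibniz rule for the induced connection $\nabla$ on $T^*N\otimes\Phi^*TM$ built from $\nabla^N$ and $\Phi^*\nabla^M$, one obtains
$$\mathrm{II}_\Phi(D_1,D_2)=\bigl(\nabla_{D_1}\pi_*\bigr)\bigl(\widetilde\Phi_*D_2\bigr)+\pi_*\bigl(\mathrm{II}_{\widetilde\Phi}(D_1,D_2)\bigr),\qquad D_1,D_2\in\X(N).$$
This already isolates the $\pi_*\circ \mathrm{II}_{\widetilde\Phi}$ contribution that appears in \eqref{secenn}.

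Second, the key step is identifying the first summand with the pullback $\widetilde\Phi^*(\mathrm{II}_\pi)(D_1,D_2)$. Here I would invoke the naturality of pullback connections, namely $\widetilde\Phi^*(\pi^*\nabla^M)=\Phi^*\nabla^M$, together with the very definition $\mathrm{II}_\pi(X_1,X_2)=(\pi^*\nabla^M)_{X_1}\pi_*X_2-\pi_*(\nabla^P_{X_1}X_2)$ for $X_1,X_2\in\X(P)$. Extending $\widetilde\Phi_*D_1,\widetilde\Phi_*D_2$ to local vector fields on $P$ near the image of $\widetilde\Phi$ and unpacking both sides gives $(\nabla_{D_1}\pi_*)(\widetilde\Phi_*D_2)=\widetilde\Phi^*(\mathrm{II}_\pi)(D_1,D_2)$; the tensoriality of $\mathrm{II}_\pi$ guarantees the result is independent of the chosen extension. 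Substituting into the previous display yields \eqref{secenn}.

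Third, taking $\Tr_g$ on both sides of \eqref{secenn} and using the $\cin(N)$-linearity of $\pi_*$ to commute it past the trace gives
$$\tau(\Phi)=\Tr_g\mathrm{II}_\Phi=\pi_*\bigl(\Tr_g\mathrm{II}_{\widetilde\Phi}\bigr)+\Tr_g\bigl[\widetilde\Phi^*(\mathrm{II}_\pi)\bigr]=\pi_*(\tau(\widetilde\Phi))+\Tr_g\bigl[\widetilde\Phi^*(\mathrm{II}_\pi)\bigr],$$
which is the second identity. The main (and essentially only) obstacle is making the second step precise: producing a clean proof that $\widetilde\Phi^*(\mathrm{II}_\pi)$ is exactly the tensor measuring the failure of $\pi_*$ to intertwine the pullback connections $\widetilde\Phi^*\nabla^P$ and $\Phi^*\nabla^M$. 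This is a routine but slightly technical verification, standard in the theory (see \cite[Chapter 3]{baird2003harmonic}), and the rest of the argument is merely a symmetric rewriting of definitions.
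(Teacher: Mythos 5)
Your proposal is correct and follows essentially the same route as the paper, which simply invokes the standard composition law for second fundamental forms from \cite[Chapter 3]{baird2003harmonic}; your Leibniz-rule derivation of $\mathrm{II}_{\Phi}=(\nabla\pi_*)(d\widetilde\Phi,d\widetilde\Phi)+\pi_*\circ\mathrm{II}_{\widetilde\Phi}$ is precisely the standard proof of that law, and the identification of the first term with $\widetilde\Phi^*(\mathrm{II}_\pi)$ via naturality of pullback connections and tensoriality is handled correctly. The trace step, using that $\pi_*$ is fiberwise linear and hence commutes with $\Tr_g$, matches the paper's conclusion.
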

\begin{Cor}
     
    If $\widetilde{\Phi}$ is a harmonic map, then one has
    \begin{align*}
        \tau(\Phi) &=\Tr_g\big[\widetilde{\Phi}^{*}(\mathrm{II}_{\pi})\big].
    \end{align*}
    Therefore, if $\pi$ is totally geodesic (i.e., its second fundamental form $\mathrm{II}_{\pi}$ vanishes), then $\Phi$ is also harmonic.
\end{Cor}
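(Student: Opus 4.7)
The plan is to invoke Proposition \ref{pro:second-fund-form-composition} directly, specializing to the hypothesis that $\widetilde{\Phi}$ is harmonic. That proposition records the identity
\[
\tau(\Phi) = \pi_{*}\bigl(\tau(\widetilde{\Phi})\bigr) + \Tr_{g}\bigl[\widetilde{\Phi}^{*}(\mathrm{II}_{\pi})\bigr],
\]
which is obtained by taking the $g$-trace of (\ref{secenn}) and using the fact that $\pi_{*}$ is a vector bundle morphism and hence commutes with the trace. Since $\widetilde{\Phi}$ is harmonic we have $\tau(\widetilde{\Phi})=0$, and therefore $\pi_{*}(\tau(\widetilde{\Phi}))=0$. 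Substituting this into the identity above yields the first claim $\tau(\Phi)=\Tr_{g}\bigl[\widetilde{\Phi}^{*}(\mathrm{II}_{\pi})\bigr]$.

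For the second assertion I would argue as follows. If $\pi$ is totally geodesic then $\mathrm{II}_{\pi}=0$ as a section of $S^{2}(T^{*}P)\otimes\pi^{*}TM$; since the pullback by $\widetilde{\Phi}$ is $\cin(N)$-linear in the tensorial argument, $\widetilde{\Phi}^{*}(\mathrm{II}_{\pi})=0$, and hence its $g$-trace vanishes identically. Combining with the first part gives $\tau(\Phi)=0$. By the characterization of harmonic maps recalled immediately before the Corollary (the Euler--Lagrange equation $\tau(\Phi)=0$), this means that $\Phi$ is harmonic.

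There is no genuine obstacle here: the Corollary is essentially a one-line deduction from Proposition \ref{pro:second-fund-form-composition}. The only conceptual point worth flagging is that $\mathrm{II}_{\pi}$ takes values in the \emph{vertical} bundle $\mathcal{V}P$ (because $\pi\circ\pi=\pi$ is trivial in the relevant sense, so the image of $\mathrm{II}_{\pi}$ lies in $\ker\pi_{*}$), and this vertical-valued tensor is precisely what drops out when $\pi$ is totally geodesic. This observation is not needed to close the argument, but it explains geometrically why the obstruction to $\Phi$ being harmonic, given that $\widetilde{\Phi}$ is, is encoded entirely by the geometry of the fibers of $\pi$ — a theme that Section 7 of the paper will exploit via O'Neill's tensor $\bm{T}$.
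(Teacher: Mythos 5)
Your argument is correct and is exactly the paper's (implicit) proof: the Corollary is an immediate specialization of Proposition \ref{pro:second-fund-form-composition} to $\tau(\widetilde{\Phi})=0$, followed by the tensoriality of the pullback when $\mathrm{II}_{\pi}=0$. One caveat about your closing aside, which you correctly flag as inessential: $\mathrm{II}_{\pi}$ is a section of $S^{2}(T^{*}P)\otimes\pi^{*}TM$, so it takes values in $\pi^{*}TM$, not in the vertical bundle $\mathcal{V}P\subset\ker\pi_{*}$; the correct statement (Proposition \ref{vbv}) is that $\mathrm{II}_{\pi}=-\pi_{*}\circ(2\bm{A}+\bm{T})$, i.e.\ it is the $\pi_{*}$-image of a $TP$-valued tensor built from O'Neill's tensors, which is how the fiber geometry enters.
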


Now let us recall some key concepts.
   
\begin{De}
Let $\pi\colon P\to M$ be a principal $G$-bundle and let $\widehat g=\Psi^{-1}(\omega,\beta,\overline g)\in\mathrm{Met}_{KK}(P)$ be a Kaluza-Klein metric. One says that a vector field $D\in\mathfrak X(P)$ is:
\begin{enumerate}
    \item $\pi$-projectable if it is $\pi$-related to a vector field $\bar D\in\mathfrak X(M)$ that is called its $\pi$-projection.
    \item $\omega$-horizontal if its vertical part $D^{\mathcal{V}}=\omega(D)$ vanishes.
    \item $\omega$-basic if it is $\omega$-horizontal and $\pi$-projectable.
    \item the $\omega$-horizontal lift of a vector field $\bar D\in\mathfrak{X}(M)$ if it is basic and its $\pi$-projection is $\bar D$.
\end{enumerate}  We might suppress the $\omega$ prefix if the connection is known from the context and there is no risk of confusion.
\end{De}

Notice that a vector field $D\in\mathfrak{X}(P)$ is $\pi$-projectable if and only if its $\omega$-horizontal part $D^\mathcal{H}$ is $\pi$-projectable; that is, $D$ is $\pi$-projectable if and only if, its horizontal part $D^\mathcal{H}$ is $\omega$-basic . 

Since the Kaluza-Klein metric $\widehat{g}=\Psi^{-1}(\omega,\beta,\overline{g})$ on the principal $G$-bundle $\pi\colon P\to M$  is given by $$  \widehat{g}=(\omega^{*}\beta)^{\mathcal{V}}+\pi^{*}\overline{g}$$  it follows that $\pi_ {*}$ maps the horizontal tangent bundle $\mathcal{H}P=(\ker\pi_{*})^{\perp_{\hat g}}$ isometrically onto the tangent bundle $TM$ of the base manifold $M$. Therefore, every Kaluza-Klein principal $G$-bundle $\pi\colon (P,\widehat g)\to (M,\overline g)$ is a Riemannian submersion.

\begin{De}[O'Neill's Tensors \cite{o1966fundamental}]
 Let $\pi: (P,\widehat g)\to (M,\overline g)$ be a Kaluza-Klein principal $G$-bundle  and let $\nabla^{P}$ be the Levi-Civita connection of $\widehat{g}$. The fundamental $(2,1)$-tensors $\bm{A}$, $\bm{T}$ of $\pi$ are given, for arbitrary vector fields $D_{1},D_{2}$ on $P$ by 
 \begin{align}\label{tena}
     \bm{A}(D_{1},D_{2}) &=\big[\nabla^{P}_{D_{1}^{\mathcal{H}}}D_{2}^{\mathcal{V}}\big]^{\mathcal{H}}+\big[\nabla^{P}_{D_{1}^{\mathcal{H}}}D_{2}^{\mathcal{H}}\big]^{\mathcal{V}},
     \end{align}
     \begin{align}\label{tent}
     \bm{T}(D_{1},D_{2}) &=\big[\nabla^{P}_{D_{1}^{\mathcal{V}}}D_{2}^{\mathcal{H}}\big]^{\mathcal{V}}+[\nabla^{P}_{D_{1}^{\mathcal{V}}}D_{2}^{\mathcal{V}}]^{\mathcal{H}}.
 \end{align}
The tensor $\bm{T}$ represents the family of fundamental forms of the fibers of $P$.
\end{De}

Using the properties of these tensors described in \cite{o1966fundamental}, we get the following results.  

\begin{Lem}\label{lem:descomposicion-derivada-covariante} Let $\pi: (P,\widehat g)\to (M,\overline g)$ be a Kaluza-Klein principal $G$-bundle. If $\nabla^{P,\mathcal{V}}$, $\nabla^{P,\mathcal{H}}$ are the connections induced on the vertical $\mathcal VP\to P$ and horizontal subbundles $\mathcal HP\to P$, respectively, by the Levi-Civita of $P$, then for any $D_1,D_2\in\X(P)$ it holds:
\begin{align*}
 	[\nabla^P_{D_1}D_2]^\mathcal{V}&=	\nabla^{P,\mathcal{V}}_{D_1}D_2^\mathcal{V}+\bm{T}(D_1^\mathcal{V},D_2^\mathcal{H})+\bm{A}(D_1^\mathcal{H},D_2^\mathcal{H}),\\
 	[\nabla^P_{D_1}D_2]^\mathcal{H}&=\nabla^{P,\mathcal{H}}_{D_1}D_2^\mathcal{H}+\bm{T}(D_1^\mathcal{V},D_2^\mathcal{V})+\bm{A}(D_1^\mathcal{H},D_2^\mathcal{V}).
 	\end{align*}

\end{Lem}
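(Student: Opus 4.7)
The plan is to decompose each input vector field into its horizontal and vertical components via $D_i=D_i^{\mathcal H}+D_i^{\mathcal V}$ for $i=1,2$, expand $\nabla^{P}_{D_{1}}D_{2}$ by $\R$-linearity (plus the Leibniz rule, noting that the projections $\omega$ and $\Id-\omega$ are $C^{\infty}(P)$-linear), and then take the vertical and horizontal parts of each of the four resulting summands $\nabla^{P}_{D_1^{X}}D_2^{Y}$ with $X,Y\in\{\mathcal H,\mathcal V\}$. Each of these eight pieces is then identified with one of the quantities appearing on the right-hand sides.

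The key observation is that the O'Neill tensors in (\ref{tena}) and (\ref{tent}) isolate exactly the ``mixed'' pieces. Reading the definitions directly, for any arguments one has
\[
\bm{A}(D_1^{\mathcal H},D_2^{\mathcal H})=[\nabla^{P}_{D_1^{\mathcal H}}D_2^{\mathcal H}]^{\mathcal V},\quad \bm{A}(D_1^{\mathcal H},D_2^{\mathcal V})=[\nabla^{P}_{D_1^{\mathcal H}}D_2^{\mathcal V}]^{\mathcal H},
\]
\[
\bm{T}(D_1^{\mathcal V},D_2^{\mathcal H})=[\nabla^{P}_{D_1^{\mathcal V}}D_2^{\mathcal H}]^{\mathcal V},\quad \bm{T}(D_1^{\mathcal V},D_2^{\mathcal V})=[\nabla^{P}_{D_1^{\mathcal V}}D_2^{\mathcal V}]^{\mathcal H},
\]
because in each definition one of the two summands vanishes automatically when the corresponding argument is already purely horizontal or purely vertical. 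For the remaining ``unmixed'' pieces I will use the defining property of the induced connections on a subbundle: since $D_2^{\mathcal V}$ is a section of $\mathcal VP$, one has $[\nabla^{P}_{D}D_2^{\mathcal V}]^{\mathcal V}=\nabla^{P,\mathcal V}_{D}D_2^{\mathcal V}$ for every $D\in\mathfrak X(P)$, and analogously $[\nabla^{P}_{D}D_2^{\mathcal H}]^{\mathcal H}=\nabla^{P,\mathcal H}_{D}D_2^{\mathcal H}$.

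Assembling the four terms of $[\nabla^{P}_{D_1}D_2]^{\mathcal V}$ using the above identifications yields
\[
[\nabla^{P}_{D_1}D_2]^{\mathcal V}=\bm{A}(D_1^{\mathcal H},D_2^{\mathcal H})+\bm{T}(D_1^{\mathcal V},D_2^{\mathcal H})+\nabla^{P,\mathcal V}_{D_1^{\mathcal H}}D_2^{\mathcal V}+\nabla^{P,\mathcal V}_{D_1^{\mathcal V}}D_2^{\mathcal V},
\]
and collapsing the last two summands by $C^{\infty}(P)$-linearity of $\nabla^{P,\mathcal V}$ in the first slot gives the claimed formula. The horizontal statement is proved by the mirror-image bookkeeping, grouping the two $\nabla^{P,\mathcal H}$ terms and reading off $\bm{A}(D_1^{\mathcal H},D_2^{\mathcal V})$ and $\bm{T}(D_1^{\mathcal V},D_2^{\mathcal V})$ from the remaining two.

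There is essentially no obstacle: the lemma is a direct rearrangement of the defining formulas for $\bm{A}$ and $\bm{T}$ combined with the elementary fact that the induced connection on a distribution is the projection of the ambient Levi-Civita connection. The only subtle point worth checking is that none of the identifications depend on $D_1,D_2$ being projectable or basic; $\bm A$ and $\bm T$ are genuine $(2,1)$-tensors on $P$, so the decomposition holds pointwise and for arbitrary $\mathfrak X(P)$-arguments, which is exactly what we need.
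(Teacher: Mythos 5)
Your proof is correct and is essentially the argument the paper intends: the paper gives no written proof, merely citing the properties of $\bm{A}$ and $\bm{T}$ from O'Neill, and your direct expansion of $\nabla^P_{D_1}D_2$ into the four cross terms, identifying the mixed pieces with the tensors via their definitions in (\ref{tena})--(\ref{tent}) and the unmixed pieces with the projected connections, is exactly that verification. The bookkeeping checks out in both the vertical and horizontal cases, and your remark that tensoriality of $\bm{A}$ and $\bm{T}$ removes any need for projectability of $D_1,D_2$ is the right point to flag.
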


\begin{Lem} Let $\pi: (P,\widehat g)\to (M,\overline g)$ be a Kaluza-Klein principal $G$-bundle. If  $\widetilde{\Phi}: (N,g)\longrightarrow (P,\widehat{g})$ is a smooth map and $\widetilde\Phi^*\nabla^P$ is the pullback under $\widetilde\Phi$ of the Levi-Civita connection of $(P,\widehat g)$, then for any $U_1,U_2\in\X(N)$ it holds:
\begin{align*}
 	[(\widetilde\Phi^*\nabla^P&)_{U_1}(\widetilde\Phi_*U_2)]^\mathcal{V}=\\&=	(\widetilde\Phi^*\nabla^{P,\mathcal{V}})_{U_1}[\widetilde\Phi_*(U_2)]^\mathcal{V}+(\widetilde{\Phi}^{\#}\bm{T})([\widetilde\Phi_*(U_1)]^\mathcal{V},[\widetilde\Phi_*(U_2)]^\mathcal{H})+(\widetilde{\Phi}^{\#}\bm{A})([\widetilde\Phi_*(U_1)]^\mathcal{H},[\widetilde\Phi_*(U_2)]^\mathcal{H}),\\
 	\\
 	[(\widetilde\Phi^*\nabla^P&)_{U_1}(\widetilde\Phi_*U_2)]^\mathcal{H}=\\&=(\widetilde\Phi^*\nabla^{P,\mathcal{H}})_{U_1}[\widetilde\Phi_*(U_2)]^\mathcal{H}+(\widetilde\Phi^{\#}\bm{T})([\widetilde\Phi_*(U_1)]^\mathcal{V},[\widetilde\Phi_*(U_2)]^\mathcal{V})+(\widetilde\Phi^{\#}\bm{A})([\widetilde\Phi_*(U_1)]^\mathcal{H},[\widetilde\Phi_*(U_2)]^\mathcal{V}).
 	\end{align*}
\end{Lem}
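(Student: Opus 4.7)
The strategy is to pull back Lemma \ref{lem:descomposicion-derivada-covariante} to $\widetilde\Phi^*TP$, exploiting three elementary facts: (i) the splitting $TP=\mathcal V P\oplus\mathcal H P$ induces a splitting $\widetilde\Phi^*TP=\widetilde\Phi^*\mathcal V P\oplus\widetilde\Phi^*\mathcal H P$, and the pullback of the vertical projector $\omega$ agrees with the intrinsic projector of this pullback splitting, so that for $\sigma\in\Gamma(N,\widetilde\Phi^*TP)$ the notation $\sigma^{\mathcal V}$ is unambiguous; (ii) the pullback connection satisfies $(\widetilde\Phi^*\nabla^P)_{U}(\widetilde\Phi^{\#}s)|_{x}=\nabla^P_{\widetilde\Phi_{*x}U_x}s$ for every $s\in\Gamma(P,TP)$ and $U\in\X(N)$, depending on $U$ only through the tangent vector $\widetilde\Phi_{*x}U_x$; and (iii) the tensors $\bm T$ and $\bm A$ are $\cin(P)$-tensorial in both slots, hence admit canonical pullbacks $\widetilde\Phi^{\#}\bm T$ and $\widetilde\Phi^{\#}\bm A$ acting on sections of $\widetilde\Phi^*TP$, and the defining identity $\nabla^{P,\mathcal V}_{X}V=[\nabla^P_{X}V]^{\mathcal V}$ of the projected connection pulls back to $(\widetilde\Phi^*\nabla^{P,\mathcal V})_U\sigma=[(\widetilde\Phi^*\nabla^P)_U\sigma]^{\mathcal V}$ for every $\sigma\in\Gamma(N,\widetilde\Phi^*\mathcal V P)$, and analogously for the horizontal projection.

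With these in hand, the computation is local on $N$. Around any point I would pick a local frame $\{V_a,H_\mu\}$ of $TP$ adapted to the splitting and write $\widetilde\Phi_*U_2=\alpha^a\,\widetilde\Phi^{\#}V_a+\beta^\mu\,\widetilde\Phi^{\#}H_\mu$ with $\alpha^a,\beta^\mu\in\cin(N)$, so that $[\widetilde\Phi_*U_2]^{\mathcal V}=\alpha^a\widetilde\Phi^{\#}V_a$ and $[\widetilde\Phi_*U_2]^{\mathcal H}=\beta^\mu\widetilde\Phi^{\#}H_\mu$. Expanding $(\widetilde\Phi^*\nabla^P)_{U_1}(\widetilde\Phi_*U_2)$ by Leibniz, the derivative contributions $U_1(\alpha^a)\widetilde\Phi^{\#}V_a$ and $U_1(\beta^\mu)\widetilde\Phi^{\#}H_\mu$ reassemble via (iii) into $(\widetilde\Phi^*\nabla^{P,\mathcal V})_{U_1}[\widetilde\Phi_*U_2]^{\mathcal V}$ and $(\widetilde\Phi^*\nabla^{P,\mathcal H})_{U_1}[\widetilde\Phi_*U_2]^{\mathcal H}$, respectively, whereas the frame-differentiation terms $\alpha^a(\widetilde\Phi^*\nabla^P)_{U_1}(\widetilde\Phi^{\#}V_a)$ and $\beta^\mu(\widetilde\Phi^*\nabla^P)_{U_1}(\widetilde\Phi^{\#}H_\mu)$ reduce by (ii) to pointwise covariant derivatives of $V_a$ and $H_\mu$ along any local extension of $\widetilde\Phi_*U_1$. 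To these I would apply Lemma \ref{lem:descomposicion-derivada-covariante}, and then use $\cin(P)$-tensoriality to recognize the resulting $\bm T$- and $\bm A$-contributions, once recombined with the coefficients $\alpha^a,\beta^\mu$, as the stated pullback contractions $(\widetilde\Phi^{\#}\bm T)([\widetilde\Phi_*U_1]^{\mathcal V},[\widetilde\Phi_*U_2]^{\mathcal H})$, $(\widetilde\Phi^{\#}\bm A)([\widetilde\Phi_*U_1]^{\mathcal H},[\widetilde\Phi_*U_2]^{\mathcal H})$, and their $\mathcal H$-counterparts.

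The hard part will be almost entirely notational: one must keep carefully separate the two a priori different vertical projectors — the pullback of $\omega$ acting on sections of $\widetilde\Phi^*TP$, and the one defined by the pullback splitting — and likewise distinguish $\widetilde\Phi^*\nabla^{P,\mathcal V}$ from the $\mathcal V$-projection of $\widetilde\Phi^*\nabla^P$; facts (i) and (iii) are precisely the statements that these identifications are consistent. Once that consistency is in place, matching $\mathcal V$- and $\mathcal H$-components on the two sides of each displayed identity reduces to a routine termwise comparison, with no further obstacle.
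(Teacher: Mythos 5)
Your proposal is correct and follows the same route the paper intends: the lemma is stated as the pullback along $\widetilde\Phi$ of Lemma \ref{lem:descomposicion-derivada-covariante}, and your verification via an adapted local frame, the Leibniz rule for $\widetilde\Phi^*\nabla^P$, and the $\cin(P)$-tensoriality of $\bm{T}$ and $\bm{A}$ is exactly the standard argument that makes this pullback legitimate. The consistency points you flag in (i)--(iii) are indeed the only content of the proof, and your termwise comparison closes it.
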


\begin{De}
	Let $\pi: (P,\widehat g)\to (M,\overline g)$ be a Kaluza-Klein principal $G$-bundle such that $\widehat g=\Psi^{-1}(\omega,\beta,\overline g)$ and let $\widetilde\Phi\colon N\to P$ be a smooth map.  The decomposition $$TP=\mathcal{V}P\oplus\mathcal{H}P$$ into the Whitney sum of the vertical  and  $\omega$-horizontal subbundles induces a decomposition of the vector valued $1$-form $d\widetilde\Phi\in\Omega^1(N,\widetilde\Phi^*TP)$ into a sum of  $1$-forms taking values in the pullback under $\widetilde\Phi$ of the vertical and $\omega$-horizontal subbundles
	$$d\widetilde\Phi=d^\mathcal{V}\widetilde\Phi+d^\mathcal{H}\widetilde\Phi .$$ We say that $d^\mathcal{V}\widetilde\Phi\in \Omega^1(N,\widetilde\Phi^*\mathcal VP)$, $d^\mathcal{H}\widetilde\Phi\in\Omega^1(N,\widetilde\Phi^*\mathcal HP)$ are, respectively, the vertical and $\omega$-horizontal differentials of $\widetilde\Phi$.
	
	Analogously, if $\nabla=\nabla^N\otimes1+1\otimes\widetilde\Phi^*\nabla^P$ is the connection induced on $TN\otimes\widetilde\Phi^*TP$ by the Levi-Civita connections $\nabla^N,\nabla^P$ of $(N,g)$ and $(P,\widehat g)$, respectively, then the second fundamental form $\mathrm{II}_{\widetilde\Phi}=\nabla(d\widetilde\Phi)\in S^2(N,\widetilde\Phi^*TP)$ of $\widetilde\Phi$ decomposes into a sum $$\mathrm{II}_{\widetilde\Phi}=\left[\mathrm{II}_{\widetilde\Phi}\right]^\mathcal{V}+\left[\mathrm{II}_{\widetilde\Phi}\right]^\mathcal{H}.$$ We say that $\left[\mathrm{II}_{\widetilde\Phi}\right]^\mathcal{V}\in S^2(N,\widetilde\Phi^*\mathcal VP)$, $\left[\mathrm{II}_{\widetilde\Phi}\right]^\mathcal{H}\in S^2(N,\widetilde\Phi^*\mathcal HP)$ are, respectively, the  vertical and $\omega$-horizontal components of the second fundamental $\mathrm{II}_{\widetilde\Phi}$ of $\widetilde\Phi$. In a similar way, we say that $$[\tau(\widetilde\Phi)]^\mathcal{V}:=\Tr_g([\mathrm{II}_{\widetilde\Phi}]^\mathcal{V})\in\X^\mathcal{V}_{\widetilde\Phi}(P),\quad\quad\quad [\tau(\widetilde\Phi)]^\mathcal{H}:=\Tr_g([\mathrm{II}_{\widetilde\Phi}]^\mathcal{H})\in\X^\mathcal{H}_{\widetilde\Phi}(P),$$ are the vertical and $\omega$-horizontal components of the tension field $\tau(\widetilde\Phi)$ of $\widetilde\Phi$.
\end{De}

Taking into account the previous lemmas, one straightforwardly proves the following result. 

\begin{Pro}
	Let $\pi: (P,\widehat g)\to (M,\overline g)$ be a Kaluza-Klein principal $G$-bundle such that $\widehat g=\Psi^{-1}(\omega,\beta,\overline g)$ and let $\widetilde\Phi\colon (N,g)\to (P,\widehat g)$ be a smooth map. The vertical and $\omega$-horizontal components of the second fundamental form $\mathrm{II}_{\widetilde\Phi}$ of $\widetilde\Phi$  are given by \begin{align*}
 	\left[\mathrm{II}_{\widetilde\Phi}\right]^\mathcal{V}&=[\nabla(d^\mathcal{V}\widetilde\Phi)]^\mathcal{V}+[\nabla(d^\mathcal{H}\widetilde\Phi)]^\mathcal{V},\\
 	\left[\mathrm{II}_{\widetilde\Phi}\right]^\mathcal{H}&=[\nabla(d^\mathcal{V}\widetilde\Phi)]^\mathcal{H}+[\nabla(d^\mathcal{H}\widetilde\Phi)]^\mathcal{H}.
 \end{align*} Moreover, one has 
 \begin{align*}
 	\left[\mathrm{II}_{\widetilde\Phi}\right]^\mathcal{V}&=\nabla^\mathcal{V}(d^\mathcal{V}\widetilde\Phi)+\widetilde\Phi^*\left(\bm{T}^{\mathcal{V},\mathcal{H}}\right)+\widetilde\Phi^{*}\left(\bm{A}^{\mathcal{H},\mathcal{H}}\right),\\
 	\left[\mathrm{II}_{\widetilde\Phi}\right]^\mathcal{H}&=\nabla^\mathcal{H}(d^\mathcal{H}\widetilde\Phi)+\widetilde\Phi^*\left(\bm{T}^{\mathcal{V},\mathcal{V}}\right)+\widetilde\Phi^{*}\left(\bm{A}^{\mathcal{H},\mathcal{V}}\right),
 \end{align*} where $\nabla^\mathcal{V}=\nabla^N\otimes1+1\otimes\widetilde\Phi^*\nabla^{P,\mathcal{V}}$, $\nabla^\mathcal{H}=\nabla^N\otimes1+1\otimes\widetilde\Phi^*\nabla^{P,\mathcal{H}}$ are, respectively, the induced connections  on $TN\otimes\widetilde\Phi^*\mathcal VP$ and $TN\otimes\widetilde\Phi^*\mathcal HP$ and given $D_1,D_2\in \X(N)$ it is
 \begin{align*}
 	\widetilde\Phi^*\left(\bm{T}^{\mathcal{V},\mathcal{H}}\right)(D_1,D_2)&=(\widetilde\Phi^{\#}\bm{T})([\widetilde\Phi_*(D_1)]^\mathcal{V},[\widetilde\Phi_*(D_2)]^\mathcal{H}),\\
 	\widetilde\Phi^*\left(\bm{A}^{\mathcal{H},\mathcal{H}}\right)(D_1,D_2)&=(\widetilde\Phi^{\#}\bm{A})([\widetilde\Phi_*(D_1)]^\mathcal{H},[\widetilde\Phi_*(D_2)]^\mathcal{H}),\\
 	\widetilde\Phi^*\left(\bm{T}^{\mathcal{V},\mathcal{V}}\right)(D_1,D_2)&=(\widetilde\Phi^{\#}\bm{T})([\widetilde\Phi_*(D_1)]^\mathcal{V},[\widetilde\Phi_*(D_2)]^\mathcal{V}),\\
 	\widetilde\Phi^*\left(\bm{A}^{\mathcal{H},\mathcal{V}}\right)(D_1,D_2)&=(\widetilde\Phi^{\#}\bm{A})([\widetilde\Phi_*(D_1)]^\mathcal{H},[\widetilde\Phi_*(D_2)]^\mathcal{V}).
 \end{align*}
\end{Pro}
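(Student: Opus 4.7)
\medskip
\noindent\textbf{Proof plan.} The strategy is to decompose $\mathrm{II}_{\widetilde\Phi}=\nabla(d\widetilde\Phi)$ in two stages: first along the splitting $d\widetilde\Phi=d^\mathcal V\widetilde\Phi+d^\mathcal H\widetilde\Phi$, and then via the vertical/horizontal projection on the target $\widetilde\Phi^*TP=\widetilde\Phi^*\mathcal VP\oplus\widetilde\Phi^*\mathcal HP$, identifying the four resulting pieces by means of the pullback O'Neill lemma stated above.

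First, since $\nabla=\nabla^N\otimes 1+1\otimes\widetilde\Phi^*\nabla^P$ is $\R$-linear, the decomposition of $d\widetilde\Phi$ immediately gives $\mathrm{II}_{\widetilde\Phi}=\nabla(d^\mathcal V\widetilde\Phi)+\nabla(d^\mathcal H\widetilde\Phi)$, and projecting both sides onto $\widetilde\Phi^*\mathcal VP$ and $\widetilde\Phi^*\mathcal HP$ yields at once the first pair of identities of the statement.

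Next, I would evaluate on vector fields $D_1,D_2\in\X(N)$. For $\bullet\in\{\mathcal V,\mathcal H\}$ one has $\nabla(d^\bullet\widetilde\Phi)(D_1,D_2)=(\widetilde\Phi^*\nabla^P)_{D_1}[\widetilde\Phi_*D_2]^\bullet-[\widetilde\Phi_*(\nabla^N_{D_1}D_2)]^\bullet$, and the subtrahend is already pure (vertical when $\bullet=\mathcal V$, horizontal when $\bullet=\mathcal H$), so it survives only under the matching projection. The key step is then to apply the pullback O'Neill lemma to the first summand: for a purely vertical second slot $[\widetilde\Phi_*D_2]^\mathcal V$ the vertical projection leaves exactly $(\widetilde\Phi^*\nabla^{P,\mathcal V})_{D_1}[\widetilde\Phi_*D_2]^\mathcal V$, while the horizontal projection picks up the cross-terms $\widetilde\Phi^*(\bm T^{\mathcal V,\mathcal V})(D_1,D_2)+\widetilde\Phi^*(\bm A^{\mathcal H,\mathcal V})(D_1,D_2)$; symmetrically, for a purely horizontal second slot the horizontal projection leaves $(\widetilde\Phi^*\nabla^{P,\mathcal H})_{D_1}[\widetilde\Phi_*D_2]^\mathcal H$ and the vertical projection produces $\widetilde\Phi^*(\bm T^{\mathcal V,\mathcal H})(D_1,D_2)+\widetilde\Phi^*(\bm A^{\mathcal H,\mathcal H})(D_1,D_2)$. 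Assembling the four contributions and recognising each combination $(\widetilde\Phi^*\nabla^{P,\bullet})_{D_1}[\widetilde\Phi_*D_2]^\bullet-[\widetilde\Phi_*(\nabla^N_{D_1}D_2)]^\bullet$ as $\nabla^\bullet(d^\bullet\widetilde\Phi)(D_1,D_2)$ via the very definition of the pullback connections on $\widetilde\Phi^*\mathcal VP$ and $\widetilde\Phi^*\mathcal HP$ gives the stated formulas.

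The only real difficulty is combinatorial bookkeeping: one must carefully track which vertical/horizontal character each slot of $\bm T$ and $\bm A$ receives in the four $(d^\bullet\widetilde\Phi,\mathrm{proj}^\bullet)$ combinations, using that $\bm T$ always takes a vertical first argument and $\bm A$ a horizontal one, and that the character of the second argument is dictated by whether one has applied the vertical or the horizontal differential of $\widetilde\Phi$. Once this is correctly set up, the proof reduces to four applications of the pullback O'Neill lemma followed by a straightforward collection of terms.
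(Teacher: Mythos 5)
Your argument is correct and is essentially the proof the paper intends: the statement is derived by splitting $\mathrm{II}_{\widetilde\Phi}(D_1,D_2)=(\widetilde\Phi^*\nabla^P)_{D_1}(\widetilde\Phi_*D_2)-\widetilde\Phi_*(\nabla^N_{D_1}D_2)$ along $\mathcal VP\oplus\mathcal HP$ and invoking the two preceding decomposition lemmas, exactly as you do. The only cosmetic difference is that you apply the pullback O'Neill lemma separately to the pure components $[\widetilde\Phi_*D_2]^{\mathcal V}$ and $[\widetilde\Phi_*D_2]^{\mathcal H}$ rather than once to $\widetilde\Phi_*D_2$; this is harmless because the correction terms involving $\bm{T}$, $\bm{A}$ and the subbundle connections are tensorial in the second slot, so the two applications sum to the single one and yield the same four contributions.
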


Therefore, it makes sense to introduce the following:

\begin{De}
	Let $\pi: (P,\widehat g)\to (M,\overline g)$ be a Kaluza-Klein principal $G$-bundle such that $\widehat g=\Psi^{-1}(\omega,\beta,\overline g)$ and let $\widetilde\Phi\colon (N,g)\to (P,\widehat g)$ be a smooth map. One says that $$\mathrm{II}_{\widetilde\Phi}^\mathcal{V}:=\nabla^\mathcal{V}(d^\mathcal{V}\widetilde\Phi)\in S^2(N,\widetilde\Phi^*\mathcal VP),\quad\quad\quad  \mathrm{II}_{\widetilde\Phi}^\mathcal{H}:=\nabla^\mathcal{H}(d^\mathcal{H}\widetilde\Phi)\in S^2(N,\widetilde\Phi^*\mathcal HP)$$ are, respectively, the intrinsic vertical and horizontal second fundamental forms of $\widetilde\Phi$. Analogously,  $$\tau^\mathcal{V}(\widetilde\Phi):=\Tr_g(\mathrm{II}_{\widetilde\Phi}^\mathcal{V})\in\X^\mathcal{V}_{\widetilde\Phi}(P),\quad\quad\quad \tau^\mathcal{H}(\widetilde\Phi):=\Tr_g(\mathrm{II}_{\widetilde\Phi}^\mathcal{H})\in\X^\mathcal{H}_{\widetilde\Phi}(P),$$ are called, respectively, the intrinsic vertical and horizontal tension fields of $\widetilde\Phi$.
\end{De}
Taking into account that $\omega(\bm A)$ is skew-symmetric, we immediately get:
\begin{Cor}\label{cor:vertical-horizontal-decomposition}
	Let $\pi: (P,\widehat g)\to (M,\overline g)$ be a Kaluza-Klein principal $G$-bundle such that $\widehat g=\Psi^{-1}(\omega,\beta,\overline g)$ and let $\widetilde\Phi\colon (N,g)\to (P,\widehat g)$ be a smooth map. It holds \begin{align*}
 	\left[\mathrm{II}_{\widetilde\Phi}\right]^\mathcal{V}&=\mathrm{II}_{\widetilde\Phi}^\mathcal{V}+\omega\left(\widetilde\Phi^*\bm{T}+\widetilde\Phi^{*}\bm{A}\right),\quad\quad\quad \left[\tau(\widetilde\Phi)\right]^\mathcal{V}&&=\tau^\mathcal{V}(\widetilde\Phi)+\omega\left(\Tr_g\left[\widetilde\Phi^*\bm{T}\right]\right),\\
 	\left[\mathrm{II}_{\widetilde\Phi}\right]^\mathcal{H}&=\mathrm{II}_{\widetilde\Phi}^\mathcal{H}+\mathcal{H}\left(\widetilde\Phi^*\bm{T}+\widetilde\Phi^{*}\bm{A}\right), \quad\quad\quad \left[\tau(\widetilde\Phi)\right]^\mathcal{H}&&=\tau^\mathcal{H}(\widetilde\Phi)+\mathcal H\left(\Tr_g\left[\widetilde\Phi^*(\bm{T}+\bm{A})\right]\right).
 \end{align*}
\end{Cor}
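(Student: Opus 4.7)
The plan is to deduce the corollary directly from the previous proposition: it amounts to repackaging the four sector components $\bm{T}^{\mathcal V,\mathcal V}$, $\bm{T}^{\mathcal V,\mathcal H}$, $\bm{A}^{\mathcal H,\mathcal H}$, $\bm{A}^{\mathcal H,\mathcal V}$ of O'Neill's tensors using the projectors $\omega$ and $\mathcal H=\Id_{TP}-\omega$, and then invoking an antisymmetry to collapse one of the resulting traces.

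First I would check the pointwise identities
\begin{align*}
\omega(\widetilde\Phi^{*}\bm{T})&=\widetilde\Phi^{*}(\bm{T}^{\mathcal V,\mathcal H}), & \omega(\widetilde\Phi^{*}\bm{A})&=\widetilde\Phi^{*}(\bm{A}^{\mathcal H,\mathcal H}),\\
\mathcal H(\widetilde\Phi^{*}\bm{T})&=\widetilde\Phi^{*}(\bm{T}^{\mathcal V,\mathcal V}), & \mathcal H(\widetilde\Phi^{*}\bm{A})&=\widetilde\Phi^{*}(\bm{A}^{\mathcal H,\mathcal V}).
\end{align*}
These are immediate from the defining formulas (\ref{tena}), (\ref{tent}): decomposing $\widetilde\Phi_{*}D_{i}=H_{i}+V_{i}$ and using that $\bm{T}$ vanishes on a horizontal first argument while $\bm{A}$ vanishes on a vertical first argument, one has $\bm{T}(\widetilde\Phi_{*}D_{1},\widetilde\Phi_{*}D_{2})=\bm{T}(V_{1},H_{2})+\bm{T}(V_{1},V_{2})$, whose first summand is $\mathcal V$-valued and whose second is $\mathcal H$-valued; the analogous statement for $\bm{A}$ is $\bm{A}(\widetilde\Phi_{*}D_{1},\widetilde\Phi_{*}D_{2})=\bm{A}(H_{1},V_{2})+\bm{A}(H_{1},H_{2})$, with the first summand horizontal and the second vertical. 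Inserting these equalities into the expressions for $[\mathrm{II}_{\widetilde\Phi}]^{\mathcal V}$ and $[\mathrm{II}_{\widetilde\Phi}]^{\mathcal H}$ supplied by the previous proposition, and recalling the definitions $\mathrm{II}_{\widetilde\Phi}^{\mathcal V}:=\nabla^{\mathcal V}(d^{\mathcal V}\widetilde\Phi)$, $\mathrm{II}_{\widetilde\Phi}^{\mathcal H}:=\nabla^{\mathcal H}(d^{\mathcal H}\widetilde\Phi)$, yields the two second fundamental form identities of the statement.

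The tension field identities then follow by taking the $g$-trace of these two equalities. The only nontrivial point is the vanishing of $\Tr_g\bigl[\omega(\widetilde\Phi^{*}\bm{A})\bigr]$: on two horizontal arguments the tensor $\omega(\bm{A})$ coincides with $\bm{A}(H_{1},H_{2})=\tfrac{1}{2}[H_{1},H_{2}]^{\mathcal V}$, which is antisymmetric in $H_{1},H_{2}$, so its $g$-trace is zero. This antisymmetry, highlighted in the sentence preceding the corollary, kills the $\bm{A}$-contribution in the vertical tension formula and produces $[\tau(\widetilde\Phi)]^{\mathcal V}=\tau^{\mathcal V}(\widetilde\Phi)+\omega(\Tr_g[\widetilde\Phi^{*}\bm{T}])$. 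In the horizontal case no cancellation occurs, since $\mathcal H(\widetilde\Phi^{*}\bm{A})=\widetilde\Phi^{*}(\bm{A}^{\mathcal H,\mathcal V})$ pairs arguments of different types and carries no symmetry constraint, which is precisely what $\mathcal H(\Tr_g[\widetilde\Phi^{*}(\bm{T}+\bm{A})])$ records.

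The main obstacle, if any, is purely notational: keeping track of which slots of $\bm{T}$ and $\bm{A}$ are vertical or horizontal and how the projectors act on them. Once this bookkeeping is organized, the whole corollary reduces to the antisymmetry of $\omega(\bm{A})$ on horizontal vectors plus substitution into the proposition.
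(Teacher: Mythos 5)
Your proposal is correct and follows the paper's own route: the paper derives the corollary immediately from the preceding proposition together with the observation that $\omega(\bm{A})$ is skew-symmetric on horizontal arguments, which is exactly the cancellation you identify for the vertical tension field. The sector bookkeeping you spell out (which projector picks out which component of $\bm{T}$ and $\bm{A}$) is the same substitution the paper leaves implicit.
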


As a consequence of the previous results, it makes sense to introduce the following:
 
 \begin{De}
 	Let $\pi: (P,\widehat g)\to (M,\overline g)$ be a Kaluza-Klein principal $G$-bundle such that $\widehat g=\Psi^{-1}(\omega,\beta,\overline g)$. A map $\widetilde\Phi\colon (N,g)\to (P,\widehat g)$ is called vertically  (resp. horizontally) harmonic  if the vertical (resp. horizontal) component of its tension field $\tau(\widetilde\Phi)$ vanishes $$\left[\tau(\widetilde\Phi)\right]^\mathcal{V}=0.\quad\quad (\text{resp.}\ \left[\tau(\widetilde\Phi)\right]^\mathcal{H}=0.)$$
 \end{De}

 It is clear that $\widetilde\Phi$ is a harmonic map if and only if it is horizontally and vertically harmonic, i.e
        $$[\tau(\widetilde{\Phi})]^{\mathcal{H}}=0  \quad \text{and} \quad
        [\tau(\widetilde\Phi{})]^{\mathcal{V}}=0.$$

\begin{Pro}\label{vbv}
    Let $\pi: (P,\widehat{g})\longrightarrow (M,\overline{g})$  be a Kaluza-Klein principal $G$-bundle. For every   vector field $D\in\mathfrak{X}(P)$,  one has
    \begin{align}\label{mmmm}
        \mathrm{II}_{\pi}(D,D) &=[-\pi_{*}\circ(2\bm{A}+\bm{T})](D,D),
    \end{align}
    where ${\nabla}$ is the connection on $T^{*}P\otimes\pi^{*}TM$ induced by the Levi-Civita connections of $(P,\widehat{g})$ and $(M,\overline{g})$.
\end{Pro}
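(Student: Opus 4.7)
Since the second fundamental form $\mathrm{II}_\pi$ is tensorial in both arguments, it is enough to verify the identity pointwise. My plan is to fix $p\in P$, decompose $D_p=X_p+V_p$ according to the $\widehat g$-orthogonal splitting $T_pP=\mathcal H_pP\oplus\mathcal V_pP$, and extend $X_p$ and $V_p$ to, respectively, a basic horizontal vector field $X$ (projecting to some $\bar X\in\X(M)$) and a vertical vector field $V$ defined near $p$, so that $D=X+V$ in a neighbourhood of $p$. From the defining formula
\begin{equation*}
\mathrm{II}_\pi(D,D)=(\pi^*\nabla^M)_D\pi_*(D)-\pi_*(\nabla^P_DD),
\end{equation*}
the problem then reduces to computing each of the two terms on the right-hand side.

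For the first term, since $V$ is vertical one has $\pi_*(D)=\pi_*(X)=\pi^{\#}(\bar X)$, and the defining property of the pullback connection $\pi^*\nabla^M$ together with $\pi_*V=0$ yields $(\pi^*\nabla^M)_D\pi^{\#}(\bar X)=\pi^{\#}(\nabla^M_{\bar X}\bar X)$. For the second term, I would expand $\nabla^P_DD=\nabla^P_XX+\nabla^P_XV+\nabla^P_VX+\nabla^P_VV$ and split each summand into its horizontal and vertical parts by means of Lemma \ref{lem:descomposicion-derivada-covariante}. Because $\pi_*$ annihilates every vertical contribution and $\pi\colon(P,\widehat g)\to(M,\overline g)$ is a Riemannian submersion, the classical identity $\pi_*(\nabla^P_XX)=\pi^{\#}(\nabla^M_{\bar X}\bar X)$ for basic horizontal $X$ exactly cancels the first term of $\mathrm{II}_\pi(D,D)$. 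The surviving contributions are the horizontal parts of the two cross derivatives together with the horizontal part of $\nabla^P_VV$; by the definitions \eqref{tena}, \eqref{tent} of O'Neill's tensors these read $\bm A(X,V)$ for each cross term (they coincide because $[X,V]$ is vertical when $X$ is basic and $V$ is vertical, so that $[\nabla^P_XV]^{\mathcal H}=[\nabla^P_VX]^{\mathcal H}$) and $\bm T(V,V)$, leaving
\begin{equation*}
\mathrm{II}_\pi(D,D)=-2\pi_*\bm A(X,V)-\pi_*\bm T(V,V).
\end{equation*}

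To rephrase this in terms of $D$ alone, I would use that $\bm A(D_1,D_2)$ vanishes whenever $D_1$ is vertical and that $\bm A$ is skew on horizontal pairs, so that $\bm A(D,D)=\bm A(X,V)$; similarly $\bm T(D_1,D_2)$ vanishes whenever $D_1$ is horizontal, so that $\bm T(D,D)=\bm T(V,V)+\bm T(V,X)$, and since $\bm T(V,X)$ is vertical we get $\pi_*\bm T(D,D)=\pi_*\bm T(V,V)$. Combining these two observations yields exactly $\mathrm{II}_\pi(D,D)=-\pi_*\bigl(2\bm A(D,D)+\bm T(D,D)\bigr)$, which is the claimed formula.

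The main obstacle will be the bookkeeping of which O'Neill pieces survive the push-forward by $\pi_*$, together with the verification that $[\nabla^P_XV]^{\mathcal H}=[\nabla^P_VX]^{\mathcal H}$ once $X$ is basic and $V$ is vertical. Once these are in place, the remaining steps are tensorial manipulations drawn directly from Lemma \ref{lem:descomposicion-derivada-covariante} and from the standard Riemannian-submersion identity for basic horizontal vector fields, which in turn follows from the Koszul formula adapted to $\widehat g=(\omega^*\beta)^{\mathcal V}+\pi^*\overline g$.
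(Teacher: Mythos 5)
Your proposal is correct and follows essentially the same route as the paper: both reduce by tensoriality to a projectable/basic-plus-vertical representative, expand $\nabla^P_DD$ into its four horizontal--vertical cross terms, cancel $\nabla^M_{\pi_*D}\pi_*D$ against $\pi_*\bigl((\nabla^P_{D^{\mathcal H}}D^{\mathcal H})^{\mathcal H}\bigr)$ via the Riemannian-submersion property, and identify the survivors with $2\bm A(D^{\mathcal H},D^{\mathcal V})+\bm T(D^{\mathcal V},D^{\mathcal V})$. The only cosmetic difference is that you rederive $[\nabla^P_XV]^{\mathcal H}=[\nabla^P_VX]^{\mathcal H}$ from torsion-freeness and verticality of $[X,V]$, where the paper simply cites O'Neill's Lemmas 1 and 3.
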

\begin{proof} Vertical vector fields are $\pi$-projectable and one has $\X^\mathcal{H}(P)=\cin(P)\otimes_{\cin(M)}\X_B(P)$. Therefore, since $\mathrm{II}_{\pi}$ is a tensor, it is enough to prove that the claimed identity holds for a $\pi$-projectable $D\in \mathfrak{X}(P)$. Applying Lemmas $1$ and $3$ 
 of \cite{o1966fundamental}, we get\begin{align*}
        \mathrm{II}_{\pi}(D,D) &=({\nabla}\pi_{*})(D,D)=(\pi^*{\nabla^P})_{D}\pi_{*}D-\pi_{*}(\nabla^{P}_{D}D)=\\
        &= \nabla^{M}_{\pi_{*}D}\pi_{*}D-\pi_{*}\left(\nabla^{P}_{D^{\mathcal{H}}}D^{\mathcal{H}}+\nabla^{P}_{D^{\mathcal{H}}}D^{\mathcal{V}}+\nabla^{P}_{D^{\mathcal{V}}}D^{\mathcal{H}}+\nabla^{P}_{D^{\mathcal{V}}}D^{\mathcal{V}}\right)=\\
&=\nabla^{M}_{\pi_{*}D}\pi_{*}D-\pi_{*}\left((\nabla^{P}_{D^{\mathcal{H}}}D^{\mathcal{H}})^{\mathcal{H}}\right)-\pi_{*}\left(\bm{A}(D^{\mathcal{H}},D^{\mathcal{V}})+(\nabla^{P}_{D^{\mathcal{V}}}D^{\mathcal{H}})^{\mathcal{H}}+\bm{T}(D^{\mathcal{V}},D^{\mathcal{V}})\right)=\\
&=-\pi_{*}\left(2\bm{A}(D^{\mathcal{H}},D^{\mathcal{V}})+\bm{T}(D^{\mathcal{V}},D^{\mathcal{V}})\right)=[-\pi_{*}\circ(2\bm{A}+\bm{T})](D,D),
    \end{align*}
    hence (\ref{mmmm}).
\end{proof}

Taking the $g$-Riemannian trace of $\widetilde{\Phi}^{*}(\mathrm{II}_{\pi})$ we get the:

\begin{Cor}\label{cor:traza-segunda-forma-fundamental-fibras}  If $\pi: (P,\widehat{g})\longrightarrow (M,\overline{g})$ is a Kaluza-Klein principal $G$-bundle, then one has
\begin{align}\label{prp} 
     \Tr_g\big[\widetilde{\Phi}^{*}(\mathrm{II}_{\pi})\big] &=-\pi_{*}\left(\Tr_g\big[ \widetilde{\Phi}^{*}(2\bm{A}+\bm{T})\big]\right).
\end{align}
\end{Cor}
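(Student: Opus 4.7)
The plan is to derive the identity directly from Proposition \ref{vbv} by taking the $g$-trace of the pullback to $N$. The point is that the $g$-trace of a symmetric $(0,2)$-tensor is the sum of its diagonal values on a $g$-orthonormal frame, so the diagonal identity of Proposition \ref{vbv} is already sufficient.

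First, I would observe that although Proposition \ref{vbv} is stated for vector fields $D\in\mathfrak X(P)$, both sides are tensorial in $D$ (the second fundamental form, $\bm A$, and $\bm T$ are all tensors, and $\pi_{*}$ is bundle-linear). Consequently the identity passes to the pointwise statement
\begin{align*}
\mathrm{II}_{\pi}(D_p,D_p)=-\pi_{*}\bigl[(2\bm A+\bm T)(D_p,D_p)\bigr]\qquad\text{for every } D_p\in T_pP,
\end{align*}
and therefore, by pulling back under $\widetilde\Phi\colon N\to P$,
\begin{align*}
[\widetilde\Phi^{*}\mathrm{II}_{\pi}](U,U)=-\pi_{*}\bigl[\widetilde\Phi^{*}(2\bm A+\bm T)(U,U)\bigr]\qquad\text{for every } U\in T_xN.
\end{align*}

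Then I would take a local $g$-orthonormal frame $\{U_1,\dots,U_n\}$ on $N$ and compute
\begin{align*}
\Tr_g\bigl[\widetilde\Phi^{*}(\mathrm{II}_{\pi})\bigr]=\sum_{r=1}^{n}[\widetilde\Phi^{*}\mathrm{II}_{\pi}](U_r,U_r)=-\sum_{r=1}^{n}\pi_{*}\bigl[\widetilde\Phi^{*}(2\bm A+\bm T)(U_r,U_r)\bigr].
\end{align*}
Since $\pi_{*}$ is $\mathbb R$-linear on fibers, it commutes with the finite sum and can be pulled out, yielding
\begin{align*}
\Tr_g\bigl[\widetilde\Phi^{*}(\mathrm{II}_{\pi})\bigr]=-\pi_{*}\!\left(\sum_{r=1}^{n}\widetilde\Phi^{*}(2\bm A+\bm T)(U_r,U_r)\right)=-\pi_{*}\bigl(\Tr_g\bigl[\widetilde\Phi^{*}(2\bm A+\bm T)\bigr]\bigr),
\end{align*}
which is exactly (\ref{prp}).

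There is essentially no obstacle: the only subtlety to flag is the passage from the ``vector field'' formulation of Proposition \ref{vbv} to its pointwise avatar, which is immediate from the tensoriality of $\mathrm{II}_{\pi}$, $\bm A$ and $\bm T$ and the fact that any tangent vector at a point extends locally to a $\pi$-projectable vector field. Once this is observed, the corollary is just linearity of $\pi_{*}$ combined with the definition of the $g$-trace of a pullback symmetric tensor.
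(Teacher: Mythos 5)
Your proposal is correct and follows exactly the route the paper intends: the corollary is stated in the paper as an immediate consequence of Proposition \ref{vbv}, obtained by pulling back the pointwise (tensorial) identity along $\widetilde\Phi$ and taking the $g$-trace over an orthonormal frame, using the fiberwise linearity of $\pi_{*}$. Your extra remark on passing from the vector-field formulation to the pointwise one is a reasonable clarification of a step the paper leaves implicit.
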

Combining Corollary \ref{cor:traza-segunda-forma-fundamental-fibras}, Proposition \ref{pro:second-fund-form-composition} and bearing in mind that $\pi_*\colon\mathcal HP\xrightarrow{\sim}\pi^*TM$ is a vector bundle isomorphism, we obtain the key result:

\begin{thm}\label{teo:horizontal-component-tension-field}
    Let $\pi: (P,\widehat{g})\longrightarrow (M,\overline{g})$  be a Kaluza-Klein principal $G$-bundle,  $\widetilde{\Phi}: (N,g)\longrightarrow (P,\widehat{g})$ a smooth map and $\Phi=\pi\circ\widetilde \Phi\colon (N,g)\longrightarrow (M,\overline{g})$ their composition. The tension field of $\Phi$ is given by \begin{align*}
    \tau(\Phi) &=\pi_{*}(\tau(\widetilde{\Phi})) -\pi_{*}\left(\Tr_g\big[\widetilde{\Phi}^{*}(2\bm{A}+\bm{T})\big]\right).
\end{align*}
In particular, $\widetilde{\Phi}$ is a horizontal harmonic map if and only if $\Phi$ satisfies
\begin{align}\label{CVC}
    \tau(\Phi) &= -\pi_{*}\left(\Tr_g\big[\widetilde{\Phi}^{*}(2\bm{A}+\bm{T})\big]\right).
\end{align}
\end{thm}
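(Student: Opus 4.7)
The plan is to combine the two immediately preceding results with the Riemannian submersion structure of the Kaluza--Klein bundle, so the proof will really be a short bookkeeping argument rather than a fresh computation.

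First, I would invoke Proposition~\ref{pro:second-fund-form-composition} applied to $\Phi=\pi\circ\widetilde\Phi$, which gives
$$\tau(\Phi) = \pi_{*}\bigl(\tau(\widetilde\Phi)\bigr) + \Tr_g\big[\widetilde\Phi^{*}(\mathrm{II}_\pi)\big].$$
Substituting the identity of Corollary~\ref{cor:traza-segunda-forma-fundamental-fibras} into the second term yields at once the first displayed formula of the statement. Nothing more is needed for this part, since Proposition~\ref{vbv} has already repackaged the second fundamental form of the projection $\pi$ in terms of O'Neill's tensors $\bm A$ and $\bm T$.

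For the ``in particular'' clause I would use that the Kaluza--Klein structure makes $\pi:(P,\widehat g)\to(M,\overline g)$ a Riemannian submersion, as recorded after Theorem~\ref{teo:structure-Kaluza-Klein-metrics}. Consequently $\pi_{*}$ annihilates $\mathcal{V}P$ and restricts to a bundle isomorphism $\mathcal{H}P\xrightarrow{\sim}\pi^{*}TM$. Decomposing $\tau(\widetilde\Phi)=[\tau(\widetilde\Phi)]^{\mathcal V}+[\tau(\widetilde\Phi)]^{\mathcal H}$ along the splitting of $\widetilde\Phi^{*}TP$, one has
$$\pi_{*}\bigl(\tau(\widetilde\Phi)\bigr) = \pi_{*}\bigl([\tau(\widetilde\Phi)]^{\mathcal H}\bigr),$$
so that $\pi_{*}(\tau(\widetilde\Phi))=0$ if and only if $[\tau(\widetilde\Phi)]^{\mathcal H}=0$, i.e.\ $\widetilde\Phi$ is horizontally harmonic. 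Inserting this equivalence into the identity for $\tau(\Phi)$ derived in the first step gives the desired characterization \eqref{CVC}; conversely, if \eqref{CVC} holds then that same identity forces $\pi_{*}(\tau(\widetilde\Phi))=0$, and the isomorphism $\pi_{*}|_{\mathcal HP}$ then forces horizontal harmonicity.

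There is no real obstacle: all geometric content is already absorbed in the two cited results and in the Riemannian submersion property. The only point that deserves a moment of care is to track the bundles the objects live in, so as to justify the use of $\pi_{*}|_{\mathcal HP}$ being an isomorphism at the level of pullbacks, that is, $\pi_{*}\colon \widetilde\Phi^{*}\mathcal{H}P\xrightarrow{\sim}\widetilde\Phi^{*}\pi^{*}TM=\Phi^{*}TM$, which is the bundle where both sides of \eqref{CVC} naturally live.
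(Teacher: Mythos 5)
Your argument is correct and coincides with the paper's own proof, which likewise obtains the identity for $\tau(\Phi)$ by combining Proposition \ref{pro:second-fund-form-composition} with Corollary \ref{cor:traza-segunda-forma-fundamental-fibras}, and then deduces the characterization \eqref{CVC} from the fact that $\pi_{*}$ kills $\mathcal{V}P$ and restricts to the isomorphism $\mathcal{H}P\xrightarrow{\sim}\pi^{*}TM$. Your closing remark about tracking the pullback bundles is a sensible precision that the paper leaves implicit.
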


   \begin{thm}\label{teo:vertical-tension-field}  Let $\pi: (P,\widehat{g})\longrightarrow (M,\overline{g})$  be a Kaluza-Klein principal $G$-bundle. The vertical component of the tension field of a smooth map $\widetilde{\Phi}: (N,g)\longrightarrow (P,\widehat{g})$ is given by
   \begin{align*}
       [\tau(\widetilde{\Phi})]^{\mathcal{V}} &=-\delta^{(\nabla^{N},\widetilde\Phi^*\nabla^{P,\mathcal{V}})}(\widetilde{\Phi}^{*}\omega)+\omega(\Tr_{g}(\widetilde{\Phi}^{*}\bm{T})),
   \end{align*}
       where 
 $\widetilde{\Phi}^{*}\omega\in\Omega^{1}(N,\widetilde{\Phi}^{*}(\mathcal{V}P))$ is the pullback of the connection $\omega\in\Omega^{1}(P,\mathcal{V}P)$  and $\delta^{(\nabla^{N},\widetilde\Phi^*\nabla^{P,\mathcal{V}})}$ is the codifferential operator defined on the vector bundle $T^{*}N\otimes\widetilde{\Phi}^{*}(\mathcal{V}P)$ with respect to the connections $\nabla^{N},\widetilde\Phi^*\nabla^{P,\mathcal{V}}$. In particular, $\widetilde\Phi$ is vertically harmonic  if and only if 
        \begin{align}\label{eq:vertic}
\delta^{(\nabla^{N},\widetilde\Phi^*\nabla^{P,\mathcal{V}})}(\widetilde{\Phi}^{*}\omega)=\omega\left(\Tr_g(\widetilde{\Phi}^{*}(\bm{T}))\right).
        \end{align} Thus, if $\pi$ has totally geodesic fibers, then $\widetilde{\Phi}^{*}\omega$ satisfies the Hodge gauge equation $\delta^{(\nabla^{N},\widetilde\Phi^*\nabla^{P,\mathcal{V}})}(\widetilde{\Phi}^{*}\omega)=0$. 

   \end{thm}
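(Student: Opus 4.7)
The plan is to reduce the statement to a direct computation that identifies the intrinsic vertical tension field of $\widetilde\Phi$ with a codifferential, and then to invoke Corollary \ref{cor:vertical-horizontal-decomposition} to absorb the $\bm T$-correction coming from the difference between the intrinsic and the ambient vertical second fundamental form.

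First I would observe that the vertical differential $d^{\mathcal V}\widetilde\Phi\in\Omega^1(N,\widetilde\Phi^*\mathcal VP)$ coincides, as a $\widetilde\Phi^*\mathcal VP$-valued $1$-form on $N$, with the pullback $\widetilde\Phi^{*}\omega$: indeed, for every $U\in\mathfrak X(N)$ one has
\begin{align*}
(d^{\mathcal V}\widetilde\Phi)(U)=\omega_{\widetilde\Phi}\bigl(\widetilde\Phi_*U\bigr)=(\widetilde\Phi^{*}\omega)(U),
\end{align*}
since $\omega$ is the orthogonal projector onto $\mathcal VP$. Consequently, the intrinsic vertical second fundamental form introduced above becomes
\begin{align*}
\mathrm{II}^{\mathcal V}_{\widetilde\Phi}=\nabla^{\mathcal V}(d^{\mathcal V}\widetilde\Phi)=\nabla^{\mathcal V}(\widetilde\Phi^{*}\omega),
\end{align*}
where $\nabla^{\mathcal V}=\nabla^{N}\otimes1+1\otimes\widetilde\Phi^{*}\nabla^{P,\mathcal V}$ is precisely the connection on $T^{*}N\otimes\widetilde\Phi^{*}\mathcal VP$ used to define the codifferential $\delta^{(\nabla^{N},\widetilde\Phi^{*}\nabla^{P,\mathcal V})}$.

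Next I would take the $g$-trace and use the sign convention $\delta^{(\nabla^{N},\widetilde\Phi^{*}\nabla^{P,\mathcal V})}\alpha=-\Tr_{g}(\nabla^{\mathcal V}\alpha)$ that defines the codifferential of a $1$-form with values in the pulled-back bundle, obtaining
\begin{align*}
\tau^{\mathcal V}(\widetilde\Phi)=\Tr_{g}\bigl(\mathrm{II}^{\mathcal V}_{\widetilde\Phi}\bigr)=\Tr_{g}\bigl(\nabla^{\mathcal V}(\widetilde\Phi^{*}\omega)\bigr)=-\delta^{(\nabla^{N},\widetilde\Phi^{*}\nabla^{P,\mathcal V})}(\widetilde\Phi^{*}\omega).
\end{align*}
Plugging this identity into the vertical part of the decomposition provided by Corollary \ref{cor:vertical-horizontal-decomposition},
\begin{align*}
[\tau(\widetilde\Phi)]^{\mathcal V}=\tau^{\mathcal V}(\widetilde\Phi)+\omega\bigl(\Tr_{g}[\widetilde\Phi^{*}\bm T]\bigr),
\end{align*}
yields the announced formula, and the Hodge gauge equation for totally geodesic fibers follows at once since $\bm T\equiv 0$ in that case.

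I do not anticipate any deep obstacle: the only subtle point is the careful bookkeeping that $\widetilde\Phi^{*}\omega$ is regarded as a section of $T^{*}N\otimes\widetilde\Phi^{*}\mathcal VP$ (rather than of $T^{*}N\otimes\widetilde\Phi^{*}TP$) so that the covariant derivative $\nabla^{\mathcal V}$, and not the full $\widetilde\Phi^{*}\nabla^{P}$, is the one producing the codifferential. This is precisely what allows the $\bm T$-defect in Corollary \ref{cor:vertical-horizontal-decomposition} to appear as an inhomogeneous term on the right-hand side, rather than being hidden inside the codifferential, and it is what separates the vertically harmonic equation from its naive analogue.
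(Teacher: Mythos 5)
Your argument is correct and follows essentially the same route as the paper: both identify $d^{\mathcal V}\widetilde\Phi$ with $\widetilde\Phi^{*}\omega$, recognize $\tau^{\mathcal V}(\widetilde\Phi)=\Tr_g(\nabla^{\mathcal V}(\widetilde\Phi^{*}\omega))=-\delta^{(\nabla^{N},\widetilde\Phi^{*}\nabla^{P,\mathcal V})}(\widetilde\Phi^{*}\omega)$, and then invoke Corollary \ref{cor:vertical-horizontal-decomposition} to supply the $\bm T$-term. Your remark about regarding $\widetilde\Phi^{*}\omega$ as valued in $\widetilde\Phi^{*}\mathcal VP$ so that $\nabla^{\mathcal V}$ rather than $\widetilde\Phi^{*}\nabla^{P}$ defines the codifferential is exactly the point the paper's computation with the orthonormal frame makes implicit.
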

   \begin{proof}
        Thanks to Corollary \ref{cor:vertical-horizontal-decomposition} one has 
        $$\left[\tau(\widetilde\Phi)\right]^\mathcal{V}=\tau^\mathcal{V}(\widetilde\Phi)+\omega\left(\Tr_g\left[\widetilde\Phi^*\bm{T}\right]\right),$$ with $\tau^\mathcal{V}(\widetilde\Phi)=\Tr_g(\mathrm{II}_{\widetilde\Phi}^\mathcal{V})=\Tr_g(\nabla^\mathcal{V}(d^\mathcal{V}\widetilde\Phi))=\Tr_g(\nabla^\mathcal{V}(\widetilde\Phi^*\omega))$. Considering now a local orthonormal frame $\{U_r\}_{r=1}^n$ on $(N,g)$, we get \begin{align*}
 	\Tr_g(\nabla^\mathcal{V}(\widetilde\Phi^*\omega))= \sum_{r=1}^{n}\left[(\widetilde\Phi^*\nabla^{P,\mathcal{V}})_{U_{r}}\left((\widetilde{\Phi}^{*}\omega)(U_{r})\right)-(\widetilde{\Phi}^{*}\omega)(\nabla^{N}_{U_{r}}U_{r})\right]=-\delta^{(\nabla^{N},\widetilde\Phi^*\nabla^{P,\mathcal{V}})}(\widetilde{\Phi}^{*}\omega)
 \end{align*}and the proof is finished.       

       \end{proof}
       
If $\pi: (P,\widehat{g})\longrightarrow (M,\overline{g})$ is a Kaluza-Klein principal $G$-bundle, then the $\widehat{g}$-vertical projection $\omega\in\Omega^{1}(P,\mathcal{V}P)$ is a principal connection and we have its curvature $2$-form $\widehat\Omega^\omega\in\Omega^2(P,\mathfrak g)$. It is well known that  $\widehat\Omega^\omega$ is a basic $2$-form on $P$;  that is, $\widehat\Omega^\omega$ is $G$-invariant and horizontal (i.e for every vertical vector field $V\in\mathfrak{X}^{\mathcal{V}}(P)$ one has $i_{V}\widehat\Omega^{\omega}=0$). The curvature form defines a $\mathcal V P$-valued $2$-form  $\Omega^\omega=R_\bullet\circ\widehat\Omega^\omega\in \Omega^2(P,\mathcal V P)$, where $R_\bullet$ is the vector bundle isomorphism $R_\bullet\colon\mathfrak g_P\xrightarrow{\sim} \mathcal V P$ described in Proposition \ref{pro:principal-connections}. It is well known that $$\Omega^\omega(D_1,D_2)=-\omega([D_1^\mathcal{H},D_2^\mathcal{H}]),\quad D_1,D_2\in\X(P).$$ Moreover, one has $$R_g\cdot[\Omega^\omega(D_1,D_2)]=\Omega^\omega(R_{g}\cdot D_1,R_{g}\cdot D_2),\quad g\in G,D_1,D_2\in\X(P),$$ where for any $D\in \X(P)$ and $\sigma\in G$, we denote by $R_\sigma\cdot D$ the pushforward of the vector field $D$ under the diffeomorphism $R_\sigma\colon P\xrightarrow{\sim} P$. The previous identity shows that if $D_1, D_2\in \X(P)^G$ are $G$-invariant vector fields, then $\Omega^\omega(D_1,D_2)\in \X^{\mathcal V}(P)^G$ is an invariant vertical vector field. It is well known that there is a natural isomorphism of $\cin(M)$-modules, which at the same time is  a $\cin(M)$-Lie algebra anti-isomorphism, between the space of sections of the adjoint bundle and the space of $G$-invariant vertical vector fields $$\widetilde{(-)}\colon \Gamma(M,\mathrm{ad} P)=\cin(P,\mathfrak g)^G\xrightarrow{\sim}\X^{\mathcal V}(P)^G$$that associates to $\boldsymbol{\nu}\in \cin(P,\mathfrak g)^G$ the $G$-invariant vector field $\widetilde{\boldsymbol{\nu}}\in \X^{\mathcal V}(P)^G$ given by $\widetilde{\boldsymbol{\nu}}_p:=[\boldsymbol{\nu}(p)]^*_p$. 
    
The polarity maps associated to the $\X^{\mathcal V}(P)$-valued $2$-form $\Omega^{\omega}$ and to the Kaluza-Klein metric $\widehat{g}=\Psi^{-1}(\omega,\beta,\overline g)$ define a $C^{\infty}(P)$-linear map $\mathscr{F}^{\omega}:\mathfrak{X}(P)\longrightarrow\mathfrak{X}(P)\otimes_{\cin(P)}\X^\mathcal{V}(P)$  given by $$\mathscr{F}^{\omega}=(p^{-1}_{\widehat{g}}\otimes\Id_{\mathfrak{g}})\circ p_{\Omega^{\omega}}\in T^{1}_{1}(P)\otimes_{\cin(P)}\X^\mathcal{V}(P).$$ That is, $\mathscr{F}^{\omega}$ is the  $\X^\mathcal{V}(P)$-valued horizontal $(1,1)$-tensor field on $P$ such  that if $\langle\ ,\ \rangle_{\mathcal{V}P}$ denotes the restriction of $\widehat g$ to vertical vector fields, then for every $V\in \X^\mathcal{V}(P)$   it holds
\begin{align*}
   \langle V, \Omega^{\omega}\rangle_{\mathcal VP}(D_{1},D_{2}) &=\widehat{g}(\langle V,\mathscr{F}^{\omega}\rangle_{\mathcal VP}(D_{1}),D_{2}),\quad D_{1}, D_{2}\in \mathfrak{X}(P).
\end{align*} Whence, $\langle V,\mathscr{F}^{\omega}\rangle_{\mathcal VP}=p^{-1}_{\widehat{g}}\circ p_{\langle V,\Omega^{\omega}\rangle_{\mathcal VP}}$.  Since $\Omega^\omega$ is horizontal, it follows immediately that the endomorphism $\langle V,\mathscr{F}^{\omega}\rangle_{\mathcal VP}\colon \mathfrak{X}(P)\to \mathfrak{X}(P)$ vanishes on vertical vector fields and preserves horizontal ones. 
\begin{De}\label{defi:Lorentz-endomorphism}
    The $\mathcal{V}P$-valued $(1,1)$-tensor field $\mathscr{F}^{\omega}$ will be called the Lorentz endomorphism of the Kaluza-Klein principal $G$-bundle  $\pi:(P,\widehat{g})\longrightarrow(M,\overline{g})$. The curvature modified metric $\widehat g_{\mathscr{F}^\omega}$ is the $\mathfrak{X}^\mathcal{H}(P)$-valued $2$-covariant symmetric tensor field on $P$ given by $$\widehat g_{\mathscr{F}^\omega}(D_1,D_2)=\frac{1}{2}\left\{ \langle D_1^\mathcal{V},\mathscr{F}^\omega\rangle_{\mathcal VP}(D_2^\mathcal{H})+\langle D_2^\mathcal{V},\mathscr{F}^\omega\rangle_{\mathcal VP}(D_1^\mathcal{H})\right\},\quad D_1,D_2\in\mathfrak{X}(P).$$
\end{De}

  Since a Kaluza-Klein principal bundle  $\pi: (P,\widehat{g}){\longrightarrow} (M,\overline{g})$ is a Riemannian submersion, the restriction $\pi_{*}|_{\mathcal{H}P}: \mathcal{H}P=(\mathcal{V}P)^{\perp_{\widehat{g}}}\overset{\sim}{\longrightarrow} \pi^{*}TM$ gives an isomorphism of vector bundles over $P$. Its inverse 
$h:\pi^{*}TM\overset{\sim}{\longrightarrow} \mathcal{H}P\subset TP$
    is called the  horizontal map of the connection $\omega$. Any $X\in\mathfrak{X}(M)$ induces a vector field  $X^{\#}\in \Gamma(P,\pi^{*}TM)$ along $\pi$ such that its value at $p\in P$ is  $X^{\#}_{p}:=(p,X_{\pi(p)})\in T_{\pi(p)}M$. We denote $X^{h}:=h(X^{\#})\in\X(P)$ and call it the horizontal lift of $X$ with respect to the connection $\omega$. Therefore, $X^{h}\in\mathfrak{X}(P)$ is the unique $\omega$-basic vector field whose projection is $X$. Any $\omega$-basic vector field on $P$ can be written in this way. The horizontal lift map $h\colon\mathfrak{X}(M)\longrightarrow \mathfrak{X}(P)$ provides an isomorphism of $C^{\infty}(M)$-modules between $\mathfrak{X}(M)$ and the $C^{\infty}(M)$-module $\mathfrak{X}_B(P)$ of basic vector fields on $P$. Moreover, the $\cin(P)$-module of horizontal vector fields $\X^H(P)$ satisfies $\X^\mathcal{H}(P)=\cin(P)\otimes_{\cin(M)}\X_B(P)\simeq \cin(P)\otimes_{\cin(M)}\X(M)$.
\begin{De}
    Given a Kaluza-Klein principal $G$-bundle  $\pi:(P,\widehat{g})\longrightarrow (M,\overline{g})$, its Faraday form is the $\X^\mathcal{V}(P)^G$-valued $2$-form  $\Omega_{\omega}\in\Omega^{2}(M)\otimes_{\cin(M)}\X^\mathcal{V}(P)^G$ defined by $$\Omega_{\omega}(X_{1},X_{2})=\Omega^{\omega}(X^h_{1},X^h_{2})=-\omega([X^{h}_{1},X^{h}_{2}])=[X_{1},X_{2}]^h-[X^h_{1},X^h_{2}],\quad X_1,X_2\in\mathfrak{X}(M).$$    We also have a $\mathfrak{X}^\mathcal{V}(P)^G$-valued $(1,1)$-tensor field  $\mathscr{F}_{\omega}\in T^{1}_{1}(M)\otimes_{C^{\infty}(M)}\mathfrak{X}^{\mathcal{V}}(P)^G$ such that 
     \begin{align}\label{relacion}
         \langle V, \Omega_{\omega}\rangle_{\mathcal{V}P}(X_{1},X_{2})=\overline{g}(\langle V, \mathscr{F}_{\omega}\rangle_{\mathcal{V}P}(X_{1}),X_{2}),\quad V\in\mathfrak{X}(M), X_1,X_2\in\mathfrak{X}(M).
     \end{align} One says that $\Omega_\omega$, $\mathscr{F}_{\omega}$ are, respectively, the field strength $2$-form and the Lorentz strength endomorphism of the Kaluza-Klein principal $G$-bundle.
     \end{De}

      Thanks to \cite[Lemma 2]{o1966fundamental}, one gets the following result.
      
       \begin{Lem}\label{lem:A-vs-Curvatura}
   For any horizontal vector fields $D_1,D_2\in\mathfrak{X}(P)$ it holds
    \begin{align}\label{force}
        2\bm{A}(D_{1},D_{2}) &=[D_{1},D_{2}]^{\mathcal{V}}=-\Omega^{\omega}(D_{1},D_{2}).
    \end{align}
    Moreover, if $D_{1}=X_{1}^h, D_{2}=X_{2}^h$ are $\omega$-basic, then one also has
    \begin{align}\label{force2}
        2\bm{A}(D_{1},D_{2}) &=-\Omega_{\omega}(X_{1},X_{2}).
    \end{align}
  \end{Lem}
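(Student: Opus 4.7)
The plan is to proceed in three steps: reduce both identities to the antisymmetry of $\bm{A}$ on horizontal vector fields, prove that antisymmetry using the Kaluza-Klein structure, and then rewrite the result in terms of the curvature form $\Omega^{\omega}$.

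First I would unpack the definition \eqref{tena}: on horizontal inputs $D_{1}, D_{2}\in\X^\mathcal{H}(P)$ the terms involving $D_i^{\mathcal V}$ drop out, leaving
\[
\bm{A}(D_{1}, D_{2}) = [\nabla^{P}_{D_{1}} D_{2}]^\mathcal{V}.
\]
Torsion-freeness of the Levi-Civita connection $\nabla^{P}$ immediately gives
\[
\bm{A}(D_{1},D_{2}) - \bm{A}(D_{2},D_{1}) = [\nabla^{P}_{D_{1}} D_{2}-\nabla^{P}_{D_{2}} D_{1}]^\mathcal{V}=[D_{1}, D_{2}]^{\mathcal V},
\]
so the first equality in \eqref{force} is equivalent to $\bm{A}$ being antisymmetric on pairs of horizontal vector fields.

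The main obstacle is establishing this antisymmetry. Because $\bm{A}$ is tensorial and $\X^\mathcal{H}(P)=\cin(P)\otimes_{\cin(M)}\X_{B}(P)$, I may restrict to basic lifts $D_{i}=X_{i}^{h}$. For any vertical $V\in\X^\mathcal{V}(P)$, compatibility of $\nabla^{P}$ with $\widehat g$ gives
\[
\widehat g(\bm{A}(D_{1}, D_{2}), V) = D_{1}\widehat g(D_{2}, V) - \widehat g(D_{2}, \nabla^{P}_{D_{1}} V)=-\widehat g(D_{2}, \nabla^{P}_{D_{1}} V),
\]
since $\widehat g(D_{2},V)=0$ by $\widehat g$-orthogonality of $\mathcal{H}P$ and $\mathcal{V}P$. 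Because $D_{1}$ is basic and $V$ is vertical, $[D_{1}, V]$ is $\pi$-related to $[\pi_{*}D_{1},0]=0$ and is therefore vertical, so $\widehat g(D_{2}, [D_{1}, V])=0$ and hence $\widehat g(D_{2}, \nabla^{P}_{D_{1}}V)=\widehat g(D_{2}, \nabla^{P}_{V}D_{1})$. Adding this identity to its twin obtained by swapping $D_{1}$ and $D_{2}$ yields
\[
\widehat g(\bm{A}(D_{1},D_{2})+\bm{A}(D_{2},D_{1}),\,V) = -V\,\widehat g(D_{1},D_{2}).
\]
The canonical decomposition $\widehat g=(\omega^{*}\beta)^\mathcal{V}+\pi^{*}\overline g$ of a Kaluza-Klein metric forces $\widehat g(D_{1},D_{2})=\pi^{*}\overline g(X_{1},X_{2})$ to be constant along the fibers, so the right-hand side vanishes; letting $V$ range over $\X^\mathcal{V}(P)$ yields the required antisymmetry and thus the first equality in \eqref{force}.

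For the second equality I would invoke the identity recalled just before the statement, $\Omega^{\omega}(D_{1}, D_{2})=-\omega([D_{1}^\mathcal{H}, D_{2}^\mathcal{H}])$, which on horizontal inputs reads $\Omega^{\omega}(D_{1},D_{2})=-[D_{1},D_{2}]^\mathcal{V}$, giving $[D_{1},D_{2}]^\mathcal{V}=-\Omega^{\omega}(D_{1},D_{2})$. Finally, \eqref{force2} is immediate from the definition $\Omega_{\omega}(X_{1},X_{2})=\Omega^{\omega}(X_{1}^{h}, X_{2}^{h})$ of the Faraday form applied to the basic case $D_{i}=X_{i}^{h}$.
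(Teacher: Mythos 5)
Your proof is correct. Note that the paper does not actually write out an argument for this lemma: it simply invokes \cite[Lemma 2]{o1966fundamental}, O'Neill's result that $\bm{A}(D_1,D_2)=\tfrac{1}{2}[D_1,D_2]^{\mathcal V}$ for horizontal fields, and then translates via the standard identity $\Omega^{\omega}(D_1,D_2)=-\omega([D_1^{\mathcal H},D_2^{\mathcal H}])$ and the definition of the Faraday form. What you have done is reprove O'Neill's lemma from scratch, and your argument is essentially his original one: reduce to showing that the symmetric part of $\bm{A}$ on horizontal fields vanishes, pair against a vertical field $V$, use metric compatibility and torsion-freeness to convert the result into $-V\,\widehat g(D_1,D_2)$, and kill that term because $\widehat g(X_1^h,X_2^h)=\overline g(X_1,X_2)\circ\pi$ is constant along the fibers of the Riemannian submersion. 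Each step checks out, including the reduction to basic lifts (legitimate since the symmetric part of $\bm{A}$ is tensorial and basic fields span $\mathcal H_pP$ at every point) and the nondegeneracy of $\widehat g$ on $\mathcal VP$ needed to conclude. The only difference from the paper is one of economy versus self-containment: the citation buys brevity, while your version makes the lemma independent of \cite{o1966fundamental} at the cost of half a page.
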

   \begin{thm}\label{teo:horizontal-hamonic-curvature-modified-metric}
    Let $\pi: (P,\widehat{g})\longrightarrow (M,\overline{g})$ be a Kaluza-Klein principal bundle with $\widehat g=\Psi^{-1}(\omega,\beta,\overline g)$. A map $\widetilde{\Phi}: (N,g)\longrightarrow (P,\widehat{g})$ is horizontally harmonic if and only if  $\Phi=\pi\circ\widetilde \Phi\colon (N,g)\longrightarrow (M,\overline{g})$ satisfies
    \begin{align}\label{asss}
\tau(\Phi)&=-\pi_*\left(\Tr_{g}\big[\widetilde\Phi^{*}\widehat{g}_{\mathscr{F}^{\omega}}\big]\right)-\pi_{*}\left(\Tr_g\big[\widetilde{\Phi}^{*}\bm{T}\big]\right).
    \end{align}
\end{thm}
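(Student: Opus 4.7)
The plan is to deduce this statement from Theorem \ref{teo:horizontal-component-tension-field}, which already shows that horizontal harmonicity of $\widetilde\Phi$ is equivalent to
\[
\tau(\Phi)=-\pi_{*}\bigl(\Tr_g[\widetilde\Phi^{*}(2\bm A+\bm T)]\bigr).
\]
Comparing with \eqref{asss}, everything reduces to the pointwise identity
\[
\pi_{*}\bigl(\Tr_g[\widetilde\Phi^{*}(2\bm A)]\bigr)=\pi_{*}\bigl(\Tr_g[\widetilde\Phi^{*}\widehat g_{\mathscr{F}^{\omega}}]\bigr),
\]
which I will prove by identifying the horizontal part of the ``diagonal'' $\bm A(D,D)$ with $\widehat g_{\mathscr{F}^{\omega}}(D,D)$ for every $D\in\X(P)$, since $\pi_{*}$ annihilates vertical vectors.

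First, I would decompose $\bm A(D,D)$ using \eqref{tena}. The piece $\bm A(D^{\mathcal H},D^{\mathcal H})=[\nabla^{P}_{D^{\mathcal H}}D^{\mathcal H}]^{\mathcal V}$ is vertical, so it drops under $\pi_{*}$; the horizontal remnant is
\[
[\bm A(D,D)]^{\mathcal H}=\bm A(D^{\mathcal H},D^{\mathcal V})=[\nabla^{P}_{D^{\mathcal H}}D^{\mathcal V}]^{\mathcal H}.
\]
The key step is then to relate this remnant to the Lorentz endomorphism. O'Neill's tensor $\bm A$ satisfies the well-known skew-adjointness property $\widehat g(\bm A_{X}E,F)=-\widehat g(E,\bm A_{X}F)$ for horizontal $X$ and arbitrary $E,F$. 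Applied to a horizontal $Y$ and vertical $V$, and combining with Lemma \ref{lem:A-vs-Curvatura} which gives $2\bm A(X,Y)=-\Omega^{\omega}(X,Y)$, this yields
\[
\widehat g\bigl(Y,[\nabla^{P}_{X}V]^{\mathcal H}\bigr)=\widehat g(Y,\bm A_{X}V)=-\widehat g(V,\bm A_{X}Y)=\tfrac12\widehat g\bigl(V,\Omega^{\omega}(X,Y)\bigr)=\tfrac12\langle V,\Omega^{\omega}\rangle_{\mathcal VP}(X,Y).
\]
Using Definition \ref{defi:Lorentz-endomorphism}, the right-hand side equals $\tfrac12\widehat g(\langle V,\mathscr{F}^{\omega}\rangle_{\mathcal VP}(X),Y)$; since $Y$ is an arbitrary horizontal vector and both members are horizontal, I obtain the pointwise formula
\[
[\nabla^{P}_{X}V]^{\mathcal H}=\tfrac12\langle V,\mathscr{F}^{\omega}\rangle_{\mathcal VP}(X).
\]

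Substituting $X=D^{\mathcal H}$ and $V=D^{\mathcal V}$ gives $2[\bm A(D,D)]^{\mathcal H}=\langle D^{\mathcal V},\mathscr{F}^{\omega}\rangle_{\mathcal VP}(D^{\mathcal H})=\widehat g_{\mathscr{F}^{\omega}}(D,D)$, where the last equality is just the definition of the curvature modified metric evaluated on the diagonal. Taking $D=\widetilde\Phi_{*}U_{r}$ for a $g$-orthonormal frame $\{U_r\}$ on $N$, summing over $r$ and applying $\pi_{*}$ (which kills the vertical remainder $2[\bm A(D,D)]^{\mathcal V}$), I obtain $\pi_{*}(\Tr_g[\widetilde\Phi^{*}(2\bm A)])=\pi_{*}(\Tr_g[\widetilde\Phi^{*}\widehat g_{\mathscr{F}^{\omega}}])$. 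Combining this with Theorem \ref{teo:horizontal-component-tension-field} finishes the proof. The only subtle point is the derivation of the identity $[\nabla^{P}_{X}V]^{\mathcal H}=\tfrac12\langle V,\mathscr{F}^{\omega}\rangle_{\mathcal VP}(X)$; once this is in hand, everything else is a direct rewriting.
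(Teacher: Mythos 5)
Your proposal is correct and follows essentially the same route as the paper's proof: reduce to Theorem \ref{teo:horizontal-component-tension-field}, then identify the diagonal of $2\bm{A}$ with $\widehat g_{\mathscr{F}^{\omega}}$ via the $\widehat g$-skew-adjointness of $\bm A_{X}$ for horizontal $X$ together with Lemma \ref{lem:A-vs-Curvatura}, and finally take the $g$-trace along $\widetilde\Phi$. The only cosmetic difference is that the paper additionally observes $\bm A(D^{\mathcal H},D^{\mathcal H})=-\tfrac12\Omega^{\omega}(D^{\mathcal H},D^{\mathcal H})=0$, so that $2\bm A(D,D)=\widehat g_{\mathscr{F}^{\omega}}(D,D)$ holds exactly rather than only after applying $\pi_{*}$, but your version suffices for the statement.
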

\begin{proof}[Proof of Theorem (\ref{teo:horizontal-hamonic-curvature-modified-metric})]
Thanks to Theorem \ref{teo:horizontal-component-tension-field}, $\widetilde \Phi$ is horizontally harmonic if and only if  $$\tau(\Phi)=-\pi_{*}\left(\Tr_g\big[\widetilde{\Phi}^{*}(2\bm{A}+\bm{T})\big]\right).$$ Therefore, we just need to compute the term $-\pi_{*}\left(\Tr_g\big[\widetilde{\Phi}^{*}(2\bm{A})\big]\right)$. In order to proceed, we recall that by \cite[2' p. 460]{o1966fundamental} the tensor $\bm A$ is horizontal in its first component. Therefore, for any vector field $D\in\mathfrak{X}(P)$ one has $$2\bm A(D,D)=2\bm A(D^\mathcal{H},D)=2\bm A(D^\mathcal{H},D^\mathcal{H})+2\bm A(D^\mathcal{H},D^\mathcal{V}).$$ 
By Lemma \ref{lem:A-vs-Curvatura} we have $2\bm A(D^\mathcal{H},D^\mathcal{H})=-\Omega^\omega(D^\mathcal{H},D^\mathcal{H})=0$ since $\Omega^\omega$ is antisymmetric. Hence $2\bm A(D,D)=2\bm A(D^\mathcal{H},D^\mathcal{V})$. Now, applying \cite[Lemma 3 3.]{o1966fundamental} we get $\bm A(D^\mathcal{H},D^\mathcal{V})=(\nabla^P_{D^\mathcal{H}}D^\mathcal{V})^\mathcal{H}$. Hence, $2\bm A(D,D)=2\bm A(D^\mathcal{H},D^\mathcal{V})$ is a horizontal vector field.  On the other hand, by \cite[1' p. 460]{o1966fundamental} the map $\bm A(D^\mathcal{H},-)$ is $\widehat g$-skew-symmetric and thus for any vector field $E\in\mathfrak X(P)$ one has \begin{align*}
\widehat g(2\bm A(D^\mathcal{H}, D^\mathcal{V}),E^\mathcal{H})=-\widehat g(D^\mathcal{V},2\bm A(D^\mathcal{H},E^\mathcal{H}))=\widehat g(D^\mathcal{V},\Omega^\omega(D^\mathcal{H},E^\mathcal{H}))
\end{align*}where the last equality follows from Lemma \ref{lem:A-vs-Curvatura}. Now bearing in mind the Lorentz endomorphism, Definition \ref{defi:Lorentz-endomorphism}, we get \begin{align*}
\widehat g(D^\mathcal{V},\Omega^\omega(D^\mathcal{H},E^\mathcal{H}))=\langle D^\mathcal{V},\Omega^\omega\rangle_{\mathcal{V}P}(D^\mathcal{H},E^\mathcal{H})=\widehat g(\langle D^\mathcal{V},\mathscr{F}^\omega\rangle_{\mathcal{V}P}(D^\mathcal{H}),E^\mathcal{H}).
\end{align*}
 Hence $\widehat g(2\bm A(D^\mathcal{H}, D^\mathcal{V}),E^\mathcal{H})=\widehat g(\langle D^\mathcal{V},\mathscr{F}^\omega\rangle_{\mathcal{V}P}(D^\mathcal{H}),E^\mathcal{H})$. Since this is true for any horizontal vector field $E^\mathcal{H}\in\mathfrak X(P)$ and $2\bm A(D^\mathcal{H}, D^\mathcal{V})$, $\langle D^\mathcal{V},\mathscr{F}^\omega\rangle_{\mathcal{V}P}(D^\mathcal{H})$  are horizontal, we conclude that $$2\bm A(D,D)=2\bm A(D^\mathcal{H}, D^\mathcal{V})=\langle D^\mathcal{V},\mathscr{F}^\omega\rangle_{\mathcal{V}P}(D^\mathcal{H})=\widehat g_{\mathscr{F}^\omega}(D,D).$$ Using this identity and considering a local orthonormal frame $\{U_r\}_{r=1}^n$ on $(N,g)$ we obtain \begin{align*}
\pi_*\left(\Tr_g\big[\widetilde{\Phi}^{*}(2\bm{A})\big]\right)&=\pi_*\left(\sum_{r=1}^n(2\bm A)(\widetilde\Phi_*(U_r),\widetilde\Phi_*(U_r))\right)=\pi_*\left(\sum_{r=1}^n\widetilde\Phi^*(\widehat g_{\mathscr{F}^\omega})(U_r,U_r)\right)=\\&=\pi_*(\Tr_g\big[\widetilde\Phi^*\widehat g_{\mathscr{F}^\omega}\big])
 \end{align*}as claimed in the statement.
\end{proof}

\begin{Rem}
As far as we know, equation (\ref{asss}) has not appeared previously in the literature, while a particular case of equation (\ref{eq:vertic}) was obtained  in \cite{manabe1992pluriharmonic} under the simplifying assumption that $\pi$ had totally geodesic fibers. On the other hand, \cite{Catuogno} contains a related result obtained by means of stochastic techniques that however is not able to capture neither the Lorentz force nor the vertical influence of  the second fundamental form of the fibers of the principal bundle and moreover does not lead to Wong's equations, see below Example \ref{ex:magnetic-curves}.    
\end{Rem}
 
This, together with Theorem \ref{teo:vertical-tension-field}, yields the following:
 \begin{Cor}\label{cor:harmonic-equations}
      Let $\pi: (P,\widehat{g})\longrightarrow(M,\overline{g})$ be a Kaluza-Klein principal bundle and let $\widetilde{\Phi}: (N,g)\longrightarrow(P,\widehat{g})$ be a smooth map and consider the composition $\Phi=\pi\circ\widetilde{\Phi}$. Then $\widetilde{\Phi}$ is harmonic if and only if 
      \begin{align}\label{eq:harmonic-equations}
\tau(\Phi) &=-\pi_{*}[\Tr_{g}(\widetilde{\Phi}^{*}\widehat{g}_{\mathscr{F}^{\omega}})]-\pi_{*}[\Tr_{g}(\widetilde{\Phi}^{*}\bm{T})],\quad \ \quad \delta^{(\nabla^{N}, \nabla^\mathcal{V})}(\widetilde{\Phi}^{*}\omega)=\omega[\Tr_{g}(\widetilde{\Phi}^{*}\bm{T})].
      \end{align}      
      \end{Cor}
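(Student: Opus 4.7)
The plan is to observe that the corollary is essentially an immediate combination of the two earlier theorems, together with the canonical decomposition of the tension field along the $\widehat{g}$-orthogonal splitting $TP=\mathcal{V}P\oplus\mathcal{H}P$.

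First I would recall that since $\widehat{g}=\Psi^{-1}(\omega,\beta,\overline{g})$ induces the orthogonal decomposition $TP=\mathcal{V}P\perp_{\widehat{g}}\mathcal{H}P$, the pullback vector field $\tau(\widetilde{\Phi})\in\Gamma(N,\widetilde{\Phi}^{*}TP)$ splits uniquely as $\tau(\widetilde{\Phi})=[\tau(\widetilde{\Phi})]^{\mathcal{V}}+[\tau(\widetilde{\Phi})]^{\mathcal{H}}$, so $\tau(\widetilde{\Phi})=0$ if and only if both components vanish. By the characterization of harmonic maps as those with vanishing tension field (using that we may restrict to compactly supported variations), this is equivalent to $\widetilde{\Phi}$ being simultaneously vertically and horizontally harmonic, exactly the remark already made just before Proposition \ref{vbv}.

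Next I would invoke Theorem \ref{teo:horizontal-hamonic-curvature-modified-metric}, which asserts that horizontal harmonicity of $\widetilde{\Phi}$ is equivalent to the first equation in \eqref{eq:harmonic-equations}, namely
\begin{align*}
\tau(\Phi)=-\pi_{*}\bigl[\Tr_{g}(\widetilde{\Phi}^{*}\widehat{g}_{\mathscr{F}^{\omega}})\bigr]-\pi_{*}\bigl[\Tr_{g}(\widetilde{\Phi}^{*}\bm{T})\bigr],
\end{align*}
and Theorem \ref{teo:vertical-tension-field}, which asserts that vertical harmonicity of $\widetilde{\Phi}$ is equivalent to the second equation in \eqref{eq:harmonic-equations}, namely
\begin{align*}
\delta^{(\nabla^{N},\widetilde\Phi^*\nabla^{P,\mathcal{V}})}(\widetilde{\Phi}^{*}\omega)=\omega\bigl(\Tr_{g}(\widetilde{\Phi}^{*}\bm{T})\bigr).
\end{align*}

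Combining the two equivalences gives the claim. There is no real obstacle here: the corollary is purely a bookkeeping statement that packages the two earlier results into a single criterion, and the only thing to verify is that the $\mathcal{V}/\mathcal{H}$ decomposition of $\tau(\widetilde{\Phi})$ is indeed a direct sum decomposition in the pullback bundle $\widetilde{\Phi}^{*}TP=\widetilde{\Phi}^{*}\mathcal{V}P\oplus\widetilde{\Phi}^{*}\mathcal{H}P$, which follows from the functoriality of pullbacks applied to the $\widehat{g}$-orthogonal Whitney sum $TP=\mathcal{V}P\oplus\mathcal{H}P$.
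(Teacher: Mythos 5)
Your proposal is correct and follows exactly the paper's route: the corollary is obtained by combining Theorem \ref{teo:horizontal-hamonic-curvature-modified-metric} (horizontal harmonicity is equivalent to the generalized Lorentz equation) with Theorem \ref{teo:vertical-tension-field} (vertical harmonicity is equivalent to the generalized Hodge gauge equation), using that $\tau(\widetilde{\Phi})=0$ if and only if both components of the $\widehat{g}$-orthogonal splitting vanish. Nothing is missing.
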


If $B=\{\xi_1,\ldots,\xi_d\}$ is a basis of the Lie algebra, then one has $$\widehat \Omega^\omega=\sum_{a=1}^d\Omega^{\omega,a}\otimes\xi_a\in\Omega^2(P,\g)\quad \Longrightarrow\quad  \Omega^\omega=\sum_{a=1}^d\Omega^{\omega,a}\otimes\xi_a^*\in\Omega^2(P,\mathcal VP),$$ where the $\Omega^{\omega,a}\in\Omega^2(P)$ are basic $2$-forms over $P$. In general,  $\Omega^{\omega,a}$ is not $G$-invariant, and therefore it does not descend to $M$ and we do not have a direct expression for the Faraday $2$-form $\Omega_\omega$ in terms of the $\Omega^{\omega,a}$. However, if the Lie group $G$ has trivial adjoint group, $\Ad G=\Id_\g$, then $\xi_a^*$ is $G$-invariant. Since $\Omega^\omega$ is $G$-invariant, it follows that $\Omega^{\omega,a}$ is a basic $2$-form, whence it descends to $\Omega^a_\omega\in\Omega^2(M)$. Therefore, setting $\mathscr{F}^a_\omega=p_{\overline g}^{-1}\circ p_{\Omega^a_\omega}\in T^1_1(M)$,  one has $$\Omega_\omega=\sum_{a=1}^d\Omega_\omega^a\otimes\xi_a^*\in\Omega^2(M)\otimes_{\cin(M)}\X^\mathcal{V}(P)^G\quad \Longrightarrow\quad \mathscr{F}_\omega=\sum_{a=1}^d\mathscr F^a_\omega\otimes\xi^*_a\in T_1^1(M) \otimes_{\cin(M)}\X^\mathcal{V}(P)^G.$$
      
In the general case, one can express $\mathscr{F}_\omega$ in a similar way, leading to the following result:

\begin{Pro}\label{pro:Lorentz-strength}
    If $\pi: (P,\widehat{g})\longrightarrow (M,\overline{g})$ is a Kaluza-Klein principal $G$-bundle with $\widehat g=\Psi^{-1}(\omega,\beta,\overline g)$, then its Lorentz strength endomorphism can be written as a finite sum $$\mathscr{F}_\omega=\sum_{a=1}^k\mathscr{F}_{\omega}^a\otimes \widetilde{\boldsymbol{\xi}}_a,\quad \mathscr{F}_{\omega}^a\in T_1^1(M),\widetilde{\boldsymbol{\xi}}_a\in\X^\mathcal{V}(P)^G.$$ Moreover, a smooth map $\widetilde{\Phi}: (N,g)\longrightarrow (P,\widehat{g})$ is horizontally harmonic if and only if the tension field of $\Phi=\pi\circ\widetilde \Phi$ is given by
     \begin{align}\label{eq:tension-field}
         \tau(\Phi)=-\sum_{a=1}^k     [\Phi^*(\mathscr{F}_\omega^a)](p_g^{-1}(\widetilde\Phi^*[p_{\widehat g}(\widetilde{\boldsymbol{\xi}}_a)]))-\pi_{*}\left(\Tr_g\big[\widetilde{\Phi}^{*}\bm{T}\big]\right).
     \end{align} 
     If $G$ has trivial adjoint group, then one can take $k=\dim\g$, $\widetilde{\boldsymbol{\xi}}_a=\xi_a^*$ and $\mathscr{F}^a_\omega=p_{\overline g}^{-1}\circ p_{\Omega^a_\omega}\in T^1_1(M)$. 
\end{Pro}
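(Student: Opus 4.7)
The proof naturally splits into three components, all essentially reducing to Theorem \ref{teo:horizontal-hamonic-curvature-modified-metric} once a suitable decomposition of $\mathscr{F}_\omega$ is in hand. For the decomposition, I would exploit the $\cin(M)$-module isomorphism $\X^{\mathcal V}(P)^G \simeq \Gamma(M, \ad P)$ provided by the map $\widetilde{(-)}$. Since $\ad P \to M$ is a finite-rank vector bundle over the paracompact Hausdorff manifold $M$, a standard partition-of-unity argument realizes $\ad P$ as a direct summand of a trivial bundle; consequently, there exist finitely many $G$-equivariant maps $\boldsymbol{\xi}_1, \dots, \boldsymbol{\xi}_k \in \cin(P,\g)^G$ whose associated invariant vertical vector fields $\widetilde{\boldsymbol{\xi}}_a$ span $\mathcal V P_p$ at every point. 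The evaluation map $\cin(M)^k \to \Gamma(M, \ad P)$ is then surjective, which yields the expansion $\mathscr{F}_\omega = \sum_{a=1}^k \mathscr{F}_\omega^a \otimes \widetilde{\boldsymbol{\xi}}_a$ with $\mathscr{F}_\omega^a \in T^1_1(M)$.

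For the tension field formula, the bridge between $\mathscr{F}^\omega$ on $P$ and $\mathscr{F}_\omega$ on $M$ is the key intertwining identity: using $\Omega_\omega(X_1, X_2) = \Omega^\omega(X_1^h, X_2^h)$ together with the fact that $\pi_*\colon (\mathcal HP, \widehat g) \to (TM, \overline g)$ is a fiberwise isometry, one deduces that for any vertical $V \in \X^{\mathcal V}(P)$ and any $X \in \X(M)$,
\[
\pi_*\langle V, \mathscr{F}^\omega\rangle_{\mathcal VP}(X^h) = \langle V, \mathscr{F}_\omega\rangle_{\mathcal VP}(X).
\]
Picking a local $g$-orthonormal frame $\{U_r\}_{r=1}^n$ on $N$, writing $[\widetilde\Phi_*U_r]^\mathcal H$ as the horizontal lift of $\Phi_*U_r$ at $\widetilde\Phi(x)$, substituting into equation (\ref{asss}) of Theorem \ref{teo:horizontal-hamonic-curvature-modified-metric} and expanding $\mathscr{F}_\omega$ in the generating frame produces
\[
\pi_*(\Tr_g[\widetilde\Phi^*\widehat g_{\mathscr{F}^\omega}])(x) = \sum_{a=1}^k\sum_{r=1}^n \widehat g(\widetilde\Phi_*U_r, \widetilde{\boldsymbol{\xi}}_a)|_{\widetilde\Phi(x)}\, \mathscr{F}_\omega^a(\Phi_*U_r)|_{\Phi(x)}.
\]
The last step is to recognize, directly from the definition of the musical isomorphism and of the pullback of a one-form, that $\sum_r \widehat g(\widetilde\Phi_*U_r, \widetilde{\boldsymbol{\xi}}_a)(x)\, U_r = p_g^{-1}\!\left(\widetilde\Phi^*[p_{\widehat g}(\widetilde{\boldsymbol{\xi}}_a)]\right)\!(x)$ and then to use the $\cin(M)$-linearity of $\mathscr{F}_\omega^a$ to reorganize the inner sum; this delivers equation (\ref{eq:tension-field}).

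The trivial adjoint case is immediate from the transformation rule $R_{g*}\xi^*_p = (\Ad_{g^{-1}}\xi)^*_{p\cdot g}$: when $\Ad G = \Id_\g$ each fundamental vector field $\xi_a^*$ is automatically $G$-invariant, so $\{\xi_a^*\}_{a=1}^{\dim\g}$ is a global frame of $\mathcal VP$ inside $\X^{\mathcal V}(P)^G$ and one can take $k = \dim\g$ and $\widetilde{\boldsymbol{\xi}}_a = \xi_a^*$. The $G$-invariance of both $\Omega^\omega$ and $\xi_a^*$ then forces each component $\Omega^{\omega,a}$ in $\Omega^\omega = \sum_a \Omega^{\omega,a}\otimes \xi_a^*$ to be a $G$-invariant basic $2$-form on $P$, hence to descend to a $2$-form $\Omega^a_\omega \in \Omega^2(M)$; comparing with the defining relation (\ref{relacion}) pins down $\mathscr{F}_\omega^a = p_{\overline g}^{-1}\circ p_{\Omega^a_\omega}$. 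I expect the main technical obstacle to lie in the second stage: verifying the intertwining identity between $\mathscr{F}^\omega$ and $\mathscr{F}_\omega$ cleanly and then carefully tracking the several musical isomorphisms so that the orthonormal-frame contraction really reorganizes into the pullback expression $p_g^{-1}(\widetilde\Phi^*[p_{\widehat g}(\widetilde{\boldsymbol{\xi}}_a)])$ appearing in (\ref{eq:tension-field}). Once this bookkeeping is under control, the remaining assertions follow at once.
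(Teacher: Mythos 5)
Your proposal is correct and follows essentially the same route as the paper: the same intertwining identity $\pi_*[\widehat g_{\mathscr F^\omega}(D,D)]=\langle D^{\mathcal V},\mathscr F_\omega\rangle_{\mathcal VP}(X)$ obtained from the Riemannian submersion property, the same finite-sum expansion of $\mathscr F_\omega$ as an element of $T^1_1(M)\otimes_{\cin(M)}\X^{\mathcal V}(P)^G$ (the paper invokes this directly from the definition of the tensor product, whereas you derive generators from the projectivity of $\Gamma(M,\ad P)$ — a harmless elaboration), and the same contraction of the orthonormal-frame sum into $p_g^{-1}(\widetilde\Phi^*[p_{\widehat g}(\widetilde{\boldsymbol{\xi}}_a)])$ via the musical isomorphisms, with the trivial-adjoint case handled exactly as in the discussion preceding the proposition.
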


\begin{proof} 
    Since $\pi: (P,\widehat{g})\longrightarrow (M,\overline{g})$ is a Riemannian submersion, given $E\in\mathfrak{X}(M)$ and $D\in\mathfrak{X}(P)$ that is $\pi$-related to $X\in\mathfrak{X}(M)$, one has \begin{align*}
    \bar g(\pi_*[\widehat g_{\mathscr{F}^\omega}(D,D)],E)&= \bar g(\pi_*[\widehat g_{\mathscr{F}^\omega}(D,D)],\pi_*(E^h))=\widehat g(\widehat g_{\mathscr{F}^\omega}(D,D),E^h)=\\&=\widehat g(\langle D^\mathcal{V},\mathscr{F}^\omega\rangle_{\mathcal{V}P}(D^\mathcal{H}),E^h)=\widehat g(\langle D^\mathcal{V},\mathscr{F}^\omega\rangle_{\mathcal{V}P}(X^h),E^h)=\\&=\langle D^\mathcal{V},\Omega^\omega\rangle_{\mathcal{V}P}(X^h,E^h)=\langle D^\mathcal{V},h^*\Omega^\omega\rangle_{\mathcal{V}P}(X,E)=\\&=\langle D^\mathcal{V},\Omega_\omega\rangle_{\mathcal{V}P}(X,E)=\bar g(\langle D^\mathcal{V},\mathscr{F}_\omega\rangle_{\mathcal{V}P}(X),E).
\end{align*}Since $E\in\mathfrak{X}(M)$ is arbitrary and $\bar g$ is non degenerated, it follows that 
$$\pi_*[\widehat g_{\mathscr{F}^\omega}(D,D)]=\langle D^\mathcal{V},\mathscr{F}_\omega\rangle_{\mathcal{V}P}(X).$$ Now, let $\{U_r\}_{r=1}^n$ be a local orthonormal frame on $(N,g)$. Taking into account that $\widetilde \Phi_*(U_r)$ is $\pi$-related to $\Phi_*(U_r)$, we get
\begin{align*}
    \pi_*(\Tr_g\big[\widetilde\Phi^*\widehat g_{\mathscr{F}^\omega}\big])=\pi_*\left(\sum_{r=1}^n\widetilde\Phi^*(\widehat g_{\mathscr{F}^\omega})(U_r,U_r)\right)=\sum_{r=1}^n \langle [\widetilde\Phi_*(U_r)]^\mathcal{V},\mathscr{F}_\omega\rangle_{\mathcal{V}P}(\Phi_*(U_r)).
\end{align*} We know that $\mathscr{F}_{\omega}$ is an element of the tensor product $ T^{1}_{1}(M)\otimes_{C^{\infty}(M)}\mathfrak{X}^{\mathcal{V}}(P)^G$. Therefore, we have that $\mathscr{F}_{\omega}=\sum_{a=1}^k \mathscr{F}_{\omega}^a\otimes \widetilde{\boldsymbol{\xi}}_a $ is given by a finite sum as in the statement.  Substituting this expression above, using that $\Phi^*(\mathscr F^a_\omega):=\Phi^\#(\mathscr F^a_\omega)\circ\Phi_*\in\Hom(TN,\Phi^*TM)$, after some compuations we obtain \begin{align*}
    \pi_*(\Tr_g\big[\widetilde\Phi^*\widehat g_{\mathscr{F}^\omega}\big])&=\sum_{r=1}^n\sum_{a=1}^k\langle[\widetilde\Phi_*(U_r)]^\mathcal{V},\widetilde\Phi^\#(\widetilde{\boldsymbol{\xi}}_a)\rangle_{\mathcal{V}P}\,  \Phi^\#(\mathscr{F}_\omega^a)(\Phi_*(U_r))=\\&\sum_{a=1}^k\Tr_g\left(\widetilde\Phi^*[p_{\widehat g}(\widetilde{\boldsymbol{\xi}}_a)]\otimes \Phi^*(\mathscr{F}_\omega^a)\right)=\sum_{a=1}^k [\Phi^*(\mathscr{F}_\omega^a)](p_g^{-1}(\widetilde\Phi^*[p_{\widehat g}(\widetilde{\boldsymbol{\xi}}_a)])).
\end{align*}In the last equality we have used  that given $\theta,\theta'\in\Omega^1(N)$, $D\in\mathfrak{X}(N)$ one has $\Tr_g(\theta\otimes\theta'\otimes D)=\Tr_g(\theta\otimes\theta')\cdot D=\theta'(p_g^{-1}\theta)\cdot D$. Therefore (\ref{eq:tension-field}) follows from Theorem \ref{teo:horizontal-hamonic-curvature-modified-metric}. The final claim is just a consequence of the discussion preceding this proposition.
\end{proof}

\section{Kaluza-Klein harmonic maps and generalized magnetic maps}
Now we introduce the key players of this paper.
 \begin{De}\label{defi:generalized-magnetic-maps} A Kaluza-Klein harmonic map is a harmonic map whose target is a Kaluza-Klein principal bundle. Let $\pi:(P,\widehat g)\longrightarrow (M,\overline g)$ be a Kaluza-Klein principal $G$-bundle with $\widehat g=\Psi^{-1}(\omega,g_\ad,\overline g)$ and $(N,g)$  a Riemannian manifold.  The Euler-Lagrange equations  for a Kaluza-Klein harmonic map $\widetilde{\Phi}: (N,g)\longrightarrow (P,\widehat g)$   \begin{align}
\tau(\Phi) &=-\pi_{*}[\Tr_{g}(\widetilde{\Phi}^{*}\widehat{g}_{\mathscr{F}^{\omega}})]-\pi_{*}[\Tr_{g}(\widetilde{\Phi}^{*}\bm{T})],\quad  \quad \delta^{(\nabla^{N}, \widetilde\Phi^*\nabla^{P,\mathcal{V}})}(\widetilde{\Phi}^{*}\omega)=\omega[\Tr_{g}(\widetilde{\Phi}^{*}\bm{T})],
      \end{align}are called the generalized Wong's equations for the  Riemannian manifold $(N,g)$ and the Kaluza-Klein principal $G$-bundle $\pi:(P,\widehat g)\longrightarrow (M,\overline g)$, (see below  Example \ref{ex:magnetic-curves} for the motivation of this terminology). We denote by $\mathbf{Har}_{KK}((N,g),(P,\widehat g))$ the space of Kaluza-Klein harmonic maps whose target is $(P,\widehat g)$.
 
A smooth map  $\Phi\colon (N,g)\longrightarrow (M,\overline g)$ is a generalized magnetic map with background Kaluza-Klein principal bundle $(P,\widehat g)$  if there exists $\widetilde{\Phi}\in \mathbf{Har}_{KK}((N,g),(P,\widehat g)) $ such that  $\Phi=\pi\circ\widetilde{\Phi}$, or equivalently  if $\widetilde\Phi$ satisfies  \begin{align}\label{eq:Lorentz}
\tau(\Phi) &=-\pi_{*}[\Tr_{g}(\widetilde{\Phi}^{*}\widehat{g}_{\mathscr{F}^{\omega}})]-\pi_{*}[\Tr_{g}(\widetilde{\Phi}^{*}\bm{T})],\quad \ \quad\\\label{eq:Hodge} \delta^{(\nabla^{N}, \widetilde\Phi^*\nabla^{P,\mathcal{V}})}(\widetilde{\Phi}^{*}\omega)&=\omega[\Tr_{g}(\widetilde{\Phi}^{*}\bm{T})].
      \end{align} We call (\ref{eq:Lorentz}) or (\ref{eq:tension-field})
      the generalized Lorentz equation satisfied by the generalized magnetic map $\Phi$. The terms on the right side of (\ref{eq:Lorentz}) are, respectively, the Lorentz and internal space horizontal shape strengths.  We say that (\ref{eq:Hodge}) is the generalized Hodge gauge equation and the term on its left is the internal space vertical shape strength. We will denote by $\mathbf{Mag}^{(P,\widehat g)}((N,g),(M,\overline g))$ the space of generalized magnetic maps from $(N,g)$ into $(M,\overline g)$ with background Kaluza-Klein principal bundle $\pi:(P,\widehat g)\longrightarrow (M,\overline g)$.  One says that a generalized magnetic map  $\Phi$ is uncharged if its Lorentz strength vanishes; that is,  $\pi_*\left(\Tr_{g}\big[\widetilde\Phi^{*}\widehat{g}_{\mathscr{F}^{\omega}}\big]\right)=0$.
   \end{De}

Now notice that there is a natural right action of the gauge group $\cin(N,G)$
	\begin{align*}
		\rho^\mathcal{V}: C^{\infty}(N&,P)\times\cin(N,G)\longrightarrow C^{\infty}(N,P)\\&\widetilde\Phi\quad,\quad\quad\chi\quad\quad\ \ \longmapsto\quad\quad \widetilde\Phi\cdot\chi
	\end{align*}
such that the natural surjective map \begin{align*}
 	\boldsymbol{\pi}\colon \cin(N&,P)\longrightarrow  \cin(N,M)\\
 	&\widetilde\Phi\quad\longmapsto\quad \Phi=\pi\circ\widetilde\Phi
 \end{align*}identifies the $\cin(N,G)$-orbits with maps in $\cin(N,P)$ having the same projection to $M$. Thus \begin{thm}\label{teo:aplicaciones-magneticas-cociente}
	Let $\pi:(P,\widehat g)\longrightarrow (M,\overline g)$ be a Kaluza-Klein principal $G$-bundle with $\widehat g=\Psi^{-1}(\omega,g_\ad,\overline g)$ and $(N,g)$  a Riemannian manifold. There is a natural surjective map \begin{align*}
 	\boldsymbol{\pi}\colon\mathbf{Har}_{KK}((N,g),&{(P,\widehat g)})\longrightarrow  \mathbf{Mag}^{(P,\widehat g)}((N,g),(M,\overline g))\\
 	&\widetilde\Phi\quad\quad\ \longmapsto\quad\quad\quad\quad \Phi=\pi\circ\widetilde\Phi
 \end{align*}that establishes a quotient identification $$\mathbf{Mag}^{(P,\widehat g)}((N,g),(M,\overline g))=\mathbf{Har}_{KK}((N,g),{(P,\widehat g)})/\sim$$ under the equivalence  relation: $\widetilde\Phi_1\sim\widetilde\Phi_2$ iff there exists  $\chi\in\cin(N,G)$ with $\widetilde\Phi_2=\widetilde\Phi_1\cdot\chi\Leftrightarrow \tau(\widetilde\Phi_1\cdot\chi)=0$.

\end{thm}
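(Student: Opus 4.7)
The plan is to verify three facts in succession: that $\boldsymbol{\pi}$ is well-defined, that it is surjective, and that its fibers coincide with gauge orbits intersected with $\mathbf{Har}_{KK}$. The first two are essentially bookkeeping. For well-definedness I would invoke Corollary \ref{cor:harmonic-equations}: a Kaluza-Klein harmonic map $\widetilde\Phi$ satisfies precisely the two equations that Definition \ref{defi:generalized-magnetic-maps} imposes on $\Phi=\pi\circ\widetilde\Phi$ in order for $\Phi$ to be a generalized magnetic map with background $(P,\widehat g)$. Surjectivity is then immediate from that same definition, which by construction asserts the existence of such a lift for every $\Phi \in \mathbf{Mag}^{(P,\widehat g)}((N,g),(M,\overline g))$.

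The substantive step is the identification of the fibers of $\boldsymbol{\pi}$. Given $\widetilde\Phi_1,\widetilde\Phi_2 \in \mathbf{Har}_{KK}$ with $\pi\circ\widetilde\Phi_1 = \pi\circ\widetilde\Phi_2 =: \Phi$, both lifts land in the same fiber $P_{\Phi(x)}$ for each $x\in N$. Since $P\to M$ is a principal $G$-bundle, each such fiber is a $G$-torsor, and there is a unique set-theoretic map $\chi\colon N\to G$ with $\widetilde\Phi_2 = \widetilde\Phi_1\cdot\chi$. The only technical point is smoothness of $\chi$. To establish it I would work locally: around any $x_0\in N$ choose a section $s_U\colon U\to P_U$ of $\pi$ with $\Phi(x_0)\in U$ and use the associated $G$-equivariant trivialization $\psi_U\colon U\times G \xrightarrow{\sim} P_U$ from Subsection \ref{subsection:local-expressions}. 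Writing $\psi_U^{-1}\circ\widetilde\Phi_i = (\Phi,\chi_i)$ on $V=\Phi^{-1}(U)$ produces smooth $\chi_i\colon V\to G$, whence $\chi|_V=\chi_1^{-1}\chi_2$ is smooth; the $G$-equivariance of transitions guarantees that these local pieces agree on overlaps and assemble into a global $\chi \in \cin(N,G)$.

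For the converse direction, the action $\rho^\mathcal{V}$ is fiberwise over $M$ by construction, so $\pi\circ(\widetilde\Phi\cdot\chi) = \pi\circ\widetilde\Phi$ for every $\chi \in \cin(N,G)$; hence any gauge translate of a lift of $\Phi$ is again a lift of $\Phi$. Combining the two directions, the fibers of $\boldsymbol{\pi}$ are precisely the $\cin(N,G)$-orbits intersected with $\mathbf{Har}_{KK}$, and within a single orbit through a fixed harmonic $\widetilde\Phi_1$, the requirement that $\widetilde\Phi_1\cdot\chi$ also lie in $\mathbf{Har}_{KK}$ is exactly the condition $\tau(\widetilde\Phi_1\cdot\chi)=0$, which is the biconditional displayed in the statement. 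The principal technical obstacle in this argument is the smoothness of the gauge parameter $\chi$, and it is disposed of by the local trivialization argument sketched above; the rest amounts to unwrapping Definition \ref{defi:generalized-magnetic-maps} through Corollary \ref{cor:harmonic-equations}.
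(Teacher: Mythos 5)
Your proposal is correct and follows essentially the same route as the paper, which states the theorem as an immediate consequence of the remark that $\boldsymbol{\pi}\colon\cin(N,P)\to\cin(N,M)$ identifies $\cin(N,G)$-orbits with lifts having a common projection, together with Definition \ref{defi:generalized-magnetic-maps}. The only point the paper leaves implicit — smoothness of the gauge parameter $\chi$ relating two lifts of the same $\Phi$ — is exactly the point you handle with the local trivialization argument, and your treatment of it is correct.
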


Now we provide a necessary condition for generalized magnetic maps.

\begin{Pro}\label{pro:necessary-condition-generalized-magnetic-maps}
	Let $\pi:(P,\widehat g)\longrightarrow (M,\overline g)$ be a Kaluza-Klein principal $G$-bundle and $(N,g)$  a Riemannian manifold.  A necessary condition for a smooth map  $\Phi\colon N\longrightarrow M$ to be a generalized magnetic map is that the principal $G$-bundle  $\Phi^*P\to N,$ obtained as the pullback of the principal $G$-bundle $\pi\colon P\to M$ along $\Phi$,   must be trivializable.
\end{Pro}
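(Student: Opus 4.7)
The plan is to exploit the definition of a generalized magnetic map, which requires the existence of a Kaluza-Klein harmonic lift $\widetilde\Phi\colon N\to P$ with $\pi\circ\widetilde\Phi=\Phi$, and then reinterpret this lift as a global section of the pullback principal bundle $\Phi^*P\to N$. Since a principal $G$-bundle is trivializable if and only if it admits a global smooth section, this will immediately give the conclusion.

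More concretely, I would first recall the construction of the pullback principal bundle: $\Phi^*P=\{(x,p)\in N\times P:\Phi(x)=\pi(p)\}$ with its projection $\mathrm{pr}_1\colon\Phi^*P\to N$ and $G$-action inherited from $P$ on the second factor. There is a canonical bundle morphism $\widehat\Phi\colon \Phi^*P\to P$ defined by $(x,p)\mapsto p$ covering $\Phi$. Next, I would observe that to give a smooth map $\widetilde\Phi\colon N\to P$ satisfying $\pi\circ\widetilde\Phi=\Phi$ is equivalent to giving a smooth section $s_{\widetilde\Phi}\colon N\to \Phi^*P$ of $\mathrm{pr}_1$, via the rule $s_{\widetilde\Phi}(x):=(x,\widetilde\Phi(x))$. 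This correspondence is a tautological consequence of the universal property of the pullback.

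To finish, I would invoke the standard fact from the theory of principal bundles (see for instance \cite{kobayashi1963foundations}) that a principal $G$-bundle is trivializable if and only if it admits a global smooth section. Therefore, if $\Phi\in\mathbf{Mag}^{(P,\widehat g)}((N,g),(M,\overline g))$, Definition \ref{defi:generalized-magnetic-maps} furnishes a smooth lift $\widetilde\Phi\in\mathbf{Har}_{KK}((N,g),(P,\widehat g))$, the associated section $s_{\widetilde\Phi}$ trivializes $\Phi^*P\to N$, and the necessary condition follows.

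The argument is really a definitional unfolding rather than a deep calculation; there is no genuine obstacle since the harmonicity of $\widetilde\Phi$ plays no role at all in the topological statement. The only point worth making explicit is that the definition of generalized magnetic map encodes precisely the existence of such a lift, so that the nontrivial content of the proposition is the observation that the mere existence of $\Phi$ as a generalized magnetic map already imposes a global topological constraint on $\Phi$, independent of the field equations (\ref{eq:Lorentz})--(\ref{eq:Hodge}).
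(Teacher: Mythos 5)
Your proposal is correct and follows essentially the same route as the paper's own proof: obtain the harmonic lift $\widetilde\Phi$ from the definition of generalized magnetic map, observe via the universal property of the fiber product that it yields a global section of $\Phi^*P\to N$, and conclude by the standard criterion that a principal bundle is trivializable if and only if it admits a global section. The extra detail you supply on the explicit construction of the pullback bundle and the section $s_{\widetilde\Phi}$ is a harmless elaboration of the same argument.
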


\begin{proof}
	If $\Phi$ is a generalized magnetic map, then there exist a horizontal harmonic map $\widetilde\Phi$ into $P$ such that the following diagram is commutative \[\begin{tikzcd}\label{CO}
	(N,g) && (P,\widehat{g}) \\
	\\
	&& (M,\overline{g})
	\arrow["\widetilde{\Phi}", from=1-1, to=1-3]
	\arrow["\Phi"', from=1-1, to=3-3]
	\arrow["\pi", from=1-3, to=3-3]
\end{tikzcd}\]By the universal property of the fiber product, one has that $\widetilde\Phi$ defines a global section of the pullback principal $G$-bundle $\Phi^*P\to N$. Since a principal $G$-bundle is trivializable if and only if it admits a global section, the result follows.
\end{proof}

When this necessary condition is satisfied, one would like to know whether generalized magnetic maps actually do exist. Before addressing this question we first study the action of the gauge group. The following can be proved by careful calculation.

\begin{Pro}[gauge variation-harmonic gauge fixing]\label{pro:gauge-variation-harmonic-gauge-fixing} If $\widetilde\Phi\colon (N,g)\to (P,\widehat g)$ is a smooth map into a Kaluza-Klein principal $G$-bundle with $\widehat g=\Psi^{-1}(\omega,g_\ad,\overline g)$, then for any  $\chi\in\cin(N,G)$ one has:
\begin{align*}
	[\tau(\widetilde\Phi\cdot\chi)]^\mathcal{H}&=R_{\chi,*}\left[[\tau(\widetilde\Phi)]^\mathcal{H}+\mathcal C^\mathcal{H}_{\widetilde\Phi,\omega,\bm{A},\bm{T}}(\chi,\chi)\right],\\ [\tau(\widetilde\Phi\cdot\chi)]^\mathcal{V}&=R_{\chi,*}\left[[\tau(\widetilde\Phi)]^\mathcal{V}-\delta^{(\nabla^N,\widetilde\Phi^*\nabla^{P,\mathcal V})}
( \theta^{R,\widetilde\Phi,\chi})+\mathcal C^\mathcal{V}_{\widetilde\Phi,\omega,\bm{T}}(\chi,\chi)\right],\
\end{align*} where $\theta^{R,\widehat\Phi,\chi}=R^P_{\bullet;\widetilde\Phi}[\chi^*\theta^R]$, with $\theta^R$ the right invariant Maurer-Cartan $1$-form of $G$, and $\mathcal C^\mathcal{V}_{\widetilde\Phi,\omega,\bm{T}}(\chi,\chi)$, $\mathcal C^\mathcal{H}_{\widetilde\Phi,\omega,\bm{A},\bm{T}}(\chi,\chi)$ are quadratic first order differential operators on $\chi$. Hence, given a  Kaluza-Klein harmonic map $\widetilde\Phi\in\mathbf{Har}_{KK,\Phi}((N,g),{(P,\widehat g)}):=\bm{\pi}^{-1}(\Phi)$ if the nonlinear elliptic harmonic gauge fixing equations  $$\bm{HGF}_{\widetilde\Phi}(\chi)=0\quad \equiv\quad \mathcal C^\mathcal{H}_{\widetilde\Phi,\omega,\bm{A},\bm{T}}(\chi,\chi)=0,\quad\quad  \delta^{(\nabla^N,\widetilde\Phi^*\nabla^{P,\mathcal V})}
( \theta^{R,\widetilde\Phi,\chi})=\mathcal C^\mathcal{V}_{\widetilde\Phi,\omega,\bm{T}}(\chi,\chi)$$ have a solution, then $\widetilde\Phi\cdot\chi$ is also harmonic. Whence, the solutions  to the harmonic gauge fixing equations determine the equivalence relation $\sim$ on $\mathbf{Har}_{KK}((N,g),{(P,\widehat g)})$ and the equivalence class of $\widetilde\Phi$ is $$\mathbf{Har}_{KK,\Phi}((N,g),{(P,\widehat g)})=\{\chi\in\cin(N,G)\colon \bm{HGF}_{\widetilde\Phi}(\chi)=0\}.$$ In particular,  every element of $G$, thought as a constant map, acts on $\mathbf{Har}_{KK}((N,g),{(P,\widehat g)})$ reproducing the action, described in Proposition \ref{pro:invarianza-funcional-energia-Dirichlet}, of $G$ as a subgroup of the Lie group of isometries $O(P,\widehat g)$. \end{Pro}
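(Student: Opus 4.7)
The plan is to compute $d(\widetilde\Phi\cdot\chi)$ via the Leibniz rule for the right $G$-action on $P$, decompose it into vertical and horizontal parts using the $G$-equivariance of $\omega$, and then compute $\tau(\widetilde\Phi\cdot\chi)$ by applying the covariantly-split formula of Corollary~\ref{cor:vertical-horizontal-decomposition} to $\widetilde\Phi\cdot\chi$.

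First, writing $\widetilde\Phi\cdot\chi=R\circ(\widetilde\Phi,\chi)$ and mimicking the pushforward computation used in the proof of Proposition~\ref{pro:expr-loc-metrica-Kaluza-Klein}, I will establish for $U_x\in T_xN$ the identity
\[(\widetilde\Phi\cdot\chi)_*U_x = R_{\chi(x)*}\!\left(\widetilde\Phi_*U_x + \theta^{R,\widetilde\Phi,\chi}_x(U_x)\right),\qquad \theta^{R,\widetilde\Phi,\chi}=R^P_{\bullet;\widetilde\Phi}[\chi^*\theta^R].\]
Because $R_{g*}$ preserves both the $\omega$-vertical and $\omega$-horizontal subbundles, and $\theta^{R,\widetilde\Phi,\chi}$ takes values in $\widetilde\Phi^*\mathcal VP$, this immediately gives $d^\mathcal{H}(\widetilde\Phi\cdot\chi)=R_{\chi*}\circ d^\mathcal{H}\widetilde\Phi$ and $d^\mathcal{V}(\widetilde\Phi\cdot\chi)=R_{\chi*}\circ(d^\mathcal{V}\widetilde\Phi+\theta^{R,\widetilde\Phi,\chi})$.

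Second, I will apply the intrinsic covariant derivatives $\nabla^\mathcal{H}$, $\nabla^\mathcal{V}$ of Corollary~\ref{cor:vertical-horizontal-decomposition} to these expressions and trace. Since $\widehat g$ is $G$-invariant, its Levi-Civita connection $\nabla^P$ is $G$-equivariant, so $\nabla^{P,\mathcal V}$ and $\nabla^{P,\mathcal H}$ commute with $R_{g*}$ for each \emph{fixed} $g\in G$; but when $\chi$ varies on $N$, a Leibniz expansion of $\nabla_U(R_{\chi*}S)$ produces, besides the term $R_{\chi*}(\nabla_US)$, a bilinear coupling between $d\chi$ and $S$. On the horizontal side this coupling is purely bilinear in $d\chi$ and $d^\mathcal{H}\widetilde\Phi$ and, after tracing, is absorbed into $\mathcal C^\mathcal{H}_{\widetilde\Phi,\omega,\bm A,\bm T}(\chi,\chi)$, with $\bm A$ and $\bm T$ appearing via the vertical-horizontal cross terms of Corollary~\ref{cor:vertical-horizontal-decomposition}. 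On the vertical side, linearization gives precisely the codifferential $\delta^{(\nabla^N,\widetilde\Phi^*\nabla^{P,\mathcal V})}(\theta^{R,\widetilde\Phi,\chi})$ by the very definition of $\delta$ as the $g$-trace of $\nabla^\mathcal V$ acting on a $\mathcal VP$-valued $1$-form, while the remaining bilinear terms in $d\chi$ collect into $\mathcal C^\mathcal{V}_{\widetilde\Phi,\omega,\bm T}(\chi,\chi)$. The contributions of $\omega(\Tr_g[(\widetilde\Phi\cdot\chi)^*\bm T])$ and $\mathcal H(\Tr_g[(\widetilde\Phi\cdot\chi)^*(\bm A+\bm T)])$ supplied by Corollary~\ref{cor:vertical-horizontal-decomposition} split, via the pushforward formula above, into the $R_{\chi*}$-image of the corresponding $\widetilde\Phi$-terms plus further quadratic $\chi$-contributions reabsorbed into the same $\mathcal C$-operators.

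The remaining assertions follow directly. Vanishing of $\bm{HGF}_{\widetilde\Phi}(\chi)$ implies $\tau(\widetilde\Phi\cdot\chi)=R_{\chi*}\tau(\widetilde\Phi)$, so harmonicity is preserved, and conversely two Kaluza-Klein harmonic lifts of the same $\Phi$ differ by a gauge $\chi$ automatically satisfying $\bm{HGF}_{\widetilde\Phi}(\chi)=0$, giving the description of the $\sim$-class in terms of Theorem~\ref{teo:aplicaciones-magneticas-cociente}. For a constant $\chi\equiv g$, $\chi^*\theta^R=0$ trivializes $\theta^{R,\widetilde\Phi,\chi}$, and the operators $\mathcal C^\mathcal{H},\mathcal C^\mathcal{V}$, being first-order and quadratic in $d\chi$, also vanish, leaving the action $\tau(\widetilde\Phi\cdot g)=R_{g*}\tau(\widetilde\Phi)$ of $G\subset O(P,\widehat g)$ from Proposition~\ref{pro:invarianza-funcional-energia-Dirichlet}. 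The main obstacle is the bookkeeping in the second step: the map $x\mapsto R_{\chi(x)*}$ varies with $x$, so differentiating it against $\widetilde\Phi$-dependent factors generates many cross terms, and controlling how $\nabla^P$ acts on $\theta^{R,\widetilde\Phi,\chi}$ requires Lemma~\ref{lem:descomposicion-derivada-covariante} to separate vertical/horizontal parts and thereby introduce the O'Neill tensors $\bm T$ and $\bm A$ exactly where predicted. Verifying that all residuals reorganize into $\mathcal C^\mathcal{V}_{\widetilde\Phi,\omega,\bm T}(\chi,\chi)$ and $\mathcal C^\mathcal{H}_{\widetilde\Phi,\omega,\bm A,\bm T}(\chi,\chi)$ without leftover second-order derivatives of $\chi$ is the delicate point.
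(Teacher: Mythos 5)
Your outline is correct and follows exactly the route the paper intends: the paper omits the proof entirely (stating only that it ``can be proved by careful calculation''), and your plan supplies that calculation using the same ingredients the paper deploys elsewhere, namely the pushforward identity $(\widetilde\Phi\cdot\chi)_*U_x=R_{\chi(x)*}\bigl(\widetilde\Phi_*U_x+\theta^{R,\widetilde\Phi,\chi}_x(U_x)\bigr)$ modelled on the proof of Proposition \ref{pro:expr-loc-metrica-Kaluza-Klein}, the $G$-equivariance of $\omega$, $\nabla^P$, $\bm T$ and $\bm A$, and the splitting of Corollary \ref{cor:vertical-horizontal-decomposition}. You also correctly locate the only genuinely delicate points: that the sole second-order contribution in $\chi$ is the vertical codifferential term, and that all residual terms involve at least one derivative of $\chi$ (so that constant gauge transformations recover the isometric action of Proposition \ref{pro:invarianza-funcional-energia-Dirichlet}), both of which follow from the tensoriality and $G$-equivariance you invoke.
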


However, the crucial point we must address is the existence problem for (generalized) magnetic maps. In this respect we have:
\begin{thm}\label{teo:existence-generalized-magnetic-maps}
Let $\pi:(P,\widehat g)\longrightarrow (M,\overline g)$ be a Kaluza-Klein principal $G$-bundle and $(N,g)$  a Riemannian manifold. We have the following existence results for (generalized) magnetic maps:
\begin{enumerate}
	\item $\dim N=1$.
\begin{itemize}
\item[1.a)] If $N$ is an open subset of $\R$ and $(P,\widehat g)$ is geodesically complete, then every initial condition $D_x\in TM$ determines a (generalized) magnetic map $\Phi_{D_x}$ defined everywhere on $N$.	
\item[1.b)] If we endow $N=S^1$ with any Riemannian metric $g$ and $(P,\widehat g)$ is compact, then every non-trivial free homotopy class $\ell\in\pi_0(LM)$ of loops in $M$ in the image  of $\pi_*\colon \pi_0(LP)\to \pi_0(LM)$ contains a (generalized) magnetic map $\Phi\colon (S^1,g)\to (M,\overline g)$. If $G$ is connected, then $\pi_*$ is surjective, thus any class $\ell\in\pi_0(LM)$ can be realized by a (generalized) magnetic map.
\end{itemize}
\item $\dim N=2$.
\begin{itemize}
\item If $N, P$ are compact and $\pi_2(P)=0$, then any homotopy class $\boldsymbol{\Phi}\in[N,M]$ that trivializes $P$ can be represented by a  (generalized) magnetic map.
\end{itemize}
\item $\dim N\geq 3$.
\begin{itemize}
\item If $N$, $P$ are compact, then for any homotopy class $\boldsymbol{\Phi}\in[N,M]$ that trivializes $P$ there exists a Riemannian metric $g'$ in $N$ conformally equivalent to $g$ and a smooth map $\Phi\in\boldsymbol{\Phi}$ such that $\Phi\colon (N,g')\to (M,\overline g)$ is a (generalized) magnetic map.
 \end{itemize}
\item $\dim N$ arbitrary.
\begin{itemize}
\item If $N$, $P$ are compact and $(P,\widehat g)$ has non positive sectional curvature, then any homotopy class $\boldsymbol{\Phi}\in[N,M]$ that trivializes $P$ can be represented by a   (generalized) magnetic map.
\end{itemize}
\end{enumerate}
\end{thm}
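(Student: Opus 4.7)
The plan is to reduce each item to the existence of a Kaluza-Klein harmonic map into the total space $(P,\widehat g)$ and then project by $\pi$. By Theorem \ref{teo:aplicaciones-magneticas-cociente}, a smooth map $\Phi\colon N\to M$ is a generalized magnetic map if and only if it admits a harmonic lift $\widetilde\Phi\colon (N,g)\to (P,\widehat g)$, and by Proposition \ref{pro:necessary-condition-generalized-magnetic-maps} the triviality of $\Phi^*P\to N$ is precisely what makes such smooth lifts available. Accordingly, given a homotopy class $\boldsymbol\Phi\in[N,M]$ that trivializes $P$ (or an initial condition $D_x\in T_xM$ in the one-dimensional case), I would first pick a smooth lift $\widetilde\Phi_0\colon N\to P$ of a representative of $\boldsymbol\Phi$, determining a homotopy class $\tilde{\boldsymbol\Phi}\in\pi_*^{-1}(\boldsymbol\Phi)\subset[N,P]$, and then invoke the relevant classical existence theorem to produce a harmonic representative in $\tilde{\boldsymbol\Phi}$; its projection under $\pi$ is the desired generalized magnetic map.

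For $\dim N=1$, harmonic maps are affinely parametrized geodesics and both (1.a) and (1.b) are handled by classical theory applied to $(P,\widehat g)$. In (1.a), Hopf-Rinow gives a complete geodesic $\widetilde\gamma$ from any initial condition $\widetilde D_p\in T_pP$; choosing $\widetilde D_p$ as the horizontal lift of $D_x\in T_xM$ at any $p\in\pi^{-1}(x)$, which makes sense because $\pi$ is a Riemannian submersion by Theorem \ref{teo:structure-Kaluza-Klein-metrics}, and projecting gives a globally defined generalized magnetic curve with the prescribed initial data. In (1.b), the classical Lyusternik-Fet theory for closed geodesics on the compact $(P,\widehat g)$ produces a closed geodesic in every non-trivial $\tilde\ell\in\pi_0(LP)$, whose projection realizes any class in the image of $\pi_*$. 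When $G$ is connected, the fiber $G$ of $LP\to LM$ is path-connected, so $\pi_*$ is surjective on $\pi_0$, completing the case.

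For the higher-dimensional items, the three cases differ only in the existence theorem invoked upstairs in $P$, applied to the fixed lift $\widetilde\Phi_0$. For (2), I would appeal to the Sacks-Uhlenbeck theorem: $\dim N=2$ makes the energy conformally critical, and the hypothesis $\pi_2(P)=0$ rules out bubbling, so every class in $[N,P]$ admits a harmonic representative. For (3), I would use an existence theorem for harmonic maps on higher-dimensional compact domains after a conformal change of the domain metric, yielding a pair $(g',\widetilde\Phi)$ with $g'$ conformally equivalent to $g$ and $\widetilde\Phi\colon(N,g')\to(P,\widehat g)$ harmonic in the chosen class. For (4), non-positive sectional curvature of $(P,\widehat g)$ together with compactness of $N$ and $P$ is exactly the Eells-Sampson framework, in which the heat-flow method supplies a harmonic representative in every class of $[N,P]$.

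The principal obstacle, and the reason the trivializability hypothesis appears, is the homotopy-theoretic bookkeeping between $[N,P]$ and $[N,M]$: the classical theorems deliver a harmonic map representing some class in $[N,P]$, but one needs that class to lie over the prescribed $\boldsymbol\Phi\in[N,M]$ under $\pi_*\colon[N,P]\to[N,M]$. The lift $\widetilde\Phi_0$ furnished by the triviality of $\Phi^*P$ gives a genuine preimage class $\tilde{\boldsymbol\Phi}$, and the existence result inside that class then descends correctly. A secondary technical point, most prominent in case (3), is the identification of the precise conformal-change existence statement for harmonic maps in dimension $\geq 3$ that applies to our target geometry $(P,\widehat g)$; this is most cleanly resolved by working entirely upstairs in $P$ and citing the corresponding theorem there.
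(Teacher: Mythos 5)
Your proposal is correct and follows essentially the same route as the paper: lift the initial data or the homotopy class to $P$ (using the trivializability of $\Phi^{*}P$ for the higher-dimensional cases), apply the classical existence theorem upstairs (Hopf--Rinow, closed geodesics in non-trivial free homotopy classes, Sacks--Uhlenbeck, Eells--Ferreira, Eells--Sampson), and project by $\pi$. The only cosmetic differences are that the paper takes an arbitrary lift $D_{p}$ of $D_{x}$ in case (1.a) rather than the horizontal one, and cites Berger for the closed-geodesic statement in (1.b).
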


\begin{proof} 1.a) Given $D_x\in T_xM$, since $\pi$ is a submersion, there exists $D_p\in T_pP$ such that $\pi_*D_p=D_x$. Since $P$ is complete there exists a geodesic $\widetilde\Phi_{D_p}$ in $P$ with initial condition $D_p$ that is defined everywhere on $N$. Taking $\Phi_{D_x}:=\pi\circ \widetilde\Phi_{D_p}$ gives the desired harmonic magnetic map. 

	1.b) The class $\ell$ lifts to a non trivial free homotopy class of loops $\widetilde\ell$ in $P$. It is a well known, see for instance  \cite[Theorem 6.10]{Berger} that $\widetilde\ell$ can be represented by a closed geodesic $\widetilde\gamma \colon S^1\to P$ that after reparametrization, if necessary, is a harmonic map. Therefore, $\gamma=\pi\circ\widetilde\gamma$ is the desired (generalized) magnetic map.  
	
	2) Since $\boldsymbol{\Phi}$ trivializes $P$ it follows that there exists a homotopy class $\widetilde{\boldsymbol{\Phi}}\in[N,P]$ such that $\boldsymbol{\Phi}=\pi_*(\widetilde{\boldsymbol{\Phi}})$. By \cite[Theorem 5.1]{SU} if follows that there exists a harmonic map $\widetilde\Phi\colon N\to P$ that represents $\widetilde{\boldsymbol{\Phi}}$, hence $\Phi=\pi\circ\widetilde\Phi$ is the required  (generalized) magnetic map.
	
	3), 4) Proceeding as in the proof of 2) the results follow  from \cite{EF} and \cite[Thm. p. 156]{eells1964harmonic}.
\end{proof}

In the special situation where the fibers are totally geodesic, one has the following:

\begin{Lem}
    If the fibers of a Kaluza-Klein principal bundle are totally geodesic (i.e $\bm{T}=0$), then the generalized Lorentz equation reduces to
    \begin{align}\label{cuuu}
        \tau(\Phi) &=-\pi_{*}(\Tr_{g}[\widetilde{\Phi}^{*}\widehat{g}_{\mathscr{F}}]).
    \end{align} \end{Lem}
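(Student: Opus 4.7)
The plan is a direct one-step argument: the conclusion is just the generalized Lorentz equation (\ref{eq:Lorentz}) of Definition \ref{defi:generalized-magnetic-maps} specialized to $\bm T = 0$. Concretely, any generalized magnetic map $\Phi = \pi\circ\widetilde\Phi$ satisfies
$$\tau(\Phi) = -\pi_*\bigl[\Tr_g(\widetilde\Phi^*\widehat g_{\mathscr F^\omega})\bigr] - \pi_*\bigl[\Tr_g(\widetilde\Phi^*\bm T)\bigr].$$
Since the pullback of tensor fields is $\R$-linear in the target tensor, the hypothesis $\bm T \equiv 0$ on $P$ forces $\widetilde\Phi^*\bm T \in \Gamma(N, S^2(T^*N)\otimes\widetilde\Phi^*TP)$ to vanish identically; applying $\Tr_g$ and then $\pi_*$ preserves this vanishing, so the second summand on the right-hand side drops out and one is left precisely with (\ref{cuuu}).

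The only fact worth recording explicitly is the (standard) equivalence between $\bm T \equiv 0$ and the fibers of $\pi$ being totally geodesic. This is built into the geometric interpretation of O'Neill's tensor $\bm T$ recalled in the paper right after equation (\ref{tent}): $\bm T$ encodes the family of second fundamental forms of the fibers, so these are totally geodesic if and only if $\bm T$ vanishes identically. There is no genuine obstacle here; the lemma is a clean corollary of Theorem \ref{teo:horizontal-hamonic-curvature-modified-metric} extracted from Definition \ref{defi:generalized-magnetic-maps}, presented because this totally geodesic case will be the setting for the applications developed in the subsequent sections.
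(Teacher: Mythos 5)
Your proposal is correct and matches the paper's (implicit) argument: the lemma is just the generalized Lorentz equation (\ref{eq:Lorentz}) with the term $-\pi_{*}[\Tr_{g}(\widetilde{\Phi}^{*}\bm{T})]$ deleted, which is immediate once $\bm{T}\equiv 0$ forces $\widetilde{\Phi}^{*}\bm{T}=0$. The paper offers no separate proof because the statement is exactly this one-line specialization, so there is nothing further to compare.
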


\section{Local expressions for Kaluza-Klein principal bundles}

We can make  Proposition \ref{pro:Lorentz-strength} somewhat more explicit  by considering local sections of a Kaluza-Klein principal $G$-bundle.  Indeed given a local section $s\in\Gamma(U,P)$ inducing the $G$-equivariant trivialization $ P_U\underset{\sim}{\xrightarrow{\varphi_U}} U\times G$, with $\varphi_U=(\pi,\widehat\varphi_U)$, then the isomorphism of $\cin(M)$-modules $$\widetilde{(-)}\colon \Gamma(M,\mathrm{ad} P)=\cin(P,\mathfrak g)^G\xrightarrow{\sim}\X^{\mathcal V}(P)^G$$ yields by restriction an isomorphism of $\cin(U)$-modules $$ \widehat{(-)}\colon  \cin(U,\mathfrak{g})\underset{\sim}{\xrightarrow{{\varphi_{U\bullet}}}} \cin(P_U,\mathfrak g)^G\underset{\sim}{\xrightarrow{\widetilde{(-)}}}\X^\mathcal{V}(P_U)^G$$ such that for $\boldsymbol{\xi}\in\cin(U,\mathfrak g)$ one has $[\varphi_{U\bullet}(\boldsymbol{\xi})](p):=\Ad_{\widehat\varphi^{-1}_U(p)}\boldsymbol{\xi}(\pi(p))$ and $\widehat{\boldsymbol{\xi}}:=\widetilde{[\varphi_{U\bullet}(\boldsymbol{\xi})]}$. Expressing this isomorphism  on the trivializing open set $U\times G$ we get $\varphi_U\cdot\widehat {\boldsymbol{\xi}}=\boldsymbol{\xi}^R$ where $\boldsymbol{\xi}^R\in \X(U\times G)$ is the right invariant vector field given by $\boldsymbol{\xi}^R_{(x,g)}=(0_x,[\boldsymbol{\xi}(x)]^R_g)$, with $[\boldsymbol{\xi}(x)]^R_g=R_{g_*}(\boldsymbol{\xi}(x))$. Now a direct derivation proves the following result.

\begin{Pro}\label{pro:local-expressions-Kaluza-Klein}
Let $\pi:(P,\widehat g)\longrightarrow (M,\overline g)$ be a Kaluza-Klein principal $G$-bundle with $\widehat{g}=\Psi^{-1}(\omega,\beta,\bar g)$. Given a local section $s\in\Gamma(U,P)$, we have the local Kaluza-Klein metric $\widehat g_U=\Psi^{-1}(\omega_U,\beta_U,\overline g_U)$. If the local connection $\omega_U$ has gauge potential $A$ and field strength $F=dA+[A,A]$ defined on $U$ by the section $s$, then it holds:
\begin{enumerate}
\item The local vector field valued curvature $2$-form $\Omega^{\omega_U}$ is given by $$\Omega^{\omega_U}((D_1,E_1),(D_2,E_2))=[F(D_1,D_2)]^R,\quad D_1,D_2\in\X(U),E_1,E_2\in\X(G).$$
\item The local field strength $2$-form $\Omega_{\omega_U}$ is given by $$\Omega_{\omega_U}(D_1,D_2)=[F(D_1,D_2)]^R,\quad D_1,D_2\in\X(U),$$
\end{enumerate}

If $B=\{\xi_1,\ldots,\xi_d\}$ is a basis of the Lie algebra $\mathfrak g$ and the field strength is $F=\sum_{a=1}^d F^a\otimes\xi_a$,\ with  $F^a\in\Omega^2(U)$, then one has: \begin{align*}
 	 \text{Local field strength}\ 2\text{-form}\quad &\Omega_{\omega_U}=\sum_{a=1}^d F^a\otimes\xi_a^R,\\
 	\text{Local Lorentz strength endomorphism}\quad &\mathscr{F}_{\omega_U}= \sum_{a=1}^d\mathscr F^a_{\omega_U}\otimes \xi_a^R,\quad \mathscr F^a_{\omega_U}=p_{\overline g_U}^{-1}(F^a).
 \end{align*} 
Moreover, a smooth map $\widetilde{\Phi}: (N,g)\longrightarrow (P,\widehat{g})$ is horizontally harmonic if and only if for every local section $s\in \Gamma(U,P)$ the tension field of $\Phi=\pi\circ\widetilde \Phi$  is given on the open subset $\Phi^{-1}(U)\subset N$ by
     \begin{align}\label{eq:local-tension-field}
         \tau(\Phi)=-\sum_{a=1}^d     [\Phi^*(\mathscr{F}^a_{\omega_U})](p_g^{-1}(\widetilde\Phi^*[p_{\widehat g_U}(\xi^R_a)]))-\pi_{*}\left(\Tr_g\big[\widetilde{\Phi}^{*}\bm{T}\big]\right).
     \end{align}
     \end{Pro}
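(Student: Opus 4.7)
The plan is to prove items (1) and (2) by a direct computation using the well-known formula $\Omega^{\omega_U}(D_1,D_2)=-\omega_U([D_1^{\mathcal H},D_2^{\mathcal H}])$ together with the explicit form of $\omega_U$ supplied by Proposition \ref{pro:expr-loc-metrica-Kaluza-Klein}; the basis expansions and the local tension field formula then follow by $\cin(U)$-linearity and substitution into equation (\ref{eq:tension-field}) of Proposition \ref{pro:Lorentz-strength}.

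First I would read off from $\omega_U((D,E))=(0,[A(D)]^R+E)$ that the $\omega_U$-horizontal lift of a vector field $X\in\X(U)$ is $X^h=(X,-[A(X)]^R)\in\X(U\times G)$, where $X$ is identified with $(X,0)$ and $[A(X)]^R$ with the purely vertical vector field $(0,[A(X)]^R)$. Because $\Omega^{\omega_U}$ is a horizontal $2$-form its value on $((D_1,E_1),(D_2,E_2))$ depends only on the horizontal projections of its arguments, so it suffices to compute $\Omega^{\omega_U}(X_1^h,X_2^h)$ for vector fields $X_1,X_2\in\X(U)$ extending $D_1,D_2$. Expanding the bracket one gets
\begin{align*}
[X_1^h,X_2^h]=[X_1,X_2]-[X_1,[A(X_2)]^R]-[[A(X_1)]^R,X_2]+[[A(X_1)]^R,[A(X_2)]^R],
\end{align*}
where in the two mixed brackets $X_i$ only differentiates the $U$-dependence of the $\g$-valued function $A(X_j)$, giving $[X_1,[A(X_2)]^R]=[X_1\!\cdot\! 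A(X_2)]^R$ and symmetrically, while the last bracket is purely vertical and, by the sign rule $[\xi^R,\eta^R]=-[\xi,\eta]^R$ for right-invariant vector fields on $G$, equals $-[[A(X_1),A(X_2)]]^R$. Applying $\omega_U$ and using the Cartan formula $dA(X_1,X_2)=X_1\!\cdot\! A(X_2)-X_2\!\cdot\! A(X_1)-A([X_1,X_2])$ together with $F=dA+[A,A]$ from Section \ref{subsection:local-expressions}, everything collapses to
\begin{align*}
-\omega_U([X_1^h,X_2^h])=\bigl(0,[dA(X_1,X_2)+[A(X_1),A(X_2)]]^R\bigr)=[F(X_1,X_2)]^R,
\end{align*}
which proves item (1); item (2) is then immediate from the definition $\Omega_{\omega_U}(D_1,D_2)=\Omega^{\omega_U}(D_1^h,D_2^h)$, and the $G$-invariance of $[F(D_1,D_2)]^R$ is automatic because $F$ lives on $U$ and right-invariant vector fields on $G$ are $R_g$-invariant by construction.

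For the basis expansions I would decompose $F=\sum_{a=1}^d F^a\otimes\xi_a$ with $F^a\in\Omega^2(U)$ and insert into item (2) to obtain $\Omega_{\omega_U}=\sum_a F^a\otimes\xi_a^R$. To pass to $\mathscr{F}_{\omega_U}$ I would use the defining identity $\langle V,\Omega_{\omega_U}\rangle_{\mathcal V P}(X_1,X_2)=\overline g_U(\langle V,\mathscr{F}_{\omega_U}\rangle_{\mathcal V P}(X_1),X_2)$, expand the left-hand side as $\sum_a\langle V,\xi_a^R\rangle_{\mathcal V P}F^a(X_1,X_2)$ and match coefficients to read off $\mathscr F^a_{\omega_U}=p_{\overline g_U}^{-1}(F^a)$. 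Finally, equation (\ref{eq:local-tension-field}) is obtained by applying equation (\ref{eq:tension-field}) of Proposition \ref{pro:Lorentz-strength} with $k=d$ and $\widetilde{\boldsymbol{\xi}}_a=\xi_a^R$; this choice is legitimate thanks to the discussion preceding the proposition, where the local identification $\widehat{(-)}\colon\cin(U,\g)\xrightarrow{\sim}\X^{\mathcal V}(P_U)^G$ sends the constant map $\xi_a$ to the $G$-invariant vertical vector field whose expression in the trivialization is exactly $\xi_a^R$.

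The main obstacle is the bookkeeping in the bracket computation on $U\times G$: one must carefully track the sign rule $[\xi^R,\eta^R]=-[\xi,\eta]^R$ for right-invariant vector fields together with the sign from the Cartan formula for $dA$, which conspire precisely so that $F=dA+[A,A]$ appears on the nose in the sign convention of the paper.
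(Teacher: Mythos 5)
Your proposal is correct and is precisely the ``direct derivation'' the paper alludes to without writing out: the bracket computation of $[X_1^h,X_2^h]$ on $U\times G$ using the explicit $\omega_U$ from Proposition \ref{pro:expr-loc-metrica-Kaluza-Klein}, the sign rule for right-invariant vector fields, and the Cartan formula reproduce $F=dA+[A,A]$ exactly as intended, and the final formula is the correct specialization of equation (\ref{eq:tension-field}) with $\widetilde{\boldsymbol{\xi}}_a=\xi_a^R$, which is legitimate since right-invariant vector fields on $U\times G$ are invariant under the right $G$-action and hence lie in $\X^{\mathcal V}(P_U)^G$. No gaps.
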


Now we use the notation introduced in Section \ref{subsection:local-expressions} and in particular we consider on the open set $U\times G$ the reference frame $\mathcal R=\{E_\mu,E_a\}_{\mu=1,a=1}^{m,d}$   introduced there. Bearing in mind Proposition \ref{pro:Levi-Civita-connection}, a somewhat lengthy but straightforward computation proves the following results.

 \begin{Pro}\label{pro:local-expression-Wong-Lorentz}
The O'Neill tensors of a Kaluza-Klein  principal $G$-bundle $\pi\colon (P,\widehat g)\to (M,\overline g)$ are  expressed in the reference  frame $\mathcal R=\{E_\mu,E_a\}_{\mu=1,a=1}^{m,d}$ by \begin{align*}
 	\mathbf{T}(E_\mu,E_\nu)&=\mathbf{T}(E_\mu,E_a)=0,\\
 	\mathbf{T}(E_a,E_\mu)&=\left[\nabla_{E_a}E_\mu\right]^\mathcal{V}=\frac{\bar\beta^{db}}{2} B_{a\mu d}\, E_b,\\
 	\mathbf{T}(E_a,E_b)&=\left[\nabla_{E_a}E_b\right]^\mathcal{H}=-\frac{\bar g^{\mu\rho}}{2} B_{a\mu b}\, E_\rho,\\
 	\mathbf{A}(E_\mu,E_\nu)&=\left[\nabla_{E_\mu}E_\nu\right]^\mathcal{V}=-\frac{1}{2}F_{\mu\nu}^a E_a,\\
 	\mathbf{A}(E_\mu,E_a)&=\left[\nabla_{E_\mu}E_a\right]^\mathcal{H}=-\frac{1}{2}\bar\beta_{ab}F^b_{\alpha\mu}\bar g^{\alpha\rho} E_\rho,\\
 	\mathbf{A}(E_a,E_\mu)&=\mathbf{A}(E_a,E_b)=0.
 \end{align*} where $B_{a\mu d}=\partial_\mu\bar\beta_{ad}-\bar\beta([A_\mu,\xi_a],\xi_d)-\bar\beta(\xi_a,[A_\mu,\xi_d]) $.
 \end{Pro}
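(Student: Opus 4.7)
The plan is to reduce the statement to a direct verification that extracts the horizontal and vertical components of the covariant derivatives already computed in Proposition \ref{pro:Levi-Civita-connection}, together with the defining identities of the O'Neill tensors.

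First I would set up the horizontal/vertical dichotomy of the adapted reference frame: since $\omega_U((D,E))=(0,[A(D)]^R+E)$ by Proposition \ref{pro:expr-loc-metrica-Kaluza-Klein}, one checks immediately that $\omega_U(E_\mu)=0$ and $\omega_U(E_a)=E_a$, so that $E_\mu\in\X^{\mathcal H}(U\times G)$ and $E_a\in\X^{\mathcal V}(U\times G)$. Plugging this into the definitions
\[
\bm{A}(D_1,D_2)=[\nabla^P_{D_1^{\mathcal H}}D_2^{\mathcal V}]^{\mathcal H}+[\nabla^P_{D_1^{\mathcal H}}D_2^{\mathcal H}]^{\mathcal V},\qquad
\bm{T}(D_1,D_2)=[\nabla^P_{D_1^{\mathcal V}}D_2^{\mathcal H}]^{\mathcal V}+[\nabla^P_{D_1^{\mathcal V}}D_2^{\mathcal V}]^{\mathcal H},
\]
the four identities $\bm{T}(E_\mu,E_\nu)=\bm{T}(E_\mu,E_a)=\bm{A}(E_a,E_\mu)=\bm{A}(E_a,E_b)=0$ follow tautologically because in each case the first argument has the wrong character ($E_\mu^{\mathcal V}=0$ for $\bm{T}$, $E_a^{\mathcal H}=0$ for $\bm{A}$).

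Next I would treat the four nontrivial cases by simply reading off the appropriate component of the corresponding line of Proposition \ref{pro:Levi-Civita-connection}. Concretely, $\bm{A}(E_\mu,E_\nu)=[\nabla_{E_\mu}E_\nu]^{\mathcal V}$ is the $E_a$-part of the first formula, giving $-\tfrac12 F^a_{\mu\nu}E_a$; $\bm{A}(E_\mu,E_a)=[\nabla_{E_\mu}E_a]^{\mathcal H}$ is the $E_\rho$-part of the second formula, giving $-\tfrac12\bar\beta_{ab}F^b_{\alpha\mu}\bar g^{\alpha\rho}E_\rho$; $\bm{T}(E_a,E_\mu)=[\nabla_{E_a}E_\mu]^{\mathcal V}$ is the $E_b$-part of the third formula; and $\bm{T}(E_a,E_b)=[\nabla_{E_a}E_b]^{\mathcal H}$ is the $E_\rho$-part of the fourth formula. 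The only real bookkeeping step is to recognise the combinations appearing in the last two cases as $B_{a\mu d}$ and $B_{a\mu b}$; this is automatic from the definition $B_{a\mu d}=\partial_\mu\bar\beta_{ad}-\bar\beta([A_\mu,\xi_a],\xi_d)-\bar\beta(\xi_a,[A_\mu,\xi_d])$.

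The main potential obstacle is not in the present proposition but in the one it depends on, namely Proposition \ref{pro:Levi-Civita-connection}, whose proof requires a patient Koszul formula computation involving the anchor identities $[E_\mu,E_\nu]=-F^a_{\mu\nu}E_a$, $[E_\mu,E_a]=-[A_\mu,\xi_a]^R=-c^b_{ac}A^c_\mu E_b$, and $[E_a,E_b]=[\xi_a,\xi_b]^R_G$-type commutators, combined with the derivatives of $\bar\beta_{ab}$ along the horizontal lift. Assuming this underlying computation, which is indicated to be lengthy but routine, the present statement amounts to matching terms across four lines, so no further calculation is needed. I would therefore write the proof as a short paragraph citing Proposition \ref{pro:Levi-Civita-connection}, the O'Neill definitions \eqref{tena}--\eqref{tent}, and the horizontal/vertical character of $\{E_\mu,E_a\}$, then display a single compact table identifying each projection.
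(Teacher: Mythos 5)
Your proposal is correct and follows exactly the route the paper intends: the paper gives no explicit proof but states that the result follows from Proposition \ref{pro:Levi-Civita-connection} by a straightforward computation, which is precisely your reading-off of vertical and horizontal components after noting that $E_\mu$ is $\omega$-horizontal and $E_a$ is vertical. All six identities, including the four vanishing ones forced by the first-argument character in the O'Neill definitions \eqref{tena}--\eqref{tent}, check out against the displayed formulas of Proposition \ref{pro:Levi-Civita-connection}.
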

 
Let us consider now a smooth map $\widetilde{\Phi}: N\longrightarrow P$ and its composition $\Phi=\pi\circ\widetilde \Phi\colon N\longrightarrow M$ with the projection of a Kaluza-Klein principal $G$-bundle. Shrinking $U$, if needed, we can assume without loss of generality that $\Phi^{-1}(U)$ is an open chart of $N$ with coordinate functions $\{y^1,\ldots,y^n\}$.
\begin{thm}
Let $\pi: (P,\widehat{g})\longrightarrow (M,\overline{g})$  be a  Kaluza-Klein principal $G$-bundle,  $\widetilde{\Phi}: (N,g)\longrightarrow (P,\widehat{g})$ be a smooth map and  $\Phi=\pi\circ\widetilde \Phi\colon (N,g)\longrightarrow (M,\overline{g})$ the composed map. The map $\widetilde\Phi$ is harmonic if and only if  on every local trivialization $U\times G$ where $\widetilde \Phi=(\Phi,\widehat \Phi)$ it satisfies the local generalized Wong's equations \begin{align}\nonumber
 g^{ij}\left\{\frac{\partial^2\Phi^\rho}{\partial y^i\partial y^j}+\frac{\partial\Phi^\mu}{\partial y^i} \frac{\partial\Phi^\nu}{\partial y^j}\Phi^{*}(\overline\Gamma_{\mu\nu}^\rho)-\Gamma_{ij}^k\frac{\partial{\Phi^\rho}}{\partial y^k}\right\}&=\\\label{eq:Local-Lorentz} 
 g^{ij}\left\{
 	\frac{\partial\Phi^\mu}{\partial y^i}\,\Phi^{*}(\bar\beta_{bc}F^c_{\alpha\mu}\bar g^{\alpha\rho})+\frac{1}{2}	\left[\Phi^*A^a(\partial_i)+\widehat\Phi^*\theta^a(\partial_i)
\,\Phi^{*}({\bar g^{\mu\rho}} B_{a\mu b})\right]
\right\}& 
\left[\Phi^*A^b(\partial_j)+\widehat\Phi^*\theta^b(\partial_j)\right],	
 \end{align}
 \begin{align}\nonumber
	&\frac{g^{ij}}{2}\left[\Phi^*A^a(\partial_i)+\widehat\Phi^*\theta^a(\partial_i)\right]\,\left\{\left[\Phi^*A^c(\partial_j)+\widehat\Phi^*\theta^c(\partial_j)\right]\,\Phi^{*}\left({\bar\beta^{db}}L_{acd}\right)+\frac{\partial\Phi^\nu}{\partial y^j}\,\Phi^{*}({\bar\beta^{db}} B_{a\nu d})\right\}=\\ \label{eq:Local-Hodge-gauge}&=g^{ij}\left\{\Gamma_{ij}^k\left[\Phi^*A^b(\partial_k)+\widehat\Phi^*\theta^b(\partial_k)\right]-\frac{\partial}{\partial y^i}\left[\Phi^*A^b(\partial_j)+\widehat\Phi^*\theta^b(\partial_j)\right]\right\},
\end{align}with $L_{acd}= \bar\beta (\xi_a,[\xi_c,\xi_d])-\bar\beta (\xi_c,[\xi_d,\xi_a])-\bar\beta (\xi_d,[\xi_a,\xi_c])$ and (\ref{eq:Local-Lorentz}), (\ref{eq:Local-Hodge-gauge}) are the generalized Lorentz and Hodge gauge equations, respectively.
 \end{thm}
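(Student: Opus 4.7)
The plan is to derive the local equations by specializing the intrinsic harmonic characterization of Corollary \ref{cor:harmonic-equations} via a local trivialization, using the reference frame $\mathcal{R}=\{E_\mu,E_a\}_{\mu=1,a=1}^{m,d}$ and the explicit formulas already assembled in Propositions \ref{pro:Levi-Civita-connection}, \ref{pro:local-expressions-Kaluza-Klein} and \ref{pro:local-expression-Wong-Lorentz}. Given a local section $s\in\Gamma(U,P)$ with associated trivialization $\psi_U:U\times G\xrightarrow{\sim}P_U$, I would write $\widetilde{\Phi}|_{\Phi^{-1}(U)}=(\Phi,\widehat\Phi)$ and first compute the tangent map in the adapted frame. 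Since $\partial/\partial x^\mu=E_\mu+A_\mu^a E_a$ and $\widehat\Phi_*(\partial_i)=[\widehat\Phi^*\theta^a(\partial_i)]\,E_a$, a direct decomposition gives
\begin{align*}
\widetilde{\Phi}_*\!\left(\tfrac{\partial}{\partial y^i}\right)=\frac{\partial\Phi^\mu}{\partial y^i}\,E_\mu+\bigl[\Phi^*A^a(\partial_i)+\widehat\Phi^*\theta^a(\partial_i)\bigr]\,E_a,
\end{align*}
and the bracket above is precisely the component of $\widetilde\Phi^*\widehat\omega$ along $\xi_a$. This identifies the horizontal and vertical parts of $d\widetilde\Phi$ that will recur throughout.

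Next, I would plug this decomposition into the Lorentz equation of Corollary \ref{cor:harmonic-equations}. The left-hand side is the usual coordinate expression for $\tau(\Phi)^\rho$, yielding the bracket in the first line of (\ref{eq:Local-Lorentz}). For the right-hand side I would use the two local ingredients: Proposition \ref{pro:local-expressions-Kaluza-Klein} rewrites $-\pi_{*}[\Tr_{g}(\widetilde\Phi^{*}\widehat g_{\mathscr{F}^\omega})]$ in the form $-\sum_a[\Phi^*\mathscr F^a_{\omega_U}](p_g^{-1}(\widetilde\Phi^*[p_{\widehat g_U}(\xi_a^R)]))$, which after pairing with $\bar g$ produces the term $g^{ij}\,(\partial\Phi^\mu/\partial y^i)\,\Phi^*(\bar\beta_{bc}F^c_{\alpha\mu}\bar g^{\alpha\rho})\,\widetilde\Phi^*\widehat\omega^b(\partial_j)$; Proposition \ref{pro:local-expression-Wong-Lorentz} supplies the nonvanishing components of $\bm T$, and only $\bm T(E_a,E_\mu)$ and $\bm T(E_a,E_b)$ can contribute after projecting with $\pi_*$, giving the $\bar g^{\mu\rho}B_{a\mu b}$-term. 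Collecting both contributions symmetrically in the indices $i,j$ via the metric $g^{ij}$ yields (\ref{eq:Local-Lorentz}).

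For the Hodge gauge equation, the strategy is to expand $\delta^{(\nabla^N,\widetilde\Phi^*\nabla^{P,\mathcal V})}(\widetilde\Phi^*\omega)$ in an orthonormal frame, using $(\widetilde\Phi^*\omega)(\partial_i)=\bigl[\Phi^*A^a(\partial_i)+\widehat\Phi^*\theta^a(\partial_i)\bigr]E_a$. The codifferential produces two types of terms: the derivative $-g^{ij}\partial_i[\widetilde\Phi^*\widehat\omega^b(\partial_j)]$ and the connection contribution $g^{ij}\Gamma^k_{ij}\widetilde\Phi^*\widehat\omega^b(\partial_k)$ from $\nabla^N$, together with a vertical Christoffel contribution coming from $\nabla^{P,\mathcal V}$ evaluated on the vertical vector fields $E_a$. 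The latter is extracted from Proposition \ref{pro:Levi-Civita-connection} by isolating the purely vertical piece $\nabla^{P,\mathcal V}_{E_a}E_b$, whose coefficient is exactly $\frac{\bar\beta^{dc}}{2}L_{acd}$ with the stated antisymmetrization of brackets. The right-hand side $\omega[\Tr_g(\widetilde\Phi^*\bm T)]$ is simultaneously converted using Proposition \ref{pro:local-expression-Wong-Lorentz}: only $\bm T(E_a,E_\mu)$ has a vertical component, contributing the $\bar\beta^{db}B_{a\nu d}$-term. Rearranging delivers (\ref{eq:Local-Hodge-gauge}).

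The argument is a routine but delicate bookkeeping exercise; no new geometric input beyond what has already been proved is required. The main obstacle will be organizing the many index-laden terms so that the vertical-vertical Christoffel combination of $\nabla^{P,\mathcal V}$ collapses into the manifestly $\Ad$-covariant bracket tensor $L_{acd}=\bar\beta(\xi_a,[\xi_c,\xi_d])-\bar\beta(\xi_c,[\xi_d,\xi_a])-\bar\beta(\xi_d,[\xi_a,\xi_c])$ and that the cross terms involving the gauge potential $A$ repackage correctly with the Maurer–Cartan terms $\widehat\Phi^*\theta^a$ into the gauge-invariant combination $\widetilde\Phi^*\widehat\omega^a=\Phi^*A^a+\widehat\Phi^*\theta^a$. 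Once this is achieved, separating horizontal and vertical projections against the frame $\{E_\mu,E_a\}$ and comparing components yields the two stated systems, completing the equivalence.
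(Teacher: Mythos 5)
Your proposal is correct and follows exactly the route the paper intends: the paper gives no written proof of this theorem, stating only that it follows from Proposition \ref{pro:Levi-Civita-connection} and the adapted frame $\mathcal R=\{E_\mu,E_a\}$ by ``a somewhat lengthy but straightforward computation,'' which is precisely the bookkeeping you outline (decompose $d\widetilde\Phi$ against $\{\omega^\mu,\omega^a\}$, specialize Corollary \ref{cor:harmonic-equations} using the local $\bm{A}$, $\bm{T}$ and vertical Christoffel data). The only cosmetic slip is the remark that $\bm T(E_a,E_\mu)$ ``can contribute after projecting with $\pi_*$'' — that component is vertical and is killed by $\pi_*$; it instead feeds the Hodge gauge equation, as your own treatment of the vertical part correctly reflects.
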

 
 \begin{Rem}
 These equations provide a natural generalization of Wong's equations, which describe the motion of particles coupled to a gauge field, see for instance \cite{Montgomery}. The generalized equations represent the dynamics of extended objects, like strings or branes, under the action of a gauge field. 	
 \end{Rem}

\section{The geometry of the fibres of a Kaluza-Klein principal bundle}
\begin{Lem}\label{lem:T-vanishing}
    Let $\pi:(P,\widehat g)\longrightarrow (M,\overline g)$ be a Kaluza-Klein principal $G$-bundle with $\widehat{g}=\Psi^{-1}(\omega,\beta,\bar g)$.\\
    If $\nabla^{P}$ is the Levi-Civita connection of $\widehat{g}$ and  $V, V_{1}, V_{2}$ are vertical vector fields on $P$ and $H,H_{1}, H_{2}$ are $\omega$-horizontal vector fields, then it holds
\begin{align*}
    \bm{T}(V_{1},V_{2}) &=\nabla^{P}_{V_{1}}V_{2}-\omega(\nabla^{P}_{V_{1}}V_{2}), \quad\quad \bm{T}(V,H)=\omega(\nabla^{P}_{V}H), \quad\quad \bm{T}(H_1,H_2)=0, \quad\quad \bm{T}(H,V)=0.
\end{align*}
In particular, if we consider $\xi,\xi_{1},\xi_{2}\in\mathfrak{g}$, we have:
\begin{align*}
    \bm{T}(\xi^{*}_{1},\xi^{*}_{2}) &=\nabla^{P}_{\xi^{*}_{1}}\xi^{*}_{2}-\omega(\nabla^{P}_{\xi^{*}_{1}}\xi^{*}_{2}), \quad \quad \bm{T}(\xi^{*},H)=\omega(\nabla^{P}_{\xi^{*}}H)=\omega(\nabla^{P}_{H}\xi^{*}).
\end{align*} 
    Moreover, the following conditions are equivalent:
    \begin{enumerate}
        \item $\bm{T}=0$, i.e., the fibers on the Riemannian submersion $\pi: (P,\widehat{g})\longrightarrow (M,\overline{g})$ are totally geodesic.\label{tot1}
\item For any $\xi_{1}, \xi_{2}\in\mathfrak{g}$ and any $\omega$-horizontal vector field $H$ we have\quad  $\widehat{g}(\nabla^{P}_{\xi^{*}_{1}}\xi^{*}_{2},H) =0.$ \label{tot2}
\item For any $\xi_{1}$, $\xi_{2}\in\mathfrak{g}$ and any $\omega$-horizontal vector field $H$ we have\quad $\widehat{g}(\nabla^{P}_{\xi^{*}_{1}}H,\xi^{*}_{2}) =0$. \label{tot3}
\item For any $\xi_{1}, \xi_{2}\in\mathfrak{g}$ and any $\omega$-horizontal vector field $H$ we have\quad $\widehat{g}(\nabla^{P}_{H}\xi^{*}_{1},\xi^{*}_{2}) =0.$ \label{tot4}
\item For any $\xi_{1}, \xi_{2}\in\mathfrak{g}$ and any $\omega$-horizontal vector field $H$ we have\quad  $H[\widehat{g}(\xi^{*}_{1},\xi^{*}_{2})] =0.$
\end{enumerate}
\end{Lem}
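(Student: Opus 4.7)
The argument splits into three stages. In the first stage I derive the four identities for $\bm{T}$ by direct substitution into the defining formula
\begin{align*}
\bm{T}(D_1,D_2) = [\nabla^P_{D_1^\mathcal V} D_2^\mathcal H]^\mathcal V + [\nabla^P_{D_1^\mathcal V} D_2^\mathcal V]^\mathcal H,
\end{align*}
using that for vertical $V$ one has $V^\mathcal V=V$, $V^\mathcal H=0$, and for horizontal $H$ one has $H^\mathcal H=H$, $H^\mathcal V=0$, and bearing in mind that $\omega$ is the vertical projector while $\mathrm{Id}-\omega$ is the horizontal one. The cases $\bm T(H_1,H_2)$ and $\bm T(H,V)$ vanish because $D_1^\mathcal V=0$, and the other two formulas drop out immediately. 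The fundamental-field specializations follow at once; the non-obvious equality $\omega(\nabla^P_{\xi^*}H)=\omega(\nabla^P_H\xi^*)$ is obtained from torsion-freeness together with the observation that $[\xi^*,H]$ is $\omega$-horizontal. This last fact holds because basic horizontal vector fields are $G$-invariant and so commute with fundamental vertical fields, and because $\mathcal HP=\cin(P)\otimes_{\cin(M)}\mathfrak X_B(P)$, so the Leibniz rule propagates the horizontality of $[\xi^*,H]$ to arbitrary horizontal $H$.

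In the second stage I establish the equivalences $(1)\Leftrightarrow(2)\Leftrightarrow(3)\Leftrightarrow(4)$. Since fundamental vertical fields $\xi^*$ pointwise span $\mathcal VP$ and $\bm T$ is $\cin(P)$-linear in each argument, it suffices to test the vanishing of $\bm T$ on pairs $(\xi_1^*,\xi_2^*)$ and $(\xi^*,H)$. The key inputs are two structural properties of $\bm T$ proved by O'Neill in \cite{o1966fundamental}: the symmetry $\bm T(V_1,V_2)=\bm T(V_2,V_1)$ on vertical inputs, and the $\widehat g$-skew-symmetry of the operator $\bm T_V:=\bm T(V,\cdot)$, namely $\widehat g(\bm T_V X,Y)+\widehat g(X,\bm T_V Y)=0$ for all $X,Y\in\mathfrak X(P)$. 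With these in hand, $(1)\Leftrightarrow(2)$ follows because $(2)$ forces the horizontal vector $\bm T(\xi_1^*,\xi_2^*)$ to vanish and skew-symmetry then transfers the vanishing to $\bm T(\xi^*,H)$; $(2)\Leftrightarrow(3)$ is a direct application of skew-symmetry; and $(3)\Leftrightarrow(4)$ uses torsion-freeness combined with the horizontality of $[\xi^*,H]$ from Stage~1, plus $\cin(P)$-linearity of both expressions in $H$ to reduce to the case of basic horizontal fields.

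The third stage treats the equivalence with $(5)$. Metric compatibility of $\nabla^P$ gives
\begin{align*}
H[\widehat g(\xi_1^*,\xi_2^*)] = \widehat g(\nabla^P_H\xi_1^*,\xi_2^*)+\widehat g(\xi_1^*,\nabla^P_H\xi_2^*),
\end{align*}
so $(4)\Rightarrow(5)$ is immediate. For the converse, I set $B(\xi_1,\xi_2):=\widehat g(\nabla^P_H\xi_1^*,\xi_2^*)$, so that $(5)$ reads $B(\xi_1,\xi_2)+B(\xi_2,\xi_1)=0$; that is, $B$ is skew-symmetric. Running the computation from Stage~2 backwards on a basic horizontal $H$, I obtain the closed form $B(\xi_1,\xi_2)=-\widehat g(H,\bm T(\xi_1^*,\xi_2^*))$, which by the vertical symmetry of $\bm T$ is itself symmetric in $(\xi_1,\xi_2)$. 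Being simultaneously symmetric and skew-symmetric, $B$ vanishes identically, which is precisely $(4)$.

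\textbf{Main obstacle.} The only step which is not pure unwinding of definitions is $(5)\Rightarrow(4)$: naive use of compatibility only controls the symmetric part of $B$, so one must uncover a hidden symmetry of $B$ coming from the combination of O'Neill's vertical symmetry and skew-symmetry properties of $\bm T$. Locating this cancellation is the crux of the argument, and everything else reduces to careful bookkeeping of vertical versus horizontal components.
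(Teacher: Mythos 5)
Your proposal is correct and follows essentially the same route as the paper: unwind the definition of $\bm{T}$ against the vertical/horizontal splitting, use torsion-freeness together with the horizontality of $[\xi^*,H]$ to link conditions (2)--(4), and then relate these to (5) via metric compatibility. Your symmetric-plus-skew argument for $(5)\Rightarrow(4)$ is just a repackaging of the paper's direct computation $H[\widehat{g}(\xi^{*}_{1},\xi^{*}_{2})]=-2\,\widehat{g}(H,\nabla^{P}_{\xi^{*}_{1}}\xi^{*}_{2})$, whose factor of $2$ encodes exactly the symmetry of $B$ that you extract from O'Neill's vertical symmetry of $\bm{T}$.
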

\begin{proof}
    The first assertions are derived from the fact that $\omega$ is the orthogonal projector onto the vertical sub-bundle $\mathcal{V}P\longrightarrow P$ with respect to the Kaluza-Klein metric $\widehat{g}$. The equality for $T(\xi^*,H)$ follows since $\mathrm{Tor}_{\nabla^P}=0$ and the fact that $[\xi^*,H]$ is always horizontal, see \cite[Lemma p. 78]{kobayashi1963foundations}. Regarding the equivalences, we have 
    \begin{align*}
        0 &=\bm{T}(\xi^{*}_{1},\xi^{*}_{2})=(\nabla^{P}_{\xi^{*}_{1}}\xi^{*}_{2})^{\mathcal{H}}=0\iff\widehat{g}(\nabla^{P}_{\xi^{*}_{1}}\xi^{*}_{2},H)=0, \forall H,\\
        0 &= \bm{T}(\xi^{*}_{1},H)=(\nabla^{P}_{\xi^{*}_{1}}H)^{\mathcal{V}}=0\iff \widehat{g}(\nabla^{P}_{\xi^{*}_{1}}H,\xi^{*}_{2})=0, \forall\xi_{2}.
    \end{align*}
    However, since $\nabla^{P}$ is the Levi-Civita connection of $\widehat{g}$, it holds $\nabla^{P}\widehat{g}=0$ and $\mathcal{V}P\perp\mathcal{H}P$, hence
    \begin{align*}
        0 &=\xi^{*}_{1}[\widehat{g}(H,\xi^{*}_{2})]=\widehat{g}(\nabla^{P}_{\xi^{*}_{1}}H,\xi^{*}_{2})+\widehat{g}(H,\nabla^{P}_{\xi^{*}_{1}}\xi^{*}_{2})=0\iff\widehat{g}(\nabla^{P}_{\xi^{*}_{1}}H,\xi^{*}_{2})=-\widehat{g}(H,\nabla^{P}_{\xi^{*}_{1}}\xi^{*}_{2}).
    \end{align*}
    As $\mathrm{Tor}_{\nabla^{P}}=0$, it holds
    \begin{align*}
        \widehat{g}(\nabla^{P}_{\xi^{*}_{1}}H,\xi^{*}_{2})=\widehat{g}(\nabla^{P}_{H}\xi^{*}_{1}+[\xi^{*}_{1},H],\xi^{*}_{2})=\widehat{g}(\nabla^{P}_{H}\xi^{*}_{1},\xi^{*}_{2}),
    \end{align*}
    where we have taken into account again that $[\xi^{*}_{1},H]$ is horizontal. Therefore, we get the equalities
    \begin{align*}
        -\widehat{g}(H,\nabla^{P}_{\xi^{*}_{1}}\xi^{*}_{2}) &=\widehat{g}(\nabla^{P}_{\xi^{*}_{1}}H,\xi^{*}_{2})=\widehat{g}(\nabla^{P}_{H}\xi^{*}_{1},\xi^{*}_{2})
    \end{align*}
    from which we deduce the equivalence of (\ref{tot1}), (\ref{tot2}), (\ref{tot3}), (\ref{tot4}).\\
Finally, using again that $\nabla^{P}\widehat{g}=0$, the equality $\widehat{g}(\nabla^{P}_{H}\xi^{*}_{1},\xi^{*}_{2})=-\widehat{g}(H,\nabla^{P}_{\xi^{*}_{1}}\xi^{*}_{2})$ we have just tested, and that $\Tor_{\nabla^{P}}=0$, we have
\begin{align*}
    H[\widehat{g}(\xi^{*}_{1},\xi^{*}_{2})] &=\widehat{g}(\nabla^{P}_{H}\xi^{*}_{1},\xi^{*}_{2})+\widehat{g}(\xi^{*}_{1}.\nabla^{P}_{H}\xi^{*}_{2})=-\widehat{g}(H,\nabla^{P}_{\xi^{*}_{1}}\xi^{*}_{2})-\widehat{g}(H,\nabla^{P}_{\xi^{*}_{2}}\xi^{*}_{1})=\\
    &=-\widehat{g}(H,\nabla^{P}_{\xi^{*}_{1}}\xi^{*}_{2}+\nabla^{P}_{\xi^{*}_{2}}\xi^{*}_{1})=-\widehat{g}(H,\nabla^{P}_{\xi^{*}_{1}\xi^{*}_{2}}+\nabla^{P}_{\xi^{*}_{1}}\xi^{*}_{2}+[\xi^{*}_{2},\xi^{*}_{1}])=-2\widehat{g}(H,\nabla^{P}_{\xi^{*}_{1}}\xi^{*}_{2})
\end{align*}
where we have used that $[\xi^{*}_{2},\xi^{*}_{1}]$ is vertical. Therefore, we  finally obtain the following equalities
\begin{align*}
    -\widehat{g}(H,\nabla^{P}_{\xi^{*}_{1}}\xi^{*}_{2})=\widehat{g}(\nabla^{P}_{\xi^{*}_{1}}H,\xi^{*}_{2})=\widehat{g}(\nabla^{P}_{H}\xi^{*}_{1},\xi^{*}_{2})=\frac{1}{2}H[\widehat{g}(\xi^{*}_{1},\xi^{*}_{2})],
\end{align*}
finishing the proof.
\end{proof}

Now let us recall, see for instance \cite[Chapter VI]{GHV2}, that for any vector space $V$ the connection $\omega$ induces  an  exterior covariant differential operator $$d^\omega\colon \Omega^\bullet(P,V)\to\Omega^{\bullet+1}(P,V)$$ given by $d^\omega:=\mathcal{H}^*\circ d$, where $\mathcal H=\Id_{TP}-\omega$ is the horizontal endomorphism of $P$ defined by $\omega$. Moreover, if one has a representation $\rho\colon G\to\Aut(V)$ the exterior covariant differential $d^\omega$ leaves invariant the $\cin(M)$-submodule $\Omega^\bullet_\mathrm{Bas}(P,V)\subset \Omega^\bullet(P,V)$ of basic forms (i.e $G$-invariant and horizontal forms). Since there is a natural isomorphism of $\mathbb Z$-graded $\cin(M)$-modules $$\varphi^\bullet_\rho\colon \Omega^\bullet_\mathrm{Bas}(P,V)\xrightarrow{\sim}\Omega^\bullet(M,P\times_\rho V),$$ with $\pi_\rho\colon P\times_\rho V\to M$ the associated vector bundle, it follows that $\omega$ induces a connection $\nabla^\rho$ on $P\times_\rho V$ such that its exterior covariant differential $d^{\nabla^\rho}$ makes commutative the following diagram 
\[\begin{tikzcd}
	\Omega^\bullet_\mathrm{Bas}(P,V) && \Omega^{\bullet+1}_\mathrm{Bas}(P,V) \\
	\\
	\Omega^\bullet(M,P\times_\rho V) && \Omega^{\bullet+1}(M,P\times_\rho V)
	\arrow["d^\omega", from=1-1, to=1-3]
	\arrow["\varphi^\bullet_\rho"', "\wr", from=1-1, to=3-1]
	\arrow["\varphi^\bullet_\rho", "\wr
	"', from=1-3, to=3-3]
	\arrow["d^{\nabla^\rho}", from=3-1, to=3-3]
\end{tikzcd}\] In particular, one has $$\nabla^\rho_D s= \varphi^0_\rho(i_{D^h}d^\omega ((\varphi^0_\rho)^{-1}(s))),\quad\quad D\in\X(M),s\in\Gamma(M,P\times_\rho V).$$

\begin{thm}\label{teo:beta-cov-constant}
    Let $\pi: (P,\widehat g)\longrightarrow (M,\overline g)$ be a Kaluza-Klein principal $G$-bundle such that  $\widehat g=\Psi^{-1}(\omega,\beta,\overline g)$. If $\nabla^{P}$ is the Levi-Civita connection of $\widehat{g}$, then the second fundamental form $\bm{T}$ of the fibers of the Riemannian submersion $\pi: (P,\widehat{g})\longrightarrow (M,\overline{g})$ vanishes (i.e., the fibers are totally geodesic) if and only if $\beta\in\cin(P,\mathrm{Met}(\mathfrak{g}))$ is $\omega$-covariantly constant, that is $$d^\omega\beta=0.$$   
Equivalently, if $\widehat g=\Psi^{-1}(\omega,g_\mathrm{ad},\overline g)$, then $\bm{T}=0$ if and only if the adjoint bundle $(\ad P,g_\ad,\nabla^\ad)$ is a Riemannian vector bundle, i.e $\nabla^\mathrm{ad}g_\mathrm{ad}=0$. That is, $\bm{T}=0$, if and only if the connection $\nabla^\mathrm{ad}$ of $\mathrm{ad}P$ is compatible with the Riemannian metric $g_\mathrm{ad}$ and thus for every $D\in\X(M),\boldsymbol{\xi}_1, \boldsymbol{\xi}_2\in\Gamma(M,\mathrm{ad}P)$ it holds $$0=(\nabla^\mathrm{ad}_D g_\mathrm{ad})(\boldsymbol{\xi}_1, \boldsymbol{\xi}_2)=D[g_\mathrm{ad}(\boldsymbol{\xi}_1, \boldsymbol{\xi}_2)]-g_\mathrm{ad}(\nabla^\mathrm{ad}_D\boldsymbol{\xi}_1, \boldsymbol{\xi}_2)-g_\mathrm{ad}(\boldsymbol{\xi}_1,\nabla^\mathrm{ad}_D\boldsymbol{\xi}_2).$$
\end{thm}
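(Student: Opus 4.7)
The plan is to exploit Lemma \ref{lem:T-vanishing}, specifically the equivalent characterization of $\bm{T}=0$ as the condition $H[\widehat{g}(\xi_1^*,\xi_2^*)]=0$ for every $\omega$-horizontal vector field $H\in\X^\mathcal H(P)$ and every $\xi_1,\xi_2\in\mathfrak g$. Since $\widehat g=\Psi^{-1}(\omega,\beta,\overline g)$, by construction one has $\widehat g(\xi_1^*,\xi_2^*)=\beta(\xi_1,\xi_2)$, so $\bm{T}=0$ is equivalent to $H[\beta(\xi_1,\xi_2)]=0$ for all such $H,\xi_1,\xi_2$.

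Next, I would view $\beta$ as an element of $\Omega^0_{\mathrm{Bas}}(P,S^2(\mathfrak g^*))$: it is $G$-equivariant by the Lemma that identifies $\mathrm{Met}(\ad P)=\cin(P,\mathrm{Met}(\mathfrak g))^G$, and every $0$-form is trivially horizontal. Using the definition $d^\omega=\mathcal H^*\circ d$ recalled just before the statement, the exterior covariant derivative acts on $\beta$ by $(d^\omega\beta)(D)=d\beta(D^{\mathcal H})=D^{\mathcal H}[\beta]$ for every $D\in\X(P)$. Regarded as taking values in $S^2(\mathfrak g^*)$, this says $d^\omega\beta=0$ is equivalent to $H[\beta(\xi_1,\xi_2)]=0$ for every horizontal $H$ and every $\xi_1,\xi_2\in\mathfrak g$. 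Combining with the previous paragraph yields the first equivalence $\bm{T}=0\iff d^\omega\beta=0$.

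For the second equivalence, consider the representation $\rho=S^2(\Ad^*)\colon G\to\Aut(S^2(\mathfrak g^*))$, for which the associated bundle $P\times_\rho S^2(\mathfrak g^*)$ is canonically isomorphic to $S^2((\ad P)^*)$. Through the $\cin(M)$-isomorphism $\varphi^0_\rho\colon\Omega^0_{\mathrm{Bas}}(P,S^2(\mathfrak g^*))\xrightarrow{\sim}\Gamma(M,S^2((\ad P)^*))$, the section $\beta$ corresponds precisely to $g_{\ad}$; and the commutative diagram recalled in the paper identifies $d^\omega$ on basic forms with the covariant exterior derivative $d^{\nabla^\rho}$ induced on $S^2((\ad P)^*)$ by $\nabla^{\ad}$. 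Hence $d^\omega\beta=0$ is equivalent to $\nabla^{S^2((\ad P)^*)}g_{\ad}=0$. Applying the Leibniz rule for connections induced on symmetric tensor products gives
\begin{align*}
(\nabla^{S^2((\ad P)^*)}_D g_{\ad})(\boldsymbol\xi_1,\boldsymbol\xi_2)=D[g_{\ad}(\boldsymbol\xi_1,\boldsymbol\xi_2)]-g_{\ad}(\nabla^{\ad}_D\boldsymbol\xi_1,\boldsymbol\xi_2)-g_{\ad}(\boldsymbol\xi_1,\nabla^{\ad}_D\boldsymbol\xi_2),
\end{align*}
so this tensor vanishes identically if and only if $(\ad P,g_{\ad},\nabla^{\ad})$ is a Riemannian vector bundle, completing the chain of equivalences.

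The main technical point I foresee is bookkeeping the identifications: checking that the $G$-equivariance of $\beta$ is precisely $\rho$-equivariance for $\rho=S^2(\Ad^*)$, and that the connection on $S^2((\ad P)^*)$ corresponding under $\varphi^0_\rho$ to $d^\omega$ is the one naturally induced from $\nabla^{\ad}$ by the symmetric tensor functor. Both facts follow from the compatibility of the associated bundle construction with smooth tensorial functors (which underlies the very identification $\mathrm{Met}(\ad P)\simeq\cin(P,\mathrm{Met}(\mathfrak g))^G$ proved earlier), so no genuinely new work is needed beyond carefully transporting the Leibniz formula through $\varphi^0_\rho$.
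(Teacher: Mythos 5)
Your proposal is correct and follows essentially the same route as the paper's own proof: both reduce $\bm{T}=0$ to the condition $H[\beta(\xi_1,\xi_2)]=0$ via Lemma \ref{lem:T-vanishing}, identify this with $d^\omega\beta=\mathcal H^*(d\beta)=0$, and then pass to the adjoint bundle through the compatibility of the symmetric-power functor with the associated-bundle construction, so that $d^\omega$ on basic $S^2(\mathfrak g^*)$-valued forms corresponds to the connection induced by $\nabla^{\ad}$ on $S^2((\ad P)^*)$. Your explicit transcription of the Leibniz rule merely spells out a step the paper leaves implicit; no gap.
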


\begin{proof}
	We have seen in Lemma \ref{lem:T-vanishing}, that $\bm{T}=0$ if and only if for any $\omega$-horizontal vector field $H$ and any $\xi_1,\xi_2\in\mathfrak{g}$ one has $$H[\widehat g(\xi_1^*,\xi_2^*)]=0.$$
	Since $\widehat g(\xi_1^*,\xi_2^*)=\beta(\xi_1,\xi_2)$, the above equality is equivalent to $$H[\beta(\xi_1,\xi_2)]=0\Longleftrightarrow H\beta=0\Longleftrightarrow (d\beta)(H)=0\Longleftrightarrow  \mathcal H^*(d\beta)=d^\omega\beta=0.$$ This proves the first claim of the statement. For the second one, $\omega$ induces a connection $\nabla^\mathrm{ad}$ on the adjoint bundle $P\times_\mathrm{ad}\mathfrak{g}$. The compatibility of differential functors with the associated bundle functor gives in particular a vector bundle isomorphism  $$S^2((P\times_\rho\mathfrak{g})^*)\simeq P\times_{S^2(\rho^\vee)}S^2(\mathfrak{g}^*)$$ and the connection induced by $\omega$ on $S^2((P\times_\rho\mathfrak{g})^*)$ is just the connection naturally induced by $\nabla^\mathrm{ad}$, hence the proof is finished.
\end{proof}

\begin{Cor}\label{cor:subvariedad-metricas_KK-fibras-tot-geodesicas}
	  If we present the space of Kaluza-Klein metrics on a principal $G$-bundle $\pi: P\longrightarrow M$ as$$\mathrm{Met}_{KK}(P)=\mathcal{A}(P)\times \cin(P,\mathfrak{g})^G\times \mathrm{Met}(M),$$ then the subspace $\mathrm{Met}_{KK,gf}(P)$ of $\mathrm{Met}_{KK}(P)$ formed by those Kaluza-Klein metrics with totally geodesic fibers gets identified with $$\mathrm{Met}_{KK,tgf}(P)=\{(\omega,\beta,\overline g)\in\mathrm{Met}_{KK}(P)\colon d^\omega\beta=0\}.$$ Equivalently, if we consider the presentation $\mathrm{Met}_{KK}(P)=\mathcal{A}(P)\times \mathrm{Met}(\mathrm{ad}P)\times \mathrm{Met}(M),$ then the subspace $\mathrm{Met}_{KK,tgf}(P)$ gets identified with $$\mathrm{Met}_{KK,tgf}(P)=\{(\omega,g_\mathrm{ad},\overline g)\in\mathrm{Met}_{KK}(P)\colon \nabla^\mathrm{ad}g_\mathrm{ad}=0\}.$$
\end{Cor}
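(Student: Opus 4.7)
The plan is to recognize that this corollary is a direct repackaging of Theorem \ref{teo:beta-cov-constant} in terms of subspaces of $\mathrm{Met}_{KK}(P)$, rather than being a result that requires any new work. I would open with an observation that, by definition, $\mathrm{Met}_{KK,tgf}(P)$ consists exactly of those Kaluza-Klein metrics $\widehat g$ whose associated Riemannian submersion $\pi\colon(P,\widehat g)\to(M,\overline g)$ has O'Neill tensor $\bm T=0$.

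For the first identification, I would apply the parametrization $\Psi\colon \mathrm{Met}_{KK}(P)\xrightarrow{\sim}\mathcal A(P)\times \cin(P,\mathfrak g)^G\times\mathrm{Met}(M)$ (in the form provided in the Remark following Corollary \ref{cor:existence-Kaluza-Klein-metrics}), so that every $\widehat g$ corresponds uniquely to a triple $(\omega,\beta,\overline g)$. Then Theorem \ref{teo:beta-cov-constant} gives the equivalence $\bm T=0 \Longleftrightarrow d^\omega\beta=0$, which transfers the defining condition from $\widehat g$ to the triple, yielding the stated identification.

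For the second identification, I would use the alternative parametrization $\Psi\colon \mathrm{Met}_{KK}(P)\xrightarrow{\sim}\mathcal A(P)\times\mathrm{Met}(\ad P)\times\mathrm{Met}(M)$ from Theorem \ref{teo:structure-Kaluza-Klein-metrics}, under which $\beta\in\cin(P,\mathrm{Met}(\mathfrak g))^G$ corresponds to $g_\ad\in\mathrm{Met}(\ad P)$ via the $G$-equivariance isomorphism $\Gamma(M,S^2(\ad P)^*)\simeq\cin(P,S^2(\mathfrak g^*))^G$. The second half of Theorem \ref{teo:beta-cov-constant} identifies $d^\omega\beta=0$ with $\nabla^\ad g_\ad=0$, which I would simply invoke to conclude.

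Since no computation is required beyond the appeal to Theorem \ref{teo:beta-cov-constant}, there is no genuine obstacle; the only point where care is warranted is making explicit that the two presentations of $\mathrm{Met}_{KK}(P)$ are the ones justified earlier in the paper, so that the subsets $\mathrm{Met}_{KK,tgf}(P)$ described by the two conditions $d^\omega\beta=0$ and $\nabla^\ad g_\ad=0$ are indeed each other's images under the change of coordinates $\beta\leftrightarrow g_\ad$.
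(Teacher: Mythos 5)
Your proposal is correct and matches the paper's intent exactly: the corollary carries no separate proof in the paper precisely because it is the immediate restatement of Theorem \ref{teo:beta-cov-constant} through the two parametrizations $\Psi$ of $\mathrm{Met}_{KK}(P)$, which is what you do. Nothing is missing.
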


\begin{De}Let $\pi: (P,\widehat g)\longrightarrow (M,\overline g)$ be a Kaluza-Klein principal $G$-bundle such that  $\widehat g=\Psi^{-1}(\omega,\beta,\overline g)$ and let $\nabla^{P,{\mathcal{V}}}$, $\nabla^{\omega}$ be the connections on the vertical subbundle $\mathcal{V}P\to P$ induced, respectively, by the Levi-Civita connection of $(P,\widehat g)$ and the connection $\widehat\nabla^\omega$ on the trivial vector bundle $\mathfrak{g}_P=P\times\mathfrak{g}\to P$ given by 
   \begin{align}\label{intro}
\widehat\nabla^{\omega}\boldsymbol\xi=d\boldsymbol\xi+\frac{1}{2}[\widehat\omega,\boldsymbol\xi],\quad   \boldsymbol\xi\in C^{\infty}(P,\mathfrak{g});
   \end{align}that is, given $D\in\X(P)$ one has $$\nabla^\omega_D\boldsymbol{\xi}^*:=[\widehat \nabla^\omega_D\boldsymbol{\xi}]^*.$$ 
  The difference between the connections $\nabla^{P,\mathcal{V}}$ and $\nabla^{\omega}$ on $P$ is the tensor $\Theta\in T_2^1(P)$ 
  \begin{align}\label{tensor}
\bm{\Theta}:=\nabla^{P,\mathcal{V}}-\nabla^{\omega}\colon\X(P)\times\X^\mathcal{V}(P)\to \X^\mathcal{V}(P)
  \end{align}
  such that for arbitrary vector fields $D_{1}$, $D_{2}$ on $P$ we have
  \begin{align}\label{vert}
\bm{\Theta}(D_{1},D_{2})=\nabla^{P,\mathcal{V}}_{D_{1}}D_{2}^{\mathcal{V}}-\nabla^{\omega}_{D_{1}}D_{2}^{\mathcal{V}}\iff \nabla^{P,\mathcal{V}}_{D_{1}}D_{2}^{\mathcal{V}}=\nabla^{\omega}_{D_{1}}D_{2}^{\mathcal{V}}+\bm{\Theta}(D_{1},D_{2})
  \end{align}
  where $D_{2}^{\mathcal{V}}=\omega(D_{2})\in\mathfrak{X}^\mathcal{V}(P)$.\\
  \end{De}
  
  \begin{Pro}\label{pro:tensor-diferencia-conexiones}
  	Let $\pi: (P,\widehat g)\longrightarrow (M,\overline g)$ be a Kaluza-Klein principal $G$-bundle with  $\widehat g=\Psi^{-1}(\omega,\beta,\overline g)$. The difference tensor $\bm{\Theta}$ of the connections $\nabla^{P,\mathcal{V}}$, $\nabla^\omega$ is given by $$\bm{\Theta}(D,\boldsymbol{\xi}^*)=\frac{1}{2}\left\{\ad^*_{\boldsymbol{\xi}}(\widehat\omega(D))+\ad_{\widehat\omega(D)}^*(\boldsymbol{\xi})\right\}^*+\bm{T}(\boldsymbol{\xi}^*, D^\mathcal{H}),\quad D\in\X(P),\boldsymbol{\xi}\in\cin(P,\mathfrak{g}),$$ where for any $\zeta\in\mathfrak{g}$ the map $\ad_\zeta^*$ is the adjoint of $\ad_\zeta=[\zeta,-]\colon \mathfrak g\to\mathfrak g$ with respect to the metric $\beta$; that is, $$\beta(\ad^*_\zeta(\xi_1),\xi_2)=\beta(\xi_1,\ad_\zeta(\xi_2)),\quad\xi_1,\xi_2\in\mathfrak{g}.$$
  	Moreover, covariant derivation along basic vector fields with the connection $\nabla^{P,\mathcal V}$ leaves invariant the $\cin(M)$-submodule $\X^\mathcal{V}(P)^G\simeq\Gamma(M,\ad P)$ and thus it induces in a natural way a connection $\nabla^{\ad,\mathcal V}$ on the adjoint bundle $\ad P$ whose difference with the connection $\nabla^\ad$ induced on it by the principal connection $\omega$ is given by $$\nabla^{\ad,\mathcal V}-\nabla^\ad=\bm{T}^\ad$$ where $\bm{T}^\ad\in \Omega^1(M,\End(\ad P))$ is the $\End(\ad P)$-valued $1$-form naturally induced by $\bm{T}$. 
  	\end{Pro}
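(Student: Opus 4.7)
The plan is to establish the formula for $\bm{\Theta}(D,\boldsymbol{\xi}^{*})$ by $\cin(P)$-linearity in $D$, verifying it separately for $D$ horizontal and $D$ vertical. A useful preliminary simplification is the identity $\nabla^{P,\mathcal{V}}_{D}\boldsymbol{\xi}^{*}=[\nabla^{P}_{D}\boldsymbol{\xi}^{*}]^{\mathcal{V}}$, immediate from Lemma~\ref{lem:descomposicion-derivada-covariante} applied with $D_{2}=\boldsymbol{\xi}^{*}$ since $(\boldsymbol{\xi}^{*})^{\mathcal{H}}=0$, so the task is to compare $\widehat{\omega}(\nabla^{P}_{D}\boldsymbol{\xi}^{*})$ with $\widehat{\omega}(\nabla^{\omega}_{D}\boldsymbol{\xi}^{*})=D(\boldsymbol{\xi})+\tfrac{1}{2}[\widehat{\omega}(D),\boldsymbol{\xi}]$.

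For $D=H$ horizontal I would write $\boldsymbol{\xi}=\sum_{a}f^{a}\xi_{a}$ in a basis of $\mathfrak{g}$ and exploit that $[\xi_{a}^{*},H']=0$ for every basic $H'$ (as $\xi_{a}^{*}$ generates the flow of the $G$-action and $H'$ is $G$-invariant); the Leibniz rule then yields $\omega([\boldsymbol{\xi}^{*},H])=-[H(\boldsymbol{\xi})]^{*}$. Combined with the torsion-free identity $\nabla^{P}_{H}\boldsymbol{\xi}^{*}=\nabla^{P}_{\boldsymbol{\xi}^{*}}H-[\boldsymbol{\xi}^{*},H]$ and the definition $\bm{T}(\boldsymbol{\xi}^{*},H)=[\nabla^{P}_{\boldsymbol{\xi}^{*}}H]^{\mathcal{V}}$, this gives $[\nabla^{P}_{H}\boldsymbol{\xi}^{*}]^{\mathcal{V}}=\bm{T}(\boldsymbol{\xi}^{*},H)+[H(\boldsymbol{\xi})]^{*}$. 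Since $\widehat{\omega}(H)=0$ reduces $\nabla^{\omega}_{H}\boldsymbol{\xi}^{*}$ to $[H(\boldsymbol{\xi})]^{*}$, subtraction produces $\bm{\Theta}(H,\boldsymbol{\xi}^{*})=\bm{T}(\boldsymbol{\xi}^{*},H)$, which is precisely the desired identity in this subcase (the algebraic $\ad^{*}$ terms automatically vanish).

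For $D=\boldsymbol{\zeta}^{*}$ vertical with $\boldsymbol{\zeta}=\widehat{\omega}(D)\in\cin(P,\mathfrak{g})$, I would pair with an auxiliary vertical test field $\boldsymbol{\eta}^{*}$ and apply the Koszul formula to $2\widehat{g}(\nabla^{P}_{\boldsymbol{\zeta}^{*}}\boldsymbol{\xi}^{*},\boldsymbol{\eta}^{*})$. Two inputs drive this computation: the bracket expansion
\[
[\boldsymbol{\zeta}^{*},\boldsymbol{\xi}^{*}]=[\boldsymbol{\zeta},\boldsymbol{\xi}]^{*}+[\boldsymbol{\zeta}^{*}(\boldsymbol{\xi})]^{*}-[\boldsymbol{\xi}^{*}(\boldsymbol{\zeta})]^{*},
\]
obtained from the fact that $\xi\mapsto\xi^{*}$ is a Lie algebra homomorphism on constant arguments (whence the Leibniz corrections account for the remaining terms), and the derivative identity $(\boldsymbol{\zeta}^{*}\beta)(\boldsymbol{\xi},\boldsymbol{\eta})=\beta([\boldsymbol{\zeta},\boldsymbol{\xi}],\boldsymbol{\eta})+\beta(\boldsymbol{\xi},[\boldsymbol{\zeta},\boldsymbol{\eta}])$ encoding the $\Ad$-equivariance of $\beta$. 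The organized bookkeeping of the twelve resulting terms is the main obstacle; after the cancellations Koszul collapses to
\[
2\beta(\widehat{\omega}(\nabla^{P}_{\boldsymbol{\zeta}^{*}}\boldsymbol{\xi}^{*}),\boldsymbol{\eta})=\beta([\boldsymbol{\zeta},\boldsymbol{\xi}],\boldsymbol{\eta})+\beta(\ad^{*}_{\boldsymbol{\zeta}}\boldsymbol{\xi},\boldsymbol{\eta})+\beta(\ad^{*}_{\boldsymbol{\xi}}\boldsymbol{\zeta},\boldsymbol{\eta})+2\beta(\boldsymbol{\zeta}^{*}(\boldsymbol{\xi}),\boldsymbol{\eta}).
\]
Non-degeneracy of $\beta$ followed by subtraction of $\widehat{\omega}(\nabla^{\omega}_{\boldsymbol{\zeta}^{*}}\boldsymbol{\xi}^{*})=\boldsymbol{\zeta}^{*}(\boldsymbol{\xi})+\tfrac{1}{2}[\boldsymbol{\zeta},\boldsymbol{\xi}]$ yield $\bm{\Theta}(\boldsymbol{\zeta}^{*},\boldsymbol{\xi}^{*})=\tfrac{1}{2}(\ad^{*}_{\boldsymbol{\xi}}\boldsymbol{\zeta}+\ad^{*}_{\boldsymbol{\zeta}}\boldsymbol{\xi})^{*}$, which, together with the vanishing $\bm{T}(\boldsymbol{\xi}^{*},D^{\mathcal{H}})=0$, matches the asserted formula.

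For the second assertion I would first verify that $\nabla^{P,\mathcal{V}}_{X^{h}}\widetilde{\boldsymbol{\xi}}\in\X^{\mathcal{V}}(P)^{G}$ whenever $X\in\X(M)$ and $\widetilde{\boldsymbol{\xi}}\in\X^{\mathcal{V}}(P)^{G}$: this follows from the $G$-invariance of $\widehat{g}$ (and thus of $\nabla^{P}$), of the splitting $TP=\mathcal{V}P\oplus\mathcal{H}P$, and of the horizontal lift $X^{h}$ and $\widetilde{\boldsymbol{\xi}}$ themselves. Hence $\nabla^{P,\mathcal{V}}$ descends to a connection $\nabla^{\ad,\mathcal{V}}$ on $\ad P$. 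Specializing the first part to $D=X^{h}$ (for which $\widehat{\omega}(X^{h})=0$), the $\ad^{*}$ terms vanish and one obtains $\nabla^{P,\mathcal{V}}_{X^{h}}\widetilde{\boldsymbol{\xi}}=\nabla^{\omega}_{X^{h}}\widetilde{\boldsymbol{\xi}}+\bm{T}(\widetilde{\boldsymbol{\xi}},X^{h})$. Since for $G$-equivariant $\boldsymbol{\xi}$ the horizontal derivative $X^{h}(\boldsymbol{\xi})$ is exactly the principal-connection derivative $\nabla^{\ad}_{X}\boldsymbol{\xi}$, the first summand corresponds to $\nabla^{\ad}_{X}\widetilde{\boldsymbol{\xi}}$; the second is vertical and $G$-invariant by the $G$-invariance of $\bm{T}$, and its $\cin(M)$-bilinear dependence on $X$ and $\widetilde{\boldsymbol{\xi}}$ defines the required $\End(\ad P)$-valued $1$-form $\bm{T}^{\ad}$, whence $\nabla^{\ad,\mathcal{V}}-\nabla^{\ad}=\bm{T}^{\ad}$.
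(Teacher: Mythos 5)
Your proposal is correct and follows essentially the same route as the paper's proof: split $D$ into horizontal and vertical parts, evaluate $\bm{\Theta}$ on each via the Koszul formula together with the $\Ad$-equivariance identity $\boldsymbol{\zeta}^*\beta(\cdot,\cdot)=\beta(\ad_{\boldsymbol{\zeta}}\cdot,\cdot)+\beta(\cdot,\ad_{\boldsymbol{\zeta}}\cdot)$ and the vanishing of $[\xi^*,E^h]$, then descend to $\ad P$ using the $G$-invariance of $\nabla^P$ and of $\bm{T}$. The only (harmless) variations are that you handle the horizontal case by torsion-freeness plus the bracket identity instead of a direct Koszul computation, and you carry general $\mathfrak g$-valued functions through the vertical case rather than computing on constants and invoking $\cin(P)$-linearity of $\bm{\Theta}$ as the paper does.
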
 
\begin{proof}
By the Koszul formula, given $\zeta\in\mathfrak{g}$, one has \begin{align*}
 		\widehat g(\nabla^{P,\mathcal{V}}_D\boldsymbol{\xi}^*,\zeta^*)=\widehat g(\nabla^{P}_D\boldsymbol{\xi},\zeta^*)=\frac{1}{2}&\left\{\boldsymbol{\xi}^*\widehat g(D,\zeta^*)-\widehat g([\boldsymbol{\xi}^*,D],\zeta^*)-\widehat g(D,[\boldsymbol{\xi}^*,\zeta^*])+\right. \\&+\left.D\widehat g(\boldsymbol{\xi}^*,\zeta^*)-{\zeta}^*\widehat g(\boldsymbol{\xi}^*,D)-\widehat g(\boldsymbol{\xi}^*,[D,\zeta^*]) \right\}.
 	\end{align*}In particular, if $D=E^h$ with $E\in\X(M)$, then one has 
\begin{align*}
\widehat g(\nabla^{P}_{E^h}\boldsymbol{\xi}^*,\zeta^*)=\frac{1}{2}&\left\{\boldsymbol{\xi}^*\widehat g(E^h,\zeta^*)-\widehat g([\boldsymbol{\xi}^*,E^h],\zeta^*)-\widehat g(E^h,[\boldsymbol{\xi}^*,\zeta^*])+\right. \\&+\left.E^h\widehat g(\boldsymbol{\xi}^*,\zeta^*)-{\zeta}^*\widehat g(\boldsymbol{\xi}^*,E^h)-\widehat g(\boldsymbol{\xi}^*,[E^h,\zeta^*]) \right\}.	
\end{align*}
Bearing in mind that $\mathcal HP$ is $\widehat g$-orthogonal to $\mathcal VP$, it follows that $\widehat g(E^h,\zeta^*)=0$,  $\widehat g(E^h,[\boldsymbol{\xi}^*,\zeta^*])=0$, because $[\boldsymbol{\xi}^*,\zeta^*]$ is vertical, $\widehat g(\boldsymbol{\xi}^*,E^h)=0$ and $\widehat g(\boldsymbol{\xi}^*,[E^h,\zeta^*])=0$ as $[E^h,\zeta^*]=0$ because on the one hand it is horizontal due to \cite[Lemma p. 78]{kobayashi1963foundations}, whereas on the other it is vertical since it is $\pi$-related to $[E,0]=0$. After a straightforward computation we obtain
 \begin{align*}
\widehat g(\nabla^{P,\mathcal{V}}_{E^h}\boldsymbol{\xi}^*,\zeta^*)&=\frac{1}{2}\left\{-\widehat g([\boldsymbol{\xi}^*,E^h],\zeta^*)+E^h\widehat g(\boldsymbol{\xi}^*,\zeta^*) \right\}=\widehat g([E^h,\boldsymbol{\xi}^*],\zeta^*)+\widehat g(\bm{T}({\boldsymbol{\xi}^*},{E^h}) ,\zeta^*).	
\end{align*}Where we have used again  that $[E^h,\zeta^*]=0$ and the properties of $\bm{T}$ described in \cite{o1966fundamental}. On the other hand, if $\boldsymbol{\xi}=f^a\xi_a^*$ with $f^a\in\cin(P)$,  it holds $$[E^h,\boldsymbol{\xi}^*]=[E^h,f^a\xi_a^*]=E^h(f^a)\xi_a^*+f^a[E^h,\xi_a^*]=[E^h(\boldsymbol{\xi})]^*,$$ since as before $[E^h,\xi_a^*]=0$. Thus we get
 \begin{align*}
\widehat g(\nabla^{P,\mathcal{V}}_{E^h}\boldsymbol{\xi}^*,\zeta^*)=\widehat g([E^h(\boldsymbol{\xi})]^*+\bm{T}({\boldsymbol{\xi}^*},{E^h}) ,\zeta^*).
\end{align*}Taking into account that $\widehat g$ is non-degenerate and that $\zeta\in\mathfrak{g}$ is arbitrary, it follows that  $$\nabla^{P,\mathcal{V}}_{E^h}\boldsymbol{\xi}^*=[E^h(\boldsymbol{\xi})]^*+\bm{T}({\boldsymbol{\xi}^*},{E^h}).$$ On the other hand we have $$\nabla^\omega_{E^h}\boldsymbol{\xi}=\left[\widehat \nabla^\omega_{E^h}\boldsymbol{\xi}\right]^*=\left[E^h(\boldsymbol{\xi})+\frac{1}{2}[\widehat\omega(E^h),\boldsymbol{\xi}]\right]^*=\left[E^h(\boldsymbol{\xi})\right]^*,$$ because $\widehat\omega(E^h)=0$. Therefore, we have proved that $$\boldsymbol{\Theta}(E^h,\boldsymbol{\xi})=\nabla^{P,\mathcal{V}}_{E^h}\boldsymbol{\xi}-\nabla^\omega_{E^h}\boldsymbol{\xi}=\bm{T}({\boldsymbol{\xi}^*},{E^h}).$$ Since $\boldsymbol{\Theta}$ is $\cin(P)$-linear and $\X^\mathcal{H}(P)=\cin(P)\otimes_{\cin(M)}\X_{B}(P)\simeq\cin(P)\otimes_{\cin(M)}\X(M)$, it follows that for any horizontal vector field $H\in\X^\mathcal{H}(P)$ it holds  $$\boldsymbol{\Theta}(H,\boldsymbol{\xi})=\bm{T}({\boldsymbol{\xi}^*},H).$$

Now, given $\xi_1,\xi_2,\xi_3\in\mathfrak{g}$, since $L_{\xi^*_2}\widehat g=0$, by the Koszul formula we get\begin{align*}
 \widehat g(\nabla^{P,\mathcal{V}}_{\xi_1^*}\xi_2^*,\xi_3^*)&=\frac{1}{2}\left\{\xi_1^*\,\beta(\xi_2,\xi_3)-\xi_3^*\,\beta(\xi_2,\xi_1)-\beta(\xi_2,[\xi_1,\xi_3]) \right\}.
 \end{align*}
However, given $p\in P$ and remembering that $\beta_{pg}=\Ad_g^*\beta_p$, we have \begin{align*}
 \{\xi_1^*\,\beta(\xi_2,\xi_3)\}(p)&=\left.\frac{d}{dt}\right|_0\{\beta(\xi_2,\xi_3)\}(p\cdot\exp_{\xi_1}(t))=\left.\frac{d}{dt}\right|_0\left\{\beta_{p\cdot\exp_{\xi_1}(t)}(\xi_2,\xi_3)\right\}=\\&=\beta_p(\ad_{\xi_1}(\xi_2),\xi_3)+\beta_p(\xi_2,\ad_{\xi_1}(\xi_3))=\left\{\beta(\ad_{\xi_1}(\xi_2),\xi_3)+\beta(\xi_2,\ad_{\xi_1}(\xi_3))\right\}(p),
 \end{align*} and the analogous expression $\xi^*_3\,\beta(\xi^*_2,\xi^*_1)=\beta(\ad_{\xi_3}(\xi_2),\xi_1)+\beta(\xi_2,\ad_{\xi_3}(\xi_1)).$ Substituting these results above, after some computations we obtain    
\begin{align*}
 \widehat g(&\nabla^{P,\mathcal{V}}_{\xi_1^*}\xi_2^*,\xi_3^*)=\frac{1}{2}\left\{\beta(\ad_{\xi_1}(\xi_2),\xi_3)+\beta(\ad^*_{\xi_1}(\xi_2),\xi_3)+\beta(\ad^*_{\xi_2}(\xi_1),\xi_3)\right\}.
 \end{align*}
On the other hand, after some manipulations one has \begin{align*}
 \widehat g(\nabla^\omega_{\xi_1^*}\xi^*_2,\xi^*_3)&=	\widehat g\left([\widehat\nabla^\omega_{\xi_1^*}\xi_2]^*,\xi^*_3\right)=	\widehat g\left(\left\{i_{\xi_1^*}d\xi_2+\frac{1}{2}[\widehat\omega({\xi_1^*}),\xi_2]\right\}^*,\xi^*_3\right)=\beta\left(\frac{1}{2}\ad_{\xi_1}(\xi_2),\xi_3\right).
  \end{align*}
Therefore, we get
\begin{align*}
 \widehat g(&\nabla^{P,\mathcal{V}}_{\xi_1^*}\xi_2^*-\nabla^\omega_{\xi_1^*}\xi^*_2,\xi_3^*)= \frac{1}{2}\beta(\ad^*_{\xi_1}(\xi_2)+\ad^*_{\xi_2}(\xi_1),\xi_3)=\frac{1}{2}\widehat g\left(\left\{\ad^*_{\xi_1}(\xi_2)+\ad^*_{\xi_2}(\xi_1)\right\}^*,\xi_3^*\right).
 \end{align*} The same reasoning as in the previous case implies the equality $$\bm{\Theta}(\xi_1^*,\xi_2^*)=\nabla^{P,\mathcal{V}}_{\xi_1^*}\xi_2^*-\nabla^\omega_{\xi_1^*}\xi^*_2=\frac{1}{2}\left\{\ad^*_{\xi_1}(\xi_2)+\ad^*_{\xi_2}(\xi_1)\right\}^*.$$ Since $\bm{\Theta}$ is $\cin(P)$-linear, it follows that  for any $\boldsymbol{\xi_1}, \boldsymbol{\xi_2}\in\cin(P,\mathfrak{g})$ one has 
 $$\bm{\Theta}(\boldsymbol{\xi_1}^*,\boldsymbol{\xi_2}^*)=\nabla^{P,\mathcal{V}}_{\boldsymbol{\xi_1}^*}\boldsymbol{\xi_2}^*-\nabla^\omega_{\boldsymbol{\xi_1}^*}\boldsymbol{\xi}^*_2=\frac{1}{2}\left\{\ad^*_{\boldsymbol{\xi_1}}(\boldsymbol{\xi_2})+\ad^*_{\boldsymbol{\xi_2}}(\boldsymbol{\xi_1})\right\}^*.$$
 Therefore, given $D\in\X(P)$ and $\xi\in\mathfrak{g}$  we  finally obtain
\begin{align*}
	\bm{\Theta}(D,\xi^*)&=\bm{\Theta}(D^\mathcal{V}+D^\mathcal{H},\xi^*)=\bm{\Theta}(\omega(D)+D^\mathcal{H},\xi^*)=\bm{\Theta}([\widehat\omega(D)]^* +D^\mathcal{H},\xi^*)=\\&=\bm{\Theta}([\widehat\omega(D)]^*,\xi^*)+\bm{\Theta}(D^\mathcal{H},\xi^*)=\frac{1}{2}\left\{\ad_{\widehat\omega(D)}^*(\xi)+\ad_\xi^*(\widehat\omega(D))\right\}^*+\bm{T}(\xi^*,D^\mathcal{H}),
\end{align*} as claimed in the first part of the statement.

For the second claim, let us recall that there is an isomorphism of $\cin(M)$-modules between  the space of sections of the adjoint bundle and the space of $G$-invariant vertical vector fields $$\widetilde{(-)}\colon \Gamma(M,\mathrm{ad} P)=\cin(P,\mathfrak g)^G\xrightarrow{\sim}\X^{\mathcal V}(P)^G$$ that associates to $\boldsymbol{\nu}\in \cin(P,\mathfrak g)^G$ the $G$-invariant vector field $\widetilde{\boldsymbol{\nu}}\in \X^{\mathcal V}(P)^G$ given by $\widetilde{\boldsymbol{\nu}}_p:=[\boldsymbol{\nu}(p)]^*_p$. That is, $\widetilde{(-)}$ is just the restriction to $\cin(P,\mathfrak g)^G$ of the isomorphism $(-)^*\colon \cin(P,\mathfrak{g})\xrightarrow{\sim}\X^\mathcal{V}(P)$ that we have considered above.
 Now recall that Levi-Civita connections behave naturally under isometric diffeomorphisms and therefore are left invariant under isometries. In particular, the Levi-Civita connection  $\nabla^P$ of $(P,\widehat g)$ is preserved under isometries.  Since $\widehat g$ is $G$-invariant, it follows that for any $g\in G$ one has $R_g\cdot\nabla^P=\nabla^P$. Now, we define the map $$\widetilde\nabla^{\ad,\mathcal V}\colon \X_{\mathrm{Bas}}(P)\times \X^\mathcal{V}(P)^G\to \X^\mathcal{V}(P)^G$$ such that $$\widetilde\nabla^{\ad,\mathcal V}_BV:=\nabla^{P,\mathcal V}_BV=\omega\left(\nabla^P_BV\right),\quad\quad B\in \X_{\mathrm{Bas}}(P), V\in \X^\mathcal{V}(P)^G.$$
 The definition is correct; that is, $\widetilde\nabla^{\ad,\mathcal V}_BV$ is a $G$-invariant vertical vector field. Indeed, using the $G$-equivariance of $\omega$ and the $G$-invariance of $\nabla^P$,  given $g\in G$,  $B\in\X_{\mathrm{Bas}}(P)$, $V\in\X^\mathcal{V}(P)^G$ we have $$R_g\cdot\left(\widetilde\nabla^{\ad,\mathcal V}_BV\right)=R_g\cdot\left(\omega\left(\nabla^P_BV\right)\right)=\omega\left(R_g\cdot\left(\nabla^P_BV\right)\right)=\omega\left(\nabla^P_{R_g\cdot B}(R_g\cdot V)\right)=\omega\left(\nabla^P_BV\right)=\widetilde\nabla^{\ad,\mathcal V}_BV.$$ Now, using the isomorphism of $\cin(M)$-modules $\widetilde{(-)}\colon \Gamma(M,\mathrm{ad} P)\xrightarrow{\sim}\X^{\mathcal V}(P)^G$  we define a map $$\nabla^{\ad,\mathcal{V}}\colon\X(M)\times\Gamma(M,\ad P)\to\Gamma(M,\ad P)$$ such that $$\widetilde{\nabla^{\ad,\mathcal{V}}_E{\boldsymbol{\nu}}}=\nabla^{P,\mathcal V}_{E^h}\widetilde{\boldsymbol{\nu}},\quad\quad\quad E\in\X(M), \boldsymbol{\nu}\in\Gamma(M,\ad P).$$ One easily checks that $\nabla^{\ad,\mathcal{V}}$ is a connection on $\ad P$. Based on the results we have shown previously to demonstrate the first statement, it follows that
$$\widetilde{\nabla^{\ad,\mathcal{V}}_E{\boldsymbol{\nu}}}=\nabla^{P,\mathcal{V}}_{E^h}\widetilde{\boldsymbol{\nu}}=\nabla^{P,\mathcal{V}}_{E^h}\boldsymbol{\nu}^*=[E^h(\boldsymbol{\nu})]^*+\bm{T}({\boldsymbol{\nu}^*},{E^h})=\widetilde{i_{E^h}d^\omega \boldsymbol{\nu}}+\bm{T}(\widetilde{\boldsymbol{\nu}},E^h).$$ Since $$\bm{T}(\widetilde{\boldsymbol{\nu}},E^h)=\omega\left(\nabla^{P}_{\widetilde{\boldsymbol{\nu}}}E^h\right)$$ a similar reasoning as above, bearing in mind now the properties of $\bm{T}$ proved in \cite{o1966fundamental}, shows that $\bm{T}(\widetilde{\boldsymbol{\nu}},E^h)\in\X^\mathcal{V}(P)^G$ is a $G$-invariant vertical vector field, and therefore there is a section $\bm{T}^\ad(E)(\boldsymbol{\nu})\in\Gamma(M,\ad P)$ such that $\bm{T}(\widetilde{\boldsymbol{\nu}},E^h)=\widetilde{\bm{T}^\ad(E)({\boldsymbol{\nu}})}$. Hence, we have $$\widetilde{\nabla^{\ad,\mathcal{V}}_E{\boldsymbol{\nu}}} =\widetilde{\nabla^{ad}\boldsymbol{\nu}}+\widetilde{\bm{T}^\ad(E)({\boldsymbol{\nu}})}.$$ This implies the second claim and the proof is finished.
\end{proof}

\begin{Cor}\label{cor:difference-of-connections}
	Let $\pi: (P,\widehat g)\longrightarrow (M,\overline g)$ be a Kaluza-Klein principal $G$-bundle with  $\widehat g=\Psi^{-1}(\omega,\beta,\overline g)$. The two connections $\nabla^{P,\mathcal{V}}$, $\nabla^\omega$ of the vertical bundle $\mathcal VP\to P$  are equal, if and only if the following conditions are satisfied:
	\begin{enumerate}
	\item $\beta$ is $\ad$-invariant; that is, $$\beta(\ad_\xi(\xi_1),\xi_2)+\beta(\xi_1,\ad_\xi(\xi_2))=0,\quad\xi,\xi_1,\xi_2\in\mathfrak{g}.$$
	\item $\beta$ is $\omega$-covariantly constant; that is $$d^\omega\beta=0\Longleftrightarrow \bm{T}=0.$$
	\end{enumerate}
	
Moreover, the connections $\nabla^{\ad,\mathcal V},\nabla^\ad$ on the adjoint bundle $\ad P\to M$ are equal if and only if $\bm{T}=0$.
\end{Cor}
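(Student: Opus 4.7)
The plan is to reduce everything to the explicit formula for the difference tensor $\bm{\Theta}=\nabla^{P,\mathcal V}-\nabla^\omega$ from Proposition \ref{pro:tensor-diferencia-conexiones}, namely
$$\bm{\Theta}(D,\boldsymbol{\xi}^*)=\tfrac12\bigl\{\ad^*_{\boldsymbol{\xi}}(\widehat\omega(D))+\ad^*_{\widehat\omega(D)}(\boldsymbol{\xi})\bigr\}^*+\bm{T}(\boldsymbol{\xi}^*,D^{\mathcal H}).$$
The key observation is that the two summands on the right-hand side are decoupled: the first depends on $D$ only through $\widehat\omega(D)$, i.e.\ only through its vertical part, whereas the second depends only on $D^{\mathcal H}$. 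Hence, by plugging in successively a purely vertical and a purely horizontal $D$, the condition $\nabla^{P,\mathcal V}=\nabla^\omega$ (equivalently $\bm{\Theta}\equiv 0$) splits into the conjunction of two independent pointwise conditions: (a) $\ad^*_{\xi_1}(\xi_2)+\ad^*_{\xi_2}(\xi_1)=0$ for all $\xi_1,\xi_2\in\mathfrak g$, and (b) $\bm{T}(V,H)=0$ for every vertical $V$ and every horizontal $H$.

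I would then prove that (a) is equivalent to the $\ad$-invariance of $\beta$. In the easy direction, $\ad$-invariance is exactly $\ad^*_\xi=-\ad_\xi$, so (a) collapses to $-[\xi_1,\xi_2]-[\xi_2,\xi_1]=0$, trivially true. For the converse, setting $\xi_1=\xi_2=\xi$ in (a) yields $\ad^*_\xi(\xi)=0$, i.e.\ $\beta(\xi,[\xi,\eta])=0$ for all $\xi,\eta\in\mathfrak g$, and a standard polarization $\xi\mapsto\xi+\xi'$ combined with the symmetry of $\beta$ and the antisymmetry of the bracket then yields the full identity $\beta([\alpha,\xi_1],\xi_2)+\beta(\xi_1,[\alpha,\xi_2])=0$. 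As for (b), Lemma \ref{lem:T-vanishing} already records that the vanishing of $\bm{T}(V,H)$ on vertical-horizontal pairs is equivalent to the full vanishing $\bm{T}=0$, and Theorem \ref{teo:beta-cov-constant} reformulates the latter as $d^\omega\beta=0$; so (b) is precisely condition 2 of the statement.

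The second assertion, concerning the equality of the connections $\nabla^{\ad,\mathcal V}$ and $\nabla^{\ad}$ on $\ad P$, will follow directly from the formula $\nabla^{\ad,\mathcal V}-\nabla^{\ad}=\bm{T}^{\ad}$ established in the second part of Proposition \ref{pro:tensor-diferencia-conexiones}. Indeed, $\bm{T}^{\ad}=0$ is equivalent, through the isomorphism $\widetilde{(-)}\colon\Gamma(M,\ad P)\xrightarrow{\sim}\X^{\mathcal V}(P)^G$ combined with the $\cin(P)$-linearity of $\bm{T}$ and the decomposition $\X^{\mathcal H}(P)=\cin(P)\otimes_{\cin(M)}\X_B(P)$, to $\bm{T}(V,H)=0$ for all vertical $V$ and horizontal $H$, which Lemma \ref{lem:T-vanishing} once more identifies with the full vanishing $\bm{T}=0$.

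The main obstacle I anticipate is the converse in step (a): while $\ad$-invariance trivially implies the symmetric identity, recovering the full $\ad$-invariance of $\beta$ from it requires a careful polarization argument with close attention to the symmetries of $\beta$ and the bracket. Every remaining implication is a direct appeal to results already established in Sections 3, 4 and 7.
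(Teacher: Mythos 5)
Your proposal is correct and follows essentially the same route as the paper: both reduce the statement to the vanishing of the difference tensor $\bm{\Theta}$ computed in Proposition \ref{pro:tensor-diferencia-conexiones}, split it into a vertical and a horizontal condition, and identify these respectively with the $\ad$-invariance of $\beta$ and with $\bm{T}=0$ (hence with $d^\omega\beta=0$ by Theorem \ref{teo:beta-cov-constant}), the adjoint-bundle claim then following from $\nabla^{\ad,\mathcal V}-\nabla^{\ad}=\bm{T}^{\ad}$. The only cosmetic difference is that the paper establishes the equivalence of $\ad^*_{\xi_1}(\xi_2)+\ad^*_{\xi_2}(\xi_1)=0$ with $\ad$-invariance directly, by pairing with an arbitrary $\xi^*$ and invoking the nondegeneracy of $\beta$, which makes your diagonal-plus-polarization step unnecessary (though it is valid).
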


\begin{proof}
We have $$	\nabla^{P,\mathcal{V}}=\nabla^\omega\quad \Longleftrightarrow\quad \bm{\Theta}=0.$$ For any $\xi,\xi_1,\xi_2\in\mathfrak{g}$ it holds \begin{align*}
 	\widehat g(\bm{\Theta}(\xi_1^*,\xi_2^*),\xi^*)&=\frac{1}{2}\widehat g\left(\left\{\ad^*_{\xi_1}(\xi_2)+\ad^*_{\xi_2}(\xi_1)\right\}^*,\xi^*\right)=\frac{1}{2}\beta(\ad^*_{\xi_1}(\xi_2)+\ad^*_{\xi_2}(\xi_1),\xi)=\\&=\frac{1}{2}\left\{\beta(\xi_2,\ad_{\xi_1}(\xi))+\beta(\xi_1,\ad_{\xi_2}(\xi))\right\}=-\frac{1}{2}\left\{\beta(\ad_{\xi}(\xi_1),\xi_2)+\beta(\xi_1,\ad_{\xi}(\xi_2))\right\}.
 \end{align*} Similarly, given $D\in\X(P)$ it holds $$\bm{\Theta}(D^\mathcal{H},\xi^*)=\bm{T}(\xi^*,D^\mathcal{H}).$$Moreover, given $\zeta\in\mathfrak{g}$ since $\bm{T}(\xi^*,-)$ is $\widehat g$-skew-symmetric it holds $$\widehat g(\bm{T}(\xi^*,\zeta^*),D^\mathcal{H})=-\widehat g(\zeta^*,\bm{T}(\xi^*,D^\mathcal{H})).$$Therefore, since $\widehat g$ is nondegenerate it follows that $\bm{\Theta}=0$ if and only if $\beta$ is $\ad$-invariant and $\bm{T}=0$, but this last conditions is, thanks to Theorem \ref{teo:beta-cov-constant}, equivalent to $d^\omega\beta=0$. Whence, the proof of the first claim is finished. The second one follows in a similar way.
\end{proof}

   Proceeding in a similar way as in Theorems \ref{teo:horizontal-component-tension-field}, \ref{teo:vertical-tension-field}, we get the following results.   
    \begin{thm}\label{teo:tension-vertical-modificada}
         Let $\pi: (P,\widehat g)\longrightarrow (M,\overline g)$ be a Kaluza-Klein principal $G$-bundle with $\widehat g=\Phi^{-1}(\omega,\beta,\overline g)$. If $\widetilde{\Phi}: N\longrightarrow P$ is  a smooth map, then the vertical component of its tension field is given by
        \begin{align}
[\tau(\widetilde{\Phi})]^{\mathcal{V}}&= -\delta^{(\nabla^{N}, \widetilde\Phi^*{\nabla}^{\omega})}(\widetilde{\Phi}^{*}\omega)+\omega\left(\Tr_g(\widetilde{\Phi}^{*}(\bm{\Theta}+\bm{T}))\right)
        \end{align}
         where  $\widetilde{\Phi}^{*}\omega\in\Omega^{1}(N,\widetilde{\Phi}^{*}(\mathcal{V}P))$ is the pullback of the connection $\omega\in\Omega^{1}(P,\mathcal{V}P)$  and $\delta^{(\nabla^{N},\widetilde\Phi^*\nabla^\omega)}$ is the codifferential operator defined on the vector bundle $T^{*}N\otimes\widetilde{\Phi}^{*}(\mathcal{V}P)$ with respect to the connections $\nabla^N,\widetilde\Phi^*\nabla^{\omega}$.
   \end{thm}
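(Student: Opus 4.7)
The plan is to deduce this statement directly from Theorem \ref{teo:vertical-tension-field} by comparing the two codifferentials $\delta^{(\nabla^{N},\widetilde\Phi^*\nabla^{P,\mathcal{V}})}$ and $\delta^{(\nabla^{N},\widetilde\Phi^*\nabla^{\omega})}$ acting on $\widetilde\Phi^{*}\omega$. Recall that Theorem \ref{teo:vertical-tension-field} already asserts
\begin{align*}
[\tau(\widetilde\Phi)]^{\mathcal{V}} &= -\delta^{(\nabla^{N},\widetilde\Phi^*\nabla^{P,\mathcal{V}})}(\widetilde\Phi^{*}\omega)+\omega\bigl(\Tr_{g}(\widetilde\Phi^{*}\bm{T})\bigr),
\end{align*}
so the task reduces to rewriting the first term with $\nabla^{\omega}$ in place of $\nabla^{P,\mathcal V}$, which by Proposition \ref{pro:tensor-diferencia-conexiones} must produce a correction controlled by $\bm{\Theta}$.

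By the defining identity (\ref{vert}) of $\bm{\Theta}$, the two connections on the vertical bundle $\mathcal{V}P$ satisfy $\nabla^{P,\mathcal{V}}_{D_1}D_2^{\mathcal V}-\nabla^{\omega}_{D_1}D_2^{\mathcal V}=\bm{\Theta}(D_1,D_2)$, and since $\bm{\Theta}$ is a $(1,2)$-tensor on $P$ this lifts tensorially to the pullback connections on $\widetilde\Phi^{*}\mathcal{V}P$. Evaluating on the canonical section $\widetilde\Phi^{*}\omega(U_r)=\omega(\widetilde\Phi_{*}U_r)$ along a local $g$-orthonormal frame $\{U_r\}_{r=1}^{n}$ of $N$, I get
\begin{align*}
(\widetilde\Phi^*\nabla^{P,\mathcal V})_{U_r}\bigl(\widetilde\Phi^{*}\omega(U_r)\bigr)-(\widetilde\Phi^*\nabla^{\omega})_{U_r}\bigl(\widetilde\Phi^{*}\omega(U_r)\bigr) &= \bm{\Theta}\bigl(\widetilde\Phi_{*}U_r,\widetilde\Phi_{*}U_r\bigr),
\end{align*}
so summing over $r$, and using the local expression of the codifferential derived in the proof of Theorem \ref{teo:vertical-tension-field}, yields
\begin{align*}
\delta^{(\nabla^{N},\widetilde\Phi^*\nabla^{P,\mathcal{V}})}(\widetilde\Phi^{*}\omega)&=\delta^{(\nabla^{N},\widetilde\Phi^*\nabla^{\omega})}(\widetilde\Phi^{*}\omega)-\Tr_g(\widetilde\Phi^{*}\bm{\Theta}).
\end{align*}

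Substituting this identity into the starting formula and observing that $\bm{\Theta}$ is $\mathcal VP$-valued, so that $\omega(\bm{\Theta})=\bm{\Theta}$, the two trace contributions merge into $\omega\bigl(\Tr_g(\widetilde\Phi^{*}(\bm{\Theta}+\bm{T}))\bigr)$, producing exactly the claimed identity. The only real subtlety is verifying the tensorial behaviour of the pullback-connection difference, but this is automatic because $\bm{\Theta}$ is $\cin(P)$-linear in both arguments and the codifferential depends only pointwise on the connection; beyond that, the argument is pure bookkeeping and no substantial obstacle is expected.
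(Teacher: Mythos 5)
Your proof is correct and follows the route the paper intends: the paper gives no explicit proof of this theorem (it only says to proceed as in Theorems \ref{teo:horizontal-component-tension-field} and \ref{teo:vertical-tension-field}), but it introduces the difference tensor $\bm{\Theta}=\nabla^{P,\mathcal V}-\nabla^{\omega}$ immediately beforehand precisely so that the codifferential in Theorem \ref{teo:vertical-tension-field} can be rewritten exactly as you do, the correction $\Tr_g(\widetilde\Phi^{*}\bm{\Theta})$ then being absorbed into the vertical trace term because $\omega$ acts as the identity on the $\mathcal VP$-valued tensor $\bm{\Theta}$. There is no gap; your key identity $\delta^{(\nabla^{N},\widetilde\Phi^*\nabla^{P,\mathcal{V}})}(\widetilde\Phi^{*}\omega)=\delta^{(\nabla^{N},\widetilde\Phi^*\nabla^{\omega})}(\widetilde\Phi^{*}\omega)-\Tr_g(\widetilde\Phi^{*}\bm{\Theta})$ is the standard tensorial comparison of pullback connections and has the correct sign.
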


    \begin{Rem}
        Theorem \ref{teo:tension-vertical-modificada} generalizes the result obtained by H. Manabe in \cite[Theorem 2.4]{manabe1992pluriharmonic}.
    \end{Rem}
    \begin{Cor}\label{7}
        Let $\pi: (P,\widehat g)\longrightarrow (M,\overline g)$ be a Kaluza-Klein principal $G$-bundle with $\widehat g=\Psi^{-1}(\omega,\beta,\overline g)$. A smooth map $\widetilde{\Phi}: N\longrightarrow P$ is vertically harmonic (i.e $[\tau(\widetilde{\Phi})]^{\mathcal{V}}=0$) if and only if 
        \begin{align}
\delta^{(\nabla^{N}, \widetilde\Phi^*{\nabla}^{\omega})}(\widetilde{\Phi}^{*}\omega)=\omega\left(\Tr_g(\widetilde{\Phi}^{*}(\bm{\Theta}+\bm{T}))\right).
        \end{align}
    \end{Cor}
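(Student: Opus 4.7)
The plan is to derive this corollary as a direct consequence of Theorem \ref{teo:tension-vertical-modificada}, which already provides the full decomposition of the vertical tension field in terms of the modified codifferential and the tensors $\bm{\Theta}$, $\bm{T}$. There is essentially no additional content beyond recognizing what vanishing of $[\tau(\widetilde\Phi)]^{\mathcal V}$ means algebraically, so the proof is a one-line rearrangement.

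Concretely, I would first recall that by definition $\widetilde{\Phi}$ is vertically harmonic if and only if $[\tau(\widetilde{\Phi})]^{\mathcal{V}}=0$. Then I would apply Theorem \ref{teo:tension-vertical-modificada} to replace $[\tau(\widetilde{\Phi})]^{\mathcal{V}}$ with the explicit expression
\begin{align*}
[\tau(\widetilde{\Phi})]^{\mathcal{V}}&= -\delta^{(\nabla^{N}, \widetilde\Phi^*{\nabla}^{\omega})}(\widetilde{\Phi}^{*}\omega)+\omega\left(\Tr_g(\widetilde{\Phi}^{*}(\bm{\Theta}+\bm{T}))\right),
\end{align*}
and setting this equal to zero yields, upon transposing the codifferential term to the other side, exactly the claimed equation.

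The only subtle point worth flagging (though hardly an obstacle) is that the asserted equivalence is genuinely two-sided: the theorem gives the value of $[\tau(\widetilde\Phi)]^{\mathcal V}$ as a pointwise identity of sections of $\widetilde\Phi^*\mathcal VP$, so vanishing of the left side is equivalent to vanishing of the right side without any integration-by-parts argument, compactness assumption, or additional regularity hypothesis. Hence no extra work is needed beyond citing the theorem and noting the definition of vertical harmonicity.
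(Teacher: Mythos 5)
Your proposal is correct and follows exactly the route the paper intends: the corollary is an immediate consequence of Theorem \ref{teo:tension-vertical-modificada}, obtained by setting $[\tau(\widetilde\Phi)]^{\mathcal V}=0$ in the displayed identity and transposing the codifferential term, and your remark that the equivalence is pointwise (so no integration or compactness is needed) is accurate.
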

    \begin{Rem}
        Corollary \ref{7} is the generalization of \cite[Theorem B (i)]{manabe1992pluriharmonic}.
    \end{Rem}
    Combining Theorems \ref{teo:horizontal-component-tension-field}, \ref{teo:horizontal-hamonic-curvature-modified-metric}  and Corollary \ref{7} we get another version of Corollary \ref{cor:harmonic-equations}. 
\begin{Cor}\label{cor:equivalent-harmonic-equations} 
    Let $\pi: (P,\widehat g)\longrightarrow (M,\overline g)$ be a Kaluza-Klein principal $G$-bundle with $\widehat g=\Psi^{-1}(\omega,\beta,\overline g)$, let $\widetilde{\Phi}: N\longrightarrow P$ be a smooth map and consider the composition $\Phi=\pi\circ\widetilde{\Phi}$. Then $\widetilde{\Phi}$ is harmonic if and only if 
\begin{align}\label{fundamen}
    \tau(\Phi) =-\pi_{*}\big[\Tr_g\widetilde{\Phi}^{*}(2\bm{A}+\bm{T})\big]\quad and \quad \delta^{(\nabla^{N}, \widetilde\Phi^*{\nabla}^{\omega})}(\widetilde{\Phi}^{*}\omega) =\omega\left(\Tr_g(\widetilde{\Phi}^{*}(\bm{\Theta}+\bm{T}))\right).
\end{align} Equivalently, $\widetilde{\Phi}$ is harmonic if and only if \begin{align}\label{second-fundamental}
\tau(\Phi) &=-\pi_{*}[\Tr_{g}(\widetilde{\Phi}^{*}\widehat{g}_{\mathscr{F}^{\omega}})]-\pi_{*}[\Tr_{g}(\widetilde{\Phi}^{*}\bm{T})]\quad \text{and} \quad \delta^{(\nabla^{N}, \widetilde\Phi^*{\nabla}^{\omega})}(\widetilde{\Phi}^{*}\omega)=\omega[\Tr_{g}(\widetilde{\Phi}^{*}(\bm{\Theta+T}))].
      \end{align}   
\end{Cor}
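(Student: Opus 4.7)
The plan is to exploit the fact that $\widetilde\Phi$ is harmonic exactly when its tension field vanishes, and by the $\widehat g$-orthogonal decomposition $TP=\mathcal HP\oplus\mathcal VP$ this is equivalent to the simultaneous vanishing of the horizontal and vertical components
\[
[\tau(\widetilde\Phi)]^{\mathcal H}=0\quad\text{and}\quad [\tau(\widetilde\Phi)]^{\mathcal V}=0.
\]
Hence it suffices to translate each of these two conditions separately into intrinsic data on $N$, $P$ and $M$, and then combine the results. Since the Kaluza-Klein metric decomposes $\widehat g$-orthogonally, the two conditions are genuinely independent and can be handled in parallel, with no cross term to reconcile.

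For the horizontal component I would invoke directly Theorem \ref{teo:horizontal-component-tension-field}, which identifies $[\tau(\widetilde\Phi)]^{\mathcal H}=0$ (after applying the bundle isomorphism $\pi_{*}\colon\mathcal HP\xrightarrow{\sim}\pi^*TM$) with the first equation in \eqref{fundamen}, namely $\tau(\Phi)=-\pi_*\bigl[\Tr_g\widetilde\Phi^{*}(2\bm A+\bm T)\bigr]$. The equivalent form appearing in \eqref{second-fundamental} is then obtained by rewriting the contribution of $2\bm A$ through the curvature-modified metric: Theorem \ref{teo:horizontal-hamonic-curvature-modified-metric} (whose proof precisely uses Lemma \ref{lem:A-vs-Curvatura} and the Lorentz endomorphism of Definition \ref{defi:Lorentz-endomorphism} to establish the pointwise identity $2\bm A(D,D)=\widehat g_{\mathscr{F}^{\omega}}(D,D)$) gives $\pi_*\bigl[\Tr_g\widetilde\Phi^{*}(2\bm A)\bigr]=\pi_*\bigl[\Tr_g(\widetilde\Phi^{*}\widehat g_{\mathscr{F}^{\omega}})\bigr]$, which yields the equivalent form of the horizontal equation.

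For the vertical component I would appeal to Corollary \ref{7}, which encodes $[\tau(\widetilde\Phi)]^{\mathcal V}=0$ in terms of the codifferential with respect to the connection $\nabla^\omega$ induced by the principal connection rather than by the Levi-Civita connection on $\mathcal VP$. Concretely, Theorem \ref{teo:tension-vertical-modificada} expresses
\[
[\tau(\widetilde\Phi)]^{\mathcal V}=-\delta^{(\nabla^{N},\widetilde\Phi^{*}\nabla^{\omega})}(\widetilde\Phi^{*}\omega)+\omega\bigl(\Tr_g(\widetilde\Phi^{*}(\bm\Theta+\bm T))\bigr),
\]
where the extra $\bm\Theta$ term appears precisely because $\nabla^{P,\mathcal V}$ and $\nabla^{\omega}$ differ by the tensor $\bm\Theta$ of Proposition \ref{pro:tensor-diferencia-conexiones}; setting $[\tau(\widetilde\Phi)]^{\mathcal V}=0$ produces the second equation in both \eqref{fundamen} and \eqref{second-fundamental}. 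Putting together the horizontal and vertical equivalences yields the two stated systems and proves the corollary. There is no genuine obstacle here: the main work has already been carried out in the referenced theorems, and the role of this corollary is to package their conclusions into a single equivalent statement, with the two displayed forms differing only by the reformulation of the $2\bm A$ contribution via $\widehat g_{\mathscr F^\omega}$.
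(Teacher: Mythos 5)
Your proposal is correct and follows exactly the route the paper takes: the paper states this corollary as an immediate combination of Theorems \ref{teo:horizontal-component-tension-field}, \ref{teo:horizontal-hamonic-curvature-modified-metric} and Corollary \ref{7}, which is precisely the horizontal/vertical splitting and the rewriting of the $2\bm{A}$ term via $\widehat g_{\mathscr F^\omega}$ that you describe.
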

    \begin{Rem}
       Let $\pi: (P,\widehat g)\longrightarrow (M,\overline g)$ be a Kaluza-Klein principal $G$-bundle with $\widehat g=\Psi^{-1}(\omega,\beta,\overline g)$. If $\widetilde{\Phi}: N\longrightarrow P$ is a vertically harmonic map, then in general, $\widetilde{\Phi}^{*}\omega$ does not satisfies the Hodge gauge equation $\delta^{(\nabla^N,\widetilde\Phi^*\nabla^\omega)}(\widetilde{\Phi}^{*}\omega)=0$. However, if the fibers of $P$ are totally geodesics $(\bm{T}=0)$ and the metric $\beta$ is $\ad$-invariant, then thanks to Corollary \ref{cor:difference-of-connections} the connections ${\nabla}^{P, \mathcal{V}}$ and ${\nabla}^{\omega}$ are equal,  hence $\bm{\Theta}$ vanishes and therefore by Corollary \ref{7} the Hodge gauge equation is automatically satisfied.
    \end{Rem}

\section[Examples of Kaluza-Klein harmonic maps and magnetic maps]{Examples of Kaluza-Klein harmonic maps and generalized magnetic maps}
In this section we introduce several instances of Kaluza-Klein harmonic maps. We start by considering the natural generalization of the classical example of geodesic curves.
\begin{Ex}[Generalized magnetic curves]\label{ex:magnetic-curves}
    We consider an open subset $N=I\subset\mathbb{R}$ and  $g$ is the restriction to $I$ of the standard euclidean metric of $\mathbb R$. Let $\pi: (P,\widehat{g})\longrightarrow (M,\overline{g})$ be a Kaluza-Klein principal $G$-bundle. Let $\widetilde{\gamma}: (I,g)\longrightarrow (P,\widehat{g})$ be a geodesic curve on $P$, that is, $\nabla^{P}_{\dot{\widetilde{\gamma}}}\dot{\widetilde{\gamma}}=0$ with $\Dim P=m+d$. Therefore, the curve  $\gamma=\pi\circ\widetilde{\gamma}$ induced on $M$ is a generalized  magnetic map.   

The Euler-Lagrange equations satisfied by $\tilde\gamma$ are:
\begin{align}\label{tenshorver}
    \tau(\gamma)=\nabla^M_{\dot{\gamma}}\dot{\gamma} &=-\pi_{*}[\Tr_{g}(\widetilde{\gamma}^{*}\widehat{g}_{\mathscr{F}^{\omega}})]-\pi_{*}[\Tr_{g}(\widetilde{\gamma}^{*}\bm{T})],\quad  \quad \delta^{(\nabla^{N}, \widetilde\gamma^*{\nabla}^{P,\mathcal{V}})}(\widetilde{\gamma}^{*}\omega)=\omega[\Tr_{g}(\widetilde{\gamma}^{*}\bm{T})].
\end{align}

  Moreover, the Lorentz strength endomorphism can be written as a finite sum $$\mathscr{F}_\omega=\sum_{a=1}^k\mathscr{F}_{\omega}^a\otimes \widetilde{\boldsymbol{\xi}}_a,\quad \mathscr{F}_{\omega}^a\in T_1^1(M),\widetilde{\boldsymbol{\xi}}_a\in\X^\mathcal{V}(P)^G,$$ and the Euler-Lagrange equations are equivalent to  
\begin{align}\label{funda}
\nabla^M_{\dot{\gamma}}\dot{\gamma}=-\sum_{a=1}^{k}{\widetilde\gamma}^{\#}\widehat{g}({\widetilde\gamma}^{\#}\widetilde{\boldsymbol{\xi}}_a,\dot{\widetilde{\gamma}})\mathscr{F}^{a}_{\omega}(\dot{\gamma})-\pi_{*}[\bm{T}(\dot{\widetilde{\gamma}},\dot{\widetilde{\gamma}})],\quad  \quad \delta^{(\nabla^{N}, \widetilde\gamma^*{\nabla}^{P,\mathcal{V}})}(\widetilde{\gamma}^{*}\omega)=\omega[\Tr_{g}(\widetilde{\gamma}^{*}\bm{T})].
\end{align}
Moreover, if $\widetilde{\boldsymbol{\xi}}_a$ is a $\widehat g$-Killing vector field for each $a$ (this is the case, for instance,  if $G$ has trivial adjoint group by Proposition \ref{pro:Lorentz-strength}), then it is a Jacobi field along the geodesic $\widetilde\gamma$, whence   $$\bm{\kappa}_a:=-{\widetilde\gamma}^{\#}\widehat g({\widetilde\gamma}^{\#}\widetilde{\boldsymbol{\xi}}_a,\dot{\widetilde\gamma})$$ is constant and  therefore equations (\ref{funda}) become 
\begin{align}\label{magnegeo}
\nabla^M_{\dot{\gamma}}\dot{\gamma}=\sum_{a=1}^k\bm{\kappa}_a\cdot\mathscr{F}^a_{\omega}(\dot{\gamma})-\pi_{*}[\bm{T}(\dot{\widetilde{\gamma}},\dot{\widetilde{\gamma}})],\quad \ \quad \delta^{(\nabla^{N}, \widetilde\gamma^*{\nabla}^{P,\mathcal{V}})}(\widetilde{\gamma}^{*}\omega)=\omega[\bm{T}(\dot{\widetilde{\gamma}},\dot{\widetilde{\gamma}})].
\end{align}
The first of them is called the generalized-magnetic-like equation or the generalized-Lorentz-like equation. It generalizes the result obtained by R. Kerner, see \cite{kerner1968generalization}, where in this case $\bm{\kappa}_a$ is the  $a$-color charge to mass ratio sensed by the particle in its motion in $M$, $\mathscr{F}^a_{\omega}$ represents the $a$-color  Lorentz force-like associated with the metric $\overline{g}$ and the magnetic-like field $\Omega_\omega^a$ is the $a$-th component of the field strength $2$-form $\Omega_\omega$ of the gauge field associated to the Kaluza-Klein principal $G$-bundle. If all the charge to mass ratios vanish, $\bm{\kappa}_a=0$, then the generalized magnetic map $\gamma$ is uncharged.
 The system of equations (\ref{magnegeo}) is a generalization of Wong's equations, see the equivalences proved in \cite{Montgomery} for different ways of  expressing these equations.\end{Ex}
\begin{Rem}
Equations (\ref{magnegeo}) show a deviation of particle motion from geodesics due to the magnetic field and the geometric force $\bm{T}$ induced by the geometry of the internal space of the gauge field.
\end{Rem}
When the fibers of the Kaluza-Klein principal $G$-bundle $\pi\colon P\to M$  are totally geodesic, one gets
\begin{align}
\nabla^M_{\dot{\gamma}}\dot{\gamma}=\sum_{a=1}^d\bm{\kappa}_a\cdot\mathscr{F}^a_{\omega}(\dot{\gamma}),\quad  \quad \delta^{(\nabla^{N}, \widetilde\gamma^*{\nabla}^{P,\mathcal{V}})}(\widetilde{\gamma}^{*}\omega)=0.
\end{align}

If the magnetic-like field vanishes, $\Omega_{\omega}=0$, then equations (\ref{magnegeo}) become
\begin{align}\label{fibra}
\nabla^M_{\dot{\gamma}}\dot{\gamma}=-\pi_{*}[\bm{T}(\dot{\widetilde{\gamma}},\dot{\widetilde{\gamma}})],\quad  \quad \delta^{(\nabla^{N}, \widetilde\gamma^*{\nabla}^{P,\mathcal{V}})}(\widetilde{\gamma}^{*}\omega)=\omega[\bm{T}(\dot{\widetilde{\gamma}},\dot{\widetilde{\gamma}})].
\end{align}
\begin{Rem}
        Equations (\ref{fibra}) show how the motion of a particle is affected by the shape and structure of the internal space of the gauge field, even if its field strength vanishes $\Omega_{\omega}=0$ or more generally if $\gamma$ is uncharged. In this case, the tensor $\bm{T}$ represents the geometric force arising from the shape of internal space.
    \end{Rem}
\begin{Ex}
    If $N=P$, $g=\widehat{g}$, then the identity map $Id_{P}: (P,\widehat{g})\longrightarrow (P,\widehat{g})$ is harmonic since its second fundamental form vanishes everywhere (i.e., $\mathrm{II}_{Id_{P}}=0$, that is, $Id_{p}$ is totally geodesic). Therefore, any Kaluza-Klein bundle $\pi\colon (P,\widehat{g})\to (M,\overline g)$ is an uncharged generalized magnetic map. The Wong's equations satisfied by $\Id_P$ are 
    \begin{align*} 
        \tau(\pi) &=-d\cdot \pi_*(\bm{H}^\mathcal{V}),\quad \quad \delta^{(\nabla^{N}, \nabla^{\omega})}\omega=\bm{V},
    \end{align*} with $\bm{H}^\mathcal{V}:=\frac{1}{d}\Tr_{\widehat g}\bm{T}^\mathcal{V,V}$ the vertical mean curvature vector field of $\pi$ and $\bm{V}=\left[\sum_{a=1}^d\ad^*_{\boldsymbol{\xi}_a}(\boldsymbol{\xi}_a)\right]^*$, where $\{\boldsymbol{\xi}_a^*\}_{a=1}^d$ is a $\widehat g$-orthonormal frame of $\mathcal VP$. To show that the Euler-Lagrange equations are the ones written above one uses Corollary \ref{cor:equivalent-harmonic-equations}  and the following facts: 1) The curvature modified metric  $\widehat{g}_{\mathscr{F}^\omega}(D,D)=\langle D,\mathscr{F}^\omega\rangle_{\mathcal VP}(D)$ is zero if either $D$ is horizontal or vertical, therefore, its trace $\Tr_{\widehat{g}}\widehat{g}_{\mathscr{F}^\omega}=0$ vanishes and hence $\pi$ is uncharged.  2) Since we are projecting to $M$, we get $\pi_*(\Tr_{\widehat{g}}\bm{T})=\pi_*(\Tr_{\widehat{g}}\bm{T}^\mathcal{V,V})$. 3) Finally, one has $\omega\left(\Tr_{\widehat{g}}(\bm{\Theta}+\bm{T})\right)=\Tr_{\widehat{g}}\left(\omega\left((\bm{\Theta}+\bm{T}\right)\right)=\Tr_{\widehat{g}}\left(\omega(\bm{\Theta})+\bm{T}^\mathcal{V,H}\right)=\Tr_{\widehat{g}}\left(\omega(\bm{\Theta})\right)=\bm{V}$ where the last equality follows immediately from Proposition \ref{pro:tensor-diferencia-conexiones}.
    
\end{Ex}
Now we describe two one-parameter families of generalized magnetic maps  based on spherical harmonic immersions; that is, harmonic immersions from a product of spheres into another sphere.
\begin{Ex}
    We construct a family of harmonic map from the two-dimensional torus into   the $U(1)$-Hopf fibration $\pi\colon S^{3}\longrightarrow S^{2}$ equipped with a Kaluza-Klein metric. Their images are called twisted Clifford Tori.\\
    \textbf{Step 1} Differential geometry of the $U(1)$-Hopf fibration:\\
The three dimensional sphere $S^3$ can be understood as the unit quaternions
    \begin{align*}
        S^{3} &=\{q\in\mathbb{H}: g_{euc}(q,q)=\|q\|^{2}=\overline{q}\cdot q=q\cdot\overline{q}=1\}.
    \end{align*}
The non-zero quaternions $\mathbb{H}^*$ form a Lie group of dimension $4$. Its Lie algebra $\mathscr{H}=\mathbb{H}$ is the space of quaternions endowed with Lie bracket given by the commutator and it gets identified with the Lie algebra $\mathfrak{X}(\mathbb{H}^*)^{L}$ of left invariant vector fields on $\mathbb{H}^*$, which has as its basis
\begin{align*}
    B_{4} =\{1^{*},i^{*},j^{*}.k^{*}\}
\end{align*}
where given $\xi\in\mathbb{H}, \xi^{*}\in\mathfrak{X}(\mathbb{H}^{*})$ denotes the fundamental vector field with respect to the right action
\begin{align*}
R:\mathbb{H}^*\times\mathbb{H}^*\longrightarrow \mathbb{H}^*
\end{align*}
of $\mathbb{H}^{*}$ on itself. Let us denote by $\widehat{g}$ the metric induced on $S^3$ by the euclidean metric $g_{\text{euc}}$ of $\mathbb{H}$ so that one has
\begin{align*}
\widehat{g}_q(D_q[\xi_1],D_q[\xi_2]) &=g_{\text{euc}}(\xi_1,\xi_2)
 =\frac{1}{2}(\overline{\xi}_1\cdot\xi_2+\overline{\xi}_{2}\cdot\xi_1)=\frac{1}{2}(\xi_1\cdot\overline{\xi}_{2}+\xi_2\cdot\overline{\xi}_1).
 \end{align*}

Through straightforward calculations, the following results are obtained.
\begin{Lem}
    The metric $g_{\text{euc}}$ in $\mathbb{H}$  is invariant under the actions of $S^3$ on the left $L:S^3\times\mathbb{H}\longrightarrow\mathbb{H}$ and on the right $R: \mathbb{H}\times S^{3}\longrightarrow\mathbb{H}$,  i.e., for any $q\in S^3$ one has
    \begin{align*}
        L^*_{q}g_{\text{euc}}=g_{\text{euc}}, \quad \quad R^*_{q}g_{\text{euc}}=g_{\text{euc}}.
    \end{align*}
    \end{Lem}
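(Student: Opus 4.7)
The plan is to reduce the invariance statements to the multiplicative property of the quaternionic norm, $\|p\cdot q\| = \|p\|\cdot\|q\|$, exploiting the fact that $\mathbb{H}$ is a real vector space and therefore its tangent space at any point is canonically identified with $\mathbb{H}$ itself. Under this identification, the differential of the left multiplication map $L_q\colon \mathbb{H}\to\mathbb{H}$, $p\mapsto q\cdot p$, is again left multiplication by $q$, i.e., $(L_q)_{*p}(\xi)=q\cdot\xi$ for any $\xi\in T_p\mathbb{H}\simeq\mathbb{H}$. An analogous statement holds for the right action.

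First I would write $g_{\text{euc}}$ in terms of quaternionic conjugation using the polarization formula already recorded in the excerpt, namely
\begin{equation*}
    g_{\text{euc}}(\xi_1,\xi_2)=\tfrac{1}{2}\bigl(\overline{\xi}_1\cdot \xi_2+\overline{\xi}_2\cdot\xi_1\bigr)=\tfrac{1}{2}\bigl(\xi_1\cdot\overline{\xi}_2+\xi_2\cdot\overline{\xi}_1\bigr).
\end{equation*}
Then, using $\overline{q\cdot\xi}=\overline{\xi}\cdot\overline{q}$ together with $\overline{q}\cdot q=\|q\|^2$, a direct calculation gives
\begin{equation*}
    g_{\text{euc}}(q\cdot\xi_1,q\cdot\xi_2)=\tfrac{1}{2}\bigl(\overline{\xi}_1\,\overline{q}\, q\,\xi_2+\overline{\xi}_2\,\overline{q}\, q\,\xi_1\bigr)=\|q\|^2\, g_{\text{euc}}(\xi_1,\xi_2).
\end{equation*}
Specialising to $q\in S^3$, where $\|q\|^2=1$, yields $(L_q^*g_{\text{euc}})_p(\xi_1,\xi_2)=g_{\text{euc}}(\xi_1,\xi_2)$, which is the first identity. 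For the right action I would use the dual polarization formula $g_{\text{euc}}(\xi_1,\xi_2)=\tfrac{1}{2}(\xi_1\cdot\overline{\xi}_2+\xi_2\cdot\overline{\xi}_1)$ and the identity $q\cdot\overline{q}=\|q\|^2$ to obtain the analogous computation $g_{\text{euc}}(\xi_1\cdot q,\xi_2\cdot q)=\|q\|^2\,g_{\text{euc}}(\xi_1,\xi_2)$, which again reduces to $g_{\text{euc}}$ for $q\in S^3$.

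There is no genuine obstacle here: the only care needed is the canonical identification of the tangent spaces of $\mathbb{H}$ with $\mathbb{H}$ itself, so that the differentials of $L_q$ and $R_q$ act as left and right multiplication by $q$, respectively. Once this identification is made explicit, the whole lemma follows from the single fact that quaternionic multiplication is an isometry of $(\mathbb{H},g_{\text{euc}})$ when restricted to unit quaternions, which is just the standard statement that $S^3\subset\mathbb{H}^*$ acts by $g_{\text{euc}}$-isometries on both sides.
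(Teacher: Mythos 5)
Your proof is correct and is essentially the computation the paper has in mind: the paper states this lemma as following from a "straightforward calculation" using the very polarization formula $g_{\text{euc}}(\xi_1,\xi_2)=\tfrac{1}{2}(\overline{\xi}_1\xi_2+\overline{\xi}_2\xi_1)$ that you employ, and your reduction to $\overline{q}\cdot q=q\cdot\overline{q}=\|q\|^2=1$ together with the canonical identification $T_p\mathbb{H}\simeq\mathbb{H}$ is exactly that calculation made explicit.
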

\begin{Cor}
The metric $\widehat{g}$ induced on $S^3$ by $g_{\text{euc}}$ is bi-invariant. Therefore $(S^3,R,\widehat g)$, where $R$ is the restriction of the right action $R$ to $U(1)\subset  S^3$, is a Kaluza-Klein principal $U(1)$-bundle. Furthermore,  $B_{S^3}=\{D_1=i^*, D_2=j^*,D_3=k^*\}$ is a $\widehat{g}$-orthonormal frame of $S^3$ with dual frame  $B^{*}_{S^3}=\{\omega^1, \omega^2, \omega^3\}$.
\end{Cor}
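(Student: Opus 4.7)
The plan is to establish the three claims of the corollary in sequence, each following almost immediately from the preceding lemma together with the quaternionic description of $S^3$.

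First, I would prove the bi-invariance of $\widehat g$. Since $\widehat g$ is by definition the pullback of $g_{\mathrm{euc}}$ under the inclusion $\iota\colon S^3\hookrightarrow\mathbb H^*$, and since $S^3$ is stable under both the left and right actions of $S^3\subset\mathbb H^*$ on itself, the bi-invariance $L_q^*\widehat g=\widehat g=R_q^*\widehat g$ for $q\in S^3$ is just the restriction of the statement $L_q^*g_{\mathrm{euc}}=g_{\mathrm{euc}}=R_q^*g_{\mathrm{euc}}$ proved in the preceding lemma.

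Next, for the Kaluza-Klein bundle structure I would recall that the restriction of $R$ to the embedded subgroup $U(1)=\{e^{i\theta}\}\subset S^3$ yields the classical Hopf principal $U(1)$-bundle $\pi\colon S^3\to S^3/U(1)\simeq S^2$. The $U(1)$-invariance of $\widehat g$ is obtained by simply restricting the right $S^3$-invariance established in the first step to the subgroup $U(1)\subset S^3$. Hence $\widehat g\in\mathrm{Met}_{KK}(S^3)$ and $(S^3,R|_{U(1)},\widehat g)$ is indeed a Kaluza-Klein principal $U(1)$-bundle.

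Finally, for the orthonormal frame I would argue that $\{1,i,j,k\}$ is a $g_{\mathrm{euc}}$-orthonormal basis of $\mathbb H$, as follows from the symmetric formula $\tfrac12(\bar\xi_1\xi_2+\bar\xi_2\xi_1)$ using $\bar i=-i$, $\bar j=-j$, $\bar k=-k$ together with the multiplication rules $i^2=j^2=k^2=-1$ and $ij+ji=jk+kj=ki+ik=0$. In particular $\{i,j,k\}$ is an orthonormal basis of $T_1S^3=\ker(d\|\cdot\|^2)_1\simeq\mathrm{Im}(\mathbb H)$. Since the fundamental vector fields $i^*,j^*,k^*$ of the right action are precisely the left-invariant vector fields generated by these vectors, and since $\widehat g$ is left-invariant, the inner products $\widehat g(D_a,D_b)$ are constant on $S^3$ and equal to their values at $1$, giving $\widehat g(D_a,D_b)=\delta_{ab}$ everywhere. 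The dual frame $B^*_{S^3}=\{\omega^1,\omega^2,\omega^3\}$ then exists by general linear algebra. No step presents a genuine obstacle; the only mild subtlety is making sure the convention that $\xi^*$ denotes the fundamental field of the right action is used consistently, so that these fields are left-invariant and left-invariance of $\widehat g$ is the correct property to invoke.
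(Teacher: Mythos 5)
Your proposal is correct and follows exactly the route the paper intends: the corollary is presented as an immediate consequence of the preceding lemma (bi-invariance of $g_{\mathrm{euc}}$ restricts to $S^3$ and then to $U(1)\subset S^3$, giving the Kaluza-Klein structure on the Hopf fibration), and the orthonormality of $\{i^*,j^*,k^*\}$ follows from left-invariance of $\widehat g$ together with the computation $g_{\mathrm{euc}}(\xi_1,\xi_2)=\tfrac12(\bar\xi_1\xi_2+\bar\xi_2\xi_1)$ at the identity, which is precisely the displayed identity $\widehat g_q(D_q[\xi_1],D_q[\xi_2])=g_{\mathrm{euc}}(\xi_1,\xi_2)$ in the paper. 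Your remark on the convention that fundamental fields of the right action are left-invariant is exactly the point the paper relies on.
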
 

\begin{thm}
    The vertical and horizontal sub-bundles of the Kaluza-Klein principal $U(1)$-bundle $(S^3,R,\widehat g)$ are given by
$$V(S^3)=<i^*>, \quad H(S^3)=<j^*,k^*>.$$
The principal connection defined by the Kaluza-Klein metric $\widehat{g}$ is given by $\omega=\omega^1\otimes i^*.$

The map $\beta\in\cin(S^3,\mathrm{Met}(\mathfrak{u}(1)))$ defined by $\widehat g$ is constant with $\beta(i,i)=1$ and $\beta$ is $\Ad$-invariant.

    The curvature of the connection $\omega$ is
$\Omega^\omega=d\omega^1\otimes i^*=-2\omega^2\wedge\omega^3\otimes i^*.$

The Lorentz endomorphism of $(S^3,\widehat{g})$ is
$\mathscr{F}^{\omega}=-2(\omega^2\otimes D_3-\omega^3\otimes D_2)\otimes i^*.$

The curvature modified metric of $(S^3,\widehat{g})$ is
$$\widehat{g}_{\mathscr{F}^\omega}=-[\omega^1\otimes\omega^2+\omega^2\otimes\omega^1]\otimes D_3+[\omega^1\otimes\omega^3+\omega^3\otimes\omega^1]\otimes D_2=\omega^1\cdot\omega^3\otimes D_2-\omega^1\cdot\omega^2\otimes D_3.$$
\end{thm}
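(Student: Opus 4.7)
My plan is to verify each of the listed identities in sequence, leveraging the quaternionic structure of $S^3$ and the abelian nature of $U(1)$. The vertical/horizontal decomposition is immediate: the $U(1)$-action on $S^3$ is the restriction of right multiplication by $U(1)=\{e^{i\theta}\}\subset S^3\subset\mathbb H^*$, so its infinitesimal generator is $i\in\mathfrak{g}$; hence $\mathcal V(S^3)=\langle i^*\rangle$, and since $\{i^*,j^*,k^*\}$ is $\widehat g$-orthonormal, the orthogonal complement is $\mathcal H(S^3)=\langle j^*,k^*\rangle$. By Theorem \ref{teo:structure-Kaluza-Klein-metrics}, the principal connection $\omega$ is the $\widehat g$-orthogonal projector onto $\mathcal V(S^3)$; the orthonormality of the frame then gives $\omega=\omega^1\otimes i^*$ at once. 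For $\beta$, one has $\beta_p(i,i)=\widehat g_p(i^*_p,i^*_p)=1$ at every $p\in S^3$ by orthonormality, so $\beta$ is constant with $\beta(i,i)=1$, and $\Ad$-invariance is automatic since $U(1)$ is abelian.

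For the curvature I would use the structure equation $\widehat\Omega^\omega=d\widehat\omega+[\widehat\omega,\widehat\omega]$; the abelianness of $\mathfrak u(1)$ kills the bracket term and reduces the problem to computing $d\omega^1$. Since $\{i^*,j^*,k^*\}$ are the fundamental vector fields of the right action of $\mathbb H^*$ on itself, they coincide with the left-invariant extensions of $i,j,k$, so their Lie brackets equal the quaternionic commutators $[i,j]=2k$, $[j,k]=2i$, $[k,i]=2j$. Cartan's formula applied to the dual frame yields $d\omega^a(D_b,D_c)=-\omega^a([D_b,D_c])$ (the derivatives $D_b(\omega^a(D_c))$ vanish since those evaluations are constant), whence $d\omega^1(j^*,k^*)=-\omega^1(2i^*)=-2$ while all other evaluations of $d\omega^1$ in the basis vanish. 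Therefore $d\omega^1=-2\,\omega^2\wedge\omega^3$ and the stated formula for $\Omega^\omega$ follows.

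For the Lorentz endomorphism I would invoke its defining property in Definition \ref{defi:Lorentz-endomorphism}: $\widehat g(\langle V,\mathscr{F}^\omega\rangle_{\mathcal V P}(D_1),D_2)=\langle V,\Omega^\omega\rangle_{\mathcal V P}(D_1,D_2)$. Taking $V=i^*$, using $\widehat g(i^*,i^*)=1$ and the formula for $\Omega^\omega$ just obtained, the right-hand side equals $-2[\omega^2(D_1)\omega^3(D_2)-\omega^3(D_1)\omega^2(D_2)]$. On the other hand, evaluating the proposed $\langle i^*,\mathscr{F}^\omega\rangle_{\mathcal V P}=-2(\omega^2\otimes D_3-\omega^3\otimes D_2)$ against $D_1$ and then pairing with $D_2$ via $\widehat g$ yields the same bilinear form, thanks to $\widehat g(D_a,D_b)=\delta_{ab}$; tensoring with $i^*$ produces the claimed expression. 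The curvature modified metric $\widehat g_{\mathscr{F}^\omega}$ is then obtained by direct substitution into its defining formula: only $D_1=i^*$ has a nonzero vertical component, so evaluating the symmetrised expression $\tfrac{1}{2}\{\langle D_a^{\mathcal V},\mathscr{F}^\omega\rangle_{\mathcal V P}(D_b^{\mathcal H})+\langle D_b^{\mathcal V},\mathscr{F}^\omega\rangle_{\mathcal V P}(D_a^{\mathcal H})\}$ on the pairs $(i^*,j^*)$ and $(i^*,k^*)$ produces respectively $-D_3$ and $D_2$, which recovers both tensorial presentations in the statement.

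The only mildly delicate point is getting the signs right in the brackets of $\{i^*,j^*,k^*\}$: one must use that these are fundamental vector fields for the right action of $\mathbb H^*$ on itself, hence coincide with the left-invariant extensions of $i,j,k$, so the map $\xi\mapsto\xi^*$ is a Lie algebra homomorphism and the commutators match those of $\mathbb H$ with the correct sign. Everything else reduces to bookkeeping against the orthonormality of $\{D_1,D_2,D_3\}$ and the $\widehat g$-orthogonal splitting $TS^3=\mathcal V(S^3)\oplus\mathcal H(S^3)$.
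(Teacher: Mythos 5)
Your proposal is correct and is precisely the ``straightforward calculation'' that the paper leaves implicit for this theorem: orthonormality of $\{i^*,j^*,k^*\}$ gives the splitting, the projector, and $\beta$; the Maurer--Cartan/Cartan formula with $[j^*,k^*]=[j,k]^*=2i^*$ gives $d\omega^1=-2\,\omega^2\wedge\omega^3$; and the defining identities of $\mathscr F^\omega$ and $\widehat g_{\mathscr F^\omega}$ then yield the last two formulas by duality against the orthonormal frame. You also correctly isolate and resolve the one genuinely delicate point, namely that the fundamental vector fields of the \emph{right} action of $\mathbb H^*$ on itself are the left-invariant fields, so $\xi\mapsto\xi^*$ is a Lie algebra homomorphism and the quaternionic commutators enter with the right sign.
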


 \textbf{Step 2} For any $\alpha\in\R$, let $\widetilde\Phi_\alpha: S^{1}\times S^{1}\longrightarrow S^{3}\subset\mathbb{R}^{4}$ be the map given by $\widetilde\Phi_\alpha(z_1,z_2)=\cos\alpha\, z_1+\sin\alpha\,z_2\cdot j$. There is a commutative diagram  \[\begin{tikzcd}
	S^{1}\times S^{1} && S^{3}\subset\mathbb{R}^{4} \\
	\\
	&& {S^{2}\subset\mathbb{R}^{3}}
	\arrow[" \widetilde\Phi_\alpha", from=1-1, to=1-3]
	\arrow["\Phi_\alpha"', from=1-1, to=3-3]
	\arrow["\pi", from=1-3, to=3-3]
\end{tikzcd}\] where  $\Phi_\alpha: S^{1}\times S^{1}\longrightarrow S^{2}\subset\mathbb{R}^{3}$ is the composition $\Phi_\alpha=\pi\circ\widetilde\Phi_\alpha$. 
Using \cite[Proposition 3.3.17]{baird2003harmonic} one straightforwardly proves that if $\alpha\neq\frac{\pi}{2}\mathbb Z$ then $\widetilde\Phi_\alpha$ is an spherical  harmonic immersion that we call the $\alpha$-twisted Clifford torus; that is, $\tau(\widetilde\Phi_\alpha)=0$, see \cite{dragomir1989sottovarieta} for  details for the standard Clifford torus (i.e. for $\alpha=\frac{\pi}{4}$). Thus, $\Phi_\alpha$ is a generalized magnetic map.

\textbf{Step 3} Since $\beta$ is constant and $\ad$-invariant, by Corollary \ref{cor:difference-of-connections} we get $\bm{T}=0$, $\bm{\Theta}=0$. By Corollary \ref{cor:equivalent-harmonic-equations} it follows that the Wong's equations satisfied by $\widetilde\Phi$ are 
\begin{align*}
    \tau(\Phi) =-\pi_{*}\left(\Tr_{g}[\widetilde\Phi_\alpha^*\widehat{g}_{\mathscr{F}^{\omega}}]\right),\quad \quad \delta^{(\nabla^{S^1\times S^1},\widetilde\Phi_\alpha^*{\nabla}^{\omega})}(\widetilde\Phi_\alpha^{*}\omega) =0.
\end{align*}
Therefore
\begin{align*}
    \tau(\Phi_\alpha) =-\pi_{*}\left(\Tr_{g}[\widetilde\Phi_\alpha^*(-\omega^1\cdot\omega^2\otimes D_3+\omega^1\cdot\omega^3\otimes D_2)]\right),\quad\quad \delta^{(\nabla^{S^1\times S^1},\widetilde\Phi_\alpha^*{\nabla}^{\omega})}(\widetilde\Phi_\alpha^{*}(\omega^1\otimes i^*)) =0.
    \end{align*} By \cite[Proposition 3.3.17]{baird2003harmonic}, a straightforward computation shows that $\Phi_\alpha$ is uncharged (i.e.  $\tau(\Phi_\alpha)=0$) if and only if $\alpha\in\frac{\pi}{4}\cdot\mathbb Z$; that is, if and only if $\widetilde \Phi_\alpha$ is the standard Clifford  torus.
  \end{Ex}
 \begin{Ex}
    We finish with another family based on spherical harmonic immersions from  $S^{3}\times S^{3}$ into   the total space of the $SU(2)$-Hopf fibration $S^{7}\longrightarrow S^{4}$ equipped with a Kaluza-Klein metric. We use the same notations as in the previous example.\\
    \textbf{Step 1} 
We study the differential geometry of the $SU(2)$-Hopf fibration:\\ 
The sphere $S^7$ can be understood as the unit octonions thought as the Cayley-Dickson double  of quaternions
\begin{align*}
    S^{7}=\{\bm{o}=(q_1,q_2)\in\mathbb{H}\times\mathbb{H}: g_{\text{euc}}(\bm{o},\bm{o})=\|q_1\|^2+\|q_2\|^2=1\}.
\end{align*}
\begin{Lem}
    For each $\bm{o}\in\mathbb{O}_{1}= S^7$, the metric $g_{euc}$ in $\mathbb{O}$ is invariant under the diffeomorphisms $L_{\bm{o}}, R_{\bm{o}}$, i.e., for any $\bm{o}\in S^7$ it holds 
\begin{align*}
    L_{\bm{o}}^*g_{euc}=g_{euc}, \quad R_{\bm{o}}^*g_{euc}=g_{euc}.
\end{align*}
The metric $\widehat{g}$ induced on $S^7$ by $g_{\text{euc}}$ is bi-invariant under the natural left and right actions of the group $SU(2)=\mathbb H_1$ of unit quaternions. Therefore $(S^7,R,\widehat g)$, where $R$ is the right action $R$ of $SU(2)$, is a Kaluza-Klein principal $SU(2)$-bundle. Furthermore,  $B_{S^7}=\{D_1=\bm{i}^*, D_2=\bm{j}^*, D_3=\bm{k}^*, D_4=\bm{l}^*, D_5=\bm{m}^*, D_6=\bm{n}^*, D_7=\bm{o}^*\}$ is a $\widehat{g}$-orthonormal frame  with dual frame  $B^{*}_{S^7}=\{\omega^1, \omega^2, \omega^3,\omega^4,\omega^5,\omega^6,\omega^7\}$.
\end{Lem}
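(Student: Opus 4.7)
The plan is to derive every assertion from the single structural fact that $(\mathbb{O}, g_{\mathrm{euc}})$ is a composition algebra: $\|\bm{o}_1\bm{o}_2\|=\|\bm{o}_1\|\,\|\bm{o}_2\|$ for all $\bm{o}_1,\bm{o}_2\in\mathbb{O}$. Polarizing the identity $\|\bm{o}\,\bm{x}\|^{2}=\|\bm{o}\|^{2}\|\bm{x}\|^{2}$ in the variable $\bm{x}=\bm{a}+\bm{b}$ yields
\begin{align*}
g_{\mathrm{euc}}(\bm{o}\bm{a},\bm{o}\bm{b})&=\|\bm{o}\|^{2}\, g_{\mathrm{euc}}(\bm{a},\bm{b}),
\end{align*}
and an entirely symmetric polarization on the right gives $g_{\mathrm{euc}}(\bm{a}\bm{o},\bm{b}\bm{o})=\|\bm{o}\|^{2}g_{\mathrm{euc}}(\bm{a},\bm{b})$. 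Neither identity requires associativity. Restricting to $\bm{o}\in S^{7}=\mathbb{O}_{1}$, the $\mathbb{R}$-linear maps $L_{\bm{o}},R_{\bm{o}}\colon \mathbb{O}\to\mathbb{O}$ are Euclidean isometries, which is precisely $L_{\bm{o}}^{*}g_{\mathrm{euc}}=g_{\mathrm{euc}}$ and $R_{\bm{o}}^{*}g_{\mathrm{euc}}=g_{\mathrm{euc}}$.

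Next I would transfer these properties to $S^{7}$. Since $\|\bm{o}\bm{x}\|=\|\bm{x}\|$ for $\bm{o}\in S^{7}$, both $L_{\bm{o}}$ and $R_{\bm{o}}$ preserve $S^{7}\subset \mathbb{O}$, and the induced metric $\widehat{g}=\iota^{*}g_{\mathrm{euc}}$ inherits the invariance via $(L_{\bm{o}}|_{S^{7}})^{*}\widehat{g}=\widehat{g}$ and the analogous right identity. Specializing to the subgroup $SU(2)\simeq \mathbb{H}_{1}\hookrightarrow\mathbb{O}_{1}$ (which is genuinely associative, so that left and right octonion multiplication by $SU(2)$ really are group actions on $S^{7}$), we conclude that $\widehat{g}$ is bi-invariant under the induced $SU(2)$-actions. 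The right action $R$ of $SU(2)$ is free and its orbits are the fibers of the Hopf fibration $\pi\colon S^{7}\to S^{4}$; the $SU(2)$-invariance of $\widehat{g}$ is exactly the Kaluza-Klein condition, so $(S^{7},R,\widehat{g})$ is a Kaluza-Klein principal $SU(2)$-bundle.

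It remains to verify orthonormality of $B_{S^{7}}$. For any imaginary octonion $\bm{\xi}\in\mathrm{Im}(\mathbb{O})$ and $\bm{p}\in S^{7}$, the vector field $\bm{\xi}^{*}$ defined by infinitesimal right multiplication satisfies $\bm{\xi}^{*}_{\bm{p}}=\bm{p}\cdot\bm{\xi}$, and a short computation using $\bm{\xi}+\overline{\bm{\xi}}=0$ together with $g_{\mathrm{euc}}(\bm{p},\bm{p}\bm{\xi})=\tfrac12(\overline{\bm{p}\bm{\xi}}\cdot\bm{p}+\overline{\bm{p}}\cdot\bm{p}\bm{\xi})$ shows $\bm{\xi}^{*}_{\bm{p}}\in T_{\bm{p}}S^{7}$ (this is where I must be cautious with the Moufang identities, since octonion non-associativity forbids the naive manipulation one uses for quaternions; the pure-imaginary hypothesis together with the alternative law $\bm{p}(\bm{p}\bm{\xi})=\bm{p}^{2}\bm{\xi}$ rescues the argument). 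Applying the polarized composition identity,
\begin{align*}
\widehat{g}_{\bm{p}}(\bm{e}_{a}^{*},\bm{e}_{b}^{*})&=g_{\mathrm{euc}}(\bm{p}\bm{e}_{a},\bm{p}\bm{e}_{b})=\|\bm{p}\|^{2}\,g_{\mathrm{euc}}(\bm{e}_{a},\bm{e}_{b})=\delta_{ab},
\end{align*}
where $\{\bm{e}_{a}\}_{a=1}^{7}=\{\bm{i},\bm{j},\bm{k},\bm{l},\bm{m},\bm{n},\bm{o}\}$ is the standard orthonormal basis of $\mathrm{Im}(\mathbb{O})$. This establishes that $B_{S^{7}}=\{D_{a}=\bm{e}_{a}^{*}\}$ is a $\widehat{g}$-orthonormal global frame; defining $B_{S^{7}}^{*}=\{\omega^{a}\}$ as its dual coframe completes the statement.

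The principal technical obstacle is the non-associativity of $\mathbb{O}$: all steps that for quaternions reduce to associativity (the definition of $\bm{\xi}^{*}$, tangency to $S^{7}$, compatibility of $L_{\bm{o}}$ with the right action) must instead be justified by the composition-algebra identity $\|xy\|=\|x\|\|y\|$ and by the alternative laws $\bm{x}(\bm{x}\bm{y})=\bm{x}^{2}\bm{y}$, $(\bm{y}\bm{x})\bm{x}=\bm{y}\bm{x}^{2}$. Once these are invoked, the remaining verifications are routine.
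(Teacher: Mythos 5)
Your composition-algebra strategy is the right one for most of this lemma, and the paper itself offers no written proof to compare against (the statement sits inside an example and is asserted as following from direct calculation). The polarization of $\|\bm{o}\bm{x}\|^2=\|\bm{o}\|^2\|\bm{x}\|^2$ correctly gives $L_{\bm{o}}^*g_{\mathrm{euc}}=g_{\mathrm{euc}}$ and $R_{\bm{o}}^*g_{\mathrm{euc}}=g_{\mathrm{euc}}$ for each individual unit octonion $\bm{o}$ (no associativity is needed to say that a single linear map is an isometry), and the same polarized identity gives the orthonormality $\widehat{g}_{\bm{p}}(\bm{e}_a^*,\bm{e}_b^*)=g_{\mathrm{euc}}(\bm{p}\bm{e}_a,\bm{p}\bm{e}_b)=\delta_{ab}$ and the tangency $g_{\mathrm{euc}}(\bm{p},\bm{p}\bm{\xi})=\|\bm{p}\|^2\,\mathrm{Re}(\bm{\xi})=0$ for imaginary $\bm{\xi}$. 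Those parts are fine.

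The genuine gap is your parenthetical claim that, because $\mathbb{H}$ is associative, ``left and right octonion multiplication by $SU(2)$ really are group actions on $S^{7}$.'' This is false. Artin's theorem only makes the subalgebra generated by \emph{two} elements associative; the associator $[\bm{x},q_1,q_2]$ with $q_1,q_2\in\mathbb{H}$ and $\bm{x}\in\mathbb{O}\setminus\mathbb{H}$ does not vanish. Concretely, in the Cayley--Dickson presentation $(a,b)(c,d)=(ac-\bar{d}b,\,da+b\bar{c})$ one finds $(\bm{l}\,i)j\neq \bm{l}(ij)$, and more generally $(a+b\bm{l})q=(aq,\,b\bar{q})$, so iterating right multiplication produces $(aq_1q_2,\,b\,\overline{q_2q_1})$, which is not right multiplication by $q_1q_2$. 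The alternative laws $\bm{x}(\bm{x}\bm{y})=\bm{x}^2\bm{y}$ and $(\bm{y}\bm{x})\bm{x}=\bm{y}\bm{x}^2$ that you invoke do not repair this, since they require a repeated argument. Consequently the maps $\{R_{q}\}_{q\in\mathbb{H}_1}$ obtained by restricting octonionic right multiplication do \emph{not} assemble into a right $SU(2)$-action, and the bi-invariance and principal-bundle assertions cannot be obtained by ``restricting'' your octonionic isometries. The repair is easy but goes through a different map: the natural $SU(2)=\mathbb{H}_1$ actions on $S^7\subset\mathbb{H}\times\mathbb{H}$ are the componentwise quaternionic multiplications $(q_1,q_2)\cdot q=(q_1q,q_2q)$ (and similarly on the left), which are genuine actions by associativity of $\mathbb{H}$ and are isometries of $g_{\mathrm{euc}}=g_{\mathrm{euc}}^{\mathbb H}\oplus g_{\mathrm{euc}}^{\mathbb H}$ because the quaternionic norm is multiplicative on each factor. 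You should state the action this way, prove its isometry property directly, and reserve the octonionic composition identity for the frame $\{\bm{i}^*,\dots,\bm{o}^*\}$, where your pointwise computation is valid.
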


\begin{thm}
    The vertical and horizontal sub-bundles of the $SU(2)$-bundle Hopf $(S^7, R)$ are given by
$$V(S^7)=\{D_1=\bm{i}^*, D_2=\bm{j}^*, D_{3}= \bm{k}^*\}, \quad H(S^7)=\{D_{4}=\bm{l}^*, D_5=\bm{m}^*, D_6=\bm{n}^*, D_7=\bm{o}^*\}.$$

The principal connection defined by the Kaluza-Klein metric $\widehat{g}$ is $\omega=\omega^1\otimes \bm{i}^*+\omega^2\otimes \bm{j}^*+\omega^3\otimes \bm{k}^*.$

The map $\beta\in\cin(S^3,\mathrm{Met}(\mathfrak{u}(2)))^{SU(2)}$ defined by $\widehat g$ is constant  and $\beta$ is $\Ad$-invariant.

    The curvature of the connection $\omega$ is
    \begin{align*}
     \Omega^\omega &=2[(-\omega^4\wedge\omega^5+\omega^6\wedge\omega^7)\otimes\bm{i}^*-(\omega^4\wedge\omega^6+\omega^5\wedge\omega^7)\otimes\bm{j}^*+(-\omega^4\wedge\omega^7+\omega^5\wedge\omega^6)\otimes\bm{k}^*].
    \end{align*}

The Lorentz endomorphism of $(S^7,\widehat{g})$ is
\begin{align*}
    \mathscr{F}^{\omega} &=2[(-\omega^4\otimes D_5+\omega^5\otimes D_4+\omega^6\otimes D_7-\omega^7\otimes D_6)\otimes\bm{i}^*-\\
    &-(\omega^4\otimes D_6-\omega^6\otimes D_4+\omega^5\otimes D_7-\omega^7\otimes D_5)\otimes\bm{j}^*+\\
    &+(-\omega^4\otimes D_7+\omega^7\otimes D_4+\omega^5\otimes D_6-\omega^6\otimes D_5)\otimes\bm{k}^*].
\end{align*}

The curvature modified metric of  $(S^7,\widehat{g})$  is
\begin{align*}
    \widehat{g}_{\mathscr{F}^\omega} &=(\omega^1\cdot\omega^5+\omega^2\cdot\omega^6+\omega^3\cdot\omega^7)\otimes D_4-(\omega^1\cdot\omega^4-\omega^2\cdot\omega^7+\omega^3\cdot\omega^6)\otimes D_5+\\
    &+(\omega^1\cdot\omega^7-\omega^2\cdot\omega^4+\omega^3\cdot\omega^5)\otimes D_6+(\omega^1\cdot\omega^6-\omega^2\cdot\omega^5-\omega^3\cdot\omega^4)\otimes D_7.
\end{align*}
\end{thm}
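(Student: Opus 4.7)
The plan is to follow the same strategy used in the previous example for the complex Hopf fibration $S^3\to S^2$, adapting it to the octonionic setting. The key point is to exploit the bi-invariance of the Euclidean metric on $\mathbb O$ under the natural $SU(2)$-actions and the existence of the orthonormal frame $B_{S^7}$ of fundamental vector fields induced by the imaginary octonions.

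First, I would identify the vertical subbundle. The right $SU(2)$-action on $S^{7}$ is given by restricting the right multiplication on $\mathbb{O}$ to unit quaternions embedded as $(q,0)\in\mathbb{O}$. Differentiating this action at the identity along the basis $\{\bm{i},\bm{j},\bm{k}\}$ of $\mathfrak{su}(2)\simeq\mathrm{Im}(\mathbb H)\subset \mathrm{Im}(\mathbb O)$ produces exactly the fundamental vector fields $\bm{i}^{*},\bm{j}^{*},\bm{k}^{*}$; so $\mathcal{V}(S^{7})=\langle\bm{i}^{*},\bm{j}^{*},\bm{k}^{*}\rangle$. Since $\widehat g$ is the restriction of $g_{\text{euc}}$ to $S^{7}$ and $B_{S^{7}}$ is $\widehat g$-orthonormal, the horizontal subbundle is the orthogonal complement $\mathcal{H}(S^{7})=\langle\bm{l}^{*},\bm{m}^{*},\bm{n}^{*},\bm{o}^{*}\rangle$. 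The principal connection $\omega\in\mathcal A(P)$ is then the $\widehat g$-orthogonal projector onto $\mathcal{V}(S^{7})$, and since $\{\omega^{1},\omega^{2},\omega^{3}\}$ is the dual coframe of $\{D_{1},D_{2},D_{3}\}=\{\bm{i}^{*},\bm{j}^{*},\bm{k}^{*}\}$, this projector is $\omega=\omega^{1}\otimes\bm{i}^{*}+\omega^{2}\otimes\bm{j}^{*}+\omega^{3}\otimes\bm{k}^{*}$, giving the connection $1$-form $\widehat\omega=\omega^{1}\otimes\bm{i}+\omega^{2}\otimes\bm{j}+\omega^{3}\otimes\bm{k}$.

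Next, for the map $\beta\in\cin(S^{7},\mathrm{Met}(\mathfrak{su}(2)))^{SU(2)}$ defined by $\widehat g$: for any $\bm{o}\in S^{7}$ and $\xi,\xi'\in\mathfrak{su}(2)$ one has $\beta_{\bm{o}}(\xi,\xi')=\widehat g_{\bm{o}}(\xi^{*}_{\bm{o}},\xi'^{*}_{\bm{o}})=g_{\text{euc}}(\xi,\xi')$ by right invariance of $g_{\text{euc}}$ under $R_{\bm{o}}$, which is constant in $\bm{o}$. The $\Ad$-invariance follows because $\Ad$ on $SU(2)$ acts on $\mathrm{Im}(\mathbb H)$ by quaternionic conjugation, which preserves $g_{\text{euc}}$.

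For the curvature, I would apply the structural equation $\widehat\Omega^{\omega}=d\widehat\omega+[\widehat\omega,\widehat\omega]$ and evaluate on the frame $B_{S^{7}}$. Since $\widehat\omega$ vanishes on horizontal vectors, $[\widehat\omega,\widehat\omega]$ contributes only on pairs with at least one vertical entry; combined with the horizontality of $\widehat\Omega^{\omega}$ this means that on horizontal pairs $\widehat\Omega^{\omega}(D_{\alpha},D_{\beta})=d\widehat\omega(D_{\alpha},D_{\beta})=-\widehat\omega([D_{\alpha},D_{\beta}])$ for $\alpha,\beta\in\{4,5,6,7\}$. Hence the computation reduces to evaluating the Lie brackets $[\bm{l}^{*},\bm{m}^{*}]$, $[\bm{l}^{*},\bm{n}^{*}]$, $[\bm{l}^{*},\bm{o}^{*}]$, $[\bm{m}^{*},\bm{n}^{*}]$, $[\bm{m}^{*},\bm{o}^{*}]$, $[\bm{n}^{*},\bm{o}^{*}]$ and projecting them onto $\mathcal{V}(S^{7})$ via $\widehat\omega$. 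Using the definition of fundamental vector fields under the octonion right action and the standard Cayley-Dickson multiplication table (handling non-associativity carefully, since $\mathbb O$ is only alternative, not associative, but the required brackets only involve products that lie inside the associative subalgebra generated by two imaginary octonions and their complex span), one obtains $\widehat\omega([D_{\alpha},D_{\beta}])$ exactly as the imaginary quaternionic components announced, yielding $\widehat\Omega^{\omega}=2[(-\omega^{4}\wedge\omega^{5}+\omega^{6}\wedge\omega^{7})\otimes\bm{i}+\cdots]$ after the factor of $2$ appearing from the commutator term and the sign convention.

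Finally, the Lorentz endomorphism $\mathscr F^{\omega}$ is read off from $\Omega^{\omega}=R_{\bullet}\circ\widehat\Omega^{\omega}$ by applying the polarity map of $\widehat g$: using $B_{S^{7}}$ orthonormal, each $\omega^{\alpha}\wedge\omega^{\beta}$ component corresponds under $p^{-1}_{\widehat g}$ to $\omega^{\alpha}\otimes D_{\beta}-\omega^{\beta}\otimes D_{\alpha}$, producing the stated expression. The curvature modified metric $\widehat g_{\mathscr F^{\omega}}$ is then computed directly from its defining formula $\widehat g_{\mathscr F^{\omega}}(D_{1},D_{2})=\tfrac{1}{2}\{\langle D_{1}^{\mathcal V},\mathscr F^{\omega}\rangle_{\mathcal VP}(D_{2}^{\mathcal H})+\langle D_{2}^{\mathcal V},\mathscr F^{\omega}\rangle_{\mathcal VP}(D_{1}^{\mathcal H})\}$, inserting pairs of frame vectors and symmetrizing. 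I expect the main obstacle to be the correct bookkeeping of the brackets $[D_{\alpha},D_{\beta}]$ for $\alpha,\beta\in\{4,5,6,7\}$ because of octonion non-associativity; once that table is established, the rest is a mechanical verification.
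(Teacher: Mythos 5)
Your proposal is correct and follows essentially the same route the paper intends (the paper presents this theorem as a routine computation in the orthonormal frame of fundamental vector fields, exactly as you outline): identify $\mathcal V(S^7)$ from the $SU(2)$-action, take $\omega$ to be the $\widehat g$-orthogonal projector, compute the curvature via $\Omega^\omega(D_\alpha,D_\beta)=-\omega([D_\alpha,D_\beta])$ on horizontal pairs, and read off $\mathscr F^{\omega}$ and $\widehat g_{\mathscr F^\omega}$ through the polarity maps and the defining symmetrization. The only imprecision is in your justification of the bracket bookkeeping: the factor of $2$ comes from the Lie brackets of the fundamental vector fields themselves (as in $[\bm{i},\bm{j}]=2\bm{k}$), not from the $[\widehat\omega,\widehat\omega]$ term (which you correctly noted vanishes on horizontal pairs), and the products entering $[D_\alpha,D_\beta]$ for $\alpha,\beta\in\{4,\ldots,7\}$ involve three independent octonions, whose generated subalgebra need not be associative, so the associator requires slightly more care than your parenthetical suggests --- but this does not affect the validity of the approach or the stated result.
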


\textbf{Step 2} For any $\alpha\in\R$, let $\widetilde\Psi_\alpha: S^{1}\times S^{1}\longrightarrow S^{3}\subset\mathbb{R}^{4}$ be the map given by $\widetilde\Psi_\alpha(q_1,q_2)=\cos\alpha\,q_1+\sin\alpha\, q_2\cdot \boldsymbol{l}$. There is a commutative diagram 
\[\begin{tikzcd}
	{S^{3}\times S^{3}} && {S^{7}\subset\mathbb{R}^{8}} \\
	\\
	&& {S^{4}\subset\mathbb{R}^{5}}
	\arrow["\widetilde{\Psi}_\alpha", from=1-1, to=1-3]
	\arrow["\Psi_\alpha"', from=1-1, to=3-3]
	\arrow["\pi", from=1-3, to=3-3]
\end{tikzcd}\]
where $\Psi_\alpha\colon 	S^{3}\times S^{3} \to  S^{7}$ is the composition $\Psi_\alpha=\pi\circ \widetilde\Psi_\alpha$. Using once again \cite[Proposition 3.3.17]{baird2003harmonic} one straightforwardly proves that if $\alpha\neq\frac{\pi}{2}\mathbb Z$ then $ \widetilde\Psi_\alpha$ is an spherical harmonic immersion, that we call an $\alpha$-twisted spherical immersion, see \cite{dragomir1989sottovarieta} for the standard case when $\alpha=\frac{\pi}{4}$.  Therefore, $\Psi_\alpha$ is a generalized magnetic map.

\textbf{Step 3} Since $\beta$ is constant and $\ad$-invariant, by Corollary \ref{cor:difference-of-connections} we get $\bm{T}=0$, $\bm{\Theta}=0$. Hence, it follows from Corollary \ref{cor:equivalent-harmonic-equations}, that the Wong's equations satisfied by $\widetilde\Psi_\alpha$ are 
\begin{align*}
    \tau(\Psi_\alpha) &=-\pi_*(\Tr_g[\widetilde\Psi_\alpha^*((\omega^1\cdot\omega^5+\omega^2\cdot\omega^6+\omega^3\cdot\omega^7)\otimes D_4-(\omega^1\cdot\omega^4-\omega^2\cdot\omega^7+\omega^3\cdot\omega^6)\otimes D_5+\\
    &+(\omega^1\cdot\omega^7-\omega^2\cdot\omega^4+\omega^3\cdot\omega^5)\otimes D_6+(\omega^1\cdot\omega^6-\omega^2\cdot\omega^5-\omega^3\cdot\omega^4)\otimes D_7)])
\end{align*}
    and
    \begin{align*}
\delta^{(\nabla^{S^3\times S^3},\widetilde\Psi_\alpha^*{\nabla}^{\omega})}(\widetilde\Psi_\alpha^*(\omega^1\otimes \bm{i}^*+\omega^2\otimes \bm{j}^*+\omega^3\otimes \bm{k}^*)) &=0.
        \end{align*}
        By \cite[Proposition 3.3.17]{baird2003harmonic}, a somewhat lengthy computation shows that $\Psi_\alpha$ is uncharged (i.e $\tau(\Psi_\alpha)= 0$) if and only if $\alpha\in\frac{\pi}{4}\cdot\mathbb Z$; that is, if and only if  $\widetilde\Psi_\alpha$ is  the standard spherical harmonic  immersion.
\end{Ex}

\end{document}